\documentclass[11pt]{amsart}
\allowdisplaybreaks
\usepackage[english]{babel}
\usepackage{fullpage}
\usepackage{amsfonts}
\usepackage{amsmath}
\usepackage{amsthm}
\usepackage{amssymb}
\usepackage{bbm}
\usepackage{cancel}
\usepackage{color}
\usepackage[all]{xy}
\usepackage{graphicx}
\usepackage{mathabx}
\usepackage{tikz-cd}
\tikzcdset{scale cd/.style={every label/.append style={scale=#1},
    cells={nodes={scale=#1}}}}
\usepackage{epigraph} 
\usepackage{hyperref}
\usepackage{stmaryrd}
\usepackage{enumitem}
\usepackage[mathscr]{eucal}
\usepackage{soul}
\usepackage{mathtools}
\usepackage{xfrac}
\usepackage{kotex}

\newcommand{\defterm}[1]{\textbf{\emph{#1}}}
\usepackage{aliascnt}
\usepackage{cleveref}

\numberwithin{equation}{section} 

\usepackage[norefs,nocites]{refcheck} 

\makeatletter
\newcommand{\refcheckize}[1]{%
  \expandafter\let\csname @@\string#1\endcsname#1%
  \expandafter\DeclareRobustCommand\csname relax\string#1\endcsname[1]{%
    \csname @@\string#1\endcsname{##1}\wrtusdrf{##1}}%
  \expandafter\let\expandafter#1\csname relax\string#1\endcsname
}

\AtBeginDocument{%
  \refcheckize{\cref}%
  \refcheckize{\Cref}%

  \expandafter\let\csname RC@orig@eqref\endcsname\eqref
  \expandafter\def\expandafter\eqref\expandafter#\expandafter1\expandafter{%
    \csname RC@orig@eqref\endcsname{#1}\wrtusdrf{#1}%
  }%

  \expandafter\let\csname RC@orig@ref\endcsname\ref
  \expandafter\def\expandafter\ref\expandafter#\expandafter1\expandafter{%
    \csname RC@orig@ref\endcsname{#1}\wrtusdrf{#1}%
  }%
}
\makeatother

\usepackage{scalerel,stackengine}
\newcommand\reallywidehat[1]{%
\begingroup
\stackMath
\savestack{\tmpbox}{\stretchto{%
  \scaleto{%
    \scalerel*[\widthof{\ensuremath{#1}}]{\kern.1pt\mathchar"0362\kern.1pt}%
    {\rule{0ex}{\textheight}}
  }{\textheight}%
}{2.4ex}}%
\stackon[-6.9pt]{#1}{\tmpbox}%
\endgroup
}

\newcommand\reallywidetilde[1]{%
\begingroup
\stackMath
\savestack{\tmpbox}{\stretchto{%
  \scaleto{%
    \scalerel*[\widthof{\ensuremath{#1}}]{\kern-.8pt\mathchar"307E\kern-.8pt}%
    {\rule{0ex}{\textheight}}
  }{\textheight}%
}{2ex}}%
\stackon[-6.9pt]{#1}{\tmpbox}%
\endgroup
}

\newtheorem{theorem}{Theorem}[section]
\newtheorem{theoremX}{Theorem}
\newtheorem*{theorem*}{Theorem}

\newcommand{\makealiasthm}[2]{%
  \newaliascnt{#1}{theorem}%
  \newtheorem{#1}[#1]{#2}%
  \aliascntresetthe{#1}%
}
\newcommand{\makealiasthmX}[2]{%
  \newaliascnt{#1}{theoremX}%
  \newtheorem{#1}[#1]{#2}%
  \aliascntresetthe{#1}%
}

\makealiasthmX{goalX}{Goal}
\makealiasthm{corollary}{Corollary}
\makealiasthmX{corollaryX}{Corollary}
\newtheorem*{corollary*}{Corollary}
\makealiasthm{proposition}{Proposition}

\newtheorem*{proposition*}{Proposition}
\makealiasthm{lemma}{Lemma}
\makealiasthm{want}{Want}
\makealiasthm{conjecture}{Conjecture}
\makealiasthmX{conjectureX}{Conjecture}
\newtheorem*{conjecture*}{Conjecture}
\makealiasthm{speculation}{Speculation}
\makealiasthmX{speculationX}{Speculation}
\makealiasthm{assumption}{Assumption}
\theoremstyle{definition}
\makealiasthm{definition}{Definition}

\newtheorem*{definition*}{Definition}
\makealiasthm{variant}{Variant}
\newtheorem*{variant*}{Variant}
\makealiasthm{example}{Example}

\newtheorem*{example*}{Example}
\makealiasthm{remark}{Remark}
\newtheorem*{remark*}{Remark}
\makealiasthm{motivation}{Motivation}
\makealiasthm{notation}{Notation}
\newtheorem*{notation*}{Notation}
\makealiasthm{construction}{Construction}
\makealiasthm{question}{Question}
\newtheorem*{question*}{Question}
\newtheorem*{acknowledgements}{Acknowledgements}

\crefname{theorem}{theorem}{theorems}
\Crefname{theorem}{Theorem}{Theorems}
\crefname{theoremX}{theorem}{theorems}
\Crefname{theoremX}{Theorem}{Theorems}
\crefname{goalX}{goal}{goals}
\Crefname{goalX}{Goal}{Goals}
\crefname{corollary}{corollary}{corollaries}
\Crefname{corollary}{Corollary}{Corollaries}
\crefname{corollaryX}{corollary}{corollaries}
\Crefname{corollaryX}{Corollary}{Corollaries}
\crefname{proposition}{proposition}{propositions}
\Crefname{proposition}{Proposition}{Propositions}
\crefname{propositionX}{proposition}{propositions}
\Crefname{propositionX}{Proposition}{Propositions}
\crefname{lemma}{lemma}{lemmas}
\Crefname{lemma}{Lemma}{Lemmas}
\crefname{want}{want}{wants}
\Crefname{want}{Want}{Wants}
\crefname{conjecture}{conjecture}{conjectures}
\Crefname{conjecture}{Conjecture}{Conjectures}
\crefname{conjectureX}{conjecture}{conjectures}
\Crefname{conjectureX}{Conjecture}{Conjectures}
\crefname{speculation}{speculation}{speculations}
\Crefname{speculation}{Speculation}{Speculations}
\crefname{speculationX}{speculation}{speculations}
\Crefname{speculationX}{Speculation}{Speculations}
\crefname{assumption}{assumption}{assumptions}
\Crefname{assumption}{Assumption}{Assumptions}
\crefname{definition}{definition}{definitions}
\Crefname{definition}{Definition}{Definitions}
\crefname{definitionX}{definition}{definitions}
\Crefname{definitionX}{Definition}{Definitions}
\crefname{variant}{variant}{variants}
\Crefname{variant}{Variant}{Variants}
\crefname{example}{example}{examples}
\Crefname{example}{Example}{Examples}
\crefname{exampleX}{example}{examples}
\Crefname{exampleX}{Example}{Examples}
\crefname{remark}{remark}{remarks}
\Crefname{remark}{Remark}{Remarks}
\crefname{motivation}{motivation}{motivations}
\Crefname{motivation}{Motivation}{Motivations}
\crefname{notation}{notation}{notations}
\Crefname{notation}{Notation}{Notations}
\crefname{construction}{construction}{constructions}
\Crefname{construction}{Construction}{Constructions}
\crefname{question}{question}{questions}
\Crefname{question}{Question}{Questions}

\usepackage[colorinlistoftodos]{todonotes}

\let\oldtocsection=\tocsection
\let\oldtocsubsection=\tocsubsection
\let\oldtocsubsubsection=\tocsubsubsection
\renewcommand{\tocsection}[2]{\hspace{0em}\oldtocsection{#1}{#2}}
\renewcommand{\tocsubsection}[2]{\hspace{1em}\oldtocsubsection{#1}{#2}}
\renewcommand{\tocsubsubsection}[2]{\hspace{2em}\oldtocsubsubsection{#1}{#2}}

\def\isotInt{\mathrm{isotInt}}
\def\uExact{\underline{\Exact}}

\def\IsotComp{\mathrm{IsotComp}}
\def\IsotInt{\mathrm{IsotInt}}

\def\all{\mathrm{all}}
\def\surj{\mathrm{surj}}

\def\pullback{\mathrm{pullback}}
\def\tF{\widetilde{F}}

\def\scB{\mathscr{B}}

\def\Tw{\mathrm{Tw}}

\def\Mod{\mathrm{Mod}}

\def\Pic{\mathrm{Pic}}

\def\red{\mathrm{red}}

\def\lfp{\mathrm{lfp}}

\def\ch{\mathrm{ch}}

\def\Hd{\mathrm{Hd}}
\def\triv{\mathrm{triv}}

\def\ev{\mathrm{ev}}

\def\Perf{\mathrm{Perf}}
\def\Aut{\mathrm{Aut}}

\def\fp{\mathrm{fp}}

\def\scD{\mathscr{D}}
\def\diag{\mathrm{diag}}

\def\pr{\mathrm{pr}}
\def\id{\mathrm{id}}

\def\GG{\mathbb{G}}

\def\cart{\ar@{}[rd]|{\Box}}

\def\Tot{\mathrm{Tot}}
\def\Cech{\mathrm{\check{C}ech}}

\def\pSymp{\mathrm{pSymp}}
\def\Symp{\mathrm{Symp}}
\def\Lag{\mathrm{Lag}}

\def\dual{^{\vee}}

\def\isot{\mathrm{isot}}

\def\Z{\mathbb{Z}}

\def\C{\mathbb{C}}

\def\O{\mathcal{O}} 

\def\LL{\mathbb{L}}
\def\TT{\mathbb{T}}
\def\T{\mathrm{T}}

\def\dSt{\mathrm{dSt}}
\def\dAff{\mathrm{dAff}}

\def\QCoh{\mathrm{QCoh}}

\def\Grpd{\mathrm{Grpd}}
\def\Cat{\mathrm{Cat}}
\def\Fun{\mathrm{Fun}}

\def\Map{\mathrm{Map}}
\def\uMap{\underline{\Map}}
\def\fib{\mathrm{fib}}
\def\cof{\mathrm{cof}}

\def\op{\mathrm{op}}
\def\fil{\mathrm{fil}}

\def\forget{\mathrm{forget}}

\def\DR{\mathrm{DR}}

\def\Gr{\mathrm{Gr}}
\def\Fil{\mathrm{Fil}}
\def\Sym{\mathrm{Sym}}

\def\textin{\quad\textup{in}\quad}
\def\and{\quad\textup{and}\quad}

\def\sA{\mathscr{A}}

\def\cl{\mathrm{cl}}
\def\ex{\mathrm{ex}}

\def\cE{\mathcal{E}}
\def\cF{\mathcal{F}}

\def\cL{\mathcal{L}}

\def\pt{\mathrm{pt}}

\def\act{\mathrm{act}}

\def\sm{\mathrm{sm}}

\def\SP{\ar@{}[rd]|{\mathrm{SP}}}

\def\commutes{\ar@{}[rd]|{\circlearrowleft}}

\def\et{\mathrm{et}}

\def\tf{\widetilde{f}}
\def\tg{\widetilde{g}}

\def\dASt{\mathrm{dASt}}

\def\Exact{\mathrm{Exact}}

\def\scC{\mathscr{C}}
\def\scE{\mathscr{E}}

\def\CAlg{\mathrm{CAlg}}

\def\inv{\mathrm{inv}}
\def\equiv{\mathrm{eq}}
\def\colim{\mathrm{colim}}
\def\Grp{\mathrm{Grp}}

\def\Hamil{\mathrm{Ham}}
\def\g{\mathfrak{g}}
\def\h{\mathfrak{h}}
\def\t{\mathfrak{t}}

\def\ind{\mathrm{ind}}

\def\res{\mathrm{res}}

\def\Mon{\mathrm{Mon}}
\def\St{\mathrm{St}}
\def\Sch{\mathrm{Sch}}

\def\grp{\mathrm{grp}}

\def\H{\mathrm{H}}

\def\Ext{\mathrm{Ext}}

\makeatletter
\newcommand*{\da@rightarrow}{\mathchar"0\hexnumber@\symAMSa 4B }
\newcommand*{\da@leftarrow}{\mathchar"0\hexnumber@\symAMSa 4C }
\newcommand*{\xdashrightarrow}[2][]{%
  \mathrel{%
    \mathpalette{\da@xarrow{#1}{#2}{}{\da@rightarrow\mkern-5mu}{\,}{}}{}%
  }%
}
\newcommand{\xdashleftarrow}[2][]{%
  \mathrel{%
    \mathpalette{\da@xarrow{#1}{#2}\da@leftarrow{}{}{\,}}{}%
  }%
}
\newcommand{\xdashleftrightarrow}[2][]{%
  \mathrel{%
    \mathpalette{\da@xarrow{#1}{#2}{\da@leftarrow}{\da@rightarrow\mkern-5mu}{}{}{}}{}%
  }%
}
\newcommand*{\da@xarrow}[7]{%
  \sbox0{$\ifx#7\scriptstyle\scriptscriptstyle\else\scriptstyle\fi#5#1#6\m@th$}%
  \sbox2{$\ifx#7\scriptstyle\scriptscriptstyle\else\scriptstyle\fi#5#2#6\m@th$}%
  \sbox4{$#7\dabar@\m@th$}%
  \dimen@=\wd0 %
  \ifdim\wd2 >\dimen@
    \dimen@=\wd2 %
  \fi
  \count@=2 %
  \def\da@bars{\dabar@\dabar@}%
  \@whiledim\count@\wd4<\dimen@\do{%
    \advance\count@\@ne
    \expandafter\def\expandafter\da@bars\expandafter{%
      \da@bars
      \dabar@ 
    }%
  }%
  \mathrel{#3}%
  \mathrel{%
    \mathop{\da@bars}\limits
    \ifx\\#1\\%
    \else
      _{\copy0}%
    \fi
    \ifx\\#2\\%
    \else
      ^{\copy2}%
    \fi
  }%
  \mathrel{#4}%
}
\makeatother

\usetikzlibrary{decorations.markings} 
\tikzset{double line with arrow/.style args={#1,#2}{decorate,decoration={markings,
mark=at position 0 with {\coordinate (ta-base-1) at (0,1pt);
\coordinate (ta-base-2) at (0,-1pt);},
mark=at position 1 with {\draw[#1] (ta-base-1) -- (0,1pt);
\draw[#2] (ta-base-2) -- (0,-1pt);
}}}}
\tikzset{Equal/.style={-,double line with arrow={-,-}}}

\makeatletter
\newcommand{\namedlabel}[1]{%
  \begingroup
    \refstepcounter{equation}
    \leavevmode
    \hbox to 0pt{\hskip-2\@totalleftmargin \phantomsection\label{#1}\hss}%
  \endgroup
  (\theequation)
}
\makeatother

\def\lra{\longrightarrow}

\def\LagComp{\mathrm{LagComp}}

\def\Corr{\mathrm{Corr}}

\def\Isot{\mathrm{Isot}}

\def\SYMP{\mathrm{SYMP}}
\def\sSymp{\mathscr{S}\mathrm{ymp}}
\def\sHamil{\mathscr{H}\mathrm{am}}
\def\swap{\mathrm{sw}}
\def\LagInt{\mathrm{LagInt}}
\def\TrivLag{\mathrm{TrivLag}}

\def\rig{\mathrm{rigid}}

\def\uPerf{\underline{\mathrm{Perf}}}
\def\uPic{\underline{\mathrm{Pic}}}
\def\uMap{\underline{\mathrm{Map}}}

\def\P{\mathbb{P}}

\def\rmR{\mathrm{R}}

\def\Alg{\mathrm{Alg}}

\def\Trip{\mathrm{Trip}}

\begin{document}

\title{Shifted symplectic rigidification}

\author{Hyeonjun Park}
\author{Jemin You}

\date{\today}

\begin{abstract}
We construct shifted symplectic derived enhancements on rigidified moduli spaces of sheaves on Calabi-Yau varieties of dimension at least two.
More generally, we prove that any $B\mathbb{G}_m$-action on a non-positively-shifted symplectic derived Artin stack is Hamiltonian.
We provide a symplectic rigidification functor as the left adjoint to the trivial action functor in symplectic categories with Lagrangian correspondences.
We also descend the Lagrangian correspondence of short exact sequences of sheaves to rigidified moduli spaces.
\end{abstract}

\maketitle
\tableofcontents
\addtocontents{toc}{\protect\setcounter{tocdepth}{1}}

\section*{Introduction}

Shifted symplectic structures \cite{PTVV} are generalizations of symplectic structures for smooth varieties to derived Artin stacks.
A fundamental example of a shifted symplectic stack is a derived moduli stack of sheaves on a Calabi-Yau variety.
The existence of shifted symplectic structures has many important applications to enumerative geometry,
e.g., cohomological Donaldson-Thomas theory of Calabi-Yau $3$-folds \cite{BBDJS,KL}; Donaldson-Thomas theory of Calabi-Yau $4$-folds \cite{BJ,OT}; Gromov-Witten theory of gauged linear sigma models \cite{CZ,KP2}.

In practice, we are often interested in the {\em rigidified} moduli spaces.
Indeed, moduli stacks of sheaves on algebraic varieties are always Artin stacks due to the constant automorphisms $\GG_m$.
In many perspectives, it is more desirable to have moduli spaces as schemes, or at least generically schemes.
Thus we often remove the constant automorphisms;
this process is called {\em rigidification}.

Unfortunately, the rigidification process does not preserve shifted symplectic structures.
Indeed, we can easily observe that the self-duality of cotangent complexes breaks after rigidification.
Hence the usual derived structures on rigidified moduli spaces are not shifted symplectic.

The purpose of this paper is to construct shifted symplectic derived enhancements on rigidified moduli spaces.
We also study functorial behaviors of this symplectic rigidification. 

\subsection*{Symplectic rigidification}

Let $M$ be a moduli stack of sheaves on a Calabi-Yau $n$-fold (or more generally, moduli of objects in an $n$-Calabi-Yau category).
Then $M$ has a $(2-n)$-shifted symplectic structure by \cite{PTVV,BD}.
In particular, we have a self-duality
\begin{equation}\label{Eq:1}
\TT_M \simeq \LL_M[2-n],
\end{equation}
of the cotangent complex $\LL_M$, where $\TT_M:=\LL_M\dual$ is the tangent complex.

For any point $m \in M$, we have the subgroup $\GG_m \subseteq \Aut_M(m)$ consisting of constant automorphisms.
Equivalently, they form an action of the group stack $B\GG_m$ on $M$.
The {\em rigidification} of $M$ can be defined as the quotient stack
\[M/B\GG_m.\]
The duality \eqref{Eq:1} breaks for the rigidification since we have
$\TT_{M/B\GG_m}|_m \simeq \cof(\C[1] \to \TT_{M,m}).$

This is analogous to a well-known phenomenon in classical symplectic geometry: 
the quotient of a symplectic manifold by an action of a group is mostly not symplectic.
Instead, what we usually do in symplectic geometry is to take the {\em Hamiltonian reduction}.

A shifted symplectic version of Hamiltonian reduction is as follows.
An action of a group stack $G$ on a $d$-shifted symplectic stack $M$ is {\em Hamiltonian} if there exists a $(d+1)$-shifted Lagrangian
\[\mu : M/G \longrightarrow 
\T^*[d+1]BG\]
satisfying a natural compatibility condition (see \cite{Cal,Saf1,AC} or \S\ref{ss:Hamiltonian}).
Here the $(d+1)$-shifted cotangent bundle $\T^*[d+1]BG$ is $(d+1)$-shifted symplectic by \cite{Cal2}.
Then the {\em Hamiltonian reduction} $M/\!/G$ is defined as the Lagrangian intersection
\[\xymatrix{
M/\!/G \ar[r] \ar[d] \cart & BG \ar[d]^0 \\
M/G \ar[r]^-{\mu} & \T^*[d+1]BG,
}\]
which is $d$-shifted symplectic by \cite[Thm.~2.9]{PTVV}.

Thus we want to construct a symplectic version of the rigidification as the Hamiltonian reduction of a $B\GG_m$-action.
Our main result ensures that this is always possible when the shift is non-positive.

\begin{theoremX}[\Cref{Prop:BT-symp}, \Cref{Thm:Ham=Symp}]\label{Thm:Intro-Rig}
Let $M$ be a $d$-shifted symplectic derived Artin stack.
If $d \leq 0$,
then any $B\GG_m$-action on $M$ is Hamiltonian.
\end{theoremX}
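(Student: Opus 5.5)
The plan is to reduce the Hamiltonian condition to a statement about shifted symplectic and Lagrangian structures on $B\GG_m$-quotients, and then show that the relevant obstruction spaces vanish when $d\le 0$. The target is $\T^*[d+1]B(B\GG_m)=\T^*[d+1]K(\GG_m,2)$. Its tangent complex at the base point is $\mathfrak{g}[2]\oplus\mathfrak{g}\dual[d-1]$ with $\mathfrak{g}=\C$. The first step is to identify this cotangent stack concretely. When $d\le 0$, the fiber direction $\C[d-1]$ sits in cohomological degree $1-d\ge 1$, i.e.\ it is stacky, so $\T^*[d+1]B^2\GG_m \simeq B^2\GG_m\times B^{1-d}\mathbb{G}_a$ (for $d=0$ this reads $B^2\GG_m\times B\mathbb{G}_a$). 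This is the content I would expect in \Cref{Prop:BT-symp}.

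The Hamiltonian data then amounts to two things: a map $\mu: M/B\GG_m\to B^{1-d}\mathbb{G}_a$, which is a class in $\H^{1-d}(M/B\GG_m,\O)$, together with a Lagrangian structure compatible with the symplectic form on $M$. The strategy is to show that Hamiltonian structures are the same as closed $2$-forms $\omega'$ of degree $d$ on $M/B\GG_m$ pulling back to $\omega$, with the moment map recovered from the failure of $\omega'$ to be basic (\Cref{Thm:Ham=Symp}: Hamiltonian $\Leftrightarrow$ a symplectic form on the $B\GG_m$-equivariant side lifting $\omega$). So the main step is to lift $\omega$ along $\pi:M\to M/B\GG_m$ to a closed form. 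I would use the fiber sequence of de Rham/closed-form complexes for the $B^2\GG_m$-torsor $\pi$. Since $\pi$ is a $B\GG_m$-gerbe-like quotient, closed forms on $M/B\GG_m$ should be computed as homotopy invariants $\mathcal{A}^{2,\cl}(M)^{hB\GG_m}$. The obstructions to lifting, and the ambiguity, then lie in group cohomology of $B\GG_m$ with coefficients in closed forms. Via the infinitesimal action this becomes contraction with the generator $\xi\in \H^{-1}(\TT_M)$ coming from $\C[1]\to\TT_M$.

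The key vanishing is that $\iota_\xi\omega$ is a closed $1$-form of degree $d-1$, and the higher contraction $\iota_\xi\iota_\xi\omega$ lies in degree $d-2<0$ in $\O$. Because the action is through the $1$-shifted group $\mathbb{G}_m$, all higher obstruction classes land in negative degrees of $\O$ or of closed forms. For $d\le 0$ these vanish on a derived Artin stack, by connectivity of $\O$ (no negative cohomology of functions). Meanwhile $\iota_\xi\omega$, which has degree $d-1\le -1$, is exact, giving the moment map $\mu$ as its primitive in $\H^{1-d}$ after shifting. I expect this equivariant-lifting/obstruction computation to be the main obstacle, in particular making the degree bookkeeping rigorous in the $\infty$-categorical setting of $B\GG_m$-actions. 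I would try first to reduce to the universal case via Koszul duality, identifying $B\GG_m$-actions with $\O$-linear actions of $\C[u]$, $|u|=2$.

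Finally, nondegeneracy of the Lagrangian $\mu$ is checked on tangent complexes. The sequence $\TT_{M/B\GG_m}\to \mu^*\TT_{\T^*[d+1]B^2\GG_m}\to \LL_{M/B\GG_m}[d+1]$ should reduce, after restricting to $M$, to the self-duality $\TT_M\simeq\LL_M[d]$ together with the cofiber sequence $\C[1]\to\TT_M\to\TT_{M/B\GG_m}$ and its dual. This is a formal diagram chase.
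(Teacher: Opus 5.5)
There is a genuine gap at the step that carries the theorem. You dispose of the obstructions ``by connectivity of $\O$ (no negative cohomology of functions)'', but on a derived Artin stack $\H^{<0}(M,\O_M)$ is precisely the derived structure, and it is typically nonzero.

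In fact the moment map itself lives there. Your identification $\T^*[d+1]B^2\GG_m\simeq B^2\GG_m\times B^{1-d}\mathbb{G}_a$ reverses the direction of the fiber. The fiber's tangent complex $\t\dual[d-1]$ sits in cohomological degree $1-d\geq 1$, which is a derived (obstruction) direction, not a stacky one. So $\T^*[d+1]B^2\GG_m\simeq \t\dual[d-1]/B\GG_m$, where $\t\dual[d-1]=\Tot(\O[d-1])$ is a derived affine scheme (e.g.\ $0\times_{\mathbb{A}^1}0$ for $d=0$). Consequently $\mu$ is a class in $\H^{d-1}(M,\O_M)$, not in $\H^{1-d}$. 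Applied consistently, your vanishing principle would force $\mu\simeq 0$, hence $\iota_\xi\omega\simeq d\mu\simeq 0$, which is false in general.

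Meanwhile the exactness of $\iota_\xi\omega$, which is the real content, is only asserted. What is true and needed is a statement about de Rham cohomology, not about $\O$. A $(d-1)$-shifted closed $1$-form is exact iff its class in $\sA^{\DR}(M,d)$ vanishes. De Rham cohomology is insensitive to the derived structure and vanishes in negative degrees, so $\sA^{\DR}(M,d)$ is contractible for $d\leq -1$. For $d=0$ this is insufficient, because $\H^0_{\DR}(M)\neq 0$. One needs that $\sA^{\DR}(M,0)$ is discrete and injects into $\prod_{m\colon \pt\to M}\sA^{\DR}(\pt,0)$, where the class of a $(-1)$-shifted closed $1$-form vanishes because $\LL_{\pt}\simeq 0$ (\Cref{Lem:Exactness}). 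Nothing in your sketch handles $d=0$.

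The reduction to the moment map equation $\iota\omega\simeq d\mu$ also has to be set up differently, in three respects.

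First, the lifting problem is for the de Rham class. By \Cref{Prop:HamilviaEquiv}, a Hamiltonian structure is a lift of $[\theta]$ along $\sA^{\DR}_{B\GG_m\textnormal{-}\equiv}(M,d+2)\to\sA^{\DR}_{B\GG_m\textnormal{-}\inv}(M,d+2)$, i.e.\ at Hodge level $0$. Lifting $\omega$ to a closed $2$-form on $M/B\GG_m$ (Hodge level $2$), as you propose, would require $\iota\omega\simeq 0$ as a closed $1$-form, i.e.\ $\mu\simeq 0$, which is too strong.

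Second, before this you need the action to be symplectic, i.e.\ $\omega$ uniquely $B\GG_m$-invariant. This comes from full faithfulness of the pullback $\QCoh(M/B\GG_m)\to\QCoh(M)$ (\Cref{Prop:Invariance}). Your ``homotopy invariants'' heuristic does not supply this, and it conflates invariance with equivariance.

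Third, for rank one there are no higher obstructions at all, and not for degree reasons. By the K\"unneth formula and $\DR(B\GG_m)\simeq\widehat{\bigoplus}_p\Sym^p(\t\dual)(-p)[-2p]$, the contraction on $\DR(B\GG_m)$ is an isomorphism in each positive weight. Hence $\DR(M/B\GG_m)\to\DR(M)\xrightarrow{\iota}\DR(M)\otimes\t\dual(-1)[-2]$ is a cofiber sequence (as in the proof of \Cref{Prop:Equiv}). This cofiber sequence, combined with \Cref{Prop:HamilviaEquiv}, identifies Hamiltonian structures with $\mu$ satisfying $d\mu\simeq\iota\theta$. Your $\iota_\xi\iota_\xi\omega\in\H^{d-2}$ bookkeeping should be replaced by it.
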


Consequently, we can form the {\em symplectic rigidification}  as the Lagrangian intersection
\begin{equation} \label{Eq:81}
  \xymatrix{
M/\!/B\GG_m 
\ar[r] \ar[d] \cart & B^2\GG_m \ar[d]^0 \\
M/B\GG_m \ar[r]^-{\mu} & \T^*[d+1]B^2\GG_m,
}
\end{equation}
where $B^2\GG_m\coloneqq B(B\GG_m)$ is the classifying stack of the group stack $B\GG_m$.
Then the symplectic rigidification $M/\!/B\GG_m$ is $d$-shifted symplectic and has the same classical truncation with the ordinary rigidification $M/B\GG_m$.

We can apply Theorem \ref{Thm:Intro-Rig} to moduli of sheaves on Calabi-Yau varieties.

\begin{corollaryX} 
\label{Cor:Intro-1}
Let $X$ be a smooth projective $n$-fold with $K_X\simeq \O_X$. 
If $n \geq 2$,
the rigidified classical moduli space $M(X)_\cl$ of stable sheaves on $X$ has a $(2-n)$-shifted symplectic derived enhancement.
\end{corollaryX}

For rank $1$ sheaves (e.g. ideal sheaves), there is a shortcut to obtain \Cref{Cor:Intro-1} by considering the fiber over the determinant map in \cite{STV}.
However, for rank $0$ sheaves (i.e. torsion sheaves), \Cref{Cor:Intro-1} is already a new result.
Especially, this result was desired in Donaldson-Thomas theory of Calabi-Yau $4$-folds (e.g. \cite[Claim~3.29]{BJ}, \cite[\S4.4(e)]{GJT}).

Interestingly, Theorem \ref{Thm:Intro-Rig} does not hold for {\em families} in general.
More precisely, for a family of $K3$ surfaces, the $B\GG_m$-action on the family of moduli of sheaves is Hamiltonian if and only if the base is in the {\em Noether-Lefschetz locus} (\Cref{Cor:k3family}).
For instance, the $B\GG_m$-action for the family of quartic surfaces is not Hamiltonian. 

Moreover, the assumption $d\leq 0$ in Theorem \ref{Thm:Intro-Rig} is {\em sharp}.
For an elliptic curve, the $B\GG_m$-action on the moduli of sheaves is not Hamiltonian (\Cref{Cor:non-Ham-elliticcurve}).

The key observation for proving \Cref{Thm:Intro-Rig} is that an action of a group stack on a shifted symplectic stack is Hamiltonian if and only if (1) the symplectic form is invariant, and (2) the associated de Rham form is equivariant (\Cref{Prop:HamilviaEquiv}).
When the group stack is $B\GG_m$, 
\begin{enumerate}
\item the symplectic form is always invariant (\Cref{Prop:BT-symp}), and
\item the associated de Rham form is equivariant if and only if the contraction of the symplectic form with the vector field given by the group action is exact (\Cref{Thm:MomentMapEquation}).
\end{enumerate}
When $d\leq 0$, the contraction $1$-form is a negatively-shifted closed $1$-form and hence exact.

\subsection*{Functoriality}

Let us recall the universal property of the ordinary quotients by group actions.
For a group stack $G$, denote by $\dASt^G$ the $\infty$-category of derived Artin stacks with $G$-actions.
The functor of trivial $G$-actions, $\triv\colon\dASt \to \dASt^G$, has a left adjoint
\[\mathrm{quot} \colon \dASt^G \lra \dASt,\]
which sends $M \in \dASt^G$ to the quotient stack $M/G$.

The symplectic rigidifications have an analogous universal property if we consider the Lagrangian correspondences as morphisms.
Indeed, consider the symplectic category $\Symp_d$ in \cite{Cal,Hau} consisting of $d$-shifted symplectic stacks and their Lagrangian correspondences.
As a variant, we can form a {\em $B\GG_m$-equivariant symplectic category} $\Symp^{B\GG_m}_d$ (\Cref{Def:G-SympCat}) consisting of:
\begin{enumerate}
\item [(Obj)] $d$-shifted symplectic derived Artin stacks with (symplectic) $B\GG_m$-actions; 
\item [(Mor)] $d$-shifted $B\GG_m$-equivariant Lagrangian correspondences.
\end{enumerate}
Consider the functor of trivial $B\GG_m$-actions, $\triv_{\Symp} \colon \Symp_d \to \Symp^{B\GG_m}_d$.

\begin{theoremX}[{\Cref{Cor:rigid-adjunction}}]\label{Intro:Thm-Funct}
Let $d\leq 0$.
Then the above functor $\triv_{\Symp}$ has a left adjoint
\[\rig_{\Symp} : \Symp^{B\GG_m}_{d} \lra \Symp_{d},\]
which sends an object $M \in \Symp^{B\GG_m}_d$ to the symplectic rigidification $M/\!/B\GG_m \in \Symp_{d}$ in \eqref{Eq:81}.
\end{theoremX}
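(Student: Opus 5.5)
The plan is to build $\rig_{\Symp}$ on objects by the symplectic rigidification \eqref{Eq:81}, and to exhibit the adjunction through an explicit unit, as in the ordinary adjunction $\mathrm{quot}\dashv\triv$.

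\emph{Step 1: the unit.} For $M\in\Symp^{B\GG_m}_d$, Theorem \ref{Thm:Intro-Rig} provides a Hamiltonian structure $\mu\colon M/B\GG_m\to \T^*[d+1]B^2\GG_m$. Form the composite correspondence
\[
M \longleftarrow M\times_{M/B\GG_m} (M/\!/B\GG_m) \longrightarrow M/\!/B\GG_m .
\]
I expect this to be a $B\GG_m$-equivariant Lagrangian correspondence $\eta_M\colon M\to \triv(M/\!/B\GG_m)$. This is the standard fact that the zero-level locus $\mu^{-1}(0)\subseteq M$ is Lagrangian in $M\times \overline{M/\!/G}$. Here it would follow from composing Lagrangian correspondences in the sense of \cite{Cal}: the quotient map $M\to M/B\GG_m$ is Lagrangian over $\T^*[d+1]B^2\GG_m$ once one pulls back along $\T^*[d+1]B^2\GG_m\to B^2\GG_m$, and we then intersect with the zero section.

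\emph{Step 2: the universal property.} For $N\in\Symp_d$ we must show that composition with $\eta_M$ gives an equivalence
\[
\Map_{\Symp_d}(M/\!/B\GG_m,N)\xrightarrow{\ \sim\ }\Map_{\Symp^{B\GG_m}_d}(M,\triv N).
\]
The strategy is to identify both sides with spaces of Lagrangian structures on spans. On the right, a $B\GG_m$-equivariant correspondence $M\leftarrow L\to N$ with trivial action on $N$ descends to $L/B\GG_m\to M/B\GG_m\times N$. The equivariance of the isotropic structure amounts to a homotopy between the pullback of $\mu$ to $L/B\GG_m$ and $0$, because the action on $N$ is trivial and so its moment map is zero. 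By the universal property of the fiber product \eqref{Eq:81}, this is the same data as a map $L/B\GG_m\to M/\!/B\GG_m\times N$ together with an isotropic structure. One then checks that nondegeneracy matches on the two sides by comparing tangent complexes; the relevant fiber sequence involves $\O[1]$ and $\O[-d-1]$ and its dual.

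\emph{Step 3: functoriality.} Since $\Symp_d$ is an $(\infty,1)$-category built from spans as in \cite{Hau}, I would not define $\rig_{\Symp}$ by hand. Instead I would verify that the pointwise unit maps $\eta_M$ realize $M/\!/B\GG_m$ as a left adjoint object for each $M$. The left adjoint then exists and is computed pointwise, by the standard criterion for adjunctions in $\infty$-categories (Lurie, HTT 5.2.2).

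\emph{Main obstacle.} The hardest step is Step 2, and specifically the identification of the space of equivariant isotropic structures with the space of null-homotopies of $\mu$. This needs the characterization of Hamiltonian actions from \Cref{Prop:HamilviaEquiv} and \Cref{Thm:MomentMapEquation}. It also needs the contractibility of the relevant space of choices, which uses $d\le 0$: closed $1$-forms of negative shift are exact, and the ambiguity in $\mu$ should be controlled by $\H^{d}$ and $\H^{d+1}$ of $\O$-type de Rham groups, which vanish on the relevant loci. I would first try to reduce everything to the equivalence $\Lag(M/B\GG_m\to \T^*[d+1]B^2\GG_m)\simeq \{\text{Hamiltonian structures}\}$ and handle it there.
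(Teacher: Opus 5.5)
Your strategy is sound, but it takes a different route from the paper's, and the step you defer is exactly where the paper's main theorem enters.

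\textbf{How the paper argues.} The paper never checks a universal property by hand. It sets $\rig_{\Symp}\coloneq\red_{\Hamil}\circ(\forget_{\Hamil})^{-1}$, and then:
\begin{enumerate}
\item it takes the adjunction $\red_{\Hamil}\dashv\triv_{\Hamil}$ formally from \Cref{Prop:SympAdj}, which ultimately comes from the correspondence adjunction of \Cref{Cor:Corr-Adj}, with nondegeneracy matched by the reassociation $(L_1\times_M L)\times_N L_2\simeq L_1\times_M(L\times_N L_2)$ in \Cref{Prop:LagCat-Adjunction};
\item it transports this adjunction along the equivalence $\forget_{\Hamil}\colon\Hamil^{B\GG_m}_{d}\xrightarrow{\simeq}\Symp^{B\GG_m}_{d}$ of \Cref{Thm:Ham=Symp};
\item it identifies $\forget_{\Hamil}\circ\triv_{\Hamil}\simeq\triv_{\Symp}$ by \Cref{Lem:DecompositionofPullback}.
\end{enumerate}

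\textbf{Comparison.} Your Step 1 is fine. Your $\eta_M$ is the correspondence \eqref{Eq:HamRed-LagCor}, made equivariant by \Cref{Prop:BT-symp}, and it is what the unit of $\red_{\Hamil}\dashv\triv_{\Hamil}$ becomes under $\forget_{\Hamil}$. Step 3 is a legitimate way to obtain the functor. Your route is more economical: you only need the part of \Cref{Thm:Ham=Symp} concerning morphisms into trivial-action objects. The cost is that you must check by hand both that the isotropic data match and that nondegeneracy matches. The paper gets both from \Cref{Lem:LagFib-Non-degeneracy} and the reassociation above.

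\textbf{The weak point is Step 2.} The reduction you propose for it would not suffice. The remark that ``the action on $N$ is trivial and so its moment map is zero'' only explains why $0$ is the moment map of $\triv N$. It does not explain why two spaces agree: the space of homotopies $\theta_M|_L\simeq\theta_N|_L$ of $B\GG_m$-invariant forms on $L$, and the space of pairs consisting of a null-homotopy of $\mu|_L$ and an absolute isotropic structure on $L/B\GG_m\to M/\!/B\GG_m\times N$. That equivalence amounts to saying that
\[\isotInt^{\fib}\colon \Map_{\dSt_{B^2\GG_m}}\big(L/B\GG_m,(\t\dual[d-1]/B\GG_m)^{\isot}\big)\lra\sA^{2,\cl}(L/B\GG_m\to B^2\GG_m,d)\]
has empty or contractible fibers, so that it induces equivalences on path spaces.

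This is a statement about the correspondence $L$, which is only locally of finite presentation and not symplectic. The object-level equivalence between Lagrangian structures on $M/B\GG_m\to\T^*[d+1]B^2\GG_m$ and Hamiltonian structures on a symplectic $M$ does not give it.

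\textbf{What you actually need.} Apply \Cref{Cor:preHamilMomentMapEquation} to $L$ itself. The fiber above is a fiber of $d\colon\sA^0(L,\t\dual[d-1])\to\sA^{1,\cl}(L,\t\dual[d-1])$. It is therefore empty or a torsor under $\Omega\sA^{\DR}(L,\t\dual[d])$, whose homotopy groups are copies of $\H^{d-1-k}_{\DR}(L)$ for $k\geq 0$, and these vanish for $d\leq 0$. This is exactly the paper's argument that $\isotInt^{\fib,\lfp}$ is an equivalence for \emph{all} lfp stacks over $B^2\GG_m$.

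\textbf{Degree bookkeeping.} Your indices are also off. The groups controlling uniqueness are $\H^{\leq d-1}_{\DR}$, not $\H^{d}$ and $\H^{d+1}$. For $d=0$ the group $\H^0_{\DR}$ does not vanish. That is why object-level existence (\Cref{Lem:Exactness}, case (A1)) needs a pointwise argument rather than a vanishing statement.
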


Consequently, for a $d$-shifted $B\GG_m$-equivariant Lagrangian correspondence $M \leftarrow L \rightarrow N$, when $d\leq 0$, we have an induced rigidified Lagrangian correspondence
\[
\xymatrix@-1pc{
& L/\!/B\GG_m \ar[ld] \ar[rd] & \\
M/\!/B\GG_m  && N/\!/B\GG_m  .
}\]

The strategy of proving \Cref{Intro:Thm-Funct} is as follows.
For a general group stack $G$, there is a natural way to form a {\em Hamiltonian category} $\Hamil^G_d$ consisting of $d$-shifted symplectic stacks with Hamiltonian $G$-actions (\Cref{Def:HamCat}).
Moreover, there is a general adjunction
\[\xymatrix{
\red_{\Hamil} \colon 
\Hamil^G_{d}  \ar@<.4ex>[r] & \ar@<.4ex>[l] \Symp_{d} } \colon \triv_{\Hamil},\]
between the Hamiltonian reduction functor $\red_{\Hamil}$ and the trivial Hamiltonian action functor $\triv_{\Hamil}$ (see \S\ref{ss:Hamiltonian} for details).
When $G=B\GG_m$, we can show that the forgetful functor
\[\forget_{\Hamil} \colon \Hamil_d^{B\GG_m} \xrightarrow{\simeq} \Symp^{B\GG_m}_d\]
is an equivalence of $\infty$-categories (\Cref{Thm:Ham=Symp}), which implies \Cref{Intro:Thm-Funct}.

As an example, we provide a rigidified version of the extension Lagrangian correspondence \cite{BD}.
For a smooth projective Calabi-Yau $n$-fold $X$, we have a $(2-n)$-shifted Lagrangian correspondence
\[\xymatrix@-1pc{
& \uExact(X) \ar[ld]_{(\ev_1,\ev_3)} \ar[rd]^{\ev_2} \\
\uPerf(X)\times \uPerf(X) && \uPerf(X)
} \]   
by \cite[Cor.~6.5]{BD},
where $\uPerf(X)$ is the derived moduli stack of perfect complexes on $X$ 
and $\uExact(X)$ is the derived moduli stack of exact triangles of perfect complexes on $X$.

\begin{corollaryX}[\Cref{Prop:RidigifiedExtCorr}] \label{Intro-Cor:RidExtCorr}
Let $X$ be a smooth projective Calabi-Yau $n$-fold with $n \geq 2$.
Then there exists a canonical $(2-n)$-shifted Lagrangian correspondence
\[
\xymatrix@-1pc{
&\underline{\Exact}(X)^{\rig} \ar[ld] \ar[rd] & \\
\uPerf(X)/\!/B\GG_m \times \uPerf(X)/\!/B\GG_m && \uPerf(X)/\!/B\GG_m 
}\]
for some $\underline{\Exact}(X)^{\rig}$, which has the same classical truncation with $\uExact(X)/\!/B\GG_m$.
\end{corollaryX}

The Lagrangian correspondence in \Cref{Intro-Cor:RidExtCorr} is obtained as the composition of the rigidified Lagrangian correspondence $\uExact(X)/\!/B\GG_m$ and the canonical Lagrangian correspondence 
\[\Big(\uPerf(X)/\!/B\GG_m \times \uPerf(X)/\!/B\GG_m\Big) \dashrightarrow \Big(\uPerf(X) \times \uPerf(X)\Big)/\!/B\GG_m \]
induced by the functoriality of Hamiltonian reductions along change of groups (\Cref{Prop:Ind/Res-Functoriality}).

\subsection*{Sequel}

In a follow-up paper \cite{You}, the second-named author will apply our results on symplectic rigidification to enumerative geometry of Calabi-Yau $4$-folds \cite{BJ,OT}.
The virtual pullback formula for stable pairs and stable sheaves in \cite[Thm.~0.5]{Park1} will be lifted to a $(-2)$-shifted Lagrangian correspondence.
Since this derived geometric lift is naturally compatible with the {\em reduction} operation in \cite{KP,BKP},
we can obtain a virtual pullback formula between the reduced DT4 virtual classes for hyper-k{\"a}hler $4$-folds, studied in \cite{COT1,COT2}.
This will give an affirmative answer to the question raised in \cite[Conj.~A.1]{COT2}.

\subsection*{Related work}

Theorem \ref{Intro:Thm-Funct} is motivated by the symplectic pushforwards, introduced in \cite{Park2}.
However, our symplectic rigidification functor is not a special case of the symplectic pushforward functors.
The symplectic pushforwards exist for the {\em exact} version of symplectic categories, while our symplectic rigidification functor exists for the ordinary symplectic categories, where the symplectic forms are closed but not necessarily exact.
Especially, for $0$-shifted symplectic stacks, the symplectic forms are usually not exact (e.g. moduli of stable sheaves on $K3$-surfaces with coprime Chern character), so the symplectic pushforward does not give us a symplectic rigidification.

\begin{acknowledgements}
We thank Tasuki Kinjo for the suggestion to include $0$-shifted symplectic stacks in \Cref{Thm:Intro-Rig}.
We thank Arkadij Bojko and Yalong Cao for discussions on applications to Donaldson-Thomas theory of Calabi-Yau $4$-folds.
J.Y. would like to thank his advisor Young-Hoon Kiem for his generous support and valuable comments during this project.

H.P. was supported by Korea Institute for Advanced Study (SG089201).
J.Y. was supported by Korea Institute for Advanced Study (MG108701).
\end{acknowledgements}

\subsection*{Notation and conventions}

We use the language of $\infty$-categories in \cite{LurHTT, LurHA}.
Throughout, all derived stacks are assumed to be over the field $\C$ of complex numbers.
We denote by
\begin{itemize}
\item $\Cat$ the $\infty$-category of $\infty$-categories,
\item $\Grpd \subseteq \Cat$ the full subcategory of $\infty$-groupoids, and
\item $\dSt$ 
the $\infty$-category of derived stacks.
\end{itemize}
We say that a derived stack is geometric if it is $n$-geometric in the sense of \cite[Def.~1.3.3.1]{TV1}.
We say that a derived stack is locally geometric if it is the union of open substacks that are geometric.

\section{Symplectic categories}  \label{Sec:SympCat}

In this section, we review the theory of {\em shifted symplectic structures} in \cite{PTVV}, using the symplectic categories in \cite{Cal,Hau}.

\subsection{Differential forms}

We first fix basic notations and terminologies for differential forms on derived stacks. 
All the spaces and maps that we use for differential forms are encoded in the de Rham complexes with their Hodge filtrations.

Let $M$ be a derived stack.
Denote by $\QCoh(M)$ the symmetric monoidal $\infty$-category of quasi-coherent sheaves on $M$.
The $\infty$-category of \defterm{filtered complexes} on $M$ is the functor category
\[\Fil\big(\QCoh(M)\big) \coloneqq \Fun((\Z,\geq), \QCoh(M)),\]
where the ordered set $(\Z,\geq)$ is identified to the associated $\infty$-category.
The $\infty$-category $\Fil\big(\QCoh(M)\big)$ has a canonical symmetric monoidal structure given by the Day convolution (see \cite[\S2.2.6]{LurHA}).
We will use the following notations for complexes induced by a filtered complex $E$ on $M$.
\begin{itemize}
\item  $\Fil^pE\in \QCoh(M)$ is the image of $p\in \Z$ under the functor $E\colon (\Z,\geq) \to \QCoh(M)$,
\item $\Fil^{p/q}E \in \QCoh(M)$ is the cofiber of the induced map $\Fil^qE \to \Fil^pE$ for any integers $q \geq p$,
\item $\Gr^p E\in \QCoh(M)$ is the  cofiber of the induced map $\Fil^{p+1}E \to \Fil^pE$ for an integer $p$,
\item $\Fil^{-\infty}E \in \QCoh(M)$ is the colimit of the functor $E\colon (\Z,\geq) \to \QCoh(M)$.
\end{itemize}

The $\infty$-category of \defterm{complete} filtered complexes on $M$ is the full subcategory
\[\widehat{\Fil}\big(\QCoh(M)\big) \subseteq \Fil\big(\QCoh(M)\big)\]
consisting of filtered complexes $E$ such that $\Fil^{\infty}E \coloneqq \varprojlim_{p \to \infty}\Fil^pE\simeq 0$.
Since $\widehat{\Fil}\big(\QCoh(M)\big)$ is a reflective subcategory, it has an induced symmetric monoidal structure (see \cite[Thm.~2.25]{GP}).

Let $ g\colon M \to B$ be a morphism of derived stacks.
We have the {\em de Rham complex}
\[\DR(M \to B) \in \CAlg\left(\widehat{\Fil}\big(\QCoh(B)\big)\right)\footnote{The standard notation is $\DR(M/B)$. We use $\DR(M \to B)$ to avoid confusion with quotient-stack notations.}\]
as a commutative algebra object in the symmetric monoidal $\infty$-category $\widehat{\Fil}\big(\QCoh(B)\big)$.
We refer to \cite{CPTVV} for the construction, under identifying complete filtered complexes with graded mixed complexes (see also \cite[\S1.1]{PY}).
We note that our de Rham complex is the completion of the non-completed de Rham complex studied in \cite[\S1]{Park2}.

When $g:M \to B$ is locally geometric, it admits a cotangent complex $\LL_{M/B} \in \QCoh(M)$, and 
\begin{equation}\label{Eq:GrDR}
\Gr^p\DR(M\to B) \simeq g_*\left(\Lambda^p \LL_{M/B}[-p]\right) \textin \QCoh(B).
\end{equation}
See \cite[Thm.~2.6]{CalSaf} for a proof of this equivalence.

\begin{notation}
Let $M \to B$ be a morphism of derived stacks.
\begin{itemize}
\item The space of $d$-shifted \defterm{closed $p$-forms} on $M \to B$ is the mapping space
\[\sA^{p,\cl}(M \to B,d)\coloneqq \Map_{\QCoh(B)}(\O_B, \Fil^p\DR(M \to B)[p+d]).\]
\item The space of $d$-shifted \defterm{(ordinary) $p$-forms} on $M \to B$ is the mapping space
\[\sA^{p}(M \to B,d)\coloneqq \Map_{\QCoh(B)}(\O_B, \Gr^p\DR(M \to B)[p+d]).\]
\item The space of $d$-shifted \defterm{exact $p$-forms} on $M \to B$ is the mapping space
\[\sA^{p,\ex}(M \to B,d)\coloneqq \Map_{\QCoh(B)}(\O_B, \Fil^{0/p}\DR(M \to B)[p+d-1]).\]
\item The space of $d$-shifted \defterm{de Rham forms} on $M \to B$ is the mapping space
\[\sA^{\DR}(M \to B,d)\coloneqq \Map_{\QCoh(B)}(\O_B, \Fil^0\DR(M \to B)[d]).\]
\end{itemize}
\end{notation}

As a variant, we also consider the space of \defterm{$\cF$-twisted closed $p$-forms} for $\cF \in \QCoh(B)$, 
\[\sA^{p,\cl}(M \to B,\cF)\coloneqq \Map_{\QCoh(B)}(\O_B, \Fil^p\DR(M \to B)\otimes \cF[p]).\]
The spaces of twisted ordinary, exact, de Rham forms are defined analogously.

\subsection{Symplectic categories}
We then review the {\em categories} of shifted symplectic stacks.
The morphisms in these categories are Lagrangian correspondences between shifted symplectic stacks.

Let $B$ be a derived stack.
By \cite[Def.~3.6]{Bar}, we have the {\em correspondence category}
\[\Corr_B\coloneq \Corr(\dSt_B),\]
where $\dSt_{B}\coloneq \dSt_{/B}$ is the overcategory.
The $\infty$-category $\Corr_B$ has the following description:
\begin{itemize}
\item An object in $\Corr_B$ is a derived stack $M$ over $B$.
\item A morphism from $M$ to $N$ in $\Corr_B$ is a diagram
\[{\xymatrix@-1pc{
& L\ar[ld] \ar[rd] & \\ M && N
}}\]
of derived stacks over $B$.
\end{itemize}
We note that the assignment $B \mapsto \Corr_B$ upgrades to an $\infty$-functor
\[\Corr_{(-)}\colon\Corr(\dSt) \lra \Cat.\]
We refer to \S\ref{Appendix:Corr} for a review of the theory of correspondence categories.
See \Cref{Const:Corr-Funct} for the precise construction of the above functor $\Corr_{(-)}$.

Let $\sA^{2,\cl}_B[d] \in \dSt_B$ be the {\em derived stack} of $d$-shifted closed $2$-forms over $B$.
This derived stack is uniquely characterized by the following universal property:
for any derived stack $M$ over $B$,
\[\Map_{\dSt_B}(M,\sA^{2,\cl}_B[d]) \simeq \sA^{2,\cl}(M\to B,d).\]
The $d$-shifted \defterm{presymplectic category} over $B$ is the correspondence category
\[\pSymp_{B,d}\coloneqq \Corr_{\sA^{2,\cl}_B[d]}.\] 
\begin{itemize}
\item An object $\pSymp_{B,d}$ is a derived stack $M$ over $B$ together with $\theta_M \in \sA^{2,\cl}(M \to B,d)$.
\item A morphism from $(M,\theta_M)$ to $(N,\theta_N)$ in $\pSymp_{B,d}$ is a correspondence $M \xleftarrow{} L \xrightarrow{} N$ of derived stacks over $B$ together with equivalences $\gamma_L\colon\theta_M|_L \simeq \theta_N|_L$ in $\sA^{2,\cl}(L \to B,d)$.
\end{itemize}
A $d$-shifted \defterm{symplectic stack} $M$ is an object in $\pSymp_{B,d}$ such that
\begin{enumerate}
\item the projection map $M \to B$ is locally geometric and locally of finite presentation,
\item the image of $\theta_M$ under the map $\sA^{2,\cl}(M \to B,d) \to \sA^{2}(M \to B,d)$ induces an equivalence \[\TT_{M/B} \xrightarrow{\simeq} \LL_{M/B}[d].\]
\end{enumerate}
A $d$-shifted \defterm{Lagrangian correspondence} $L \colon M \dashrightarrow N$ is a morphism in $\pSymp_{B,d}$ such that
\begin{enumerate}
\item the projection map $L \to B$ is locally geometric and locally of finite presentation,
\item the following commutative square induced by $\gamma_L$ is a cartesian square of perfect complexes,
\[\xymatrix@-1pc{
\TT_{L/B} \ar[r] \ar[d] \cart & \TT_{N/B}|_L \simeq \LL_{N/B}|_L[d] \ar[d] \\ \TT_{M/B}|_L \simeq \LL_{M/B}|_L[d] \ar[r] & \LL_{L/B}[d].
}\]
\end{enumerate}

\begin{definition}
The $d$-shifted \defterm{symplectic category} over $B$ is the subcategory
\[\Symp_{B,d} \subseteq \pSymp_{B,d}\]
consisting of $d$-shifted symplectic stacks over $B$ and $d$-shifted Lagrangian correspondences over $B$.
\end{definition}

It is straightforward to check that the compositions of Lagrangian correspondences in $\pSymp_{B,d}$ are again Lagrangian correspondences, see \cite[Thm.~4.4]{Cal}, \cite[Thm.~1.2]{Saf1} or \cite[Prop.~14.12]{Hau}.
Since a subcategory of an $\infty$-category is determined by the underlying homotopy $1$-category, the above description gives us a well-defined $\infty$-category $\Symp_{B,d}$.

Let $p\colon U \to B$ be a morphism of derived stacks.
Then we have the \defterm{pullback functor}
\begin{equation}\label{Eq:pullback-SympCat}
\mathrm{pullback}_{U/B} \colon \Symp_{B,d} \lra \Symp_{U,d}.
\end{equation}
Indeed, we have a canonical morphism
\begin{equation}\label{Eq:pullback-SympCat-1}
{\xymatrix@-1pc{
& \sA^{2,\cl}_B[d]\times_B U\ar[ld]_{\pr_1} \ar[rd] & \\ \sA^{2,\cl}_B[d] && \sA^{2,\cl}_U[d]
}}\end{equation}
in $\Corr(\dSt)$. See \cite[Rem.~B.12.6]{CHS} for the precise construction of $\sA^{2,\cl}_B[d]\times_B U \to \sA^{2,\cl}_U[d]$.
Applying the functor $\Corr_{(-)} \colon \Corr(\dSt) \to \Cat$ in \Cref{Const:Corr-Funct}, we obtain a functor
\[\pSymp_{B,d} \lra \pSymp_{U,d}.\]
It is also straightforward to check that this presymplectic pullback functor preserves shifted symplectic stacks and Lagrangian correspondences. Equivalently, we have the pullback functor \eqref{Eq:pullback-SympCat}.

\subsection{Lagrangian categories}

We then consider the categories of Lagrangians, where the morphisms are $2$-fold Lagrangian correspondences.

Let $B$ be a derived stack, $d$ be an integer,
and $M$ be a $d$-shifted symplectic stack over $B$.
Denote by $M^{\isot} \in \dSt_B$ the derived stack of isotropic morphisms to $M$,
which is the fiber product
\begin{equation*} 
\xymatrix@-.5pc{
M^{\isot} \ar[r] \ar[d] \cart & M \ar[d]^-{\theta_M} \\
B \ar[r]^-{0} & \sA^{2,\cl}_B[d].
}\end{equation*}
Note that there exists a canonical equivalence of derived stacks,
\begin{equation}\label{Eq:M^isotxM^isot}
M^{\isot}\times_M M^{\isot} \simeq M^{\isot} \times_B \sA^{2,\cl}_B[d-1].
\end{equation}
The $d$-shifted \defterm{isotropic category} of $M$ is the correspondence category
\[\Isot_{M,d} \coloneqq\Corr_{M^{\isot}}.\]
\begin{itemize}
\item An object $L$ in $\Isot_{M,d}$ is equivalent to a morphism $B \dashrightarrow M$ in $\pSymp_{B,d}$.
\item A morphism $C\colon L_1 \dashrightarrow L_2$ in $\Isot_{M,d}$ is equivalent to a morphism $B \dashrightarrow {L_1}\times_M L_2$ in $\pSymp_{B,d-1}$,
where ${L_1}\times_M L_2 \in \pSymp_{B,d-1}$ is given by the image of $L_1\times_M L_2$ under \eqref{Eq:M^isotxM^isot}.
\end{itemize}
A $d$-shifted \defterm{Lagrangian} $L$ on $M$ is an object in $\Isot_{M,d}$ such that 
\begin{enumerate}
\item []
the induced morphism $B \dashrightarrow M$ in $\pSymp_{B,d}$ is a $d$-shifted Lagrangian correspondence.
\end{enumerate}
A $d$-shifted \defterm{$2$-fold Lagrangian correspondence} $L_1 \dashrightarrow L_2$ is a morphism in $\Isot_{M,d}$ such that 
\begin{enumerate}
\item [] the induced morphism $B\dashrightarrow L_1\times_M L_2$ in $\pSymp_{B,d-1}$ is a $(d-1)$-shifted Lagrangian correspondence.
\end{enumerate}

\begin{definition}
The $d$-shifted \defterm{Lagrangian category} of $M$ is the subcategory
\[\Lag_{M,d} \subseteq \Isot_{M,d} \]
consisting of $d$-shifted Lagrangians on $M$ and their $2$-fold Lagrangian correspondences.
\end{definition}

The $\infty$-category $\Lag_{M,d}$ is well-defined 
since the compositions of $2$-fold Lagrangian correspondences are again $2$-fold Lagrangian correspondences,
by 
\cite[Thm.~3.1]{Ben} (see \cite[\S4.2]{AB} for details).


\begin{remark}
An $(\infty,2)$-categorical enhancement $\mathrm{SYMP}_{B,d}$ of the symplectic category $\Symp_{B,d}$ is constructed in \cite{CHS}.
The Lagrangian category $\Lag_{M,d}$ can be described as the mapping category
\[\Lag_{M,d} \simeq \Map_{\mathrm{SYMP}_{B,d}} (M,B).\]
However, we will only use the underlying $(\infty,1)$-category in this paper.
\end{remark}

Let $L \colon M \dashrightarrow N$ be a $d$-shifted Lagrangian correspondence over $B$.
We have the \defterm{Lagrangian composition functor}
\begin{equation}\label{Eq:LagComp}
\LagComp_L\colon \Lag_{M,d} \lra \Lag_{N,d}.
\end{equation}
Indeed, we have a canonical morphism
\begin{equation}\label{Eq:S1-1}
{\xymatrix@-1pc{
& L^{\isot}\ar[ld] \ar[rd] & \\ M^{\isot} && N^{\isot}
}}\end{equation}
in $\Corr(\dSt)$, where $L^{\isot} \coloneq \fib(L \to \sA^{2,\cl}_B[d])$.
Applying the functor $\Corr_{(-)} \colon \Corr(\dSt) \to \Cat$ in \Cref{Const:Corr-Funct}, we obtain an  $\infty$-functor
\[\IsotComp_L \colon \Isot_{M,d} \lra \Isot_{N,d}.\]
This functor $\IsotComp_L$ preserves Lagrangians and $2$-fold Lagrangian correspondences (see e.g. \cite[\S4.2]{AB} or \cite{CHS}) and hence we have a well-defined functor $\LagComp_L$

These Lagrangian composition functors are functorial along compositions of Lagrangian correspondences in the following sense.

\begin{proposition}\label{Prop:LagComp-Functoriality}
There exists a canonical $\infty$-functor
\[\Lag_{(-),d} \colon \Symp_{B,d} \lra \Cat\]
which sends 
\begin{itemize}
\item 
a $d$-shifted symplectic stack $M$ to the Lagrangian category $\Lag_{M,d}$ and
\item a Lagrangian correspondence $L \colon M \dashrightarrow N$ to the Lagrangian composition functor $\LagComp_L$.
\end{itemize}
\end{proposition}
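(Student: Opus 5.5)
The plan is to build $\Lag_{(-),d}$ as a restriction of a functor that exists already at the level of correspondence categories, and then cut down to the symplectic and Lagrangian subcategories. The first step is to promote the assignment $M \mapsto M^{\isot}$, $L \mapsto (M^{\isot} \leftarrow L^{\isot} \rightarrow N^{\isot})$ from \eqref{Eq:S1-1} to an $\infty$-functor
\[(-)^{\isot}\colon \pSymp_{B,d} = \Corr_{\sA^{2,\cl}_B[d]} \lra \Corr(\dSt).\]
For this I would write $M^{\isot}$ as the base change of $M \to \sA^{2,\cl}_B[d]$ along the zero section $0\colon B \to \sA^{2,\cl}_B[d]$, and $L^{\isot}$ as the base change of $L \to \sA^{2,\cl}_B[d]$ along the same section. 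Base change along a fixed map $T \to S$ gives a functor $\dSt_{S} \to \dSt_{T}$ that preserves fiber products. By the functoriality of correspondence categories recalled in \S\ref{Appendix:Corr}, in particular \Cref{Const:Corr-Funct}, it therefore induces a functor $\Corr_S \to \Corr_T$. Composing with the forgetful functor $\Corr_T \to \Corr(\dSt)$ yields the desired functor. The only point to check is that the pullback of the correspondence $M \leftarrow L \to N$ over $\sA^{2,\cl}_B[d]$ is exactly \eqref{Eq:S1-1}. This holds because the homotopy $\gamma_L$ identifies the two maps $L \to \sA^{2,\cl}_B[d]$, and $L^{\isot}$ is by definition the fiber of that map.

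The second step is to compose with $\Corr_{(-)}\colon \Corr(\dSt) \to \Cat$ from \Cref{Const:Corr-Funct}. This gives a functor $\Isot_{(-),d}\colon \pSymp_{B,d} \to \Cat$ sending $M$ to $\Corr_{M^{\isot}} = \Isot_{M,d}$ and $L$ to $\IsotComp_L$, by construction. Restricting along the inclusion $\Symp_{B,d} \subseteq \pSymp_{B,d}$ then gives $\Isot_{(-),d}|_{\Symp_{B,d}}$.

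The third step passes to the subcategories $\Lag_{M,d} \subseteq \Isot_{M,d}$. A subcategory of an $\infty$-category is determined by its homotopy category: it is the pullback of the corresponding subcategory of $\mathrm{h}\Isot_{M,d}$. So it suffices to know two things. First, for each Lagrangian correspondence $L$, the functor $\IsotComp_L$ carries Lagrangians to Lagrangians and $2$-fold Lagrangian correspondences to $2$-fold Lagrangian correspondences; this was already recorded above via \cite[\S4.2]{AB} and \cite{CHS}. Second, the naturality equivalences $\IsotComp_{L'}\circ\IsotComp_L \simeq \IsotComp_{L'\circ L}$ must restrict to the subcategories. They do, since the subcategories are full on equivalences, and an equivalence between objects of $\Lag_{M,d}$ in $\Isot_{M,d}$ is an invertible correspondence and hence a $2$-fold Lagrangian correspondence. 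To conclude, I would apply the general fact that the full subcategory of $\Fun(\Symp_{B,d}, \Cat)$ of functors with pointwise subcategories preserved by all transition maps is modeled by the category of $\infty$-functors into pairs ``category with a subcategory''. In practice I would realize this via the straightening equivalence: take the coCartesian fibration classified by $\Isot_{(-),d}$ and pass to the subcategory spanned by Lagrangian objects and fiberwise $2$-fold Lagrangian morphisms together with all coCartesian edges. This subcategory is again a coCartesian fibration, and it straightens to $\Lag_{(-),d}$.

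I expect the main obstacle to be the first step. It requires identifying the composite of the base-change functors with the Lagrangian composition $L^{\isot}$ coherently, and not just on objects and $1$-morphisms. Concretely, one must check that $\Corr(-)$ applied to ``pull back along $0\colon B \to \sA^{2,\cl}_B[d]$'' is compatible with composition of correspondences, which needs base change to preserve the fiber products used in composing spans. I would handle this by appealing to the functoriality statement in \Cref{Const:Corr-Funct} directly: the object $\sA^{2,\cl}_B[d] \leftarrow B$, viewed as a morphism in $\Corr(\dSt)$, induces the restriction functor $\Corr_{\sA^{2,\cl}_B[d]} \to \Corr_B$. The construction $M \mapsto \Corr_{M^{\isot}}$ is then the composite of the slice-category functoriality of $\Corr_{(-)}$ with this base change.
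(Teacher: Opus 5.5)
Your proposal is correct and matches the paper's proof: the paper also obtains $(-)^{\isot}\colon\pSymp_{B,d}\to\Corr_B$ by applying \Cref{Const:Corr-Funct} to the morphism $\sA^{2,\cl}_B[d]\xleftarrow{0}B\xrightarrow{\id}B$, composes with the slice-correspondence functor, restricts to $\Symp_{B,d}$, and then passes to $\Lag_{(-),d}$ by straightening the subcategory of the unstraightening that is fiberwise $\Lag_{M,d}$. The only cosmetic difference is that the paper applies \Cref{Const:Corr-Funct} with $\scC=\dSt_B$ rather than first forgetting to $\Corr(\dSt)$, and this gives the same functor since $(\dSt_B)_{/X}\simeq\dSt_{/X}$.
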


\Cref{Prop:LagComp-Functoriality} is an immediate consequence of having an $(\infty,2)$-category $\SYMP_{B,d}$.
Here we provide a direct $(\infty,1)$-categorical proof.

\begin{proof}[Proof of \Cref{Prop:LagComp-Functoriality}]
The morphism $\sA^{2,\cl}_B[d] \xleftarrow{0} B \xrightarrow{\id}B$ in $\Corr(\dSt)$ induces a functor
\[(-)^{\isot} \coloneq \Corr\left((-)\times_{\sA^{2,\cl}_B[d],0}B\right)\colon  \pSymp_{B,d} \coloneq \Corr_{\sA^{2,\cl}_B[d]} \lra \Corr_B 
\]
by \Cref{Const:Corr-Funct} (for $\scC=\dSt$).
We also have a functor
\[\Corr\left((\dSt_B)_{/(-)}\right) \colon \Corr_B\coloneq \Corr(\dSt_B) \lra \Cat 
\]
by \Cref{Const:Corr-Funct} (for $\scC=\dSt_B$).
Consider the composition
\[\Isot_{(-),d}\colon\Symp_{B,d} \hookrightarrow \pSymp_{B,d} \xrightarrow{(-)^{\isot}} \Corr_B \xrightarrow{\Corr\left((\dSt_B)_{/(-)}\right)}\Cat.\]
\begin{itemize}
\item On objects, the functor $\Isot_{(-),d}$ sends a $d$-shifted symplectic stack $M$ to the isotropic category $\Isot_{M,d}\coloneq \Corr_{M^{\isot}}$.
\item On morphisms, the functor $\Isot_{(-),d}$ sends a Lagrangian correspondence $L \colon M \dashrightarrow N$ to the functor $\IsotComp_L$.
\end{itemize}
Since $\IsotComp_L$ sends $\Lag_{M,d}$ to $\Lag_{N,d}$, the functor $\Isot_{(-),d}$ induces the desired functor $\Lag_{(-),d}$. 
(More precisely, the functor $\Lag_{(-),d}$ is the straightening of the subcategory of the unstraightening of the functor $\Isot_{(-),d}$ which is fiberwise $\Lag_{M,d}$.).
\end{proof}

\subsection{Adjunction}

We now provide a canonical adjunction between the symplectic category and the Lagrangian category, associated to a Lagrangian.
This is a general form of the adjunction for the Hamiltonian reduction functor, that we will use later.

Let $B$ be a derived stack and $d$ be an integer.

\begin{proposition}\label{Prop:LagCat-Adjunction}
Let $L \colon M\dashrightarrow N$ be a $d$-shifted Lagrangian correpondence over $B$.
Then there exists a canonical adjunction
\[\xymatrix{
\LagComp_L \colon 
\Lag_{M,d}  \ar@<.4ex>[r] & \ar@<.4ex>[l] \Lag_{N,d} } \colon \LagComp_{L^{\swap}}
\]
where $L^{\swap} \colon N \dashrightarrow M$ is the Lagrangian correspondence given by swapping the role of $M$ and $N$.
\end{proposition}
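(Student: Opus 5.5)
We construct the adjunction by writing down explicit unit and counit transformations and checking the triangle identities. The natural tool is the Lagrangian composition functor of \Cref{Prop:LagComp-Functoriality}. It gives
\[\LagComp_{L^{\swap}}\circ\LagComp_L \simeq \LagComp_{L^{\swap}\circ L},\]
where $L^{\swap}\circ L = L\times_N L \colon M \dashrightarrow M$ is the composite Lagrangian correspondence. Similarly,
\[\LagComp_L\circ\LagComp_{L^{\swap}}\simeq \LagComp_{L\circ L^{\swap}}\]
with $L\times_M L\colon N\dashrightarrow N$. So it suffices to produce natural transformations $\id\Rightarrow \LagComp_{L\times_N L}$ on $\Lag_{M,d}$ and $\LagComp_{L\times_M L}\Rightarrow \id$ on $\Lag_{N,d}$. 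These should come from $2$-morphisms in the $(\infty,2)$-category of correspondences: the diagonal $L\to L\times_N L$ and $L\to L\times_M L$.

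\textbf{Unit.} For a Lagrangian $X\to M$, the composite $\LagComp_{L\times_N L}(X)$ is $X\times_M (L\times_N L)$, a Lagrangian on $M$ via the second projection. The unit component should be a $2$-fold Lagrangian correspondence from $X$ to $X\times_M L\times_N L$. The natural candidate is $X\times_M L$, with two maps:
\begin{itemize}
\item the projection $X\times_M L \to X$;
\item the diagonal $X\times_M L \to X\times_M L\times_N L$.
\end{itemize}
Two things must be checked.
\begin{enumerate}
\item \emph{Isotropic data.} The isotropic structures (homotopies $\theta_M|\simeq 0$) agree on $X\times_M L$. This follows because both structures are built from $\gamma_L$ and the isotropic structure of $X$.
\item \emph{Lagrangian condition.} The induced $B\dashrightarrow X\times_M(X\times_M L\times_N L)$ is $(d-1)$-Lagrangian. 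I would reduce this to a local statement on tangent complexes: a cartesian-square check using the Lagrangian squares for $L$ and for $X$, as in the composition theorem \cite[Thm.~3.1]{Ben}.
\end{enumerate}
The counit is built in the same way, with roles swapped: for $Y\to N$, the correspondence $Y\times_N L$ runs from $Y\times_N L\times_M L$ (via the diagonal) to $Y$ (via projection).

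\textbf{Naturality.} To get these as natural transformations rather than objectwise maps, I would build them functorially at the level of correspondence categories. I would realize $\Corr_{M^{\isot}}$-valued functors via \Cref{Const:Corr-Funct}, and define the unit using the canonical $2$-cell $\id_{M^{\isot}} \Rightarrow (L^{\isot})^{\swap}\circ L^{\isot}$ in the $(\infty,2)$-category of spans, which is given by the diagonal $L^{\isot}\to L^{\isot}\times_{N^{\isot}}L^{\isot}$. The natural transformation is then obtained by applying the $(\infty,2)$-functoriality of $\Corr_{(-)}$, or equivalently, by writing the transformation as a functor $\Isot_{M,d}\times\Delta^1\to\Isot_{M,d}$ induced by a correspondence over $M^{\isot}\times\Delta^1$. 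Afterwards one restricts to the Lagrangian subcategories, using step (2).

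\textbf{Triangle identities.} These reduce to the fact that, in spans, the composite
\[L \Rightarrow L\times_N L\times_M L \Rightarrow L\]
(diagonal followed by the other projection) is homotopic to the identity. This is the standard ambidextrous adjunction $L\dashv L^{\swap}$ in $\Corr$; it holds since the relevant fiber products are computed by diagonals, together with the compatible closed-form data.

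\textbf{Main obstacle.} I expect the hardest part to be the coherent construction of unit and counit as $\infty$-natural transformations with the symplectic/isotropic homotopies, since the paper only has $(\infty,1)$-functoriality available. I would first try to encode everything as one correspondence over $M^{\isot}\times\Delta^1$ and invoke \Cref{Const:Corr-Funct}. If that fails, I would fall back to the $(\infty,2)$-category $\SYMP_{B,d}$ of \cite{CHS}, where $L$ and $L^{\swap}$ are adjoint 1-morphisms, and deduce the statement by applying the representable $2$-functor $\Map_{\SYMP_{B,d}}(-,B)$, which sends adjunctions to adjunctions.
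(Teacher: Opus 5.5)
Your unit $X\leftarrow X\times_M L\xrightarrow{\Delta}X\times_M L\times_N L$ and your counit are exactly what the paper's adjunction produces. The difference is that the paper never builds them by hand, and building them by hand is where your route stalls.

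\textbf{How the paper gets the adjunction.} It obtains the isotropic-level adjunction $\IsotComp_L\dashv\IsotComp_{L^{\swap}}$ formally from \Cref{Cor:Corr-Adj}. Write $s\colon L^{\isot}\to M^{\isot}$ and $t\colon L^{\isot}\to N^{\isot}$, so that $\IsotComp_L=\Corr(t_!)\circ\Corr(s^*)$. The slice adjunction $s_!\dashv s^*$ gives $\Corr(s_!)\dashv\Corr(s^*)$ by \Cref{Prop:Corr-Adj}, using a fibration over $\Delta^1$ that is both cartesian and cocartesian. The self-duality $\Corr(-)^{\op}\simeq\Corr(-)$ then makes $\Corr(s^*)$ a \emph{left} adjoint of $\Corr(s_!)$ as well. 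Composing with $\Corr(t_!)\dashv\Corr(t^*)$ gives the claim, and all coherences come for free.

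\textbf{Where your primary plan falls short.} It needs an $(\infty,2)$-functoriality of $\Corr_{(-)}$ that the paper does not have. \Cref{Const:Corr-Funct} only turns morphisms of $\Corr(\dSt)$ into functors; it does not turn span-of-span $2$-cells into natural transformations. So as written the plan is incomplete exactly where the $\infty$-categorical content lies. Your fallback through $\SYMP_{B,d}$ and \cite[Prop.~2.11.1]{CHS} is legitimate, and it is the shortcut the paper itself mentions. It does need one extra input the paper's remark does not supply: the identification $\Lag_{M,d}\simeq\Map_{\SYMP_{B,d}}(M,B)$ must intertwine $\LagComp_L$ with precomposition by $L^{\swap}$. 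What your route buys is explicit unit and counit; the paper's buys a self-contained $(\infty,1)$-categorical argument.

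\textbf{Restricting to Lagrangians.} The paper checks the hom-space bijection rather than the unit and counit components. A morphism $C\colon\LagComp_L(X)\dashrightarrow Y$ in $\Isot_{N,d}$ and its adjunct $C^{\#}\colon X\dashrightarrow\LagComp_{L^{\swap}}(Y)$ are the same isotropic map into
\[(X\times_M L)\times_N Y\simeq X\times_M(L\times_N Y),\]
an equivalence of $(d-1)$-shifted symplectic stacks. Hence one is Lagrangian if and only if the other is.

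Your component check is the special case $C=\id$. Here $X\times_M(X\times_M L\times_N L)\simeq(X\times_M L)\times_N(X\times_M L)$. Under this equivalence your unit becomes the diagonal of the Lagrangian $X\times_M L\to N$, i.e.\ the identity of $X\times_M L$ in $\Lag_{N,d}$. So no tangent-complex computation or appeal to \cite{Ben} is needed.

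Either criterion suffices to restrict the adjunction. The reason is that $\Lag$ sits in $\Isot$ as a subcategory whose mapping spaces are unions of components. Moreover, $C^{\#}$ and $C$ are obtained from $\eta$ and $\epsilon$ by composition.
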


\Cref{Prop:LagCat-Adjunction} is an immediate consequence of \cite[Prop.~2.11.1]{CHS}, that is, $1$-morphisms in the $(\infty,2)$-category $\SYMP_{B,d}$ admit adjoints.
Here we provide an $(\infty,1)$-categorical proof. 

\begin{proof}[Proof of \Cref{Prop:LagCat-Adjunction}]
By \Cref{Cor:Corr-Adj}, the correspondence \eqref{Eq:S1-1} gives us an adjunction
\[\xymatrix{
\IsotComp_L \colon 
\Isot_{M,d}  \ar@<.4ex>[r] & \ar@<.4ex>[l] \Isot_{N,d} } \colon \IsotComp_{L^{\swap}}
\]
To show that this adjunction factors through the desired adjunction of Lagrangian categories,
it suffices to show the following property:
for $d$-shifted Lagrangians $L_1 \to M$, $L_2 \to N$, and a morphism $C \colon \LagComp_L(L_1) \dashrightarrow L_2$ in $\Isot_{N,d}$, with the adjunct $C^{\#}\colon L_1 \dashrightarrow \LagComp_{L^{\swap}}(L_2)$ in $\Isot_{M,d}$,
\[\text{$C \colon \LagComp_{L}(L_1)\dashrightarrow L_2$ lies in $\Lag_{N,d}$} \iff \text{$C^{\#}\colon L_1 \dashrightarrow\LagComp_{L^{\swap}}(L_2)$ lies in $\Lag_{M,d}$}.\]
This follows from the canonical equivalence
\[(L_1\times_M L)\times_N L_2 \simeq L_1\times_M (L \times_N L_2)\]
of $(d-1)$-shifted symplectic stacks over $B$. 
\end{proof}

We particularly focus on the following special case:
Let $L \to M$ be a $(d+1)$-shifted Lagrangian.
\begin{itemize}
\item The \defterm{Lagrangian intersection functor}
\begin{equation}\label{Eq:LagInt}
\LagInt_{L/M} \colon \Lag_{M,d+1} \lra \Symp_{B,d}
\end{equation}
is the Lagrangian composition functor \eqref{Eq:LagComp} along the Lagrangian correspondence $M \dashrightarrow B$ induced by the Lagrangian $L \to M$.
This functor sends a Lagrangian $L_1 \to M$ to the Lagrangian intersection
$L_1\times_M L.$
\item The \defterm{trivial Lagrangian functor}
\begin{equation}\label{Eq:TrivLag}
\TrivLag_{L/M} \colon \Symp_{B,d}\lra \Lag_{M,d+1}
\end{equation}
is the Lagrangian composition functor \eqref{Eq:LagComp} along the Lagrangian correspondence $B \dashrightarrow M$ induced by the Lagrangian $L \to M$.
This functor sends a $d$-shifted symplectic stack $N$ over $B$ to the product Lagrangian
$(L \to M) \times_B (N \to B).$
\end{itemize}

\begin{corollary}\label{Prop:SympAdj}
Let $M$ be a $(d+1)$-shifted symplectic stack over $B$
and $L$ be a $(d+1)$-shifted Lagrangian on $M$.
Then there exists a canonical adjunction
\[\xymatrix{
\LagInt_{L/M} \colon 
\Lag_{M,d+1}  \ar@<.4ex>[r] & \ar@<.4ex>[l] \Symp_{B,d} } \colon \TrivLag_{L/M}
.\]
\end{corollary}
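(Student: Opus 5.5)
This is the special case of \Cref{Prop:LagCat-Adjunction} where the correspondence is $L^{\swap}\colon M \dashrightarrow B$ and its swap is $L\colon B \dashrightarrow M$. The argument has two steps: build the correspondence and apply \Cref{Prop:LagCat-Adjunction}, then identify the resulting categories and functors with those in the statement.

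\emph{Step 1: the correspondence.} A $(d+1)$-shifted Lagrangian $L \to M$ is, by definition, a Lagrangian correspondence $B \dashleftarrow L \dashrightarrow M$ in $\Symp_{B,d+1}$, where $B$ carries the zero form. Swapping the legs gives the Lagrangian correspondence $L^{\swap}\colon M \dashrightarrow B$. Applying \Cref{Prop:LagCat-Adjunction} to $L^{\swap}$ yields a canonical adjunction
\[\LagComp_{L^{\swap}} \colon \Lag_{M,d+1} \rightleftarrows \Lag_{B,d+1} \colon \LagComp_{L}.\]
By definition, $\LagComp_{L^{\swap}}$ is $\LagInt_{L/M}$ and $\LagComp_L$ is $\TrivLag_{L/M}$, once the target is identified with $\Symp_{B,d}$ as in Step 2.

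\emph{Step 2: identifying $\Lag_{B,d+1}$ with $\Symp_{B,d}$.} This is the only real content and the step I expect to be the main obstacle.
\begin{itemize}
\item For $M=B$ with the zero form, $B^{\isot} = B\times_{\sA^{2,\cl}_B[d+1]} B \simeq \Omega_0\sA^{2,\cl}_B[d+1] \simeq \sA^{2,\cl}_B[d]$. This uses $\Map(\O_B,\Fil^2\DR[2+d+1])$ having loop space $\Map(\O_B,\Fil^2\DR[2+d])$.
\item It follows that $\Isot_{B,d+1} = \Corr_{B^{\isot}} \simeq \Corr_{\sA^{2,\cl}_B[d]} = \pSymp_{B,d}$.
\item On objects, a morphism $N \to B$ together with a nullhomotopy of $0|_N$ is exactly a closed $d$-shifted $2$-form on $N$. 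The Lagrangian condition for $B \dashleftarrow N \dashrightarrow B$ in $\Symp_{B,d+1}$ states that $\TT_{N/B} \to 0\times_{0} \LL_{N/B}[d+1][-1]$ is an equivalence, i.e.\ $\TT_{N/B}\simeq \LL_{N/B}[d]$. This is nondegeneracy, so Lagrangians on $B$ are precisely $d$-shifted symplectic stacks over $B$.
\item On morphisms, one must check that the identification \eqref{Eq:M^isotxM^isot} for $M=B$ matches the $(d-1)$-shifted form $\theta_{N_1}|-\theta_{N_2}|$ on $N_1\times_B N_2$. Granting this, $2$-fold Lagrangian correspondences $N_1 \dashrightarrow N_2$ are exactly $d$-shifted Lagrangian correspondences, by the standard equivalence between Lagrangians in a product $\overline{N_1}\times N_2$ and Lagrangian correspondences.
\end{itemize}
Hence $\Lag_{B,d+1}\simeq \Symp_{B,d}$ as subcategories of $\pSymp_{B,d}$. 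The main care needed is the sign/orientation convention in this morphism comparison; the object-level statement is a direct unwinding.

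\emph{Conclusion.} Under the equivalence $\Lag_{B,d+1}\simeq\Symp_{B,d}$, the functor $\LagComp_{L^{\swap}}$ sends $L_1\to M$ to $L_1\times_M L$, and $\LagComp_L$ sends $N$ to $L\times_B N \to M$. These are precisely $\LagInt_{L/M}$ and $\TrivLag_{L/M}$, so transporting the adjunction of Step 1 along this equivalence gives the corollary.
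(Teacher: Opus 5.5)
Your proposal is correct and matches the paper, which treats this corollary as an immediate special case of \Cref{Prop:LagCat-Adjunction} applied to the Lagrangian correspondence $M \dashrightarrow B$ induced by $L$. The paper leaves the identification of $\Lag_{B,d+1}$ with $\Symp_{B,d}$ (via $B^{\isot}\simeq\sA^{2,\cl}_B[d]$) implicit in its definitions of $\LagInt_{L/M}$ and $\TrivLag_{L/M}$; your Step 2 makes it explicit, though the fiber product in the object-level check should read $0\times_{\LL_{N/B}[d+1]}0\simeq\LL_{N/B}[d]$.
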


\subsection{Lagrangian fibrations}

We review the notion of Lagrangian fibrations.
We will provide a technically useful lemma for a Lagrangian fibration: the non-degeneracy of isotropic morphisms to the total space can be detected by the non-degeneracy of the induced closed $2$-forms over the base.

Let $B$ be a derived stack, $d$ be an integer,
and $M$ be a $(d+1)$-shifted symplectic stack over $B$.

\begin{definition}
A $(d+1)$-shifted \defterm{Lagrangian fibration} $M \to U$ over $B$ consists of:
\begin{itemize}
\item a morphism $\pi\colon M \to U$ of derived stacks over $B$, that are locally geometric and locally of finite presentation,
\item a commutative square of derived stacks
\begin{equation}\label{Eq:Lagfib}
\xymatrix@R-1pc{
M \ar[r]^-{(\theta_M,\pi)} \ar[d] & \sA^{2,\cl}_B[d+1] \times_B U \ar[d] \\
U \ar[r]^-{0} & \sA^{2,\cl}_U[d+1]
}\end{equation}
such that the following induced null-homotopic sequence is a cofiber sequence
\[\TT_{M/U} \to \TT_{M/B} \simeq \LL_{M/B}[d+1] \to \LL_{M/U}[d+1].\]
\end{itemize}
\end{definition}

Let $\pi\colon M \to U$ be a $(d+1)$-shifted Lagrangian fibration.
Then the graph
$(\id,\pi) \colon M \to M\times_B U$
is a $(d+1)$-shifted Lagrangian over $U$.
In particular, for any section $s\colon B \to U$ of $U \to B$, the fiber
\[M_s\coloneq M\times_{\pi,U,s} B \to M\]
is a $(d+1)$-shifted Lagrangian over $B$.

We have the \defterm{fiberwise Lagrangian intersection functor}
\begin{equation}\label{Eq:Funct-LagFib}
\LagInt_{M/U}^{\fib} \colon\Lag_{M,d+1} \lra \Symp_{U,d}.
\end{equation}
Indeed, we have a canonical map of derived stacks
\begin{equation}\label{Eq:Funct-LagFib-1}
\mathrm{isotInt}^{\fib}_{M/U} \colon M^{\isot} 
\simeq \fib\left(M \xrightarrow{(\theta_M,\pi)}\sA^{2,\cl}_B[d+1]\times_B U\right) 
\lra \sA^{2,\cl}_U[d],
\end{equation}
induced by the commutative square \eqref{Eq:Lagfib}. 
Applying the functor $\Corr_{(-)} \colon \Corr(\dSt) \to \Cat$ in \Cref{Const:Corr-Funct}, we obtain a functor
\[\IsotInt_{M/U}^{\fib}\colon \Isot_{M,d+1} \lra \pSymp_{U,d}.\]
This functor not only preserves the non-degeneracy, but also {\em detects} the non-degeneracy.

\begin{lemma}\label{Lem:LagFib-Non-degeneracy}
Let $M \to U$ be a $(d+1)$-shifted Lagrangian fibration.
We have a cartesian square
\[\xymatrix{
\Lag_{M,d+1} \ar@{^{(}->}[r] \ar[d]_{\LagInt_{M/U}^{\fib}} \cart &  \Isot_{M,d+1} \ar[d]^{\IsotInt_{M/U}^{\fib}} \\
\Symp_{U,d} \ar@{^{(}->}[r] &  \pSymp_{U,d} .
}\]
\end{lemma}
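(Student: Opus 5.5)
The square is a square of subcategory inclusions, and a subcategory of an $\infty$-category is determined by which objects and which morphisms it contains. So it is cartesian once we show two things. First, an object $L \in \Isot_{M,d+1}$ is a Lagrangian exactly when its image $L\times_M M \to U$ (that is, $L$ viewed over $U$ via $L \to M \xrightarrow{\pi} U$, with the induced $d$-shifted closed $2$-form over $U$) is $d$-shifted symplectic over $U$. Second, a morphism $C\colon L_1 \dashrightarrow L_2$ in $\Isot_{M,d+1}$ is a $2$-fold Lagrangian correspondence exactly when its image is a $d$-shifted Lagrangian correspondence over $U$. The finiteness conditions (locally geometric, locally of finite presentation) match automatically, because $U \to B$ and $M \to B$ are locally geometric and locally of finite presentation.

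For objects, let $f\colon L \to M$ be isotropic, with isotropic structure $\theta_M|_L \simeq 0$ over $B$. Composing with the square \eqref{Eq:Lagfib} produces a closed $2$-form $\omega_L$ on $L \to U$. The underlying $2$-form of $\omega_L$ gives a map $\TT_{L/U} \to \LL_{L/U}[d]$. I will compare this with the Lagrangian condition, namely that $\TT_{L/B} \to \TT_{M/B}|_L \simeq \LL_{M/B}|_L[d+1] \to \LL_{L/B}[d+1]$ is a fiber sequence. The plan is to build a $3\times 3$ diagram with three sequences:
\begin{itemize}
\item the sequence $\TT_{L/U} \to \TT_{L/B} \to \TT_{U/B}|_L$;
\item the Lagrangian fibration sequence $\TT_{M/U} \to \TT_{M/B} \to \LL_{M/U}[d+1]$, together with the identification $\TT_{U/B}|_M \simeq \LL_{M/U}[d]$ that it induces;
\item the dual sequence $\LL_{U/B}|_L[d+1] \to \LL_{L/B}[d+1] \to \LL_{L/U}[d+1]$.
\end{itemize}
Concretely, the cofiber of $\TT_{L/B}\to \TT_{M/B}|_L$ maps to $\LL_{L/B}[d+1]$. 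Passing to fibers over $U$ should identify the Lagrangian condition for $L\to M$ with the statement that $\TT_{L/U} \to \LL_{L/U}[d]$ is an equivalence. The key input is that the two outer comparison maps are equivalences: $\TT_{M/U}|_L \simeq \LL_{U/B}|_L[d+1]$ and $\TT_{U/B}|_L \simeq \LL_{M/U}|_L[d]$, both following from the fibration condition. Two-out-of-three in the $3\times3$ diagram then gives the equivalence of the middle conditions.

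For morphisms, note that $L_1\times_M L_2$ carries a $d$-shifted closed $2$-form over $B$ via \eqref{Eq:M^isotxM^isot}. The functor $\IsotInt^{\fib}_{M/U}$ sends $C$ to the correspondence $L_1 \leftarrow C \rightarrow L_2$ over $U$ with its presymplectic data. The $2$-fold Lagrangian condition asks that $C \to L_1\times_M L_2$ be Lagrangian over $B$. The condition on the image asks that the square
\[\TT_{C/U} \to \TT_{L_2/U}|_C \simeq \LL_{L_2/U}|_C[d],\qquad \TT_{L_1/U}|_C \simeq \LL_{L_1/U}|_C[d] \to \LL_{C/U}[d]\]
be cartesian. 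Since the object case already makes $L_i \to U$ symplectic, the same fiber-sequence comparison, now applied one level down to $C$ over $L_1\times_M L_2$ and using $\TT_{L_1\times_M L_2/B}$ expressed through $\TT_{L_i/B}$ and $\TT_{M/B}$, should identify the two conditions. An alternative is to reduce the morphism case to the object case: a $2$-fold correspondence $C$ is a Lagrangian of the $(d+1)$-shifted Lagrangian intersection data, and the fibration $L_1\times_M L_2 \to U$ should behave like the Lagrangian fibration in the object case.

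The main obstacle is the homotopy coherence of the $3\times3$ diagram. The null-homotopies coming from the isotropic structure, from the square \eqref{Eq:Lagfib}, and from the induced form over $U$ must all be compatible, so that the middle map really is the one induced by $\omega_L$. I would first check this at the level of underlying $2$-forms (graded pieces), where everything reduces to compatible maps of perfect complexes and fiber sequences, since non-degeneracy only depends on that data.
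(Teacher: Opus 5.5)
Your reduction to objects and morphisms, and your treatment of the finiteness conditions, match the paper. For objects you reprove by hand what the paper cites as \cite[Lem.~2.4.4]{Park2}, and that part works once the bookkeeping is fixed. The fibration condition gives $\TT_{U/B}|_M\simeq\LL_{M/U}[d+1]$, not $[d]$, and in fact only $\TT_{M/U}\simeq\LL_{U/B}|_M[d+1]$ is needed. Indeed $\cof(\TT_{L/B}\to\TT_{M/B}|_L)\simeq\cof(\TT_{L/U}\to\TT_{M/U}|_L)$. Now map the cofiber sequence $\TT_{L/U}\to\TT_{M/U}|_L\to\cof(\TT_{L/U}\to\TT_{M/U}|_L)$ to $\LL_{L/U}[d]\to\LL_{U/B}|_L[d+1]\to\LL_{L/B}[d+1]$. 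The middle map is an equivalence, so two-out-of-three gives ``Lagrangian over $B$ iff symplectic over $U$''.

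The genuine gap is the morphism case, which is where an actual idea is required. Your first suggestion does not say what is being compared with what. Your alternative reduction is false: $X\coloneqq L_1\times_M L_2\to U$ is not a Lagrangian fibration. Write $\omega_{L_i/U}$ for the symplectic forms over $U$ produced by the object case. The $d$-shifted form of $X$ relative to $U$ is, up to sign conventions, $\pr_1^*\omega_{L_1/U}-\pr_2^*\omega_{L_2/U}$, the pullback of the symplectic form on $Y\coloneqq\overline{L_1}\times_U L_2$, and it is not null. Moreover $\TT_{X/U}$ is an extension of $\TT_{Y/U}|_X$ by $\TT_{X/Y}$, so it cannot be $\LL_{U/B}|_X[d]$ unless $\TT_{Y/U}|_X\simeq 0$. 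Applied verbatim, the object case would compare the Lagrangian condition for $C\to X$ with symplecticity of $C$ over $U$, which is the wrong target condition.

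The missing idea is to use the map $X\to Y$ itself as the fibration. Since $X\simeq Y\times_{M\times_U M}M$, one gets $\TT_{X/Y}\simeq\TT_{M/U}|_X[-1]\simeq\LL_{U/B}|_X[d]$, compatibly with the forms. This is the Lagrangian correspondence fibration structure of \cite[Lem.~2.4.3]{Park2} that the paper uses. \cite[Lem.~2.4.4]{Park2} is then exactly your object-case comparison, generalized to a fibration whose target is a symplectic stack over $U$ rather than $U$ with the zero form. With this in hand, your ``one level down'' argument goes through. The cofiber $\cof(\TT_{C/B}\to\TT_{X/B}|_C)\simeq\cof(\TT_{C/U}\to\TT_{X/U}|_C)$ is an extension of $\cof(\TT_{C/U}\to\TT_{Y/U}|_C)$ by $\TT_{X/Y}|_C\simeq\LL_{U/B}|_C[d]$. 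This sequence maps to $\LL_{U/B}|_C[d]\to\LL_{C/B}[d]\to\LL_{C/U}[d]$ with the first map an equivalence. Hence $C\to X$ is Lagrangian over $B$ iff $C\to Y$ is Lagrangian over $U$, which is precisely the correspondence condition you wrote down.
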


\begin{proof} 
For a morphism $L \to M$ of derived stacks over $B$, the morphism $L \to U$ is locally geometric and locally finitely presented if and only if the composition $L \to B$ is locally geometric and locally finitely presented,
since $U \to B$ is locally geometric and locally finitely presented.

For a $(d+1)$-shifted isotropic morphism $L \to M$, it is a Lagrangian if and only if the induced $d$-shifted closed $2$-form on $L \to M \to U$ is symplectic, by \cite[Lem.~2.4.4]{Park2}.

For $(d+1)$-shifted Lagrangians $L_1 \to M$ and $L_2 \to M$, the map $L_1\times_M L_2 \to L_1\times_U L_2$ has a canonical $(d+1)$-shifted Lagrangian correspondence fibration structure (in the sense of \cite[Def.~2.4.1]{Park2}) by \cite[Lem.~2.4.3]{Park2}.
Applying \cite[Lem.~2.4.4]{Park2} again, a morphism $C\colon L_1 \dashrightarrow L_2$ in $\Isot_{M,d+1}$, identified to an isotropic morphism $C \to L_1\times_M L_2$, is a Lagrangian if and only if the induced isotropic morphism $C \to L_1\times_U L_2$ is Lagrangian.
\end{proof}

\subsection{Cotangent bundles} 

We finally provide an important example in symplectic geometry: the {\em cotangent bundles}.
We will also explain how to understand the symplectic pushforwards in \cite{Park2}. 

For a perfect complex $E \in \QCoh(U)$ on a derived stack $U$, 
the \defterm{total space} is the derived stack
\[\Tot_U(E) \in \dSt_U \]
uniquely characterized by the following universal property:
for any $(S \xrightarrow{s}U)\in \dSt_U$,
\[\Map_{\dSt_U}(S,\Tot_U(E)) \simeq \Map_{\QCoh(S)}(\O_S,s^*E).\]
By abuse of notation, we sometimes use the same letter $E$ to denote the associated total space.

\begin{example}\label{Ex:Cotangent}
Let $p\colon U \to B$ be a morphism of derived stacks that is locally geometric and locally of finite presentation.
The $d$-shifted \defterm{cotangent bundle} of $p\colon U \to B$ is the total space
\[\T^*[d](U/B) \coloneqq \Tot_U(\LL_{U/B}[d]).\]
By \cite[Thm.~2.4]{Cal2}, we have the following canonical symplectic/Lagrangian structures:
\begin{enumerate}
\item the cotangent bundle $\T^*[d](U/B)$ has a canonical $d$-shifted symplectic structure over $B$,
\item the projection $\T^*[d](U/B) \to U$ has a canonical $d$-shifted Lagrangian fibration structure,
\item the zero section $0\colon U \to \T^*[d](U/B) $ has a canonical $d$-shifted Lagrangian structure.
\end{enumerate}
\end{example}

The pullback functor between symplectic categories has the following canonical decomposition:

\begin{lemma}\label{Lem:DecompositionofPullback}
Let $p\colon U \to B$ be a morphism of derived stacks that is locally geometric and locally of finite presentation..
Then we have a canonical commutative triangle
\[\xymatrix@R-.2pc{
\Symp_{B,d} \ar[rd]_-{\TrivLag_{U/\T^*[d+1](U/B)}\qquad} \ar[rr]^-{\pullback_{U/B}} && \Symp_{U,d}  \\
&\Lag_{\T^*[d+1](U/B),d+1} \ar[ru]_-{\qquad \LagInt^{\fib}_{\T^*[d+1](U/B)/U}} .& 
}\]
\end{lemma}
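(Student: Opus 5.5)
Write $T \coloneq \T^*[d+1](U/B)$ for short. The plan is to realize both sides of the triangle as functors induced, via $\Corr_{(-)}\colon \Corr(\dSt)\to\Cat$ (\Cref{Const:Corr-Funct}), by morphisms in $\Corr(\dSt)$. Then it suffices to identify the composite of these correspondences with the correspondence \eqref{Eq:pullback-SympCat-1} defining $\pullback_{U/B}$. Since $\Symp_{B,d}\subseteq\pSymp_{B,d}$ and $\Lag\subseteq\Isot$ are subcategories, and all functors in question restrict along them, commutativity at the presymplectic level gives the triangle.

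First I make the two legs explicit. $\TrivLag_{U/T}$ is $\IsotComp$ along the correspondence $B\dashrightarrow T$ given by the Lagrangian zero section $0\colon U\to T$ (\Cref{Ex:Cotangent}(3)). Concretely, it is induced by the correspondence
\[
\sA^{2,\cl}_B[d]\longleftarrow \sA^{2,\cl}_B[d]\times_B U \longrightarrow T^{\isot}.
\]
The right map sends $(N\to B,\omega)$ to the isotropic morphism $N\times_B U\to U\xrightarrow{0}T$, using the isotropic structure of the zero section together with $\omega$ (here I use $\sA^{2,\cl}_B[d]\simeq B\times_{\sA^{2,\cl}_B[d+1]}B$). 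The other leg $\LagInt^{\fib}_{T/U}$ is induced by the map $\isotInt^{\fib}_{T/U}\colon T^{\isot}\to\sA^{2,\cl}_U[d]$ of \eqref{Eq:Funct-LagFib-1}, built from the Lagrangian fibration square \eqref{Eq:Lagfib} for $T\to U$ (\Cref{Ex:Cotangent}(2)).

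Composing in $\Corr(\dSt)$, where the second morphism is just a map, gives the correspondence
\[
\sA^{2,\cl}_B[d]\longleftarrow \sA^{2,\cl}_B[d]\times_B U\longrightarrow T^{\isot}\longrightarrow \sA^{2,\cl}_U[d].
\]
The key step is to show that the composite $\sA^{2,\cl}_B[d]\times_B U\to \sA^{2,\cl}_U[d]$ agrees with the canonical pullback map of \cite[Rem.~B.12.6]{CHS}. Unwinding, the composite takes $\omega$ on $N\to B$ to the closed $d$-form on $N\times_BU\to U$ obtained from two null-homotopies of the restriction of $\theta_T$ to $N\times_BU$: the zero-section isotropic structure combined with $\omega$, and the fibration null-homotopy from \eqref{Eq:Lagfib} relative to $U$.

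So the main obstacle is a compatibility on $U$ itself: the Lagrangian structure of the zero section and the Lagrangian fibration structure of $T\to U$, both restricted to $U$, must form a loop in $\sA^{2,\cl}(U\to U,d+1)$ that is canonically trivial. Given this, the $\omega$-contribution is exactly the base change of $\omega$, i.e. the pullback map. I would verify the compatibility using the construction in \cite{Cal2}. There the symplectic form on $T$ is $d\lambda$ for the tautological $1$-form $\lambda$. The tautological form $\lambda$ vanishes along the zero section and is canonically zero relative to $U$. Both null-homotopies are then induced by the single datum of the exact structure $\lambda$, restricted to $U$ where $\lambda|_U\simeq 0$, so the two coincide. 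Alternatively, by naturality in $B$ it suffices to treat the universal case, where $\sA^{2,\cl}_U[d]$ is checked via its functor of points and the identification is tautological.

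Finally, naturality of $\Corr_{(-)}$ on composites identifies the composite functor with $\pullback_{U/B}$ on presymplectic categories. Restricting to $\Symp_{B,d}$, where both sides land in $\Symp_{U,d}$ (by \Cref{Lem:LagFib-Non-degeneracy} and the fact that pullbacks preserve non-degeneracy), gives the canonical commutative triangle.
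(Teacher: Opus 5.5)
Your proposal is correct and follows essentially the same route as the paper. The paper also writes down the triangle in $\Corr(\dSt)$ formed by \eqref{Eq:pullback-SympCat-1}, the zero-section correspondence $\sA^{2,\cl}_B[d]\leftarrow \sA^{2,\cl}_B[d]\times_B U\to \T^*[d+1](U/B)^{\isot}$, and $\isotInt^{\fib}$; it then applies $\Corr_{(-)}$ and restricts to the non-degenerate subcategories, asserting rather than verifying that the triangle commutes. The compatibility you single out as the main obstacle is in fact automatic: the loop lives in $\sA^{2,\cl}(U\to U,d)$, which is contractible because $\Fil^1\DR(U\to U)\simeq 0$, so you do not need the tautological $1$-form argument from \cite{Cal2}.
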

\begin{proof}
Let $M \coloneq \T^*[d+1](U/B)$.
Then we have a canonical commutative triangle
\[
\xymatrix@-1pc{
\sA^{2,\cl}_B[d] \ar@{.>}[rr]^-{\eqref{Eq:pullback-SympCat-1}} \ar@{.>}[rd]_-{a} & & \sA^{2,\cl}_U[d]  \\
& M^{\isot} \ar@{.>}[ru]_-{\isotInt_{M/U}} &
}
\]
in $\Corr(\dSt)$, 
where $a$ is the morphism in \eqref{Eq:S1-1} for the Lagrangian correspondence $B \dashrightarrow M$ induced by the zero section Lagrangian.
Applying the functor $\Corr_{(-)} \colon \Corr(\dSt) \to \Cat$, we obtain the presymplectic version of \Cref{Lem:DecompositionofPullback} without the non-degeneracy.
This presymplectic commutative triangle preserves the non-degeneracy and we have the desired commutative triangle.
\end{proof}

We can describe the symplectic pushforwards in \cite{Park2}
as follows:

\begin{remark}
Everything in this section works in the exact setting. 
We add $(-)^{\ex}$ for the exact versions of categories and functors used in this section.
What changes in the exact setting is:
for a locally finitely presented geometric morphism $U \to B$ of derived stacks, the exact version of \eqref{Eq:Funct-LagFib-1},
\[\isotInt^{\fib,\ex}_{\T^*[d+1](U/B)/U} \colon\T^*[d+1](U/B)^{\isot,\ex}\lra{} \sA^{2,\ex}_U[d]\]
is an {\em equivalence} \cite[Prop.~2.3.1]{Park2}.
Hence 
the exact version of \Cref{Lem:LagFib-Non-degeneracy} gives us an equivalence
\[\LagInt^{\fib,\ex}_{\T^*[d+1](U/B)/U} \colon \Lag^{\ex}_{\T^*[d+1](U/B),d+1} \xrightarrow{\simeq} \Symp_{U,d}^{\ex}.\]
Therefore, by the exact versions of \Cref{Prop:SympAdj} and \Cref{Lem:DecompositionofPullback}, the composition
\[\Symp_{U,d}^{\ex} \xleftarrow[\simeq]{\LagInt^{\fib,\ex}_{\T^*[d+1](U/B)/U}} \Lag^{\ex}_{\T^*[d+1](U/B),d+1} \xrightarrow{\LagInt_{U/\T^*[d+1](U/B)}^{\ex}} \Symp_{B,d}^{\ex} \]
is the left adjoint of 
the pullback functor, 
$\pullback_{U/B}^{\ex} \colon \Symp_{B,d}^{\ex} \to \Symp_{U,d}^{\ex}$.
\end{remark}

\section{Hamiltonian reduction}\label{Sec:HamRed}

In this section, we provide a general theory of {\em Hamiltonian reduction} in shifted symplectic geometry,
following \cite{Cal,Saf1,AC}.
The following points will be crucial in the next section \S\ref{Sec:SympRig}.
\begin{enumerate}
\item By definition, a group action on a shifted symplectic stack is {\em symplectic} if the symplectic form is {\em invariant} under the group action.
\item A useful observation (Proposition \ref{Prop:HamilviaEquiv}) is that a symplectic action is {\em Hamiltonian} if and only if the de Rham form associated to the symplectic form is {\em equivariant}.
\end{enumerate}
We will present the theory using the symplectic categories of the previous section \S\ref{Sec:SympCat}.

Throughout this section, we work with the relative setting.
We fix an arbitrary base derived stack $B$.
All group stacks, group actions, and shifted symplectic stacks will be considered over $B$.
We sometimes omit $B$ in the notations when it is clear from the context.

\subsection{Group actions}

We first review the ordinary group actions on derived stacks.

We call a {\em group stack} for a group object (in the sense of \cite[Def.~7.2.2.1]{LurHTT}) in the category of derived stacks $\dSt_B$.
More precisely, the $\infty$-category of \defterm{group stacks} is the full subcategory
\[\Grp_B \subseteq \Fun(\Delta^{\op},\dSt_B)\]
consisting of functors $G \colon \Delta^{\op} \to \dSt_B$ 
such that $G_0\simeq B$ and for any partition $[n] = S_1\cup S_2$ of $[n]\in \Delta$ with $\#(S_1 \cap S_2)=1$, the induced map $G_n \to G(S_1)\times_B G(S_2)$ is an equivalence.

We have the functor of \defterm{classifying stacks}
\[\Grp_B \longrightarrow \dSt_B \colon G\mapsto BG:={\colim}_{n \in \Delta^{\op}}G_{n}.\]

\begin{example}\label{Ex:ClassifyingGroupstack}
The main group stack in this paper is the classifying stack \[BT\in \Grp_B\] of the $r$-dimensional torus $T\coloneq \GG_{m,B}^{\times r}$ over $B$.
The group structure can be given as follows:

A \defterm{commutative group stack} is a group object in $\Grp_B$,\footnote{To be precise, we should call this an $E_2$-group stack. For a {\em classical} group scheme (e.g. the torus $\GG_m^{\times r}$), this $E_2$-commutativity is equivalent to the commutativity in the classical sense.
}  i.e., a functor
	\[G : \Delta^{\op} \times \Delta^{\op} \to \dSt_B\]
such that
$G_{\bullet, n} \colon \Delta^{\op}  \to \dSt_B$ and $G_{m, \bullet} \colon \Delta^{\op}  \to \dSt_B$ are group stacks for all $m$ and $n$.

For a commutative group stack $G$, we can form the classifying stack as a {\em group} stack:
\[BG:={\colim}_{n \in \Delta^{\op}}G_{\bullet,n} \textin \Grp_B.\]
In particular, we also have the {\em second classifying stack}
$B^2G:=B(BG) \in \dSt_B$.

Since $\GG_m^{\times r}$ is a (classical) commutative group scheme, the pullback $T=\GG_{m,B}^{\times r}\coloneqq \GG_m^{\times r}\times B$ is also a commutative group stack, and hence we have the classifying stack $BT$ as a group stack.
\end{example}

Let $G$ be a group stack.
Following \cite[Def.~4.2.2.2]{LurHA},
we define the $\infty$-category of \defterm{derived stacks with $G$-actions} as the full subcategory
\[\dSt_B^G \subseteq \Fun(\Delta^{\op},\dSt_B)_{/G}\]
consisting of morphisms $M \to G$ of functors $\Delta^{\op} \to \dSt_B$ such that for any $[n]\in \Delta$, the map $M_n \to M_0 \times_B G_n$, induced by the map $[0] \to [n] \colon 0 \mapsto n$ in $\Delta$, is an equivalence.

We have the functor of \defterm{quotient stacks}
\[\dSt_B^G \xrightarrow{\simeq} \dSt_{BG} \colon M \mapsto M/G\coloneq \colim_{n\in \Delta^{\op}} M_n,\]
which is an equivalence of $\infty$-categories, since $B \to BG$ is an effective epimorphism in the sense of \cite[\S6.2.3]{LurHTT}.

\subsection{Symplectic actions}

In classical symplectic geometry, a group action on a symplectic manifold is called {\em symplectic} if the symplectic form is invariant under the group action.
We will consider an analogous definition in shifted symplectic setting.

We first fix some notations and terminologies for differential forms on derived stacks with group actions.
It will be important to distinguish {\em invariant} forms and {\em equivariant} forms.

\begin{definition}
Let $M$ be a derived stack with an action of a group stack $G$.
\begin{enumerate}
\item The space of \defterm{$G$-invariant} $d$-shifted closed $p$-forms on $M$ is
\[\sA^{p,\cl}_{G\textnormal{-}\inv}(M,d)\coloneq \sA^{p,\cl}(M/G \to BG,d).\]
\item The space of \defterm{$G$-equivariant} $d$-shifted closed $p$-forms on $M$ is
\[\sA^{p,\cl}_{G\textnormal{-}\equiv}(M,d)\coloneq \sA^{p,\cl}(M/G\to B,d).\]
\end{enumerate}
The spaces of $G$-invariant/equivariant ordinary/exact/de Rham forms are defined analogously.
\end{definition}

We have a canonical commutative triangle of forgetful maps
\begin{equation*}
\xymatrix{
 \sA^{p,\cl}_{G\textnormal{-}\equiv}(M,d) \ar[rr]^-{\equiv\to\inv} \ar[rd]_{\cancel{\equiv}} &&  \sA^{p,\cl}_{G\textnormal{-}\inv}(M,d) \ar[ld]^{\cancel{\inv}} \\& \sA^{p,\cl}(M,d) , &
}\end{equation*}
where we used an abbreviation $\sA^{p,\cl}(M,d)\coloneq \sA^{p,\cl}(M \to B,d)$.

A \defterm{symplectic action} of a group stack $G$ on a $d$-shifted symplectic stack $M$ is an action of $G$ on the underlying derived stack of $M$ together with the following additional data:
\begin{enumerate}
\item [(D1)] a $G$-invariant closed $2$-form $\theta_M^{\inv} \in \sA^{2,\cl}_{G\textnormal{-}\inv}(M,d)$,
\item [(D2)] an equivalence of $d$-shifted symplectic structures $\cancel{\inv}(\theta_M^{\inv})\simeq \theta_M $ in $\sA^{2,\cl}(M,d)$.
\end{enumerate}
More precisely, the {\em space} of symplectic structures for a $G$-action on $M$ is the fiber
\begin{equation}\label{Eq:Space-sympactions}
\xymatrix{
\sSymp(G\circlearrowleft M) \ar[r] \ar[d] \cart & \sA^{2,\cl}_{G\textnormal{-}\inv} (M,d) \ar[d]^-{\cancel{\inv}} \\
\star \ar[r]^-{\theta_M} & \sA^{2,\cl}(M,d).
}\end{equation}

We can moreover form a {\em category} of symplectic actions for a group stack $G$.

\begin{definition}\label{Def:G-SympCat}
The \defterm{$G$-equivariant symplectic category} is the symplectic category
\[\Symp^G_{B,d} \coloneq \Symp_{BG,d}.\]
\end{definition}

Then objects in $\Symp_{B,d}^G$ correspond to $d$-shifted symplectic stacks $M$ with symplectic $G$-actions.

\begin{notation}
\begin{enumerate}
\item 
The \defterm{forgetful functor}
\[\forget_{\Symp} 
\colon \Symp^G_{B,d} \lra \Symp_{B,d},\]
is the pullback functor \eqref{Eq:pullback-SympCat} along the section $s\colon B \to BG$.
\item The \defterm{functor of trivial symplectic actions}
\[\triv_{\Symp} 
\colon \Symp_{B,d} \longrightarrow \Symp_{B,d}^G,\]
is the pullback functor \eqref{Eq:pullback-SympCat} along the projection map $\pi\colon BG \to B$.
\end{enumerate}
\end{notation}

\subsection{Hamiltonian actions}\label{ss:Hamiltonian}

In classical symplectic geometry, a group action on a symplectic manifold is called {\em Hamiltonian} if it admits a {\em moment map}, i.e. a map from the manifold to the (dual) Lie algebra satisfying certain natural properties.
For a Hamiltonian action, we can construct a symplectic version of a quotient, called the {\em Hamiltonian reduction}, as the quotient of the zero locus of the moment map.
In shifted symplectic geometry, an elegant idea due to \cite{Cal,Saf1} is to consider the moment map as a shifted Lagrangian.

We first recall the symplectic geometry of the Lie algebra of a group stack.
We say that a group stack $G \in \Grp_B$ is \defterm{smooth} if the underlying derived stack $G_1\in \dSt_B$ is geometric and smooth.

\begin{notation}
Let $G$ be a smooth group stack.
The \defterm{Lie algebra}\footnote{Even though we are calling $\g$ as the {\em Lie algebra}, we will not use its Lie algebra structure.
We only consider $\g$ as a mere ($G$-equivariant) perfect complex.} of $G$ is the tangent complex
\[\g \coloneq \TT_{G/B,\id}\]
at the identity section $\id \colon B \to G$.
Then we have a canonical equivalence of derived stacks
\[\T^*[d+1](BG/B) \simeq \g\dual[d]/G,\]
for the canonical $G$-action on $\g$.
By \cite{Cal2} (see \Cref{Ex:Cotangent}), we have the following structures:
\begin{itemize}
\item the quotient stack $\g\dual[d]/G$ has a canonical $(d+1)$-shifted symplectic structure over $B$,
\item the projection $\g\dual[d]/G \to BG$ has a canonical $(d+1)$-shifted Lagrangian fibration structure,
and the projection $\g\dual[d] \to \g\dual[d]/G$ has an induced $(d+1)$-shifted Lagrangian structure,
\item the zero section $0\colon BG \to \g\dual[d]/G$ has a canonical $(d+1)$-shifted Lagrangian structure.
\end{itemize}

\end{notation}

A \defterm{Hamiltonian action} of a smooth group stack $G$ on a $d$-shifted symplectic stack $M$ is an action of $G$ on the underlying derived stack of $M$ together with the following additional data:
\begin{enumerate}
\item [(D1)] a $G$-equivariant morphism $\mu \colon M \to \g\dual[d]$,
\item [(D2)] a $(d+1)$-shifted Lagrangian structure on $\mu/G \colon M/G \to \g\dual[d]/G \simeq \T^*[d+1](BG/B)$,
\item [(D3)] an equivalence of the $d$-shifted symplectic structure $\theta_M$ on $M$ and the induced symplectic structure via the Lagrangian intersection diagram
\[\xymatrix@R-.5pc{
M \ar[r]^-{\mu} \ar[d] \cart & \g\dual[d] \ar[d] \\
M/G \ar[r]^-{\mu/G} & \g\dual[d]/G.
}\]
\end{enumerate}

More precisely, the {\em space} of Hamiltonian structure for a $G$-action on $M$ is the fiber product
\begin{equation}\label{Eq:Space-HamActions}
\xymatrix{
\sHamil(G\circlearrowleft M) \ar[r] \ar[d] \cart & \Map_{\dSt^G_B}(M, \g\dual[d])^{\isot}\ar[d] \\
\sSymp(G\circlearrowleft M) \ar[r] & \sA^{2,\cl}_{G\textnormal{-}\inv} (M,d),
}
\end{equation}
where $\Map_{\dSt^G_B}(M, \g\dual[d])^{\isot}\coloneq\Map_{\dSt_{BG}}(M/G, (\g\dual[d]/G)^{\isot})$ and
the right vertical map is given by the fiberwise isotropic intersection map \eqref{Eq:Funct-LagFib-1} along the Lagrangian fibration $\g\dual[d]/G \to BG$.

For a Hamiltonian action of $G$ on $M$, the \defterm{Hamiltonian reduction} $M/\!/G$ is the $d$-shifted symplectic stack, defined as the Lagrangian intersection
\begin{equation*} 
    \xymatrix@R-.5pc{M/\!/G \ar[r] \ar[d] \cart & BG \ar[d]^0 \\ M/G \ar[r]^-{\mu/G} & \g\dual[d]/G.}  
\end{equation*}
We have a canonical Lagrangian correspondence for Hamiltonian reduction,
\begin{equation}\label{Eq:HamRed-LagCor}
\xymatrix@-1pc{
& \mu^{-1}(0)\ar[ld] \ar[rd]& \\ M/\!/G && M,
}\end{equation}
by the triple Lagrangian intersection theorem \cite{Ben} for the Lagrangians $M/G,BG,\g\dual[d] \to \g\dual[d]/G$.

We can upgrade the above Hamiltonian actions for a smooth group stack $G$ to {\em categories}.

\begin{definition}\label{Def:HamCat}
The $G$-equivariant \defterm{Hamiltonian category} is the Lagrangian category
\[\Hamil^G_{B,d} \coloneq \Lag_{\g\dual[d]/G,d+1}.\]
\end{definition}

Objects in $\Hamil^G_{B,d}$ correspond to $d$-shifted symplectic stacks with Hamiltonian $G$-actions.

\begin{notation}\label{Notation:Functors-Forget/HamRed/TrivHam} \
\begin{enumerate}
\item The \defterm{forgetful functor}
\[\forget_{\Hamil} 
\colon \Hamil^G_{B,d} \longrightarrow \Symp^G_{B,d},\]
is the fiberwise Lagrangian intersection functor \eqref{Eq:Funct-LagFib} along the projection $\g\dual[d]/G \to BG$.

\item The \defterm{Hamiltonian reduction functor} 
\[\red_{\Hamil} 
\colon \Hamil^G_{B,d} \longrightarrow \Symp_{B,d},\]
is the Lagrangian intersection functor \eqref{Eq:LagInt} along the zero section $0 \colon BG \to \g\dual[d]/G$.

\item The \defterm{functor of trivial Hamiltonian actions}  
\[\triv_{\Hamil} 
\colon \Symp_{B,d}\longrightarrow \Hamil^G_{B,d} ,\]
is the trivial Lagrangian functor \eqref{Eq:TrivLag} along the zero section $0 \colon BG \to \g\dual[d]/G$.
\end{enumerate}
\end{notation}

Immediately from the results in \S\ref{Sec:SympCat}, we have the following:
\begin{itemize}
\item By \Cref{Prop:SympAdj}, we have a canonical adjunction
\[\xymatrix{
\red_{\Hamil} \colon 
\Hamil^G_{B,d}  \ar@<.4ex>[r] & \ar@<.4ex>[l] \Symp_{B,d} } \colon \triv_{\Hamil}.\]
\item By \Cref{Lem:DecompositionofPullback}, we have a canonical commutative triangle
\[\xymatrix@R-1pc{\Symp_{B,d}\ar[rr]^-{\triv_{\Hamil}} \ar[rd]_-{\triv_{\Symp}} && \Hamil^G_{B,d} \ar[ld]^{\quad\forget_{\Hamil}} \\ &  \Symp^G_{B,d} .& }\]
\end{itemize}
The canonical Lagrangian correspondence \eqref{Eq:HamRed-LagCor} can be extended to the natural transformation
\begin{equation}\label{Eq:HamRed-LagCor-funtorial}
\red_{\Hamil} \simeq \forget_{\Symp}\circ \forget_{\Hamil} \circ \triv_{\Hamil} \circ  \red_{\Hamil} \xrightarrow{\mathrm{counit}}\forget_{\Symp}\circ \forget_{\Hamil}  
\end{equation}
of functor $\Hamil^G_{B,d} \to \Symp_{B,d}$.

We provide a useful description of Hamiltonian actions:
a symplectic action is Hamiltonian if and only if the $G$-invariant de Rham form associated to the symplectic form is $G$-equivariant.

\begin{proposition}
\label{Prop:HamilviaEquiv}
Let $M$ be a $d$-shifted symplectic stack with a symplectic action of a smooth group stack $G$.
Then there exists a canonical equivalence  of spaces
\[\sHamil(G\circlearrowleft M)  \simeq \fib\left(\sA^{\DR}_{G\textnormal{-}\equiv}(M,d+2) \xrightarrow{\equiv \to\inv} \sA^{\DR}_{G\textnormal{-}\inv}(M,d+2), [\theta^{\inv}_M]\right).\]
\end{proposition}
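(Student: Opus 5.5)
We identify both sides with fibers of maps of filtered de Rham complexes over $BG$ and compare them. Write $\pi\colon M/G\to BG$ and $q\colon BG\to B$. By the definition \eqref{Eq:Space-HamActions}, $\sHamil(G\circlearrowleft M)$ is the fiber, over $\theta^{\inv}_M\in\sA^{2,\cl}_{G\textnormal{-}\inv}(M,d)$, of the map
\[\Map_{\dSt_{BG}}\big(M/G,(\g\dual[d]/G)^{\isot}\big)\lra \sA^{2,\cl}(M/G\to BG,d)\]
induced by $\isotInt^{\fib}_{\g\dual[d]/G/BG}$ from \eqref{Eq:Funct-LagFib-1}. Since $(\g\dual[d]/G)^{\isot}$ is the fiber of the canonical $(d+1)$-shifted closed $2$-form on $\T^*[d+1](BG/B)$, a point of the source is a pair consisting of a map $f\colon M/G\to \T^*[d+1](BG/B)$ over $BG$ and a null-homotopy of $f^*\theta_{\T^*}$ in $\sA^{2,\cl}(M/G\to B,d+1)$.

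The first step is to make the symplectic form on the cotangent bundle explicit. By \cite{Cal2}, $\theta_{\T^*}$ is the de Rham differential of the tautological $1$-form $\lambda$. Hence a map $f$ over $BG$ is the same as a relative $1$-form on $BG$, i.e. a section of $\LL_{BG/B}[d+1]$ pulled back to $M/G$. The space of such maps is then
\[\Map_{\QCoh(M/G)}\big(\O,\pi^*\LL_{BG/B}[d+1]\big)\simeq \fib\Big(\sA^{1}(M/G\to B,d+1)\to \sA^{1}(M/G\to BG,d+1)\Big),\]
using the cofiber sequence $\pi^*\LL_{BG/B}\to\LL_{M/G/B}\to\LL_{M/G/BG}$ and $\Gr^1\DR$ from \eqref{Eq:GrDR}. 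Equivalently, $f$ is a $1$-form on $M/G$ over $B$ with a trivialization of its image over $BG$.

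Next we combine this with the null-homotopy of $f^*\theta_{\T^*}=d(f^*\lambda)$ and with the compatibility (D3). The goal is to assemble all the data into a single point of
\[\fib\Big(\Fil^{0}\DR(M/G\to B)[d+2]\to \Fil^0\DR(M/G\to BG)[d+2]\Big)\]
lying over $[\theta_M^{\inv}]$. Here $[-]$ is the map $\sA^{2,\cl}\to\sA^{\DR}$ induced by $\Fil^2\to\Fil^0$, which sends a closed form to its underlying de Rham class.

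The mechanism is the following. The fiber sequences $\Fil^{p+1}\to\Fil^p\to\Gr^p$ give a cartesian square relating $\Fil^2$, $\Fil^0$ and $\Fil^{0/2}$, so a de Rham form is a closed $2$-form together with a trivialization of its exact part in $\Fil^{0/2}$. On the equivariant side, the fiber of
\[\Fil^{0/2}\DR(M/G\to B)\lra\Fil^{0/2}\DR(M/G\to BG)\]
is built from $\Gr^0$ and $\Gr^1$ pieces. Because $q_*$ of $\Gr^0$ is compatible with the projection, the $\Gr^0$ contribution cancels, and only the $\Gr^1$ term $\pi^*\LL_{BG/B}$ survives. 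This is exactly the moment-map datum $f$, and the null-homotopy of $d(f^*\lambda)-\theta$ is exactly what glues it into a de Rham class.

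Concretely, I would build a map of cartesian squares of filtered complexes: one square from $\Fil^2\to\Fil^0\to\Fil^{0/2}$ over $B$, the other the same square over $BG$. I would then check on associated gradeds that the induced map on fibers is an equivalence, which reduces to the $\Gr^1$ computation above. The main obstacle is the step identifying the fiberwise isotropic intersection map $\isotInt^{\fib}$ with the differential-of-tautological-form construction in filtered terms, including its homotopy-coherent version. My first attempt would be to invoke the universal property of $\T^*[d+1](BG/B)$ as the total space of $\LL_{BG/B}[d+1]$, together with the description of $\lambda$ in \cite[Thm.~2.4]{Cal2}. This should reduce the claim to the formal comparison of fiber sequences of $\Fil^\bullet\DR$ for $M/G\to BG\to B$.
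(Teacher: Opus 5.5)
Your proposal is correct and is essentially the paper's argument. The paper identifies $\Map_{\dSt^G_B}(M,\g\dual[d])$ with the fiber over $0$ of $\sA^{2,\ex}_{G\textnormal{-}\equiv}(M,d+1)\to\sA^{2,\ex}_{G\textnormal{-}\inv}(M,d+1)$ by citing \cite[Lem.~2.3.2]{Park2}; this is your $\Gr^1$ computation with the $\Gr^0$ terms cancelling, and it supplies exactly the step you flag as the main obstacle. It then formally pastes cartesian squares of spaces built from $\Fil^2\to\Fil^0\to\Fil^{0/2}$ for $\DR(M/G\to B)\to\Gamma(BG,\DR(M/G\to BG))$, with no further graded check. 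Looping once gives $\Map_{\dSt^G_B}(M,\g\dual[d])^{\isot}\simeq\sA^{2,\cl}_{G\textnormal{-}\inv}(M,d)\times_{\sA^{\DR}_{G\textnormal{-}\inv}(M,d+2)}\sA^{\DR}_{G\textnormal{-}\equiv}(M,d+2)$, and the paper finishes by taking the fiber over $\theta^{\inv}_M$.

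Two slips to fix. First, the fiber sequence says a closed $2$-form is a de Rham form together with a trivialization of its image in $\Fil^{0/2}$, not the reverse. Second, ``$d(f^*\lambda)-\theta$'' does not typecheck, since $f^*d\lambda$ is $(d+1)$-shifted while $\theta$ is $d$-shifted. What is actually compared with $\theta^{\inv}_M$ is the loop formed by the isotropic null-homotopy of $f^*d\lambda$ together with the canonical null-homotopy of its image over $BG$.
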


\begin{proof}
We first claim that there is a canonical cartesian diagram of spaces
\[\text{\small\xymatrix@C+1pc{
\sA^{2,\ex}_{G\textnormal{-}\equiv}(M,d+1)  \ar[r] \ar[d]_{\equiv\to\inv} \ar@{}[rd]|{\textnormal{(A)}} & \sA^{2,\cl}_{G\textnormal{-}\equiv}(M,d+1) \ar@{.>}[d]^-{(\equiv\to\inv,[-])} \\
\sA^{2,\ex}_{G\textnormal{-}\inv}(M,d+1) \ar[d] \ar@{.>}[r]^-{(\equiv\to \inv,0)} \ar@{}[rd]|{\textnormal{(B)}} & \sA^{2,\cl}_{G\textnormal{-}\inv}(M,d+1) \bigtimes_{\sA^{\DR}_{G\textnormal{-}\inv}(M,d+3)}\sA^{\DR}_{G\textnormal{-}\equiv}(M,d+3) \ar[r]^-{\pr_1} \ar[d]^{\pr_2} \ar@{}[rd]|{\textnormal{(C)}} & \sA^{2,\cl}_{G\textnormal{-}\inv}(M,d+1) \ar[d]^-{[-]} \\
\star \ar[r]^-{0} & \sA^{\DR}_{G\textnormal{-}\equiv}(M,d+3)  \ar[r]^-{\equiv\to \inv} & \sA^{\DR}_{G\textnormal{-}\inv}(M,d+3) .
}}\]
Indeed, we can form the above commutative diagram from the canonical map of filtered complexes $\DR(M/G \to B) \to \Gamma(BG, \DR(M/G \to BG))$.
Since the squares (C), (B)$+$(C), and (A)$+$(B) are cartesian, the square (B) is also cartesian, and hence the square (A) is also cartesian.

By \cite[Lem.~2.3.2]{Park2}, we have a canonical fiber square of spaces
\[\xymatrix@R-.5pc{
\Map_{\dSt_B^G}\left(M,\mathfrak{g}\dual[d]\right) \ar[r]^{} \ar[d] \cart & \sA^{2,\ex}_{G\textnormal{-}\equiv}(M,d+1) \ar[d]^{\equiv \to \inv} \\
\star \ar[r]^-0 & \sA^{2,\ex}_{G\textnormal{-}\inv}(M,d+1) .
}\]
From the above cartesian square (A), we obtain a canonical equivalence of spaces
\begin{equation} \label{Eq:IsotMapSpace}
    \Map_{\dSt_{B}^G}(M,\g\dual[d])^{\isot} \simeq \sA^{2,\cl}_{G\textnormal{-}\inv}(M,d) \times_{\sA^{\DR}_{G\textnormal{-}\inv}(M,d+2)}\sA^{\DR}_{G\textnormal{-}\equiv}(M,d+2).
\end{equation}
By taking the fiber over $\theta_M^\inv \in \sA^{2,\cl}_{G\textnormal{-}\inv}(M,d)$, we obtain the desired equivalence.
\end{proof}

One immediate corollary of Proposition \ref{Prop:HamilviaEquiv} is that exact symplectic actions are Hamiltonian.\footnote{This was already observed in \cite[\S3.3]{Park2} using the exact version of symplectic categories.}
Since negatively-shifted closed $2$-forms are exact in the absolute case ($B=\pt$) \cite[Prop.~3.2]{KPS}, we can deduce the following result:

\begin{corollary}\label{Cor:ExactSympAct}
Let $M$ be a $d$-shifted symplectic stack over $B=\pt$ and $G$ be a linearly reductive group.
If $M_{\cl}$ is quasi-compact with affine stabilizers and $d<0$, then any symplectic action of $G$ on $M$ is Hamiltonian.
\end{corollary}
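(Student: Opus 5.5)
By \Cref{Prop:HamilviaEquiv}, the space of Hamiltonian structures is the fiber of $\equiv\to\inv\colon \sA^{\DR}_{G\textnormal{-}\equiv}(M,d+2)\to \sA^{\DR}_{G\textnormal{-}\inv}(M,d+2)$ over $[\theta_M^{\inv}]$. So it suffices to show that the class $[\theta_M^{\inv}]$ lifts to an equivariant de Rham form. I will obtain such a lift by showing that $\theta_M^{\inv}$ is itself exact as an invariant closed $2$-form and that exactness lifts to the equivariant setting.

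\textbf{Step 1: transfer exactness to the equivariant setting.} Since $B=\pt$ and $d<0$, \cite[Prop.~3.2]{KPS} should apply to the quotient $M/G$ in place of $M$. Its hypotheses should hold there: $(M/G)_\cl = M_\cl/G$ is quasi-compact with affine stabilizers, because $G$ is affine and $M_\cl$ has affine stabilizers. The result should give that the map $\sA^{2,\ex}(M/G\to \pt,d)\to \sA^{2,\cl}(M/G\to\pt,d)$ is an equivalence for $d<0$. That is, equivariant negatively-shifted closed $2$-forms are exact. I would in fact use the stronger statement underlying \cite{KPS}: for such stacks $\Fil^0\DR(M/G)[d+2]$ has no nonzero global sections in the relevant degree, because it reduces to Hodge-completed derived de Rham cohomology. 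For quasi-compact stacks with affine stabilizers, that cohomology is concentrated in degrees that make $\sA^{\DR}_{G\textnormal{-}\equiv}(M,d+2)$ contractible once $d+2<2$, i.e. $d<0$.

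\textbf{Step 2: transfer to the invariant side.} Linear reductivity of $G$ should make the comparison $\DR(M/G\to\pt)\to\Gamma(BG,\DR(M/G\to BG))$ well behaved. Taking $G$-invariants is exact, so the invariant de Rham class $[\theta_M^{\inv}]\in \sA^{\DR}_{G\textnormal{-}\inv}(M,d+2)$ also vanishes, via the same vanishing of Hodge-completed de Rham cohomology in negative degrees on $M$, computed $G$-invariantly.

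\textbf{Step 3: conclude.} With $[\theta_M^{\inv}]\simeq 0$, the fiber over $[\theta_M^{\inv}]$ is equivalent to the fiber over $0$. That fiber is nonempty, since the zero equivariant form lifts it. Hence $\sHamil(G\circlearrowleft M)\neq\emptyset$, and the action is Hamiltonian.

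The main obstacle is Step 1, in particular verifying that the Hodge-completed de Rham vanishing of \cite{KPS} holds for the Artin stack $M/G$ and not only for $M$. If $M$ itself is only a stack, one must check that $M/G$ still satisfies the needed affine-stabilizer and quasi-compactness conditions, which is where linear reductivity of $G$ enters. The degree bookkeeping also needs care: the shift $d+2$ in \Cref{Prop:HamilviaEquiv} must correspond to the negative-degree range where the vanishing holds.
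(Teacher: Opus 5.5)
Your reduction via \Cref{Prop:HamilviaEquiv} and your Step~3 are fine. The gap is that the vanishing you rely on in Steps~1 and~2 is false. You assert that $\sA^{\DR}_{G\textnormal{-}\equiv}(M,d+2)$ is contractible. In Step~2 you argue that the invariant de Rham class vanishes because Hodge-completed de Rham cohomology vanishes in the relevant degree. However, $\pi_i\sA^{\DR}(M,d+2)\simeq \H^{d+2-i}_{\DR}(M)$, and de Rham cohomology is concentrated in degrees $\geq 0$, not $\geq 2$.

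For $d=-2$, the constant function $1$ is a nonzero element of $\pi_0\sA^{\DR}(M,0)=\H^0_{\DR}(M)$. For $d=-1$, one has $\pi_1\sA^{\DR}(M,1)=\H^0_{\DR}(M)\neq 0$. Also $\pi_0\sA^{\DR}(M,1)=\H^1_{\DR}(M)$ need not vanish: take $G$ trivial and $M=\T^*[-1]E$ for an elliptic curve $E$. So your appeal to ``the same vanishing \dots{} in negative degrees'' only handles $d\leq -3$, where $d+2<0$. The argument breaks down exactly in the cases $d=-1,-2$ that matter.

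The missing idea is that \cite[Prop.~3.2]{KPS} is a statement about the \emph{class} of a negatively shifted closed $2$-form, not about the ambient cohomology group. It gives $[\theta_M]\simeq 0$ in $\sA^{\DR}(M,d+2)$; morally, such a class lies in $F^2\H^{d+2}$, which vanishes for $d+2<2$. It should be applied to $M$ itself.

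Linear reductivity is then used to show that $\cancel{\inv}\colon \sA^{\DR}_{G\textnormal{-}\inv}(M,d+2)\to \sA^{\DR}(M,d+2)$ is injective on $\pi_0$. By base change, $(\pt\to BG)^*\Fil^0\DR(M/G\to BG)\simeq \Fil^0\DR(M)$. For $\pi\colon BG\to\pt$, the counit $\pi^*\pi_*\to\id$ admits a retraction (the Reynolds operator). Hence $\Gamma(BG,\Fil^0\DR(M/G\to BG))\to\Fil^0\DR(M)$ admits one too. This yields $[\theta_M^{\inv}]\simeq0$, and your Step~3 concludes.

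So your Step~2 has the right ingredient (exactness of invariants) paired with the wrong vanishing statement. Step~1 is not needed at all. Applying \cite{KPS} to $M/G$ only controls absolute, i.e.\ $G$-equivariant, closed forms. But $\theta_M^{\inv}$ is a form relative to $BG$ and is not known to lift to an equivariant closed form; if it did, the conclusion would be immediate without \cite{KPS}.
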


\begin{proof}
By \Cref{Prop:HamilviaEquiv}, it suffices to show that $[\theta_M^{\inv}] \simeq 0$ in $\sA^{\DR}_{G\textnormal{-}\inv}(M,d+2)$.
Since $[\theta_M] \simeq 0$ in $\sA^{\DR}(M,d+2)$ by \cite[Prop.~3.2]{KPS}, it suffices to show that the forgetful map
\[{\cancel{\inv}}:\sA^{\DR}_{G\textnormal{-}\inv}(M,d+2) \to \sA^{\DR}(M,d+2)\]
is injective on $\pi_0$.

We have a canonical equivalence
\[ (B \to BG)^*  \Fil^0\DR(M/G\to BG) \simeq  \Fil^0\DR(M) \textin \QCoh(B), \]
since the pushforward along $M/G \to BG$ has the base-change by \cite[Cor.~1.4.5]{DG}.
Since $G$ is linearly reductive, the counit map 
\[\pi^* \circ \pi_* \xrightarrow{} \id \textin \Fun(\QCoh(BG), \QCoh(BG)) \]
for the projection map $\pi \colon BG \to B$, admits a retract (see e.g. \cite[Lem.~A.8]{AKLPR}).
Hence 
\[ \Gamma(BG, \Fil^0\DR(M/G\to BG)) \to \Fil^0\DR(M)\]
also admits a retract and hence injective on $H^0$.
\end{proof}

Corollary \ref{Cor:ExactSympAct} is a useful result, but we will not use it in this paper.
We focus on the classifying stack $BT$ of a torus $T\simeq\GG_m^{\times r}$, instead of reductive groups.

\subsection{Change of groups} 

We provide a functorial property of the Hamiltonian reduction.

Let $G \to H$ be a morphism of smooth group stacks.
Then we have a canonical $(d+1)$-shifted Lagrangian correspondence
\[\xymatrix@-1pc{ 
& \h\dual[d]/G \ar[ld] \ar[rd] & \\
\g\dual[d]/G & & \h\dual[d]/H,
}\]
by \cite[Thm.~2.13]{Cal2}, where $\h\coloneq \TT_{H/B,\id}$ is the Lie algebra of $H$.

\begin{itemize}
\item The \defterm{induction functor}
\[\ind_{G/H} 
\colon \Hamil^G_{B,d} \longrightarrow \Hamil_{B,d}^H\]
is the Lagrangian composition functor \eqref{Eq:LagComp} along $\h\dual[d]/G\colon \g\dual[d]/G \dashrightarrow \h\dual[d]/H$.
\item The \textbf{\em restriction functor} 
\[ \res_{G/H} 
\colon \Hamil^H_{B,d} \longrightarrow \Hamil^G_{B,d},\]
is the Lagrangian composition functor \eqref{Eq:LagComp} along the swap $
\h\dual[d]/H\dashrightarrow  \g\dual[d]/G $.
\end{itemize}
By \Cref{Prop:LagCat-Adjunction}, we have a canonical adjunction $\ind_{G/H} \dashv \res_{G/H}.$

These induction functors and restriction functors are functorial along morphisms of smooth group stacks.
Let $\Grp_B^{\sm} \subseteq \Grp_B$ be the full subcategory consisting of smooth group stacks.
Let 
$\Corr(\Grp_B^{\sm})_{\all/\sm.\surj} \subseteq \Corr(\Grp_B^{\sm})$
be the subcategory consisting of the same objects and morphisms $G \xleftarrow{s} K \xrightarrow{t} H$ 
such that $t$ is smooth surjective.

\begin{proposition}\label{Prop:Ind/Res-Functoriality}
There exists a canonical $\infty$-functor
\[\Hamil^{(-)}_{B,d}\colon \Corr(\Grp_B^{\sm})_{\all/\sm.\surj} \lra \Cat,\]
which sends 
\begin{itemize}
\item a smooth group stack $G$ to the Hamiltonian category $\Hamil^G_{B,d}$,
\item a morphism $G \xleftarrow{\id} G \to H$ in $\Corr(\Grp_B^{\sm})_{\all/\sm.\surj}$ to the induction functor $\ind_{G/H}$, and
\item a morphism $G \xleftarrow{} H \xrightarrow{\id} H$ in $\Corr(\Grp_B^{\sm})_{\all/\sm.\surj}$ to the restriction functor $\res_{H/G}$.
\end{itemize}
\end{proposition}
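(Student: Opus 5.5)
The plan is to mimic the proof of \Cref{Prop:LagComp-Functoriality}. First I build a functor
\[\Phi\colon \Corr(\Grp_B^{\sm})_{\all/\sm.\surj} \lra \Symp_{B,d+1},\qquad G \mapsto \g\dual[d]/G,\]
which sends a correspondence $G \xleftarrow{s} K \xrightarrow{t} H$ to a $(d+1)$-shifted Lagrangian correspondence $\g\dual[d]/G \dashrightarrow \h\dual[d]/H$. Composing $\Phi$ with the functor $\Lag_{(-),d+1}\colon \Symp_{B,d+1}\to\Cat$ of \Cref{Prop:LagComp-Functoriality} then gives $\Hamil^{(-)}_{B,d}$. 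On the two special classes of morphisms this composite is, by definition, $\ind_{G/H}$ and $\res_{H/G}$, since those functors were defined as $\LagComp$ along $\h\dual[d]/G$ and along its swap.

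To construct $\Phi$, I identify $\g\dual[d]/G \simeq \T^*[d+1](BG/B)$ and use the functoriality of shifted cotangent bundles along correspondences. A map $f\colon BK\to BG$ gives the Lagrangian correspondence $\T^*[d+1](BG/B) \leftarrow f^*\T^*[d+1](BG/B) \to \T^*[d+1](BK/B)$ from \cite[Thm.~2.13]{Cal2}. The same construction with the roles swapped handles $BK\to BH$.

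The cleanest uniform description sends $G\leftarrow K\rightarrow H$ to the conormal Lagrangian of $BK \to BG\times_B BH$ inside $\T^*[d+1](BG/B)\times \T^*[d+1](BH/B)^{-}$. For this to be well behaved, $BK\to BG\times_B BH$ should be locally geometric and locally finitely presented, which holds because the groups are smooth. Concretely the correspondence is $\g\dual[d]/K \times_{\mathfrak k\dual[d]/K} \h\dual[d]/K$, and for $K=G\to H$ it recovers $\h\dual[d]/G$. The correspondences in this category compose by fiber products of classifying stacks: the composite of $G\leftarrow K\to H$ and $H \leftarrow K'\to L$ has middle term $K\times_H K'$, and $B(K\times_H K')\simeq BK\times_{BH}BK'$.

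The key step, and the main obstacle, is functoriality: composition of conormal correspondences must agree with the conormal of the fiber product, coherently. This is where the restriction that $t$ be smooth surjective enters. It is needed to compare $B(K\times_HK')$ with $BK\times_{BH}BK'$ and to obtain the clean-intersection (base change) condition, so that the conormal Lagrangians compose correctly, as in the composition of cotangent correspondences in \cite{Cal2}. To get coherence at the $\infty$-level rather than just on homotopy categories, I would use the exact-forms description. Cotangent bundles define an $\infty$-functor $\Corr(\dSt_B)_{\all/\sm}\to\Symp^{\ex}_{B,d+1}$, realized via the adjunction/pushforward formalism of \Cref{Lem:DecompositionofPullback} and the exact remark following it, applied to the classifying-stack functor $B\colon\Grp^{\sm}_B\to\dSt_B$. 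Composing with the forgetful functor $\Symp^{\ex}\to\Symp$ then produces $\Phi$.

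If a full coherent construction is too heavy, the fallback is to use that $\Symp_{B,d+1}$-valued functors out of a correspondence category are determined by Beck--Chevalley data (\S\ref{Appendix:Corr}). In that case it suffices to give the covariant functor $G\mapsto \g\dual[d]/G$ on $\Grp^{\sm}_B$, the contravariant one on smooth surjections, and to check base change for pullback squares. That base-change check is the single geometric verification above.
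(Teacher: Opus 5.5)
Your decomposition is the paper's: it composes $\Corr(B|_{\sm})\colon \Corr(\Grp_B^{\sm})_{\all/\sm.\surj}\to\Corr(\dSt_B^{\lfp})$, a cotangent functor $\T^*[d+1]\colon\Corr(\dSt_B^{\lfp})\to\Symp_{B,d+1}$, and $\Lag_{(-),d+1}$ from \Cref{Prop:LagComp-Functoriality}. The gap is the middle functor, which you rightly call the main obstacle but do not actually supply. Neither of your two mechanisms produces it.

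\begin{enumerate}
\item The exact remark after \Cref{Lem:DecompositionofPullback} gives, for each single map $p\colon U\to B$, a left adjoint $p_!$ of $\pullback^{\ex}_{U/B}$. At best this recovers $\T^*[d+1](U/B)$ on objects, as the pushforward of $U$ with zero form. A correspondence $U\leftarrow L\rightarrow V$ is not a morphism in any one $\Symp^{\ex}_{U}$. So assembling the various $p_!$ into a coherent functor out of $\Corr(\dSt_B^{\lfp})$ is a genuine coherence problem, not a formal consequence.
\item The fallback also fails. The unfurling of \Cref{Ex:Unfurling} only produces $\Cat$-valued functors, because its Beck--Chevalley maps come from adjunctions in $\Cat$. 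For an $(\infty,1)$-categorical target such as $\Symp_{B,d+1}$, a covariant functor, a contravariant functor and a base-change check do not determine a functor out of a correspondence category; you would need $(\infty,2)$-categorical input.
\end{enumerate}

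The paper does not construct this functor either. It imports it from \cite[Thm.~3.13]{CalSaf}, which is defined on all of $\Corr(\dSt_B^{\lfp})$. No smoothness of right legs and no clean-intersection condition is needed in the derived setting, so your remark about clean intersections is a misattribution.

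Consequently, the only role of smooth surjectivity is to make $B(-)$ a morphism of adequate triples $(\Grp_B^{\sm},\all,\sm.\surj)\to(\dSt_B^{\lfp},\all,\all)$. Concretely, it gives $B(K\times_H K')\simeq BK\times_{BH}BK'$ when $K\to H$ is smooth surjective. You assert this without argument; it is \Cref{Lem:classifyingstacks-cartesian}. The paper proves it by identifying $G_1/K\simeq H_1$: the \v{C}ech nerve of the smooth surjection $G_1\to H_1$ is the action groupoid of the kernel $K$. It then checks cartesianness after base change along the smooth surjections $B\to BG_1$ and $B\to BH_1$. With that lemma proved and the functor quoted from \cite{CalSaf}, your argument becomes the paper's.
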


\begin{proof}
Consider the $\infty$-functor of classifying stacks
\[B|_{\sm}\colon\Grp_B^{\sm} \lra \dSt_B^{\lfp},\quad G \mapsto BG\]
where $\dSt_B^{\lfp}  \subseteq \dSt_B$ is the full subcategory consisting of derived stacks that are locally geometric and locally of finite presentation over $B$.
By \Cref{Lem:classifyingstacks-cartesian} below, applying the functor $\Corr \colon \Trip \to \Cat$ of correspondence categories (\Cref{Def:Corr}), we obtain an $\infty$-functor
\[
\Corr(B|_{\sm})\colon\Corr(\Grp_B^{\sm})_{\all/\sm.\surj} \lra \Corr(\dSt_B^{\lfp}).
\]
By \cite[Thm.~3.13]{CalSaf}, we have an $\infty$-functor
\[\T^*[d+1] \colon \Corr(\dSt^{\lfp}_B) \lra \Symp_{B,d+1}\]
which sends $M\in \dSt_B^{\lfp}$ to the cotangent bundle $\T^*[d+1](M/B)$
and a morphism $M \xleftarrow{} L \xrightarrow{} N$ in $\Corr(\dSt_{B}^{\lfp})$ to the cotangent correspondence $\T^*[d+1](M/B) \leftarrow \T^*[d+2](L/M\times N) \rightarrow \T^*[d+1](N/B)$.
We define the desired functor as the composition
\[\Hamil_{B,d}^{(-)}\colon \Corr(\Grp_B^{\sm})_{\all/\sm.\surj} \xrightarrow{\Corr(B|_{\sm})}  \Corr(\dSt_B^{\lfp} )\xrightarrow{\T^*[d+1]} \Symp_{B,d+1} \xrightarrow{\Lag_{(-),d+1}} \Cat,\]
where $\Lag_{(-),d+1}$ is the functor of Lagrangian categories in \Cref{Prop:LagComp-Functoriality}.
\end{proof}

We need the following basic lemma on group stacks to complete the proof of \Cref{Prop:Ind/Res-Functoriality}.

\begin{lemma}\label{Lem:classifyingstacks-cartesian}
Consider a cartesian square
\[\xymatrix@-1pc{
G_1 \ar[r] \ar[d] \cart & G_2 \ar[d] \\
H_1 \ar[r] & H_2
} \]   
in $\Grp_B^{\sm}$.
If $G_2 \to H_2$ is smooth surjective, then the induced commutative square
\[\xymatrix@-1pc{
BG_1 \ar[r] \ar[d]  & BG_2 \ar[d] \\
BH_1 \ar[r] & BH_2
} \]   
is a cartesian square in $\dSt_B$.
\end{lemma}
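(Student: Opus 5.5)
We want to show that $BG_1 \to BG_2\times_{BH_2} BH_1$ is an equivalence of derived stacks over $B$. The plan is to check this after pulling back along an effective epimorphism. Then we identify both sides with quotient stacks using the equivalence $\dSt_B^{K}\simeq \dSt_{BK}$, $M\mapsto M/K$, recalled above.

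First, we describe the fiber product. Set $P\coloneq B\times_{BH_2} BH_1$, the fiber of $BH_1\to BH_2$ over the base point. Under the equivalence $\dSt_{BH_2}\simeq \dSt_B^{H_2}$, the stack $BH_1$ corresponds to $P$, and $P\simeq H_2/H_1$ is the quotient of $H_2$ by the right $H_1$-action through $H_1\to H_2$. Base change along $BG_2\to BH_2$ corresponds to restricting actions along $G_2\to H_2$. Hence
\[BG_2\times_{BH_2} BH_1 \simeq P/G_2 \simeq (H_2/H_1)/G_2 .\]

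Second, we use smooth surjectivity. Since $G_2 \to H_2$ is smooth surjective, the induced map $G_2 \to H_2 \to H_2/H_1$ is a smooth surjective morphism of stacks, hence an effective epimorphism. It is moreover $G_2$-equivariant, and $G_2$ acts transitively on $P$ in the sense that $G_2\to P$ is an effective epimorphism onto $P$. The stabilizer of the base point is
\[G_2\times_{H_2/H_1} B \simeq G_2\times_{H_2} H_1 \simeq G_1,\]
using the cartesian square in $\Grp_B^{\sm}$ for the last step. More precisely, the Čech nerve of $G_2\to P$ is identified with the simplicial object $G_2\times G_1^{\times n}$, the bar construction of the right $G_1$-action on $G_2$. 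Because $G_2\to P$ is an effective epimorphism, $P\simeq G_2/G_1$, and this identification is $G_2$-equivariant. Therefore
\[P/G_2 \simeq (G_2/G_1)/G_2 \simeq BG_1,\]
which recovers the canonical map $BG_1\to BG_2\times_{BH_2}BH_1$. To see that the canonical map is this equivalence, one can alternatively pull back along the effective epimorphism $B\to BG_2$. On the target this gives $P$; on the source it gives $B\times_{BG_2}BG_1\simeq G_2/G_1$. The map between them is the one just shown to be an equivalence. Since equivalences descend along effective epimorphisms in the $\infty$-topos $\dSt_B$, the square is cartesian.

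The main obstacle is the second step: showing that $G_2\to H_2/H_1$ is an effective epimorphism, and that its Čech nerve is the bar construction. Without surjectivity, $BG_2\times_{BH_2}BH_1$ would acquire several components (the cokernel), and the statement fails. I would first compare Čech nerves via the equivalences
\[G_2\times_P G_2 \simeq G_2\times_{H_2}(H_2\times_P H_2) \simeq G_2\times_{H_2}(H_2\times H_1)\simeq G_2\times G_1,\]
and then invoke that smooth surjections are effective epimorphisms for the étale/smooth topology.
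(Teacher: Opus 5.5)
Your proposal is correct, but it is organized differently from the paper's proof.

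\textbf{Your route.} You pull back once along the effective epimorphism $B\to BG_2$ and compare the two fibers directly:
\[B\times_{BG_2}BG_1\simeq G_2/G_1 \quad\text{and}\quad B\times_{BH_2}BH_1\simeq H_2/H_1.\]
The comparison map is an equivalence because $G_2\to H_2\to H_2/H_1$ is an effective epimorphism whose \v{C}ech nerve is the action groupoid of $G_1$ on $G_2$.

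\textbf{The paper's route.} The paper instead introduces the common kernel $K=G_2\times_{H_2}B\simeq G_1\times_{H_1}B$. It pastes the squares $BK\to BG_1\to BG_2$ over $B\to BH_1\to BH_2$, and uses that $B\to BH_1$ is an effective epimorphism to reduce to $(S_1)$ and $(S_1)+(S_2)$. Each of these is the short-exact-sequence case $K\to G\to H$. That case is handled by $G/K\simeq H$ (via \cite[Prop.~6.1.2.11]{LurHTT}) and one more descent along $B\to BG$.

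\textbf{Comparison.} The paper only ever quotients by kernels and obtains the general square by formal pasting. Your argument is a single descent step that exposes the conceptual reason, namely $G_2/G_1\simeq H_2/H_1$ when $G_2\to H_2$ is surjective. The price is that you must handle the homogeneous space $H_2/H_1$ for an arbitrary homomorphism $H_1\to H_2$. Both proofs rest on the same inputs:
\begin{enumerate}
\item smooth surjections are effective epimorphisms;
\item groupoid objects are effective;
\item equivalences descend along effective epimorphisms.
\end{enumerate}

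\textbf{A slip in your displayed computation.} As written, $G_2\times_{H_2}(H_2\times H_1)$ is just $G_2\times H_1$, because a factor is missing. It should read
\[G_2\times_P G_2\simeq G_2\times_{H_2}(H_2\times_P H_2)\times_{H_2}G_2\simeq G_2\times_{H_2}(H_2\times H_1)\times_{H_2}G_2\simeq G_2\times(H_1\times_{H_2}G_2)\simeq G_2\times G_1 .\]
The cartesian hypothesis enters exactly through that missing factor. What you actually need to verify is that the \emph{canonical} map $G_2\times_{G_2/G_1}G_2\to G_2\times_P G_2$ is this equivalence. The higher terms of the two \v{C}ech nerves are iterated fiber products of these, so the level-one equivalence suffices.
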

\begin{proof}
Form a cartesian diagram
\[\xymatrix@-1pc{
K \ar[r] \ar[d] \cart & G_1 \ar[r] \ar[d] \cart & G_2 \ar[d] \\
B\ar[r] & H_1 \ar[r] & H_2
} \]   
in $\Grp_B^{\sm}$, where $B$ is the trivial group stack.
Consider the induced commutative diagram
\[\xymatrix@-1pc{
BK \ar[r] \ar[d] \ar@{}[rd]|{(S_1)}  & BG_1 \ar[r] \ar[d]  \ar@{}[rd]|{(S_2)} & BG_2 \ar[d] \\
B\ar[r] & BH_1 \ar[r] & BH_2
} \]   
in $\dSt_B$.
Since $B \to BH_1$ is smooth surjective, 
to prove that the right square $(S_2)$ is cartesian,
it suffices to show that the left square $(S_1)$ and the total square $(S_1)+(S_2)$ are cartesian.

By \cite[Prop.~6.1.2.11]{LurHTT}, the (augmented) groupoid object in $\dSt_B$ associated to the $K$-action on $G_1$ is equivalent to the Cech nerve of $G_1 \to H_1$.
Since $G_1 \to H_1$ is smooth surjective, taking the colimits of the two groupoid objects gives us an equivalence
$G_1/K \simeq H_1.$
Form a commutative diagram
\[\xymatrix@-1pc{
H_1 \ar[r] \ar[d]  \ar@{}[rd]|{(S_3)} & BK \ar[r] \ar[d]  \ar@{}[rd]|{(S_1)} & B \ar[d] \\
B\ar[r] & BG_1 \ar[r] & BH_1
} \]   
in $\dSt_B$. 
Since the left square $(S_3)$ is cartesian by the equivalence $G_1/K \simeq H_1$, the total square $(S_3)+(S_1)$ is cartesian, and the map $B \to BG_1$ is smooth surjective, the right square $(S_1)$ is cartesian.
An analogous argument for $G_2 \to H_2$ shows that the square $(S_1)+(S_2)$ is also cartesian.
\end{proof}

A \defterm{short exact sequence} $K \to G \to H$ of smooth group stacks is a cartesian square
\[
\xymatrix@-1pc{
K \ar[r] \ar[d] \cart & G \ar[d] \\
B \ar[r] & H 
}    
\]
in $\Grp_B^{\sm}$, such that $G \to H$ is smooth surjective and $B$ is the trivial group stack.
By \Cref{Prop:Ind/Res-Functoriality}, a short exact sequence $K \to G \to H$ induces commutative diagrams of $\infty$-categories,
\[\xymatrix@R-1pc{\Hamil_{B,d}^G\ar[rd]_-{\red_{\Hamil}} \ar[rr]^-{\ind_{G/H}} && \Hamil^H_{B,d} \ar[ld]^{\quad\red_{\Hamil}} \\ &  \Symp_{B,d} & }
\raisebox{-1.5em}{\and}  \xymatrix@C+2pc@R-1pc{
\Hamil^G_{B,d} \ar[d]_-{\ind_{G/H}} \ar[r]^{\res_{K/G}} & \Hamil^K_{B,d} \ar[d]^-{\red_{\Hamil}}\\
\Hamil_{B,d}^H \ar[r]^{\forget_{\Hamil}}& \Symp_{B,d}.}\]
More explicitly, given a $G$-Hamiltonian action on a $d$-shifted symplectic stack $M$, the Hamiltonian reduction $M/\!/K$ has an induced $H$-Hamiltonian action, and we have a canonical equivalence
\begin{equation}\label{Eq:thirdisomorphismthm}
M/\!/G \simeq (M/\!/K)/\!/H \textin \Symp_{B,d}.
\end{equation}

\section{Symplectic rigidification}\label{Sec:SympRig}

In this section, we prove our main results:
\begin{enumerate}
\item (\Cref{Thm:Intro-Rig}) all $BT$-actions on non-positively shifted symplectic stacks are Hamiltonian;
\item (\Cref{Intro:Thm-Funct}) existence of the symplectic rigidification functor as the left adjoint of the trivial action functor.
\end{enumerate}

Throughout this section, we work with the relative setting as in \S\ref{Sec:HamRed}.
Let $B$ be an arbitrary base derived stack.
All derived stacks and shifted symplectic stacks are assumed to be over $B$.

Let $T\coloneq \GG_{m,B}^{\times r}$ be the $r$-dimensional torus over $B$ for some integer $r$.
Let $BT$ be the classifying stack of $T$, which we regard as a group stack over $B$ (as in \Cref{Ex:ClassifyingGroupstack}).

\subsection{Results}

We first summarize the results that we will prove in this section.
We postpone the proofs to the subsequent subsections.

Firstly, we will show that all $BT$-actions on shifted symplectic stacks are symplectic.
More precisely, we will show that the spaces of symplectic structures for $BT$-actions on shifted symplectic stacks are contractible.
Moreover, we will upgrade this to the following categorical statement.

\begin{proposition}\label{Prop:BT-symp}
We have a canonical cartesian square of $\infty$-categories
\[\xymatrix@C+3pc@R-1pc
{
\Symp_{B,d}^{BT} \ar[r]^-{\forget_{\Symp}} \ar[d] \cart &\Symp_{B,d}  \ar[d] \\
\Corr_{B}^{BT} \ar[r] & \Corr_B,
}\]
where $\Corr_B^{BT}\coloneq \Corr(\dSt_B^{BT})$.
\end{proposition}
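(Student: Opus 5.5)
The plan is to reduce the statement to a comparison of moduli stacks of closed $2$-forms, and then deduce the cartesian square of $\infty$-categories formally. Write $\pi\colon B^2T \to B$ for the projection and $s\colon B \to B^2T$ for the canonical section. By \Cref{Def:G-SympCat}, $\Symp^{BT}_{B,d} = \Symp_{B^2T,d}$ is a subcategory of $\pSymp_{B^2T,d} = \Corr_{\sA^{2,\cl}_{B^2T}[d]}$. The forgetful functor $\forget_{\Symp}$ is the pullback along $s$, induced by the correspondence \eqref{Eq:pullback-SympCat-1}.

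The key step is to show that the canonical comparison map
\[c\colon B^2T\times_B \sA^{2,\cl}_B[d] \lra \sA^{2,\cl}_{B^2T}[d]\]
of derived stacks over $B^2T$ is an equivalence. On points, $c$ sends a closed form on $X \to B$, for $X$ over $B^2T$, to its image under $\DR(X\to B)\to \DR(X \to B^2T)$; this is the map of \cite[Rem.~B.12.6]{CHS} for $\pi$. Since $s\colon B \to B^2T$ is an effective epimorphism, it suffices to check that $c$ becomes an equivalence after base change along $s$.

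After base change along $s$, the source becomes $\sA^{2,\cl}_B[d]$ (because $\pi\circ s=\id$). The target becomes $s^*\sA^{2,\cl}_{B^2T}[d]$, which I will identify with $\sA^{2,\cl}_B[d]$ by base change of de Rham complexes along $s$. Concretely, this means: for $X \to B^2T$ with $X' \coloneq X\times_{B^2T}B$, closed forms on $X\to B^2T$ pulled back to $X'$ agree with closed forms on $X' \to B$. The composite should then be the identity, since $\pi \circ s = \id$ makes the two pullback constructions agree.

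I expect the main obstacle to be making this base-change compatibility precise, namely checking that the maps of \cite[Rem.~B.12.6]{CHS} compose correctly for $B \xrightarrow{s} B^2T \xrightarrow{\pi} B$. I would handle it at the level of the functor $\Corr_{(-)}$ of \Cref{Const:Corr-Funct}, so that functoriality is automatic. Note that there is no hidden cohomological input about $T$ beyond $s$ being an effective epimorphism. In particular, no invariant-forms computation is needed, since closed forms relative to $B^2T$ are determined by their pullback along the cover.

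Granting the equivalence $c$, we get
\[\dSt_{/\sA^{2,\cl}_{B^2T}[d]} \simeq \dSt_{B^2T}\times_{\dSt_B} \dSt_{/\sA^{2,\cl}_B[d]},\]
because an object over $B^2T\times_B \sA^{2,\cl}_B[d]$ is the same as compatible maps to $B^2T$ and to $\sA^{2,\cl}_B[d]$ over $B$. Pullbacks in this fiber product are computed componentwise, so taking $\Corr$ preserves it, using \Cref{Def:Corr}. This gives a cartesian square with $\pSymp_{B^2T,d}$, $\pSymp_{B,d}$, $\Corr_{B^2T} \simeq \Corr^{BT}_B$ and $\Corr_B$, whose top arrow is the presymplectic $\forget$ by construction.

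It then remains to show that the non-degeneracy conditions match, i.e. that an object or morphism of $\pSymp_{B^2T,d}$ is symplectic/Lagrangian if and only if its image in $\pSymp_{B,d}$ is. For $M$ over $B^2T$, the map $M' \coloneq M\times_{B^2T}B \to M$ is a smooth surjection. Under it, $\LL_{M/B^2T}$ pulls back to $\LL_{M'/B}$, and the $2$-form pulls back compatibly. Since equivalences and cartesian squares of perfect complexes are detected smooth-locally, the symplectic condition and the Lagrangian cartesian square descend. The locally geometric and locally finitely presented conditions are also checked after smooth base change. Hence the subcategories $\Symp$ correspond, and the square of \Cref{Prop:BT-symp} is cartesian.
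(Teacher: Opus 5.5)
Your key step is false, and the construction around it targets the wrong functor. The map $c\colon B^2T\times_B\sA^{2,\cl}_B[d]\to\sA^{2,\cl}_{B^2T}[d]$ is the right leg of \eqref{Eq:pullback-SympCat-1} for $\pi$, so it induces $\triv_{\Symp}$, not $\forget_{\Symp}$. On $X=M/BT$ it is exactly the map from $BT$-equivariant to $BT$-invariant closed $2$-forms, $\sA^{2,\cl}(M/BT\to B,d)\to\sA^{2,\cl}(M/BT\to B^2T,d)$. The failure of this map to be an equivalence is what the rest of the paper is about (\Cref{Prop:Equiv}, \Cref{Thm:MomentMapEquation}).

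Concretely, take $B=\pt$, $r=1$, $d=2$ and $X=(\pt\xrightarrow{s}B^2\GG_m)$. The source is $\sA^{2,\cl}(\pt,2)\simeq\ast$. The target is $\sA^{2,\cl}(\pt\to B^2\GG_m,2)\simeq\sA^{2,\cl}(B\GG_m,2)$ by \Cref{Prop:Invariance}, and its $\pi_0\cong\C$ is spanned by $\ch_2(\cL)$. The same example refutes your identification $s^*\sA^{2,\cl}_{B^2T}[d]\simeq\sA^{2,\cl}_B[d]$: on $Y\to B$ the left side is $\sA^{2,\cl}(Y\to B^2T,d)\simeq\sA^{2,\cl}(Y\times_B BT\to B,d)$. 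Stacks of closed forms do not commute with base change, which is why pullback of symplectic categories is only given by a correspondence.

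Finally, your fiber product $\dSt_{B^2T}\times_{\dSt_B}\dSt_{/\sA^{2,\cl}_B[d]}$ is formed over composition with $\pi$. Its horizontal arrows therefore send $M/BT$ to $M/BT$, not to $M$. They are neither $\forget_{\Symp}$ nor the bottom arrow $\Corr^{BT}_B\to\Corr_B$ of the statement, both of which are pullback along $s$. Your last paragraph does use pullback along $s$, so the argument is internally inconsistent.

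The sentence you write after \emph{concretely} (closed forms on $X\to B^2T$ agree with closed forms on $X'=X\times_{B^2T}B\to B$) is the correct key input. However, it asserts $\sA^{2,\cl}_{B^2T}[d]\simeq s_*\sA^{2,\cl}_B[d]$ for the Weil restriction $s_*$, the right adjoint of $s^*$, and it is not formal. Descent along the effective epimorphism $s$ only identifies forms on $X\to B^2T$ with homotopy $BT$-fixed points of forms on $X'\to B$. That these fixed points are all forms is \Cref{Prop:Invariance}. It rests on \Cref{Lem:FullyFaithful}, i.e.\ on $\QCoh(N\times BT)\simeq\prod_{w\in\Z^r}\QCoh(N)$ with pullback landing in weight $0$.

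Your remark that nothing about $T$ is needed beyond $s$ being an effective epimorphism is a warning sign. It would apply verbatim to the group $\GG_m$ with $s\colon\pt\to B\GG_m$, and would show every $\GG_m$-action is symplectic. That fails for the weight-$(1,1)$ action on $(\mathbb{A}^2,dx\wedge dy)$.

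With the Weil-restriction equivalence in hand, \Cref{Lem:Corr-Cart} applied to $f=s$ gives the cartesian square of presymplectic categories over the functor $\Corr_{B^2T}\to\Corr_B$ induced by $s$. Your last paragraph on smooth-local non-degeneracy then completes the argument as in the paper.
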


Secondly, we will prove that $BT$-actions on $d$-shifted symplectic stacks are Hamiltonian under the following assumption on the base $B$ and the shift $d$.
As the above results on symplectic actions,
we will show that the spaces of Hamiltonian structures are contractible, and also upgrade the result to equivalences of Hamiltonian categories and symplectic categories.

\begin{assumption}\label{Assumption}
Assume one of the following two conditions on the base $B$ and the shift $d$:
\begin{enumerate}
\item [(A1)] $B=\pt$ and $d\leq 0$,
\item [(A2)] $B$ is a classical Artin stack\footnote{In this paper, the term ``classical Artin stack'' includes classical {\em higher} Artin stacks.}, locally of finite type, and $d \leq -1$.
\end{enumerate}		
\end{assumption}

\begin{theorem}\label{Thm:Ham=Symp}	
Under \Cref{Assumption},
we have a canonical equivalence of $\infty$-categories,
\[\forget_{\Hamil}\colon \Hamil^{BT}_{B,d} \xrightarrow{\simeq} \Symp^{BT}_{B,d}.\]
\end{theorem}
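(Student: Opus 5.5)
We plan to deduce the equivalence from \Cref{Lem:LagFib-Non-degeneracy} applied to the Lagrangian fibration $\g\dual[d]/BT \to B^2T$, where $G=BT$ has Lie algebra $\g=\TT_{BT/B,\id}\simeq \O_B^{\oplus r}[1]$. The fibration is $\T^*[d+1](B^2T/B)\to B^2T$. By that lemma, $\forget_{\Hamil}=\LagInt^{\fib}$ is the base change of $\IsotInt^{\fib}\colon \Isot_{\g\dual[d]/BT,d+1}\to \pSymp_{B^2T,d}$ along $\Symp_{B^2T,d}\hookrightarrow\pSymp_{B^2T,d}$. It therefore suffices to show that $\IsotInt^{\fib}$ is an equivalence after restricting to the relevant objects. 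Since both categories are correspondence categories $\Corr_{(-)}$ and the functor is induced by the map of derived stacks
\[\isotInt^{\fib}\colon (\g\dual[d]/BT)^{\isot}\lra \sA^{2,\cl}_{B^2T}[d]\]
in \eqref{Eq:Funct-LagFib-1}, it is enough to show that this map becomes an equivalence after pulling back along any $M/BT\to \sA^{2,\cl}_{B^2T}[d]$ with $M$ locally geometric and locally of finite presentation. Concretely, we want the induced map of mapping spaces to be an equivalence for each such $M/BT\to B^2T$. Equivalently, by \eqref{Eq:IsotMapSpace} in the proof of \Cref{Prop:HamilviaEquiv}, we want
\[\sA^{2,\cl}_{BT\textnormal{-}\inv}(M,d)\times_{\sA^{\DR}_{BT\textnormal{-}\inv}(M,d+2)}\sA^{\DR}_{BT\textnormal{-}\equiv}(M,d+2)\lra \sA^{2,\cl}_{BT\textnormal{-}\inv}(M,d)\]
to be an equivalence, functorially in $M$. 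This reduces to showing that the fiber of $\equiv\to\inv$ on de Rham forms of degree $d+2$ (and also in degree $d+1$, so that $\pi_0$ surjects) is contractible. We only need this in the range of $M$ lying in $\Symp$, but it is cleaner to prove it for all $M$ subject to the finiteness hypotheses of \Cref{Assumption}.

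To analyze the fiber, we compare $\Fil^0\DR(M/BT\to B)$ with $\Gamma(B^2T,\Fil^0\DR(M/BT\to B^2T))$. We use the Hodge filtration and the cofiber sequence $\pi^*\LL_{B^2T/B}\to\LL_{M/BT\,/B}\to\LL_{M/BT\,/B^2T}$, with $\LL_{B^2T/B}\simeq \O^{\oplus r}[-2]$. On graded pieces, the difference between equivariant and invariant forms is controlled by the terms $\Sym^{k}(\O^r[-2][-1])$-type contributions, i.e.\ by the formal de Rham complex of $B^2T$. That complex is $\C[u_1,\dots,u_r]$ with generators in cohomological degree $4$, weight $2$ (Chern–Weil for $B^2\GG_m$). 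Then the obstruction to lifting an invariant de Rham form of degree $d+2$ to an equivariant one is a class contracted against $u_i$. Its leading term is the contraction $\iota_v\theta$ of the symplectic form with the vector field of the action, a closed $1$-form of degree $d+1-\text{(shift)}$. The expected outcome is that the fiber is built from forms $\Fil^0\DR(M)[d+2-4k]$ and $\Fil^1$-pieces in degrees $\le d+1$.

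The main obstacle is the vanishing step: under (A1) or (A2), closed forms of negative shift on $M$ (or on $M/BT$ relative to $B^2T$) are exact in the required degree. For (A1) with $d\le 0$, the relevant de Rham cohomology is $H^{d+2-4k}$ (for $k\ge1$) of $\DR$ of a stack over $\C$. Here $d+2-4k<0$, so it vanishes, by the same argument as \cite[Prop.~3.2]{KPS}, which shows that de Rham cohomology of a derived Artin stack over $\pt$ agrees with Betti cohomology and vanishes in negative degrees. For (A2), Betti cohomology of $B$ can contribute in degree $\ge0$, which is why we need the stronger condition $d\le -1$ so that all degrees stay negative relative to $B$. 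I would first try to verify the vanishing on graded pieces and then pass to the limit via completeness, handling the derived case by reducing to the classical truncation (de Rham invariance under nilthickening). Finally, naturality in $M$ of all these identifications gives the equivalence of correspondence categories, and hence \Cref{Thm:Ham=Symp}.
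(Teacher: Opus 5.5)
Your reduction matches the paper's. Via \Cref{Lem:LagFib-Non-degeneracy}, the fact that $\Corr$ preserves limits, and \eqref{Eq:IsotMapSpace}, it suffices to show that $\Map_{\dSt^{BT}_B}(M,\t\dual[d-1])^{\isot}\to\sA^{2,\cl}(M,d)$ is an equivalence for every locally geometric, locally finitely presented $M$ with a $BT$-action. The gap is in your analysis of $\equiv\to\inv$.

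First, its fibers on $\sA^{\DR}(-,d+2)$ are not contractible in general, even under (A1). Take $B=\pt$, $d=0$ and $M=B\GG_m$ acting on itself by translation. Then $M/B\GG_m\simeq\pt$, so $[\ch_1(\cL)]\in\H^2_{\DR}(B\GG_m)$ has no equivariant lift. Only the fibers over classes $[\theta]$ of closed $2$-forms can be controlled.

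Second, your model of the obstruction is wrong. Since $\LL_{B^2T/B}\simeq\t\dual[-2]$, one has $\Gr^p\DR(B^2T)\simeq\Lambda^p(\t\dual)[-3p]$. This is an exterior algebra on generators of cohomological degree $3$ and Hodge weight $1$, not a polynomial algebra in degree $4$ and weight $2$. Accordingly, by \Cref{Prop:Equiv}, the fiber of $\equiv\to\inv$ over $x$ is the space of null-homotopies of $\iota x\in\sA^{\DR}(M,\t\dual[d])$. The obstruction therefore lives in $\H^{d}_{\DR}(M)\otimes\t\dual$, not in $\H^{d+2-4k}_{\DR}(M)$. A sanity check exposes the error: your degree count would make the argument go through verbatim for $d=1$, $B=\pt$, contradicting \Cref{Cor:non-Ham-elliticcurve}.

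With the correct degrees, vanishing of cohomology groups cannot suffice when $d=0$ and $B=\pt$, since $\H^0_{\DR}(M)\otimes\t\dual\neq 0$. The missing idea is to use the Hodge filtration. The contraction $\iota$ is a map of filtered complexes lowering the weight by exactly one, so for $\theta\in\sA^{2,\cl}(M,d)$ the obstruction is the de Rham class of the closed $1$-form $\iota\theta\in\sA^{1,\cl}(M,\t\dual[d-1])$. This yields $\Map_{\dSt^{BT}_B}(M,\t\dual[d-1])^{\isot}\simeq\sA^{2,\cl}(M,d)\times_{\sA^{1,\cl}(M,\t\dual[d-1])}\sA^0(M,\t\dual[d-1])$, which is \Cref{Cor:preHamilMomentMapEquation}. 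For $r\geq 2$ this step uses (A3) to kill the higher terms, via the induction in \Cref{Prop:Equiv}.

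What remains is exactly \Cref{Lem:Exactness}: the differential $d\colon\sA^0(M,d-1)\to\sA^{1,\cl}(M,d-1)$ is an equivalence.
\begin{itemize}
\item For $d\leq -1$ this holds because $\sA^{\DR}(M,d)$ is contractible by \cite[Lem.~6.1.2]{Park2}.
\item For $d=0$, $B=\pt$, it holds because $\sA^{\DR}(M,0)$ is discrete and detected by pullback to points. On a point the de Rham class of a closed $1$-form vanishes, since $\LL_{\pt}\simeq 0$.
\end{itemize}
This pointwise detection is what fails over a nontrivial base, which is why (A2) requires $d\leq -1$.

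Finally, \cite[Prop.~3.2]{KPS} concerns negatively shifted closed $2$-forms under quasi-compactness and affine-stabilizer hypotheses. It is not the input needed here.
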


\begin{definition}\label{Def:SympRig}
Under \Cref{Assumption}, the \defterm{symplectic rigidification} functor is the composition
\[\rig_{\Symp} \colon \Symp_{B,d}^{BT} \xleftarrow[\simeq]{\forget_{\Hamil}} \Hamil_{B,d}^{BT} \xrightarrow{\red_{\Hamil}} \Symp_{B,d} . \]
\end{definition}

More explicitly, for a $d$-shifted symplectic stack $M$ with a $BT$-action, the action is symplectic by \Cref{Prop:BT-symp} and is Hamiltonian by \Cref{Thm:Ham=Symp} under \Cref{Assumption}.
The symplectic rigidification $M/\!/BT$ is the Hamiltonian reduction of the $BT$-action, which is the Lagrangian intersection
\[\xymatrix@C-1pc{
M/\!/BT \ar[rr] \ar[d] \ar@{}[rrd]|-{\Box} && B^2T \ar[d]^0 \\
M/BT \ar[r] & \T^*[d+1](B^2T/B) \ar@{}[r]|-{\simeq} & \t\dual[d-1]/BT .
}\]
Here, $\t : = \TT_{T/B,\id}$ is the Lie algebra of $T$.

\begin{corollary}\label{Cor:rigid-adjunction}
Under \Cref{Assumption}, we have a canonical adjunction of $\infty$-categories
\[\xymatrix{
\rig_{\Symp} \colon \Symp_{B,d}^{BT}  \ar@<.4ex>[r] & \ar@<.4ex>[l] \Symp_{B,d} } \colon \triv_{\Symp}.\]
\end{corollary}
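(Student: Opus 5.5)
This is a formal consequence of \Cref{Thm:Ham=Symp} combined with the general adjunction $\red_{\Hamil}\dashv\triv_{\Hamil}$ from \S\ref{ss:Hamiltonian}, specialized to the smooth group stack $G=BT$. That adjunction is \Cref{Prop:SympAdj} applied to the zero section Lagrangian $0\colon B^2T\to \t\dual[d-1]/BT\simeq \T^*[d+1](B^2T/B)$.

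First, note that $BT$ is a smooth group stack over $B$: its underlying stack $BT$ is geometric and smooth over $B$. So the constructions of \S\ref{ss:Hamiltonian} apply and give the adjunction
\[\red_{\Hamil}\colon \Hamil^{BT}_{B,d}\rightleftarrows \Symp_{B,d}\colon \triv_{\Hamil}.\]
Second, under \Cref{Assumption}, \Cref{Thm:Ham=Symp} says that $\forget_{\Hamil}\colon \Hamil^{BT}_{B,d}\to\Symp^{BT}_{B,d}$ is an equivalence; choose an inverse $\forget_{\Hamil}^{-1}$. Pre- and post-composing an adjunction with an equivalence and its inverse again gives an adjunction. We therefore obtain
\[\red_{\Hamil}\circ\forget_{\Hamil}^{-1}\dashv \forget_{\Hamil}\circ\triv_{\Hamil}.\]
The unit and counit are transported along the equivalence. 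By \Cref{Def:SympRig}, the left adjoint is exactly $\rig_{\Symp}$.

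It remains to identify the right adjoint with $\triv_{\Symp}$. This is the commutative triangle $\forget_{\Hamil}\circ\triv_{\Hamil}\simeq\triv_{\Symp}$ stated after \Cref{Notation:Functors-Forget/HamRed/TrivHam}, which was deduced from \Cref{Lem:DecompositionofPullback} with $U=B^2T\to B$. That lemma needs $B^2T\to B$ to be locally geometric and locally of finite presentation, which holds here. Since adjoints are stable under natural equivalence, we get $\rig_{\Symp}\dashv\triv_{\Symp}$, as claimed.

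There is no real obstacle, since all the substantive work sits in \Cref{Thm:Ham=Symp}. The only point needing care is checking the hypotheses of \Cref{Lem:DecompositionofPullback}, namely that $B^2T\to B$ is locally geometric and locally of finite presentation. This is needed so that the triangle identifying $\forget_{\Hamil}\circ\triv_{\Hamil}$ with $\triv_{\Symp}$ is available for $G=BT$.
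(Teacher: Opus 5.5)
Your proposal is correct and matches the paper's own proof: you transport the adjunction $\red_{\Hamil}\dashv\triv_{\Hamil}$ from \Cref{Prop:SympAdj} along the equivalence $\forget_{\Hamil}$ of \Cref{Thm:Ham=Symp}, and then identify the right adjoint with $\triv_{\Symp}$ via \Cref{Lem:DecompositionofPullback}. Your extra checks, namely that $BT$ is a smooth group stack and that $B^2T\to B$ is locally geometric and locally of finite presentation, are details the paper leaves implicit.
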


\begin{proof}
We have a canonical adjunction $\red_{\Hamil} \dashv \triv_{\Hamil}$ by \Cref{Prop:SympAdj}.
Since $\forget_{\Hamil}$ is an equivalence by \Cref{Thm:Ham=Symp}, 
we have an induced adjunction \[\rig_{\Symp} \coloneq \red_{\Hamil}\circ (\forget_{\Hamil})^{-1} \dashv \forget_{\Hamil} \circ \triv_{\Hamil} \simeq \triv_{\Symp},\]
where $\triv_{\Symp} \simeq \forget_{\Hamil} \circ \triv_{\Hamil}$ by \Cref{Lem:DecompositionofPullback}.
\end{proof}

The key ingredient for proving \Cref{Thm:Ham=Symp} is the {\em contraction map} (\Cref{Def:Contraction})
\begin{equation}\label{Eq:ContractionMap}
\iota \colon \sA^{p,\cl}_{BT\textnormal{-}\inv}(M,d) \longrightarrow \sA^{p-1,\cl}(M,\t\dual[d-1]),
\end{equation}
for a $BT$-action on a derived stack $M$.
Using the contraction map, we can actually show that both the conditions (A1) and (A2) in \Cref{Assumption} are {\em sharp} for \Cref{Thm:Ham=Symp}.
In general, without \Cref{Assumption}, there exists an {\em obstruction} for a $BT$-action being Hamiltonian.

\begin{theorem}\label{Thm:MomentMapEquation}
Let $M$ be a $d$-shifted symplectic stack with $\theta\in \sA^{2,\cl}(M,d)$.
For a $BT$-action on $M$, the space of Hamiltonian structures fits into a canonical commutative square of spaces
\begin{equation}\label{Square:MomentMapEquation}
\xymatrix{
\sHamil(BT \circlearrowleft M) \ar[r] \ar[d]  & \Map(M, \t\dual[d-1]) \ar[d]^{d} \\
\star \ar[r]^-{\iota \theta} & \sA^{1,\cl}(M,\t\dual[d-1]).
}
\end{equation}
Moreover, if we assume that
\begin{enumerate}
\item [(A3)] $B$ is a classical Artin stack, locally of finite type, and $d \leq 2$,
\end{enumerate}
then the above commutative square \eqref{Square:MomentMapEquation} is cartesian.
\end{theorem}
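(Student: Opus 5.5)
The plan is to compute the fiber in \Cref{Prop:HamilviaEquiv} explicitly for $G=BT$. That proposition identifies $\sHamil(BT\circlearrowleft M)$ with the space of paths from $[\theta^{\inv}]$ to $0$ in the cofiber of the map of filtered complexes
\[\DR(M/BT\to B)\lra \Gamma\big(B^2T,\DR(M/BT\to B^2T)\big),\]
taken in $\Fil^0$ and shifted by $d+2$. So the whole problem is to understand this cofiber.

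\emph{Filtering the relative de Rham complex.} For $M/BT\to B^2T\to B$ we have $\LL_{B^2T/B}\simeq \t\dual[-2]$ (the dual of $\TT_{BT,\id}=\t[1]$, shifted once more), pulled back to $M/BT$. Hence $\DR(M/BT\to B)$ carries a finite-type ``Koszul'' filtration. Its $k$-th graded piece is
\[\Gamma\big(B^2T,\DR(M/BT\to B^2T)\big)\otimes \Sym^k(\t\dual[-3]),\]
with the Hodge filtration shifted by $k$. I would check this on graded pieces using \eqref{Eq:GrDR} and the cofiber sequence $\pi^*\LL_{B^2T}\to\LL_{M/BT}\to\LL_{M/BT/B^2T}$; the only subtlety is completeness, which holds because the Hodge filtration is complete.

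Base change along $B\to B^2T$, as in the proof of \Cref{Cor:ExactSympAct}, identifies the invariant forms with $\DR(M)$ up to $BT$-invariants. Since $BT$ is connected, for $\Fil^p$ of de Rham complexes these are just $\DR(M)$ itself. I expect this equivalence $\Gamma(B^2T,\DR(M/BT\to B^2T))\simeq\DR(M)$ to be the cleanest route to the invariance statement of \Cref{Prop:BT-symp} as well.

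\emph{The obstruction and the square.} The first layer ($k=1$) of the cofiber is $\Fil^{\bullet-1}\DR(M)\otimes\t\dual[-3]$. The image of $[\theta^{\inv}]$ there, with the shift $d+2$, is an element of $\Fil^1\DR(M)\otimes\t\dual[d]$, i.e.\ of $\sA^{1,\cl}(M,\t\dual[d-1])$. I will define this element to be $\iota\theta$; this is consistent with \Cref{Def:Contraction}, since the connecting map with $\pi^*\LL_{B^2T}$ is exactly contraction with the vector field $\t[1]\to\TT_M$ generating the action.

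A path $[\theta^{\inv}]\sim 0$ in the full cofiber projects to a path in the first layer. There it becomes a null-homotopy of $\iota\theta$ in $\Fil^0\DR(M)\otimes\t\dual[d]$ rather than in $\Fil^1$. Such data is precisely a section of $\Gr^0=\O_M\otimes\t\dual[d-1]$, i.e.\ a map $M\to\t\dual[d-1]$, together with an identification of its de Rham differential with $\iota\theta$. This is because $\fib(\Fil^1\to\Fil^0)$-paths are classified by $\Gr^0$ shifted by one. This produces the commutative square \eqref{Square:MomentMapEquation}.

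\emph{Cartesianness under (A3).} The remaining layers $k\ge 2$ contribute spaces of the form
\[\Map\big(\O_B,\ \Fil^{0}\DR(M)\otimes\Sym^k\t\dual[d+2-3k]\big).\]
Here $\Fil^{-k+\cdots}=\Fil^0$ because $\Fil^p\DR=\Fil^0\DR$ for $p\le 0$. For $B$ classical and locally of finite type, the Hodge-completed $\Fil^0\DR(M\to B)$ is the derived de Rham (Betti-type) cohomology and is coconnective relative to $B$, as in \cite[Prop.~3.2]{KPS}. So these mapping spaces have vanishing $\pi_0,\pi_1$ (and are contractible) once $d+2-3k\le -2$ for all $k\ge2$, i.e.\ $d\le 2$.

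Hence the truncation of the cofiber to the layers $k\le1$ does not change the relevant path space, and the square becomes a pullback. The main obstacle is this step: proving the coconnectivity of $\Fil^0\DR(M\to B)$ in the relative setting over a classical higher Artin $B$, and controlling the gluing between the layers $k=1$ and $k\ge2$. I would first reduce smooth-locally on $B$ to the affine case and then use descent.
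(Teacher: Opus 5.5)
Your route is viable and differs from the paper's in its core step, but that step is asserted rather than proved. The outer architecture is the paper's. You start from \Cref{Prop:HamilviaEquiv}, identify $\Gamma(B^2T,\DR(M/BT\to B^2T))\simeq\DR(M)$ (this is \Cref{Prop:Invariance}), and finish with the fiber sequence $\sA^0(M,\t\dual[d-1])\xrightarrow{d}\sA^{1,\cl}(M,\t\dual[d-1])\xrightarrow{[-]}\sA^{\DR}(M,\t\dual[d])$. The difference is in the paper's \Cref{Prop:Equiv}. There you use a Koszul/Katz--Oda filtration of $\DR(M/BT\to B)$ along $M/BT\to B^2T\to B$, with layers $\DR(M)\otimes\Lambda^k\t\dual(-k)[-3k]$ for $0\le k\le r$, handling all ranks at once. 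The paper never builds this filtration.

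For $r=1$ your filtration has only two layers. The paper proves that two-step statement directly: $\DR(M/BT)\to\DR(M)\xrightarrow{\iota}\DR(M)\otimes\t\dual(-1)[-2]$ is a cofiber sequence. It writes $M\simeq\colim_n M_n^{\triv}\times BT$, reduces via K\"unneth to $M=BT$, and computes with \Cref{Lem:Splitting}. For $r>1$ it splits off one $\GG_m$ at a time, needing only that $\sA^{\DR}(M,\t_1\dual\otimes\t_2\dual[d-2])$ is discrete \cite[Lem.~6.1.2]{Park2}. Your version would give uniformity in $r$ and the full tower of higher obstructions in $\sA^{\DR}(M,\Lambda^k\t\dual[d+3-3k])$ from the paper's closing remark. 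The paper's version is more economical, and there the second map \emph{is} $\iota$ by construction.

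To turn your sketch into a proof, several things need fixing.
\begin{itemize}
\item A filtration of the complete filtered complex $\DR(M/BT\to B)$ cannot be ``checked on graded pieces''; it must be constructed first. The natural construction uses $M\times BT\rightrightarrows M\to M/BT$ and K\"unneth, reducing to the Koszul complex of $\DR(BT)$. That is the paper's $r=1$ argument, generalized.
\item \emph{Defining} $\iota\theta$ as your connecting map changes the bottom arrow of the square. You must compare it with the $\sigma^*$-defined $\iota$ of \Cref{Def:Contraction}, on which \Cref{Lem:Contraction-Chern} depends.
\item Your shifts are off by one. Write $F^\bullet$ for your filtration. The cofiber of $\DR(M/BT\to B)\to\DR(M)$ is $F^1[1]$, so its first layer is $\DR(M)\otimes\t\dual(-1)[-2]$. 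This matches your $\Fil^1\DR(M)\otimes\t\dual[d]$ but not your $[-3]$. The $k$-th layer then contributes $\sA^{\DR}(M,\Lambda^k\t\dual[d+3-3k])$.
\item With $B$ classical, that layer is contractible once $d+3-3k\le -1$, so all layers with $k\ge 2$ die exactly when $d\le 2$. Your ``$d+2-3k\le -2$'' gives the same bound only because two errors cancel.
\item Since those layers are contractible, there is no gluing problem. The coconnectivity you flag as the main obstacle is precisely \cite[Lem.~6.1.2]{Park2}, which the paper also imports; \cite[Prop.~3.2]{KPS} covers only the absolute case.
\item Invariance is not a consequence of $BT$ being connected. $B\GG_a$ is connected, yet $\Gamma(B\GG_a,\O)\not\simeq\C$ and the analogue of \Cref{Lem:FullyFaithful} fails for it. The real reason is the weight decomposition in \Cref{Lem:FullyFaithful}, so simply cite \Cref{Prop:Invariance}.
\end{itemize}
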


More explicitly, \Cref{Thm:MomentMapEquation} says that for a Hamiltonian $BT$-action on a $d$-shifted symplectic $M$,
we have the \defterm{moment map equation}:
\begin{equation}\label{Eq:MomentMapEquation}
\iota\theta \simeq d\mu \textin \sA^{1,\cl}(M,\t\dual[d-1]).
\end{equation}
Under the assumption (A3), a $BT$-action on a $d$-shifted symplectic stack $M$ is Hamiltonian if and only if there exists a map $\mu \colon M \to \t\dual[d-1]$ satisfying the moment map equation \eqref{Eq:MomentMapEquation}.
Hence we may say that the associated de Rham class
\[[\iota \theta] \in \sA^{\DR}(M,\t\dual[d])\]
is the obstruction for the $BT$-action being Hamiltonian.
In \S\ref{Sec:Moduli}, we will provide non-Hamiltonian examples when \Cref{Assumption} is not satisfied, by computing the Hamiltonian obstructions.

\begin{remark}
If we don't even assume (A3) in \Cref{Thm:MomentMapEquation}, then the commutative square \eqref{Square:MomentMapEquation} is not necessarily cartesian.
There are additional higher obstructions in $\sA^{\DR}(M,\Lambda^k\t\dual[d-3k+3])$, for a $BT$-action being Hamiltonian.
\end{remark}

\subsection{Invariance}\label{ss:invariance}

We will show that all $BT$-actions on shifted symplectic stacks are symplectic (\Cref{Prop:BT-symp}).
More generally, all closed forms on derived stacks with $BT$-actions are invariant.

\begin{proposition} 
\label{Prop:Invariance}
Let $M$ be a derived stack with a $BT$-action.
Then the forgetful map
\[\cancel{\inv} :\sA^{p,\cl}_{BT\textnormal{-}\inv}(M,d) \xrightarrow{\simeq} \sA^{p,\cl}(M,d)\]
is an equivalence of spaces for all $p$ and $d$.
\end{proposition}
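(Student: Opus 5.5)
We reduce the statement to a computation of relative de Rham complexes along the morphism $q\colon M/BT \to B^2T$. Write $s\colon B \to B^2T$ for the base point. Then $M \simeq (M/BT)\times_{B^2T} B$, and the forgetful map $\cancel{\inv}$ is induced by the comparison
\[
\Gamma\bigl(B^2T, \Fil^p\DR(M/BT \to B^2T)\bigr) \lra \Gamma\bigl(B, s^*\Fil^p\DR(M/BT\to B^2T)\bigr) \simeq \Fil^p\DR(M\to B).
\]
The second identification is base change for de Rham complexes. It holds because $\Gr^p$ satisfies base change: the cotangent complex does, and pushforward along $q$ base-changes along $s$. Completeness then passes it to $\Fil^p$. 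So it suffices to show that for every $E\in\QCoh(B^2T)$ of the relevant form, the map $\Map_{B^2T}(\O,E)\to \Map_B(\O, s^*E)$ is an equivalence. Equivalently, the unit $E \to \pi^*\pi_*$-type comparison is trivial: global sections over $B^2T$ compute $\Gamma(B,s^*E)$.

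The key input is the structure of $\QCoh(B^2T)$. We have $B^2T = B(BT)$, and quasi-coherent sheaves on it are comodules (or modules) over $\O(BT)$ with its group-stack coalgebra structure. Descent along the effective epimorphism $s$ expresses $\Gamma(B^2T,E)$ as the totalization
\[
\lim_{[n]\in\Delta}\Gamma\bigl((BT)^{\times n}, E_n\bigr)\simeq \lim_{[n]\in\Delta}\Gamma\bigl(B, s^*E\otimes \O(BT)^{\otimes n}\bigr),
\]
where $\O(BT)=\Gamma(BT,\O)$. For $T=\GG_m^{\times r}$ in characteristic zero, $\O(BT)\simeq \O_B$, since $\GG_m$ is linearly reductive and $H^{>0}(BT,\O)=0$, $H^0=\O_B$. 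Hence every term of the cosimplicial object is $\Gamma(B,s^*E)$ with all coface maps equivalences, so the totalization is $\Gamma(B,s^*E)$. Put differently, $s^*\colon\QCoh(B^2T)\to\QCoh(B)$ is fully faithful on the relevant objects, and $\pi_*\pi^*\simeq \id$ for $\pi\colon B^2T\to B$.

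To make this rigorous I would work one graded piece at a time, which is enough since $\Fil^p$ is complete. Each $\Gr^k$ is $q_*\Lambda^k\LL_{(M/BT)/B^2T}[-k]$, a quasi-coherent sheaf on $B^2T$. Global sections commute with the limit in the completed filtration, so the equivalence on each $\Gr^k$, $k\ge p$, gives it for $\Fil^{p/q}$ by induction and then for $\Fil^p$ by passing to the limit.

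The main obstacle is the identification $\Gamma(B^2T,E)\simeq\Gamma(B,s^*E)$ for arbitrary (unbounded) $E$. Here one needs that the cosimplicial diagram above is genuinely constant up to equivalence, not only after applying cohomology. I would argue this via the Künneth formula $\Gamma\bigl((BT)^{n},s^*E\boxtimes\O\bigr)\simeq\Gamma(B,s^*E)\otimes\O(BT)^{\otimes n}$. This holds because $BT\to B$ has $\pi_*$ commuting with colimits and satisfying base change, as $BT$ is a quotient by a linearly reductive group, so $\pi_*$ is exact and colimit-preserving. An alternative route is to compare the composite $\QCoh(B)\xrightarrow{\pi^*}\QCoh(B^2T)\xrightarrow{\pi_*}\QCoh(B)$ with the identity, and to check that $s^*$ is conservative. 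Note that the resulting equivalence is independent of $p$ and $d$, as the statement requires.
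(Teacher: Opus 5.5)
Your argument is correct, but it is organized differently from the paper's.

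\textbf{The paper's route.} It stays upstairs on $M/BT$. Using that $\Gamma\DR$ turns colimits into limits, it reduces to $M/BT$ affine. It then identifies $\Gr^p$ of the comparison with $\Gamma(M/BT,\Lambda^p\LL_{(M/BT)/B^2T}[-p])\to\Gamma(M,\Lambda^p\LL_{M/B}[-p])$. This map is an equivalence by \Cref{Lem:FullyFaithful}, i.e.\ full faithfulness of $\QCoh(M/BT)\to\QCoh(M)$, which is proved via $M\simeq\colim M_n^{\triv}\times BT$ and $\QCoh(N\times BT)\simeq\prod_{w\in\Z^r}\QCoh(N)$. Completeness finishes the proof.

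\textbf{Your route.} You push everything down to the base. Via the base change $s^*\Fil^p\DR(M/BT\to B^2T)\simeq\Fil^p\DR(M\to B)$, you only need the special case $M=B$ of that lemma (trivial action, so $M/BT=B^2T$). Your \v{C}ech computation along $s$ establishes this case for arbitrary unbounded $E$, since each $\pi_n^*\colon\QCoh(B)\to\QCoh((BT)^n)$ is fully faithful. The same computation with $\Map(E',E)$ in fact shows that $s^*\colon\QCoh(B^2T)\to\QCoh(B)$ is an equivalence with inverse $\pi^*$. What this buys you is an argument that works directly with the filtered complex for arbitrary $M$. It never needs $M/BT$ to be geometric, nor the identification of graded pieces with exterior powers of cotangent complexes.

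\textbf{Two corrections to your justifications.}
\begin{enumerate}
\item The base change you need is tautological from how $\DR(-\to B^2T)$ is defined: as the limit of $\DR(-\times_{B^2T}A\to A)$ over affines $A\to B^2T$, the same description used in the proof of \Cref{Lem:Kunneth}. Your stated reason does not work in this generality. The formula $\Gr^p\DR\simeq q_*\Lambda^p\LL[-p]$ requires $q$ to be locally geometric, which is not assumed for an arbitrary derived stack $M$. Passing from $\Gr$ to $\Fil^p$ ``by completeness'' would also require $s^*$ to commute with $\lim_q$; this is true here, but only because $s^*$ turns out to be an equivalence.
\item For the same reason, your paragraph working one graded piece at a time is unnecessary, and it relies on the same geometricity. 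Once $\Map_{B^2T}(\O,E)\simeq\Map_B(\O,s^*E)$ holds for all $E$, apply it directly to $E=\Fil^p\DR(M/BT\to B^2T)[p+d]$.
\end{enumerate}
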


\Cref{Prop:Invariance} will follow from the following observation.

\begin{lemma}\label{Lem:FullyFaithful}
Let $M$ be a derived stack with a $BT$-action.
Then the pullback functor
\[\QCoh(M/BT) \xrightarrow{} \QCoh(M) \]
is fully faithful.
\end{lemma}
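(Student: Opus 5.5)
Write $q\colon M \to M/BT$ for the quotient map. The plan is to show that $q$ behaves like a ``$B^2T$-torsor'' and to compute $\QCoh$ of $B^2T$ directly.

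\emph{Step 1: reduce to $B^2T$.} Since $B \to B^2T$ is an effective epimorphism, the map $q$ is the base change of $s\colon B \to B^2T$ along $M/BT \to B^2T$. Quasi-coherent sheaves satisfy descent, so $\QCoh(M/BT)$ is the limit of $\QCoh$ over the Čech nerve of $q$. Using the action equivalences $M_n \simeq M\times_B (BT)^n$, this limit is the category of comodules in $\QCoh(M)$ over the coalgebra $\O(BT)$, where $\O(BT)$ is formed with the group structure of $BT$. It therefore suffices to check that the coalgebra/comonad is ``idempotent''. Concretely, $q^*q_*\simeq \id$ up to the counit is what fully faithfulness means: $q^*$ is fully faithful iff the unit $\id \to q_*q^*$ is an equivalence. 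By base change (valid here because everything is a base change from $B^2T$, and pushforward along $BT$-torsors commutes with base change by the same argument as \cite[Cor.~1.4.5]{DG}), this unit is $\mathcal{F}\mapsto \mathcal{F}\otimes \pi_*\O$, where $\pi$ is the pullback of $s$. So the claim reduces to showing that $\O_{B^2T}\to s_*\O_B$ is an equivalence after base change. Equivalently, fiberwise, we need $\Gamma(BT,\O_{BT})\simeq \O_B$.

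\emph{Step 2: the key computation.} The fiber of $s\colon B \to B^2T$ is $BT$ (that is, $B\times_{B^2T}B\simeq BT$), so the unit becomes $\mathcal{F}\to \mathcal{F}\otimes \Gamma(BT,\O)$. Now $\Gamma(BT,\O)=\Gamma(B,\O_B)\otimes \O(T)^T$, computed as group cohomology of the torus $T$ acting trivially on $\O_B$. Since $T=\GG_m^{\times r}$ is linearly reductive, its higher cohomology vanishes and $H^0$ is the trivial-weight part, namely $\O_B$. Hence $\pi_*\O\simeq \O$, and the unit is an equivalence. Relative over $B$, one writes $BT = B\GG_m^{r}\times B$ and uses the weight decomposition $\QCoh(B\GG_m^r\times B)\simeq\prod_{\Z^r}\QCoh(B)$, under which pushforward is projection to weight $0$. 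This is exact and commutes with colimits and base change.

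\emph{Step 3: the main obstacle.} The hardest part will be justifying the base change and descent for non-quasi-compact (higher) stacks $M$, so that fully faithfulness can be checked after pulling back along $B \to B^2T$. I would handle this by writing $\QCoh(M/BT)=\lim_{S\to M/BT}\QCoh(S)$ over affines $S$. The pullback $S\times_{M/BT}M\to S$ is a $BT$-gerbe, which is étale-locally (in fact fppf-locally) trivial, equal to $S\times BT$. On a trivial gerbe, the unit computation of Step 2 applies directly. Fully faithfulness of $\QCoh(S)\to\QCoh(S\times BT)$ then passes to the limit, because limits of fully faithful functors are fully faithful.
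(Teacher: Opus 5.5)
Your proposal is correct and is essentially the paper's proof. Both reduce by descent to trivial torsors $S\times BT\to S$, use the weight decomposition $\QCoh(S\times BT)\simeq\prod_{w\in\Z^r}\QCoh(S)$ (under which pullback is the inclusion of the weight-zero factor), and use that limits of fully faithful functors are fully faithful. The paper picks a more efficient cover, the \v{C}ech nerve of $q$ itself: $M\simeq\colim_n M_n^{\triv}\times BT$ with $M_n\times_{M/BT}M\simeq M_n\times BT$. This trivializes the torsor tautologically, so the \'etale-local trivialization in your Step 3 and the base-change/projection-formula discussion in Steps 1--2 are unnecessary.

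You should delete two remarks in Step 1, neither of which is used afterwards. The description of $\QCoh(M/BT)$ as comodules over the global functions of the non-affine group $BT$ is false: since those functions are just $\O_B$, it would give $\QCoh(M/BT)\simeq\QCoh(M)$, which already fails for $BT$ acting on itself by translation, where $M/BT\simeq B$. The phrase identifying fully faithfulness of $q^*$ with $q^*q_*\simeq\id$ is backwards; that is the criterion for $q_*$ to be fully faithful.
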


\begin{proof}
Recall that $M/BT \coloneq \varinjlim_{n \in \Delta^{\op}}M_n$, where $M_n \in \dSt_B$ is the image of $n\in \Delta$ under the functor $M \colon \Delta^{\op} \to \dSt$, defining the $BT$-action on $M$.
Equivalently, we have a canonical equivalence
\[M \simeq \varinjlim_{n \in \Delta^{\op}} M_n^{\triv} \times BT \textin \dSt_B^{BT},\]
where $M_n^{\triv} \in \dSt_B^{BT}$ is the derived stack $M_n$ with the trivial $BT$-action.
Since the functor $\QCoh \colon \dSt^{\op} \to \Cat$ preserves limits, it suffices to prove the statement for $M_n^{\triv} \times BT$.
Then we have a canonical equivalence of $\infty$-categories,
\[\QCoh(M_n \times BT) \simeq \prod_{w\in \Z^{ r}}\QCoh(M_n)\]
and the pullback $\pr_1^* \colon\QCoh(M_n) \to \QCoh(M_n \times BT)$ factors through the $w=0$ component, see \cite[Thm~.4.1]{Mou} and \cite[Lem.~5.7]{BP}.
In particular, the pullback functor $\pr_1^*$ is fully faithful.
\end{proof}

\begin{proof}[Proof of \Cref{Prop:Invariance}]
Denote by $\Gamma\DR(X \to Y) \coloneq (Y \to \pt)_* \DR(X \to Y) \in \Fil \big(\QCoh(\pt)\big) $ for any morphism $X \to Y$ of derived stacks.
It suffices to show that the pullback map
\begin{equation}\label{Eq:F2}
\Gamma\DR(M/BT \to B^2T) \to \Gamma\DR(M \to B) \textin  \widehat{\Fil}\big(\QCoh(\pt)\big)
\end{equation}
is an equivalence.
The functor $\Gamma\DR \colon \dSt_B^{\op} \to \widehat{\Fil}\big(\QCoh(\pt)\big)$ preserves limits since $\DR$ is defined as a right Kan extension.
Since the derived stack $M/BT$ is a colimit of derived affine schemes, we may assume that $M/BT$ is a derived affine scheme.
Since $M/BT$ is geometric, $\Gr^p$ of the map \eqref{Eq:F2} can be identified to the pullback map
\[ \Gamma(M/BT,\Lambda^p\LL_{(M/BT)/B^2T}[-p]) \to \Gamma(M,\Lambda^p\LL_{M/B}[-p]) \]
by \eqref{Eq:GrDR}.
This map is an equivalence since composing it with $\Map_{\QCoh(\pt)}(\O_{\pt}[d]-)$ gives
\[\Map_{\QCoh(M/BT)}(\O_{M/BT},\Lambda^p\LL_{(M/BT)/B^2T}[-p-d])\xrightarrow{\simeq}\Map_{\QCoh(M)}(\O_M,\Lambda^p\LL_{M/B}[-p-d])\]
which is an equivalence by \Cref{Lem:FullyFaithful}.
Since \eqref{Eq:F2} is a map of complete filtered complexes,
the associated graded equivalence implies the desired equivalence.
\end{proof}

\begin{corollary}
Let $M$ be a $d$-shifted symplectic stack with a $BT$-action.
Then the space of symplectic structures $\sSymp(BT\circlearrowleft M)$ on the action is contractible.
\end{corollary}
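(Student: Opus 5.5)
The contractibility follows directly from the definition of $\sSymp(BT\circlearrowleft M)$ in \eqref{Eq:Space-sympactions} together with \Cref{Prop:Invariance}. By definition, $\sSymp(BT\circlearrowleft M)$ is the fiber of the forgetful map
\[\cancel{\inv}\colon \sA^{2,\cl}_{BT\textnormal{-}\inv}(M,d) \lra \sA^{2,\cl}(M,d)\]
over the point $\theta_M$. Applying \Cref{Prop:Invariance} with $p=2$, this map is an equivalence of spaces. The fiber of an equivalence of spaces over any point is contractible, so $\sSymp(BT\circlearrowleft M)\simeq \star$.

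The only point to check is that the map appearing in \eqref{Eq:Space-sympactions} is literally the map shown to be an equivalence in \Cref{Prop:Invariance}. Both are induced by pulling back de Rham complexes along the square relating $M \to B$ and $M/BT \to B^2T$, so they agree. There is no real obstacle here: all the substance lies in \Cref{Lem:FullyFaithful} and \Cref{Prop:Invariance}.

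In particular, every $BT$-action on a $d$-shifted symplectic stack admits an essentially unique symplectic structure. This is the object-level shadow of \Cref{Prop:BT-symp}. The categorical version would apply the same equivalence of \Cref{Prop:Invariance} over the correspondences, together with the fact that non-degeneracy conditions are detected after forgetting the action.
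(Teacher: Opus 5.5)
Your proof is correct and matches the paper's. The paper's one-line proof cites \Cref{Prop:BT-symp}, which is itself deduced from \Cref{Prop:Invariance}. Taking the fiber of the equivalence $\cancel{\inv}$ from \Cref{Prop:Invariance} (with $p=2$) over $\theta_M$, as you do, is the same argument and the more direct route.
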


\begin{proof}
This follows from the definition of $\sSymp(BT\circlearrowleft M)$ in \eqref{Eq:Space-sympactions} and \Cref{Prop:BT-symp}.
\end{proof}

\begin{proof}[Proof of \Cref{Prop:BT-symp}]
Let $s\colon B \to B^2T$ be the section.
Denote by $s_* \colon \dSt_{B} \to \dSt_{B^2T}$ the {\em Weil restriction}, i.e., the right adjoint of the pullback $s^* \colon \dSt_{B^2T} \to \dSt_B$.
By \Cref{Prop:Invariance}, the map
\[\sA^{2,\cl}_{B^2T}[d] \xrightarrow{\simeq} s_*(\sA^{2,\cl}_{B}[d]),\]
induced from the canonical map $s^* \sA^{2,\cl}_{B^2T}[d] \to \sA^{2,\cl}_B[d]$, is an equivalence.
Consider the following canonical commutative diagram of $\infty$-categories
\[\xymatrix@-.5pc{
\Symp_{B^2T,d} \ar[r] \ar[d] \ar@{}[rd]|-{(S_1)} & \pSymp_{B^2T,d} \ar[r] \ar[d] \ar@{}[rd]|-{(S_2)} & \Corr_{B^2T} \ar[d] \\
\Symp_B \ar[r] & \pSymp_{B,d} \ar[r] & \Corr_B.
}\]
The left square $(S_1)$ is cartesian since $B \to B^2T$ is smooth surjective and the non-degeneracy of $2$-forms is a smooth local property.
The right square $(S_2)$ is cartesian since the canonical map $\sA^{2,\cl}_{B^2T}[d] \xrightarrow{} s_*(\sA^{2,\cl}_{B}[d])$ is an equivalence, see \Cref{Lem:Corr-Cart}.
Hence the total square $(S_1)+(S_2)$ is also cartesian.
\end{proof}

\subsection{Contraction map}

We will construct the {\em contraction map} \eqref{Eq:ContractionMap} used in \Cref{Thm:MomentMapEquation}.
More generally, we will construct the contraction map as a morphism of de Rham complexes.
Classically, the Cartan magic formula
implies that the de Rham differentials commute with contraction maps for {\em invariant} forms.
The heuristic reason for having the contraction map for a $BT$-action as a morphism of de Rham complexes is that 
all differential forms are invariant by \Cref{Prop:Invariance}.

We first provide a {\em K{\"u}nneth formula} for de Rham complexes.
Denote by $\widehat{\otimes}$ the symmetric monoidal structure on $\widehat{\Fil}\big(\QCoh(B)\big)$.

\begin{lemma}\label{Lem:Kunneth}
Let $N$ be a derived stack, satisfying the following properties:
\begin{itemize}
\item the projection $p_N\colon N \to B$ is locally geometric and locally of finite presentation,
\item $p_{N,*} \colon \QCoh(N) \to \QCoh(B)$ universally preserves colimits and perfect complexes.
\end{itemize}	
Then for any derived stack $M$, there exists a canonical equivalence
\[\DR(M) \widehat{\otimes} \DR(N) \simeq \DR(M\times N) \textin \Fil\big(\QCoh(B)\big).\]
\end{lemma}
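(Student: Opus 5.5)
The plan is to build the comparison map first, then check that it is an equivalence after passing to associated graded pieces. On the graded pieces the question reduces to the K\"unneth formula for cotangent complexes together with a projection/base-change formula for $p_{N,*}$.

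\emph{Construction of the map.} The two projections $\pr_M\colon M\times N\to M$ and $\pr_N\colon M\times N\to N$ (products over $B$) induce pullback maps of commutative algebras $\DR(M)\to \DR(M\times N)$ and $\DR(N)\to\DR(M\times N)$ in $\CAlg(\widehat{\Fil}(\QCoh(B)))$. Since the tensor product is the coproduct of commutative algebras, these assemble into a canonical map
\[\DR(M)\widehat{\otimes}\DR(N)\lra \DR(M\times N).\]
Both sides are complete: the source is complete by definition of $\widehat{\otimes}$, and the target is complete by construction of $\DR$. It therefore suffices to show that the map is an equivalence on $\Gr^p$ for every $p$. On associated graded pieces, the completed tensor product satisfies $\Gr^p(E\widehat\otimes F)\simeq\bigoplus_{i+j=p}\Gr^iE\otimes\Gr^jF$, because completion does not change associated gradeds and Day convolution is compatible with $\Gr$.

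\emph{Reduction in $M$.} The functor $M\mapsto \DR(M)$ sends colimits of derived stacks to limits, since it is a right Kan extension from affines. The functor $M\mapsto \DR(M\times N)$ does the same, because $-\times_B N$ preserves colimits in $\dSt_B$. On the source side, I would use that $-\widehat\otimes\DR(N)$ commutes with limits on the graded pieces. This holds because each $\Gr^j\DR(N)\simeq p_{N,*}(\Lambda^j\LL_{N/B}[-j])$ is perfect by the hypothesis on $p_{N,*}$, hence dualizable, so tensoring with it preserves limits. For a fixed $p$ the sum over $i+j=p$ is finite once we account for $\Gr^j\DR(N)=0$ for $j<0$ and $\Gr^i\DR(M)=0$ for $i<0$, so the finitely many terms cause no trouble. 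This reduces the claim to the case where $M=\operatorname{Spec}A$ is a derived affine scheme over $B$; we may also localize $B$ to be affine, using base change for $p_{N,*}$, which is the ``universally'' in the hypothesis. We may now assume $M\to B$ is affine, in particular geometric, so \eqref{Eq:GrDR} applies to $M$, to $N$, and to $M\times N$.

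\emph{Graded computation.} Since $\LL_{M\times N/B}\simeq \pr_M^*\LL_{M/B}\oplus\pr_N^*\LL_{N/B}$, we have
\[\Lambda^p\LL_{M\times N/B}[-p]\simeq\bigoplus_{i+j=p}\pr_M^*\Lambda^i\LL_{M/B}[-i]\otimes\pr_N^*\Lambda^j\LL_{N/B}[-j].\]
Pushing forward to $B$, we need
\[(p_M\times p_N)_*(\pr_M^*E\otimes\pr_N^*F)\simeq p_{M,*}E\otimes p_{N,*}F\]
for $E\in\QCoh(M)$ and $F=\Lambda^j\LL_{N/B}$. I would factor the pushforward through $M$: base change for the cartesian square $M\times N\to N$ over $M\to B$ gives $\pr_{M,*}\pr_N^*F\simeq p_M^*p_{N,*}F$, and this is where ``universally'' is used. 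The projection formula for the affine morphism $\pr_M$ then gives $\pr_{M,*}(\pr_M^*E\otimes\pr_N^*F)\simeq E\otimes p_M^*p_{N,*}F$. Finally, the projection formula for $p_M$ applied to the perfect complex $p_{N,*}F$ yields $p_{M,*}E\otimes p_{N,*}F$.

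\emph{Main obstacle.} The delicate step is the reduction to affine $M$: commuting $\widehat\otimes\DR(N)$ past the limit. This is exactly where perfectness of $p_{N,*}\Lambda^j\LL_{N/B}$ is essential, and it requires a check that completion on the source side does not interfere. My approach is to argue everything on $\Gr^p$, where completion is invisible, and only then invoke completeness of both sides.
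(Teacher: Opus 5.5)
Your proposal follows the paper's proof essentially step for step: you build the map as the coproduct map in $\CAlg(\widehat{\Fil}(\QCoh(B)))$ and reduce to $\Gr^p$ via conservativity and monoidality of $\Gr$. You then reduce to affine $B$ and affine $M$ using perfectness of $p_{N,*}\Lambda^j\LL_{N/B}$, and conclude with the K\"unneth splitting of $\Lambda^p\LL_{M\times N}$ together with base change and projection formulas.

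One small slip: $\pr_M\colon M\times_B N\to M$ is a base change of $p_N$, not of $p_M$, so it is not affine (e.g.\ for $N=BT$). The projection formula you need for $\pr_M$ still holds, for a different reason. Since $M$ is affine, $\QCoh(M)$ is generated under colimits by $\O_M$, and $\pr_{M,*}$ preserves colimits by the ``universally'' hypothesis.
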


\begin{proof}
Since $\widehat{\otimes}$ is the coproduct in $\CAlg\left( \widehat{\Fil}\big(\QCoh(B)\big)\right)$ by \cite[Prop.~3.2.4.7]{LurHA},
we have a canonical map
\[\boxtimes \colon \DR(M) \widehat{\otimes} \DR(N) \to \DR(M\times N) \textin \CAlg\left( \widehat{\Fil}\big(\QCoh(B)\big)\right).\]
Since $\Gr \colon \widehat{\Fil}\big(\QCoh(B)\big) \to \Gr\big(\QCoh(B)\big)$ is conservative \cite[Lem.~2.15]{GP} and monoidal \cite[Prop.~2.26]{GP},
it suffices to show that the induced map 
\[\bigoplus_{a+b=p}\Gr^a\DR(M) \otimes \Gr^b\DR(N) \to \Gr^p\DR(M\times N) \textin \QCoh(B)\]
is an equivalence.
Since $\Gr^{a}\DR(M)=0$ for $a<0$, there are only finitely many non-zero complexes appearing in the left-hand side.
We may assume that the base $B$ is affine
since the de Rham complex $\DR(M \to B)$ is defined as the limit of $\DR(M \times_B A \to A)$ under $\QCoh(B) \simeq \lim_{A \in \dAff_{/B}} \QCoh(A)$.

Then the statement follows from the arguments in \cite[Lem.~A.7]{BKP} (which proved \Cref{Lem:Kunneth} when $B=\pt$ and $N$ is a smooth projective variety).
Here we briefly recall the proof:

We may also assume that $M$ is affine.
Indeed, since $\Gr^p\DR(N) \simeq p_{N,*}(\Lambda^p\LL_{N/B})$ is a perfect complex on $B$,
the functor $-\otimes\Gr^p\DR(N) \colon \QCoh(B) \to \QCoh(B)$ preserves limits.
The functor
$\DR \colon \dSt_B^{\op} \to \Fil\big(\QCoh(B)\big)$ preserves limits since it is defined as a right Kan extension.
The functor $(-)\times_B N \colon \dSt_B\to\dSt_B$ preserves colimits since $\dSt_B$ is an $\infty$-topos.
Hence if we write $M\simeq \varinjlim_{i} M_i$ for derived affine schemes $M_i$, then it suffices to show the desired equivalence for $M_i$.

Since $M$ and $N$ are now locally geometric, it suffices to show that the canonical map
\[\bigoplus_{a+b=p}\Gamma(M,\Lambda^a\LL_M) \otimes \Gamma(N,\Lambda^b\LL_N) \to \Gamma(M\times N, \Lambda^p \LL_{M\times N})\]
is an equivalence.
Since $\Lambda^p \LL_{M\times N} \simeq \bigoplus_{a+b=p} \Lambda^a\LL_M\boxtimes \Lambda^b \LL_N$,
the equivalence follows from the base change formula and the projection formula in \cite[Prop.~3.10]{BZFN} (see also \cite[Thm.~B.8.12]{CHS}).
\end{proof}

In particular,  we can apply \Cref{Lem:Kunneth} to $N=BT$.

\begin{corollary} \label{Cor:KunnethBT}
Let $M$ be a derived stack. 
Then we have a canonical equivalence
\[\DR(M) \widehat{\otimes} \DR(BT) \simeq \DR(M\times BT) \textin \Fil\big(\QCoh(B)\big).\]
\end{corollary}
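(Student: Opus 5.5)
The plan is to deduce the statement directly from \Cref{Lem:Kunneth} by checking its two hypotheses for $N=BT$, where $p_{BT}\colon BT \to B$ is the structure map. No new construction is needed; the work is only in verifying these hypotheses.

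The first hypothesis holds because $BT=B\GG_{m,B}^{\times r}$ is the quotient of $B$ by the smooth affine group scheme $T$. Hence $p_{BT}$ is $1$-geometric, smooth, and locally of finite presentation, and in particular locally geometric. Its cotangent complex is $\LL_{BT/B}\simeq \t\dual[-1]$, a perfect complex pulled back along $BT\to B$.

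The second hypothesis concerns the pushforward $p_{BT,*}$. I would use the description already invoked in \Cref{Lem:FullyFaithful}:
\[\QCoh(S\times BT)\simeq \prod_{w\in\Z^r}\QCoh(S),\]
from \cite[Thm.~4.1]{Mou} and \cite[Lem.~5.7]{BP}. This holds for any $S$ over $B$, and in particular for any base change $S\to B$. Under this equivalence $p_*$ is the projection onto the weight $w=0$ component, since $T$ is linearly reductive and taking invariants is exact. Projection onto a factor commutes with colimits, because colimits in the product are computed componentwise. The decomposition is compatible with pullback along $S'\to S$, so this preservation of colimits holds universally. For perfect complexes, a perfect complex on $S\times BT$ is dualizable, and its weight components are dualizable on $S$ with only finitely many nonzero. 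The $w=0$ component is therefore perfect. Alternatively, since $p_*$ is the weight-$0$ projection it has the same perfectness property, because the relevant objects are $\Lambda^b\LL_{BT/B}=\Lambda^b\t\dual[-b]$, which have weight $0$.

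The main obstacle I expect is making the identification of $p_{BT,*}$ with the weight-$0$ projection precise over an arbitrary derived base $B$, including compatibility with base change. I would first reduce to $B$ affine: both sides of \Cref{Lem:Kunneth} are computed as limits over $\dAff_{/B}$, and the proof of \Cref{Lem:Kunneth} already makes this reduction. For affine $B$ the splitting of $\QCoh(B\times BT)$ by characters of $T$ is standard. Once both hypotheses are verified, \Cref{Lem:Kunneth} applies and gives the equivalence
\[\DR(M)\widehat{\otimes}\DR(BT)\simeq \DR(M\times BT).\]
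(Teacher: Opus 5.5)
Your proposal is correct and follows the paper's route: apply \Cref{Lem:Kunneth} to $N=BT$ and check the two hypotheses on $p_*$. The only difference is in how the pushforward properties are checked. You read off continuity, base change and preservation of perfect complexes from the weight decomposition, with $p_*$ the weight-zero projection. The paper instead cites \cite[Cor.~1.4.5]{DG} for continuity and base change, and over affine $B$ combines ``perfect $=$ compact'' \cite{BZFN} with \cite[Lem.~5.7]{BP}.

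One minor slip in your aside: $\Lambda^b\LL_{BT/B}\simeq \Sym^b(\t\dual)[-b]$, not $\Lambda^b\t\dual[-b]$. This does not affect the argument.
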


\begin{proof}
By \Cref{Lem:Kunneth}, it suffices to check that $p_* \colon \QCoh(BT) \to \QCoh(B)$ preserves colimits and perfect complexes,
where $p \colon BT \to B$ is the projection map.
By \cite[Cor.~1.4.5]{DG}, $p_*$ preserves colimits and satisfies the base change formula.
Hence it suffices to check that $p_*$ preserves perfect complexes when $B$ is affine.
By \cite[Prop.~3.9 and Cor.~3.22]{BZFN}, the perfect complexes in $\QCoh(BT)$ and $\QCoh(B)$ are exactly the compact objects.
By \cite[Lem.~5.7]{BP}, $p_*$ preserves compact objects.
\end{proof}

We then compute the de Rham complex of $BT$.
We will use the following notations:
\begin{itemize}
\item $\widehat{(-)} \colon \Fil\big(\QCoh(B)\big) \to \widehat{\Fil}\big(\QCoh(B)\big)$ is the completion functor, which is the left adjoint of the inclusion functor.
\item $\widehat{\oplus}$ is the coproduct in $\widehat{\Fil}\big(\QCoh(B)\big)$, which is the completion of the coproduct in $\Fil\big(\QCoh(B)\big)$.
\item $(-)(d) \colon \Fil\big(\QCoh(B)\big) \to \Fil\big(\QCoh(B)\big)$ is the shift functor, where $\Fil^pE(d) \coloneq \Fil^{p+d}E$.
\item $(-)^{\fil} \colon \QCoh(B) \to \Fil\big(\QCoh(B)\big)$ is the left adjoint of the functor $E \mapsto \Fil^0E$,
which factors through $\widehat{\Fil}\big(\QCoh(B)\big) \subseteq \Fil\big(\QCoh(B)\big)$.
\end{itemize}

\begin{lemma}\label{Lem:Splitting}
There exists a canonical equivalence
\begin{equation*} 
    \DR(BT) \simeq \widehat{\bigoplus}_{p \geq 0} \Sym^p(\t\dual)^{\fil}(-p)[-2p] \textin \Fil\big(\QCoh(B)\big).
\end{equation*}
\end{lemma}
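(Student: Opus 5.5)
The plan is to build a map of commutative algebras from the right-hand side to $\DR(BT)$, and then check that it is an equivalence on associated gradeds. This suffices because $\Gr$ is conservative on complete filtered complexes \cite[Lem.~2.15]{GP}.

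\emph{Step 1: the generators.} We produce a map $(\t\dual)^{\fil}(-1)[-2] \to \DR(BT)$. By the adjunction defining $(-)^{\fil}$, this is the same as a map $\t\dual[-2] \to \Fil^1\DR(BT)$, i.e.\ a $\t\dual$-twisted element of $\sA^{1,\cl}(BT\to B,1)$. For $r=1$ we take the logarithmic differential $d\log\colon \GG_m \to \sA^{1,\cl}_B[0]$. This is a map of group stacks, since $d\log(fg)=d\log f+d\log g$. Delooping it gives
\[BT \lra B\sA^{1,\cl}_B[0]\to \sA^{1,\cl}_B[1],\]
which is the closed $1$-form of degree $1$ given by the first Chern class / Atiyah class of the universal line bundle. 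For general $r$ we do this coordinatewise, which gives the $\t\dual$-valued version.

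\emph{Step 2: multiplicativity.} $\DR(BT)$ is a commutative algebra in $\widehat{\Fil}\big(\QCoh(B)\big)$ and we are in characteristic zero. The map of Step 1 therefore extends to a map of commutative algebras out of the completed free commutative algebra on $(\t\dual)^{\fil}(-1)[-2]$. The generator sits in even cohomological degree $2$, so the free algebra is the symmetric algebra with no sign issues:
\[\widehat{\bigoplus}_{p\ge0}\Sym^p\big((\t\dual)^{\fil}(-1)[-2]\big)\simeq \widehat{\bigoplus}_{p\geq 0}\Sym^p(\t\dual)^{\fil}(-p)[-2p].\]
Here we use that $(-)^{\fil}$ and the shifts $(-)(d)$ are symmetric monoidal.

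\emph{Step 3: graded pieces.} On the source, $\Sym^p(\t\dual)^{\fil}(-p)$ has $\Fil^q=\Sym^p\t\dual$ for $q\le p$ and $\Fil^q=0$ for $q>p$. Hence $\Gr^p$ of the source is $\Sym^p(\t\dual)[-2p]$, and completion does not change the graded pieces.

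On the target, \eqref{Eq:GrDR} gives $\Gr^p\DR(BT)\simeq p_*\Lambda^p\LL_{BT/B}[-p]$. Since $T$ is commutative, its coadjoint action is trivial and $\LL_{BT/B}\simeq \t\dual\otimes\O_{BT}[-1]$. Décalage then gives $\Lambda^p(\t\dual[-1])\simeq\Sym^p(\t\dual)[-p]$. Finally $p_*\O_{BT}\simeq\O_B$, because $p_*$ picks out the weight-zero component in $\QCoh(BT)\simeq\prod_{w}\QCoh(B)$, as in the proof of \Cref{Lem:FullyFaithful}. So the target is also $\Sym^p(\t\dual)[-2p]$.

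It remains to see that the map induced on $\Gr^p$ is this identification. As Step 2 produces a multiplicative map, it suffices to treat $p=1$. There the claim is that the $d\log$ class, read in $\Gr^1$, is the canonical equivalence $\t\dual[-1]\simeq\LL_{BT/B}$, coming from $\LL_{\GG_m}$ being generated by $d\log z$. This is checked after pullback along $B\to BT$.

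I expect this last identification, namely that the delooped $d\log$ form realizes the canonical isomorphism $\LL_{BT/B}\simeq\t\dual[-1]$, to be the main obstacle. Everything else is formal.
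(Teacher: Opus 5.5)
Your proposal is correct, but it follows a different route from the paper.

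\textbf{The paper's route.} The paper never names a generator. It observes that any splitting $s^p\colon\Gr^pE\to\Fil^pE$ of a complete filtered complex induces an equivalence $\widehat{\bigoplus}_p(\Gr^pE)^{\fil}(-p)\simeq E$. For $B=\pt$, the space of splittings of $\DR(BT)$ is contractible purely for degree reasons: $\Gr^p\DR(BT)\simeq\Sym^p(\t\dual[-2])$ sits in degree $2p$, while $\Fil^{p+1}\DR(BT)$ lies in degrees $\geq 2p+2$. General $B$ is then obtained by base change from $\pt$, using conservativity of $\Gr$.

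\textbf{Your route.} You build an explicit map of filtered commutative algebras out of the free algebra on the $d\log$ (that is, $\ch_1(\cL)$) class, and check it on associated gradeds.

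\textbf{Comparison.} The paper's argument is shorter and needs no multiplicative structure or explicit forms, but it only yields an equivalence of filtered complexes. Yours works directly over any $B$ and gives an equivalence of filtered commutative algebras. It also identifies the generator with the first Chern class of the universal line bundle, which the paper later uses tacitly: in the trivialization $\C\simeq\t\dual$ via $\ch_1(\cL)$, in \Cref{Cor:non-Ham-Perf}, and in the diagonal description of $\iota$ on $\DR(BT)$ in \Cref{Prop:Equiv}. For $B=\pt$ the two equivalences coincide, since splittings are unique there.

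\textbf{Two remarks.}
\begin{enumerate}
\item Your reduction to $p=1$ uses that \eqref{Eq:GrDR} is an equivalence of graded commutative algebras, $\Gr^{\bullet}\DR(BT)\simeq p_*\Sym(\LL_{BT/B}[-1])$. This is standard, but it is more than the paper states.
\item The step you flag as the main obstacle is a routine computation. The $0$-shifted $1$-form your class induces on $T\simeq B\times_{BT}B$ is $d\log z$, whose value at the identity is the canonical basis of $\t\dual$. Hence the underlying form gives the canonical equivalence $\LL_{BT/B}|_B\simeq\t\dual[-1]$.
\end{enumerate}
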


\begin{proof}
A \defterm{splitting} of a filtered complex $E \in \widehat{\Fil}\big(\QCoh(B)\big)$ is a collection $s\coloneq\{s^p\}_{p\in \Z}$ of sections 
\[s^p \colon \Gr^pE \to \Fil^pE \textin \QCoh(B)\]
of the canonical maps $\Fil^pE \to \Gr^pE$.
Any splitting $s$ of $E$ induces a map
\[s_{\#}\colon\widehat{\bigoplus}_{p\in \Z}(\Gr^pE)^{\fil}(-p) \to E \textin \widehat{\Fil}\big(\QCoh(B)\big),\]
 by the universal property of coproducts and the adjunction $(-)^{\fil} \dashv \Fil^0(-)$.
The above map $s_{\#}$ is an equivalence since $\Gr(s_{\#})$ is an equivalence.

Firstly, consider the case when $B=\pt$.
Then the space of splittings of $\DR(BT) \in \widehat{\Fil}\big(\QCoh(B)\big)$,
\[\prod_{p\in \Z} \Map_{\QCoh(B)}(\Gr^p \DR(BT), \Fil^p\DR(BT)),\]
is contractible,
since $\Gr^p\DR(BT)\simeq \Sym^p(\t\dual[-2]) \in \QCoh(B)$ is concentrated in degree $2p$,
and hence $\Fil^p\DR(BT) \simeq \varprojlim_{q \to \infty} \Fil^{p/q}\DR(BT) \in \QCoh(B)$ lies in $\QCoh(B)^{\geq 2p}$.
Hence the unique splitting gives us the desired equivalence.

Now consider a general derived stack $B$.
Then we have a canonical map
\[\widehat{(-)} \circ (B \to \pt)^* \big(\DR(B\GG_m^{\times r} \to \pt)\big) \to \DR(BT \to B) \textin \widehat{\Fil}\big( \QCoh(B)\big).\]
This map is an equivalence since $\Gr \colon \widehat{\Fil}\big(\QCoh(B)\big) \to \Gr\big(\QCoh(B)\big)$ is conservative and the pushforward $(BT \to B)_* \colon \QCoh(BT) \to \QCoh(B)$ has the base change formula by \cite[Prop.~3.10]{BZFN}.
Hence the desired equivalence for the case $B=\pt$ implies the desired equivalence for general $B$.
\end{proof}

We can now construct the contraction map as follows:

\begin{construction}\label{Def:Contraction}
Let $M$ be a derived stack with a $BT$-action.
The \defterm{contraction map} 
\[\iota \colon \DR(M) \longrightarrow \DR(M) \otimes  \t\dual(-1)[-2]	 \]
is the composition
\begin{multline*}
\DR(M) \xrightarrow{\sigma^*} \DR(M \times BT) \xleftarrow[\mathrm{Cor.}\ref{Cor:KunnethBT}]{\simeq} \DR(M) \widehat{\otimes }\DR(BT)  \\
\xleftarrow[\mathrm{Lem.}\ref{Lem:Splitting}]{\simeq} \widehat{\bigoplus}_{p\in \Z} \DR(M) \otimes \Sym^p(\t\dual)(-p)[-2p] \xrightarrow{\pr_1} \DR(M) \otimes  \t\dual(-1)[-2]	
\end{multline*}
where $\sigma \colon M \times BT \to M$ is the action map.
\end{construction}

In particular, we have a contraction map between spaces of differential forms,
\begin{equation*} 
\sA^{p,\cl}_{BT\textnormal{-}\inv}(M,d) \xrightarrow[\simeq]{\cancel{\inv}}\sA^{p,\cl}(M,d) \xrightarrow{\iota} \sA^{p-1,\cl}(M,\t\dual[d-1])
\end{equation*}
used in \eqref{Eq:ContractionMap} and \Cref{Thm:MomentMapEquation}.

\subsection{Equivariance}

We show that $BT$-actions on shifted symplectic stacks are Hamiltonian under \Cref{Assumption}.
In particular, we prove  \Cref{Thm:Ham=Symp} and \Cref{Thm:MomentMapEquation}.
Recall from \Cref{Prop:HamilviaEquiv} that being Hamiltonian is equivalent to the $BT$-equivariance
of the associated de Rham form.

We provide obstructions to $BT$-equivariance of de Rham forms via the contraction maps.

\begin{proposition} 
\label{Prop:Equiv}
Assume one of the following conditions:
\begin{enumerate}
\item [(A3)] $d\leq 2$ and $B$ is classical Artin stack, locally of finite type, or
\item [(A4)] $r=1$ (and hence $T=\GG_{m,B}$).
\end{enumerate}
Let $M$ be a derived stack with a $BT$-action.
Then there exists a canonical fiber square of spaces
\[\xymatrix{
\sA^{\DR}_{BT\textnormal{-}\equiv} (M,d+2) \ar[r] \ar[d]^-{\cancel{\equiv}} \cart  & \star \ar[d]^-{0} \\
\sA^{\DR}(M,d+2) \ar[r]^-{\iota} & \sA^{\DR}(M,\t\dual[d]) .
}\]
\end{proposition}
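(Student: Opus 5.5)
We will compute equivariant de Rham forms through the simplicial presentation of $M/BT$ and compare them with the invariant ones, which by \Cref{Prop:Invariance} coincide with ordinary forms. The de Rham complex $\DR(M/BT \to B)$ is the totalization of $\DR(M \times (BT)^{n})$ over the bar construction. By \Cref{Cor:KunnethBT} and \Cref{Lem:Splitting}, each term becomes
\[
\DR(M)\widehat{\otimes}\DR(BT)^{\widehat{\otimes} n} \simeq \widehat{\bigoplus} \DR(M)\otimes \Sym^{\bullet}(\t\dual)^{\otimes n}(\ldots)[\ldots].
\]
The cosimplicial object is then the cobar complex computing "group cohomology" of the commutative group stack $BT$ with coefficients in $\DR(M)$, where $\DR(M)$ is a comodule over the Hopf algebra $\DR(BT)$. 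Because $\DR(BT)$ is the (completed) symmetric algebra on $\t\dual(-1)[-2]$, its coalgebra structure is dual to a polynomial algebra. The comodule structure on $\DR(M)$ is then encoded by the contraction $\iota$ of \Cref{Def:Contraction}, together with the higher components $\DR(M)\to \DR(M)\otimes \Sym^p(\t\dual)(-p)[-2p]$.

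Step one is to identify $\Fil^0$ of this totalization, namely $\sA^{\DR}_{BT\textnormal{-}\equiv}(M,d+2)$. We filter the cobar complex by $\Sym$-degree, as a Koszul-type resolution. The Koszul dual of $\Sym(\t\dual[-2])$ is the exterior algebra $\Lambda(\t\dual[-1])$. This gives a tower
\[
\sA^{\DR}_{BT\textnormal{-}\equiv}(M,d+2) \to \cdots \to \sA^{\DR}(M,d+2),
\]
whose successive fibers are governed by $\sA^{\DR}(M,\Lambda^k\t\dual[d+2-3k+\ldots])$; this matches the Remark's higher obstructions. The first stage is exactly the fiber of $\iota\colon \sA^{\DR}(M,d+2)\to \sA^{\DR}(M,\t\dual[d])$ over $0$. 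We need the shift bookkeeping to check that $\Fil^0$ of $\t\dual(-1)[-2]$ twisted by $[d+2]$ gives the $\t\dual[d]$ target.

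Step two is to kill the higher stages. Under (A4) we have $\t\dual$ of rank one, so $\Lambda^k\t\dual=0$ for $k\ge 2$ and the tower stops after one step. Equivalently, $\DR(B\GG_m)$ is the polynomial algebra on a single generator, and the cobar complex is quasi-isomorphic to the two-term complex $\DR(M)\xrightarrow{\iota}\DR(M)\otimes\t\dual(-1)[-2]$. This relies on the generator being primitive and on the action being $E_2$/commutative, so that the comodule structure is determined by $\iota$ up to homotopy.

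Under (A3), the higher obstruction spaces $\sA^{\DR}(M,\Lambda^k\t\dual[d+5-3k])$ for $k\ge2$, and the corresponding connecting terms, are shown to vanish or be contractible. For this we use that, over a classical finite-type Artin base $B$, de Rham cohomology $\Fil^0\DR$ agrees with (Hodge-completed) derived de Rham cohomology of $M_{\cl}$ relative to $B$. Hence it lives in bounded degrees determined only by the classical truncation, and the relevant negative shifts (degree $d+5-3k\le 1$ for $k\ge2$ when $d\le2$) land in a range where only $\pi_0$ and $\pi_1$ issues remain. Those are handled by connectivity of the fibers.

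The main obstacle is step one: making the cobar/Koszul identification precise in the $\infty$-categorical setting, namely showing that the equivariant totalization has a filtration whose first layer is exactly $\fib(\iota)$ with the correct homotopy coherence. We also need the degree estimate under (A3), which requires a careful vanishing statement for de Rham forms of negative degree relative to a classical base.
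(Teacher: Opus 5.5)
\textbf{There is a genuine gap: Step one carries the whole proposition and is left unproved, and your (A3) step fails as written.}

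\Cref{Lem:Splitting} identifies $\DR(BT)$ only as a filtered complex. So the following are all assertions, not results:
the Hopf-algebra structure with a primitive generator;
the Koszul identification of the cobar totalization;
the claim that the comodule structure on $\DR(M)$ is determined, up to coherent homotopy, by $\iota$.
Your (A4) argument rests on these same assertions. You do not need any of this machinery.

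The sequence $\DR(M/BT)\to\DR(M)\xrightarrow{\iota}\DR(M)\otimes\t\dual(-1)[-2]$ is canonically null-homotopic, coming from $M\times BT\rightrightarrows M\to M/BT$, and it is natural in $M\in\dSt^{BT}_B$. Write $M\simeq\colim_n M_n^{\triv}\times BT$. All three terms turn this colimit into a limit; the third does because $\t\dual$ is a line bundle. So checking that the sequence is a cofiber sequence reduces to the free case $N\times BT$, and by \Cref{Lem:Kunneth} to $M=BT$ acting on itself. There \Cref{Lem:Splitting} makes $\iota$ the sum of the maps $\Sym^p(\t\dual)\to\Sym^{p-1}(\t\dual)\otimes\t\dual$. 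For $r=1$ these are multiplication by $p$, hence invertible for $p\neq0$, which leaves fiber $\O_B^{\fil}\simeq\DR(B\to B)$. Applying $\Map(\O_B,\Fil^0(-)[d+2])$ gives the square under (A4). For $r\geq2$ these maps have nonzero cokernel once $p\geq 2$, which is why (A3) needs a separate argument.

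Your (A3) step fails as written because the shift $[d+5-3k]$ is off by two. For $k=1$ it gives $\t\dual[d+2]$ instead of the required $\t\dual[d]$. The correct obstruction spaces are $\sA^{\DR}(M,\Lambda^k\t\dual[d+3-3k])$, as in the Remark following \Cref{Thm:MomentMapEquation}. With your indexing the relevant degrees go up to $1$, where for instance $\pi_0\sA^{\DR}(M,1)=\H^1_{\DR}(M)$, so these spaces are not contractible in general. ``Handled by connectivity of the fibers'' is not an argument.

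With the correct indexing, for $k\geq2$ and $d\leq2$ the degree is $\leq -1$. Those spaces are contractible by \cite[Lem.~6.1.2]{Park2}, so the finite tower collapses at its first stage. This still presupposes the rank-$r$ Koszul tower from Step one.

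The paper avoids constructing that tower by inducting on $r$:
\begin{enumerate}
\item Write $T=T_1\times T_2$ with $T_2=\GG_{m,B}$, and use $M/BT\simeq(M/BT_1)/BT_2$.
\item Apply the rank-one case to the $BT_2$-action on $M/BT_1$, and the induction hypothesis to the two $BT_1$-rows.
\item Use that $\sA^{\DR}(M,\t_1\dual\otimes\t_2\dual[d-2])$ is discrete; this is exactly where $d\leq2$ and classicality of $B$ enter. Discreteness lets you replace the fiber product over this space by a plain product.
\end{enumerate}
This yields $\fib(\iota_{BT})$ with $\iota_{BT}=(\iota_{BT_1},\iota_{BT_2})$.
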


\begin{proof} 
Note that there exists a canonical null-homotopic sequence
\begin{equation}\label{Eq:2}
\DR(M/BT) \xrightarrow{{\cancel{\equiv}}} \DR(M) \xrightarrow{\iota} \DR(M)\otimes \t\dual(-1)[-2]\end{equation}
induced from the coequalizing diagram $M \times BT \rightrightarrows M \to M/BT$ associated to the $BT$-action.

Assume (A4), hence $T\simeq \GG_{m,B}$.
We claim that the above sequence \eqref{Eq:2} is a cofiber sequence.
Indeed, as in the proof of \Cref{Lem:FullyFaithful}, since $M \simeq \colim_{n \in \Delta^{\op}} M_n^{\triv} \times BT \in \dSt^{BT}_B$, we may assume that $M \simeq N \times BT$, for a derived stack $N$ with the trivial $BT$-action.
Then by the K\"{u}nneth formula (Lemma \ref{Lem:Kunneth}), we may assume that $N \simeq B$.
By Lemma \ref{Lem:Splitting}, we may identify the contraction map $\iota \colon \DR(BT) \to \DR(BT)\otimes \t\dual(-1)[-2]$ with the composition
{\footnotesize \[
\bigoplus_{p\in \Z}\left(\Sym^p(\t\dual) \xrightarrow{\Sym^p(\id,\id)} \Sym^p(\t\dual \oplus \t\dual) \simeq \bigoplus_{a+b=p} \Sym^{a}(\t\dual)\otimes \Sym^b(\t\dual)
\xrightarrow{\pr_1}\Sym^{p-1}(\t\dual)\otimes \Sym^1(\t\dual) \right)(-p)[-2p],\] }

\noindent
where $\Delta\colon \t\dual \to \t\dual \oplus \t\dual$ is the diagonal map.
Since $\t \simeq \O_B$, the $p\neq 0$ component of the above map is an equivalence.
Hence the fiber of the above map is $\O_B^{\fil} \simeq \DR(B \to B)$, as desired.

Assume (A3).
We will use the induction on $r$.
We may write $T\simeq T_1 \times T_2$, where $T_2 \simeq \GG_{m,B}$ is the $1$-dimensional torus.
From the induced $BT_2$-action on $M/BT_1$, 
we have a coequalizing diagram $M \times BT_1 \rightrightarrows M \to M/BT_1$ in $\dSt_B^{BT_2}$, and we can form a morphism of null-homotopic sequences
\[\xymatrix{
\sA^{\DR} (M/BT_1,d+2) \ar[d]^{\iota_{BT_2}} \ar[r] & \sA^{\DR} (M,d+2) \ar[r]^-{\iota_{BT_1}} \ar[d]^{\iota_{BT_2}} & \sA^{\DR} (M, \t_1\dual[d]) \ar[d]^{\iota_{BT_2}}\\
\sA^{\DR} (M/BT_1,\t_2\dual[d]) \ar[r] & \sA^{\DR} (M,\t_2\dual[d] ) \ar[r]^-{\iota_{BT_1}} & \sA^{\DR} (M,\t_1\dual \otimes \t_2\dual [d-2]).
}\]
By the induction hypothesis, the two rows are fiber sequences.
Since $d \leq2$ and $B$ is classical, the space $\sA^{\DR} (M,\t_1\dual \otimes \t_1\dual[ d-2])$ is discrete by \cite[Lem.~6.1.2(1)]{Park2},
and the above commutative diagram and \Cref{Prop:Equiv} under assumption (A4) for the $BT_2$-action on $M/BT_1$ give us
\begin{align*}
\sA^{\DR} (M/BT,d+2) &\simeq \fib\left(\sA^{\DR} (M/BT_1,d+2) \xrightarrow{\iota_{BT_2}} \sA^{\DR} (M/BT_1,\t_2\dual[d])\right)	\\
&\simeq \fib\left(  \sA^{\DR} (M,d+2) \xrightarrow{(\iota_{BT_1},\iota_{BT_2})} \sA^{\DR} (M,\t_1\dual[d] ) \times \sA^{\DR} (M,\t_2\dual[d])  \right) \\
&\simeq \fib\left(  \sA^{\DR} (M,d+2) \xrightarrow{\iota_{BT}} \sA^{\DR} (M,\t\dual[d])\right),
\end{align*}
as desired.
\end{proof}

\begin{corollary} \label{Cor:preHamilMomentMapEquation}
Let $M$ be a derived stack with a $BT$-action.
Then there exists a canonical commutative square of spaces
\[
\xymatrix{
\Map_{\dSt^{BT}_B}(M,\t\dual[d-1])^{\isot}\ar[r] \ar[d]   & \sA^0(M,\t\dual[d-1]) \ar[d]^{d} \\
\sA^{2,\cl}(M,d) \ar[r]^-{\iota } & \sA^{1,\cl}(M,\t\dual[d-1]).
}\]
Moreover, when the assumption (A3) of \Cref{Thm:MomentMapEquation} is satisfied, the above square is cartesian.
\end{corollary}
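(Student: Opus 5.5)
Our plan is to start from the equivalence \eqref{Eq:IsotMapSpace} established in the proof of \Cref{Prop:HamilviaEquiv}, applied to $G=BT$ and shift $d-1$, since $\t\dual[d-1]$ is the relevant Lie coalgebra for the group stack $BT$. The Lie algebra of $BT$ is $\t[1]$, so $\mathrm{Lie}(BT)\dual[d]\simeq \t\dual[d-1]$. This identifies
\[\Map_{\dSt^{BT}_B}(M,\t\dual[d-1])^{\isot}\simeq \sA^{2,\cl}_{BT\textnormal{-}\inv}(M,d)\times_{\sA^{\DR}_{BT\textnormal{-}\inv}(M,d+2)}\sA^{\DR}_{BT\textnormal{-}\equiv}(M,d+2).\]
By \Cref{Prop:Invariance}, the invariant forms $\sA^{2,\cl}_{BT\textnormal{-}\inv}(M,d)$ and $\sA^{\DR}_{BT\textnormal{-}\inv}(M,d+2)$ may be replaced by the non-invariant ones. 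Since $\Fil^0$ and completion commute with the equivalence \eqref{Eq:F2}, this holds for de Rham forms as well. The left vertical map of the desired square is then the projection to $\sA^{2,\cl}(M,d)$.

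To build the commutative square, we use the null-homotopic sequence \eqref{Eq:2} of filtered complexes, $\DR(M/BT)\to\DR(M)\xrightarrow{\iota}\DR(M)\otimes\t\dual(-1)[-2]$. Apply it to the cofiber sequence $\Fil^2\to\Fil^0\to\Fil^{0/2}$, which relates closed $2$-forms, de Rham forms and exact $2$-forms. The fiber product above then maps to the space of pairs consisting of $\theta\in\sA^{2,\cl}(M,d)$ and a null-homotopy of $\iota[\theta]$ in $\sA^{\DR}(M,\t\dual[d])$. Because $\iota$ lowers the filtration by one, $\iota\theta$ lands in $\Fil^1$, i.e.\ in $\sA^{1,\cl}(M,\t\dual[d-1])$. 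A null-homotopy of its image in $\Fil^0$ is the same as a lift to $\Fil^{0/1}$, i.e.\ an element $\mu\in\sA^0(M,\t\dual[d-1])$ with $d\mu\simeq\iota\theta$, using the fiber sequence $\Fil^1\to\Fil^0\to\Gr^0$. This gives the top horizontal map to $\sA^0(M,\t\dual[d-1])$ and the homotopy filling the square.

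For cartesianness under (A3), we invoke \Cref{Prop:Equiv}. It says $\sA^{\DR}_{BT\textnormal{-}\equiv}(M,d+2)$ is the fiber of $\iota\colon\sA^{\DR}(M,d+2)\to\sA^{\DR}(M,\t\dual[d])$. Hence the fiber product becomes the space of pairs $(\theta,h)$ with $h\colon\iota[\theta]\simeq 0$ in $\sA^{\DR}(M,\t\dual[d])$. The cofiber sequence $\Fil^1\to\Fil^0\to\Gr^0$, twisted by $\t\dual$, identifies the fiber of $\sA^{1,\cl}(M,\t\dual[d-1])\to\sA^{\DR}(M,\t\dual[d])$ over $0$ with $\sA^0(M,\t\dual[d-1])$, via $d$. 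Therefore such pairs are exactly $\sA^{2,\cl}(M,d)\times_{\sA^{1,\cl}}\sA^0$, which is the claimed cartesian square.

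The main obstacle is compatibility. We must check that the map obtained from \eqref{Eq:IsotMapSpace} and \Cref{Prop:Equiv} agrees with the map built by hand from $\iota$ on $\Fil^2$ and $\Fil^1$. Equivalently, the contraction must be compatible with the filtration shift $(-1)$, so that the two null-homotopies match. We would handle this by checking everything at the level of the filtered map \eqref{Eq:2}, before passing to mapping spaces, so that all identifications are induced by one diagram of filtered complexes.
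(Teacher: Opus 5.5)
Your proposal is correct and is essentially the paper's argument. The paper also combines the cartesian square from \eqref{Eq:IsotMapSpace}, the invariance equivalences of \Cref{Prop:Invariance}, and the null-homotopic square of \Cref{Prop:Equiv}. It then factors the total square through the cartesian square $\sA^0(M,\t\dual[d-1])\xrightarrow{d}\sA^{1,\cl}(M,\t\dual[d-1])\to\sA^{\DR}(M,\t\dual[d])$, using that $\iota$ commutes with $[-]$ because it is a filtered map, and gets cartesianness under (A3) by pasting.

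One minor verbal slip: \eqref{Eq:IsotMapSpace} is applied with the symplectic shift $d$, not $d-1$. Your displayed formula already reflects this correctly through $\mathrm{Lie}(BT)\dual[d]\simeq\t\dual[d-1]$.
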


\begin{proof}
Consider the canonical commutative diagram of spaces
\[\xymatrix@C+2pc{
\Map_{\dSt^{BT}_B}(M,\t\dual[d-1])^{\isot} \ar[r] \ar[d] \ar@{}[rd]|-{(S_1)} & \sA^{\DR}_{BT\textnormal{-}\equiv}(M,d+2) \ar[r] \ar[d]^-{\equiv\to\inv}    \ar@{}[rdd]|-{(S_2)}& \star \ar[dd]^0 \\
\sA^{2,\cl}_{BT\textnormal{-}\inv}(M,d) \ar[r]^-{[-]} \ar[d]^{\cancel{\inv}}_{\simeq}  \ar@{}[rd]|-{(S_3)}& \sA^{\DR}_{BT\textnormal{-}\inv}(M,d+2) \ar[d]^{\cancel{\inv}}_{\simeq} &  \\
\sA^{2,\cl}(M,d) \ar[r]^-{[-]} & \sA^{\DR}(M,d+2)  \ar[r]^-{\iota} & \sA^{\DR}(M,\t\dual[d]) ,
}\]
where $(S_1)$ is the cartesian square induced by \Cref{Prop:HamilviaEquiv} (more precisely, the equivalence \eqref{Eq:IsotMapSpace}),
$(S_2)$ is the commutative square in \Cref{Prop:Equiv},
and the canonical commutative square $(S_3)$ is cartesian since $\cancel{\inv}$ is an equivalence by \Cref{Prop:Invariance}.

The total square $(S_1)+(S_2)+(S_3)$ has the following alternative factorization
\[\xymatrix{
\Map_{\dSt^{BT}_B}(M,\t\dual[d-1])^{\isot}\ar@{.>}[r]^-{} \ar[d] \ar@{}[rd]|-{(S_4)}  & \sA^{0}(M,\t\dual[d-1]) \ar[r] \ar[d]^-{d}  \ar@{}[rd]|-{(S_5)}&    \star \ar[d]^0 \\
\sA^{2,\cl}(M,d)  \ar[r]^-{\iota} & \sA^{1,\cl}(M,\t\dual[d-1]) \ar[r]^-{[-]} & \sA^{\DR}(M,\t\dual[d]),
}\]
since the contraction map $\iota$ commutes with $[-]$ and the right square $(S_5)$ is cartesian.

Now assume (A3) in \Cref{Thm:MomentMapEquation}.
Since the square $(S_2)$ is cartesian by \Cref{Prop:Equiv},
the total square $(S_1)+(S_2)+(S_3)\simeq(S_4)+(S_5)$ is cartesian, 
and hence the square $(S_4)$ is also cartesian.
\end{proof}

\begin{remark}
In \Cref{Cor:preHamilMomentMapEquation},
the top horizontal arrow \[\Map_{\dSt^{BT}_B}(M,\t\dual[d-1])^{\isot} \to\sA^0(M,\t\dual[d-1]) \]  is equivalent to the canonical forgetful map.
However, the precise comparison of the two maps requires a lengthy diagram chasing argument.
Since this comparison is not used in the main results of this paper, we omit the proof.
\end{remark}

\begin{proof}[Proof of \Cref{Thm:MomentMapEquation}]
We have a canonical commutative diagram of spaces
\[
\xymatrix{
\sHamil(BT \circlearrowleft M) \ar[r] \ar[d] \cart & \Map_{\dSt^{BT}_B}(M,\t\dual[d-1])^{\isot}\ar[r] \ar[d]   & \sA^0(M,\t\dual[d-1]) \ar[d]^{d} \\
\star \ar[r]^-{\theta} & \sA^{2,\cl}(M,d) \ar[r]^-{\iota } & \sA^{1,\cl}(M,\t\dual[d-1]),
}\]
where the left cartesian square is given by the definition of the space $\sHamil(BT \circlearrowleft M)$ in \eqref{Eq:Space-HamActions},
and the right commutative square is the one in \Cref{Cor:preHamilMomentMapEquation}.
If we assume (A3) in \Cref{Thm:MomentMapEquation}, the right square is also cartesian by \Cref{Cor:preHamilMomentMapEquation}.
\end{proof}

To prove \Cref{Thm:Ham=Symp}, we recall the following well-known fact: negatively shifted closed $1$-forms are exact.
For the completeness, we provide a proof here.

\begin{lemma}\label{Lem:Exactness}
Let $M$ be a derived stack over $B$.
Assume that $M$ is locally geometric and locally of finite presentation over $B$.
Under \Cref{Assumption}, the de Rham differential
\[ d \colon \sA^0(M,d-1) \to \sA^{1,\cl}(M,d-1) \]
is an equivalence of spaces.
\end{lemma}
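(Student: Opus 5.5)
The plan is to identify the fiber of the de Rham differential $d\colon \sA^0(M,d-1)\to\sA^{1,\cl}(M,d-1)$ with a space of de Rham forms, and then show that this space is contractible. The filtered complex $\DR(M)$ gives a fiber sequence
\[\Fil^1\DR(M) \to \Fil^0\DR(M) \to \Gr^0\DR(M).\]
Rotating it gives a fiber sequence $\Gr^0\DR(M)[-1]\to \Fil^1\DR(M)\to \Fil^0\DR(M)$. After applying $\Map_{\QCoh(B)}(\O_B,-[d])$ and unwinding the shifts in the definitions of $\sA^0(M,d-1)$ and $\sA^{1,\cl}(M,d-1)$, we obtain a fiber sequence of spaces
\[\sA^0(M,d-1)\xrightarrow{d}\sA^{1,\cl}(M,d-1)\to \sA^{\DR}(M,d).\]
Since this sequence comes from a cofiber sequence in a stable category, the first map is an equivalence exactly when the spaces $\Map(\O_B,\Fil^0\DR(M)[d+k])$ are contractible for $k=0$ and $k=-1$. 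It therefore suffices to show that $\sA^{\DR}(M,e)$ is contractible for all $e\le d\le 0$ under (A1), and for all $e\le d\le -1$ under (A2).

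The second step is to compute $\Fil^0\DR(M)$ using derived de Rham theory. By its construction as a right Kan extension, $\Fil^0\DR(M)$ is the completed Hodge-filtered derived de Rham complex. Over $\C$, for $M$ locally of finite presentation, this computes the (relative) algebraic de Rham cohomology of the underlying classical stack; this is Bhatt's theorem in the affine case, extended by descent. Under (A1), $B=\pt$ and $\Fil^0\DR(M)\simeq R\Gamma_{\mathrm{dR}}(M_\cl)$. Smooth descent reduces this to smooth affine schemes, whose de Rham cohomology vanishes in negative degrees, so the limit has no cohomology in negative degrees. Hence $\pi_0\sA^{\DR}(M,e)=H^{e}$ and the higher homotopy groups $H^{e-i}$ all vanish for $e\le 0$, except that $H^0$ may be nonzero when $e=0$. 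So I need contractibility only for $e=d\le 0$ and $e=d-1$, with the constant functions appearing only when $d=0$.

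This needs care. For $d=0$, $\sA^{\DR}(M,0)=H^0_{\mathrm{dR}}$ consists of the locally constant functions, which are not zero. The cleaner route is to treat the map $d$ directly: its fiber over $0$ is $\Map(\O,\Fil^0\DR[d-1])$, which involves $H^{d-1}$ and lower, and these vanish because $d-1<0$. For surjectivity on $\pi_0$, I use that the obstruction in $H^d$ lands in $\Fil^0$ while the classes it must kill come from $\Fil^1$. The map $H^d(\Fil^1)\to H^d(\Fil^0)$ is zero when $d=0$, since a closed $1$-form of degree $-1$ has de Rham class in $H^0_{\mathrm{dR}}$ with weight at least $1$. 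Concretely, I would check that the image of $H^0(\Fil^1)\to H^0(\Fil^0)$ vanishes using the degeneration of the Hodge filtration on $H^0$ of smooth schemes, where $F^1H^0=0$.

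Under (A2), the base $B$ is classical and locally of finite type, and the same argument runs relative to $B$. Here $\Fil^0\DR(M\to B)$ is computed by relative de Rham cohomology. The shift $d\le -1$ is needed because relative $H^0$ carries the nonzero $F^1$ coming from the Gauss–Manin direction. I expect to cite \cite[Lem.~6.1.2]{Park2} for the vanishing of $\sA^{\DR}(M,\cF[e])$ for $e<0$ with $B$ classical.

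I expect the main obstacle to be the boundary case $d=0$ in (A1). It requires a genuine Hodge-theoretic input, namely $F^1H^0_{\mathrm{dR}}=0$ for the relevant stacks, rather than the formal degree vanishing that handles every other case.
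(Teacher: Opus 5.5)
Your skeleton is the paper's. You use the cartesian square $\sA^0(M,d-1)\xrightarrow{d}\sA^{1,\cl}(M,d-1)\xrightarrow{[-]}\sA^{\DR}(M,d)$ over $0\colon\star\to\sA^{\DR}(M,d)$, and you get contractibility of $\sA^{\DR}(M,d)$ for $d\le -1$ from \cite[Lem.~6.1.2]{Park2}. For $d=0$, $B=\pt$, you use discreteness of $\sA^{\DR}(M,0)$, so everything reduces to showing that $[-]\colon\pi_0\sA^{1,\cl}(M,-1)\to\pi_0\sA^{\DR}(M,0)$ is zero. Your opening claim that $d$ is an equivalence \emph{exactly} when the $\Fil^0$ mapping spaces are contractible is only a sufficient condition, and it fails at $e=0$, as you noticed.

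The gap is in how you prove that last vanishing. You appeal to $F^1H^0=0$ for smooth schemes, but you give no way to pass from the derived Artin stack $M$ to smooth schemes. Smooth descent only produces charts $U\to M$ that are smooth over $M$, so they are typically singular and derived. On such charts $H^0(\Fil^1\DR(U))$ need not vanish.

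For example, take $U=\mathrm{Spec}\,\C[\epsilon]$ with $\epsilon$ in degree $-1$. Then $\Fil^0\DR(U)\simeq\C$ by nil-invariance, while $\Gr^0\DR(U)\simeq\O(U)$ has $H^{-1}\simeq\C$. The long exact sequence then gives $H^0(\Fil^1\DR(U))\simeq\C\neq0$. Only its \emph{image} in $H^0(\Fil^0\DR(U))$ vanishes, and seeing that requires an injectivity statement, which is the real content of the $d=0$ case. Similarly, your coconnectivity conclusion is correct, but it holds because de Rham cohomology of singular finite-type schemes lives in non-negative degrees, not because the charts are smooth.

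The missing ingredient, which the paper uses, is that restriction to $\C$-points
\[\pi_0\sA^{\DR}(M,0)\lra\prod_{m\colon\pt\to M}\pi_0\sA^{\DR}(\pt,0)\]
is injective, again by \cite[Lem.~6.1.2]{Park2}; concretely, $H^0_{\mathrm{dR}}(M)$ consists of locally constant functions. Since $[-]$ commutes with $m^*$, this reduces the claim to $M=\pt$. There $\Fil^1\DR(\pt)=0$ because $\LL_{\pt}\simeq0$, so $[-]$ is zero.

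No Hodge-theoretic degeneration is involved. Once you insert this injectivity, your argument coincides with the paper's. Injecting instead into $H^0_{\mathrm{dR}}$ of a smooth h-cover of $M_{\cl}$ would also work, but it needs h-descent for $\Fil^0\DR$, which is more than the pointwise argument requires.
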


\begin{proof}
Consider the canonical cartesian square of spaces
\[\xymatrix@-.5pc{
\sA^0(M,d-1) \ar[r]^-{d} \ar[d] \cart & \sA^{1,\cl}(M,d-1)  \ar[d]^-{[-]} \\
\star \ar[r]^-{0} & \sA^{\DR}(M,d).
}\]

\noindent
(A2) If $d \leq -1$ and $B$ is classical, then the space $\sA^{\DR}(M,d)$ is contractible by \cite[Lem.~6.1.2]{Park2}, and the statement follows.

\smallskip

\noindent
(A1) If $d = 0$ and $B = \pt$, then the space $\sA^{\DR}(M,d)$ is discrete by \cite[Lem.~6.1.2]{Park2}.
Hence it suffices to show that the map $[-] \colon \sA^{1,\cl}(M,-1) \to \sA^{\DR}(M,0)$ is zero.
Since the map \[ \sA^{\DR}(M,0) \lra \prod_{m\colon \pt \to M} \sA^{\DR}(\pt,0),\]
induced by the pullbacks $m^*$, is injective by \cite[Lem.~6.1.2]{Park2}, we may assume that $M=\pt$.
Since $\LL_{\pt} \simeq 0$, the map $[-]$ is zero.
\end{proof}

\begin{proof}[Proof of \Cref{Thm:Ham=Symp}]
Let $\dSt^{\lfp}_B \subseteq \dSt_B$ be the full subcategory consisting of derived stacks that are locally geometric and locally of finite presentation over $B$.
Form a cartesian diagram
\[\xymatrix@C+2pc{
\dSt_{(\t\dual[d-1]/BT)^{\isot}}^{\lfp} \ar[r]^-{\isotInt^{\fib,\lfp}} \ar[d] \cart & \dSt_{\sA^{2,\cl}_{B^2T}[d]}^{\lfp}\ar[d] \ar[r] \cart & \dSt_B^{\lfp} \ar[d] \\
\dSt_{(\t\dual[d-1]/BT)^{\isot}} \ar[r]^-{\isotInt^{\fib}} & \dSt_{\sA^{2,\cl}_{B^2T}[d]} \ar[r] & \dSt_B.
}\]
Then the map $\isotInt^{\fib,\lfp}$ is an equivalence of $\infty$-categories by \Cref{Cor:preHamilMomentMapEquation} and \Cref{Lem:Exactness}.

Consider the canonical commutative diagram of $\infty$-categories,
\[\xymatrix@C+3pc{
\Lag_{\t\dual[d-1]/BT,d+1} \ar[r]^-{} \ar[d]_{\LagInt^{\fib}} \ar@{}[rd]|-{(A)} &  \Corr\left(\dSt_{(\t\dual[d-1]/BT)^{\isot}}^{\lfp}\right)\ar[d]_{\simeq}^-{\Corr(\isotInt^{\fib,\lfp})} \ar[r] \ar@{}[rd]|-{(B)} & \Isot_{\t\dual[d-1]/BT,d+1}\ar[d]^-{\IsotInt^{\fib}} \\
\Symp_{B^2T,d} \ar[r] & \Corr\left(\dSt_{\sA^{2,\cl}_{B^2T}[d]}^{\lfp}\right) \ar[r] & \mathrm{p}\Symp_{B^2T,d}
}\]
The right square $(B)$ is cartesian since the functor $\Corr \colon \Cat^{\fp} \to \Cat$ (in \Cref{Def:Corr}) preserves limits.
Since the total square $(A)+(B)$ is cartesian by \Cref{Lem:LagFib-Non-degeneracy},
the left square $(A)$ is also cartesian.
Since the middle vertical arrow $\Corr(\isotInt^{\fib,\lfp})$ is an equivalence, so is the left vertical arrow $\LagInt^{\fib}$, which is $\forget_{\Hamil} \colon \Hamil^{BT}_{B,d} \to \Symp^{BT}_{B,d}$ by definition (\Cref{Notation:Functors-Forget/HamRed/TrivHam}).
\end{proof}

\section{Moduli of perfect complexes}\label{Sec:Moduli}

In this section, we apply our general results on symplectic rigidifications to derived moduli stacks of perfect complexes on Calabi-Yau varieties.

Throughout this section, we let $T\coloneq \GG_{m,B} \in \Grp_B$ be the $1$-dimensional torus over a base $B$.

\subsection{Results}

We first summarize the results that we will prove in this section.

Let $p\colon X \to B$ be a smooth projective morphism of classical schemes, of relative dimension $n$.
A \defterm{Calabi-Yau structure} on $p\colon X \to B$ is an isomorphism of line bundles
\[\omega \colon \O_X \xrightarrow{\simeq} K_{X/B},\]
where $K_{X/B} \coloneq \Lambda^n\Omega_{X/B}$ is the canonical line bundle.
Denote by
\[\uPerf(X/B) \in \dSt_B\]
the derived moduli stack of perfect complexes on the fibers of $p \colon X \to B$.
Similarly, we let $\uPic(X/B)\in \dSt_B$ be the derived moduli stack of line bundles on the fibers of $p\colon X \to B$.

The derived stack $\uPerf(X/B)$ has the following additional structures:
\begin{itemize}
\item $\uPerf(X/B)$ has a canonical $(2-n)$-shifted symplectic structure
over $B$, when $p\colon X \to B$ has a Calabi-Yau structure, by \cite{PTVV};
\item $\uPerf(X/B)$ has a canonical action of $BT$, given by the constant automorphisms of perfect complexes, see \S\ref{ss:BT-action} for the precise construction.
\end{itemize}
By \Cref{Prop:BT-symp}, the $BT$-action on $\uPerf(X/B)$ is always symplectic (and such symplectic structure on the action is unique).
Our main result is about the Hamiltonian structures on the $BT$-action.

\begin{theorem}\label{Thm:Perf-Ham}
Let $p\colon X \to B$ be a smooth projective Calabi-Yau morphism of relative dimension $n$ between classical schemes of finite type.
Then the canonical $BT$-action on $\uPerf(X/B)$ is Hamiltonian if and only if either
\begin{enumerate}
\item $n \geq 3$, or
\item $n=2$ and the projection map $\uPic(X/B)_\cl \to B$ is flat.
\end{enumerate}
\end{theorem}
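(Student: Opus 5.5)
Here $d = 2-n$ and $B$ is a classical scheme of finite type. For $n\ge 2$ we have $d\le 0\le 2$, so assumption (A3) of \Cref{Thm:MomentMapEquation} holds. That theorem, combined with \Cref{Prop:HamilviaEquiv} and \Cref{Prop:Equiv} under (A4), with $r=1$, reduces the question to the vanishing of a single obstruction class. Concretely, the action is Hamiltonian if and only if the de Rham class
\[[\iota\theta]\in \pi_0\sA^{\DR}(\uPerf(X/B),\t\dual[d]) = \pi_0\sA^{\DR}(\uPerf(X/B),d)\]
vanishes, where $\t\dual\simeq\O_B$. The plan is therefore:
\begin{enumerate}
\item[(i)] identify the closed $1$-form $\iota\theta$ explicitly;
\item[(ii)] decide when its de Rham class is zero.
\end{enumerate}

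For step (i), I would use the description of the PTVV form $\theta$ as the integral $\int_{X/B}\omega\wedge \mathrm{ch}_2(\mathcal{E})$, up to normalization, where $\mathcal E$ is the universal complex. The contraction along the $BT$-action is computed by pulling back along the action map $\sigma\colon \uPerf\times BT\to\uPerf$. Under $\sigma$, the universal complex becomes $\mathcal E\boxtimes\mathcal L$, with $\mathcal L$ the weight-one line bundle on $BT$. Expanding with the K\"unneth splitting (\Cref{Cor:KunnethBT}, \Cref{Lem:Splitting}), and using $\mathrm{ch}(\mathcal L)=e^{c_1}$, the coefficient of $c_1$ is $\int_{X/B}\omega\wedge\mathrm{ch}_1(\mathcal E)$. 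So $\iota\theta$ should be the pullback of a canonical $d$-shifted closed $1$-form along the determinant map $\det\colon\uPerf(X/B)\to\uPic(X/B)$. That $1$-form is itself the contraction $\int_{X/B}\omega\wedge c_1$ on $\uPic$. The same computation is valid on $\uPic$ with its own $BT$-action, and there it reduces to the Hodge-theoretic pairing with $c_1$.

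For step (ii) I split by dimension. If $n\ge3$, then $d\le -1$ and $B$ is classical, so $\sA^{\DR}(\uPerf,d)$ is contractible by \cite[Lem.~6.1.2]{Park2}, exactly as in case (A2) of \Cref{Lem:Exactness}. The obstruction therefore vanishes automatically. If $n=2$, then $d=0$ and the space $\sA^{\DR}(-,0)$ is discrete. By \cite[Lem.~6.1.2]{Park2} a class in it is detected by restriction to the points of the classical truncation, and these values are locally constant functions. I would make this concrete: a degree-$0$ de Rham class on a classical finite type stack is a locally constant function on its points, in the relative de Rham sense over $B$. On a $K3$ fiber over $b\in B$, the class of $\iota\theta$ at a point $E$ is the number $\int_{X_b}\omega_b\wedge c_1(E)$.

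The main obstacle is to show that this function is locally constant over $B$ exactly when $\uPic(X/B)_\cl\to B$ is flat. Here I would use that $\int\omega_b\wedge c_1(L)=0$ for every algebraic class $c_1(L)$, since $\omega_b$ is of type $(2,0)$, provided one works fiberwise over a point. The relative version, however, measures the Gauss--Manin derivative of $\int\omega\wedge c_1$ along $B$. This derivative vanishes identically precisely when the class $c_1(L)$ stays of type $(1,1)$ in all first-order directions, that is, when each line bundle deforms over $B$. By the usual description of the deformation theory of line bundles on $K3$ surfaces (obstruction $\nabla[\omega]\cdot c_1$), this is equivalent to $\uPic(X/B)_\cl\to B$ being unobstructed, hence flat. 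I would carry out this step on $\uPic(X/B)$ first and then pull back along $\det$. Pullback preserves vanishing, and conversely the class can be detected on $\uPerf$ via the section $L\mapsto L$. This gives both directions of the equivalence.
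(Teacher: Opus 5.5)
Your reduction to the single obstruction class $[\iota\theta]$ and your identification $\iota\theta\simeq\int_{X/B}\ch_1(\cE_{X/B})$ agree with the paper (\Cref{Prop:4.13}), and so does your treatment of $n\ge 3$ (\Cref{Thm:Ham=Symp}). There are, however, two genuine gaps.

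\textbf{The cases $n\le 1$.} The theorem is an equivalence, so you must also show that the action is \emph{not} Hamiltonian when $n\in\{0,1\}$. Your proposal never treats these cases. The obstruction is still $[\iota\theta]$; for this direction the commutative square of \Cref{Thm:MomentMapEquation} already suffices. But you now have to prove that the class is nonzero, and that requires actual computations:
\begin{itemize}
\item For $n=0$, the paper restricts to $\uPic\simeq B\GG_m\subseteq\uPerf$. There the class is $[\ch_1(\cL)]\neq 0$ in $\H^2_{\DR}(B\GG_m)$ (\Cref{Cor:non-Ham-Perf}).
\item For $n=1$ and a genus-one curve $E$, the paper pulls $\int_E\ch_1(\cE)$ back along the map $E\to\uPerf(E)$ classifying $\O_{E\times E}(\Delta)$. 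The result is the Calabi--Yau form $\omega\neq 0$ in $\H^1_{\DR}(E)$ (\Cref{Cor:non-Ham-elliticcurve}).
\end{itemize}
This is exactly where the sharpness of $d\le 0$ is established, so it cannot be skipped.

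\textbf{The case $n=2$: detection at points.} You take from \cite[Lem.~6.1.2]{Park2} the principle that a class is detected by its restrictions to points. That is an absolute statement, used in \Cref{Lem:Exactness} only for $B=\pt$. Relative to a base $B$, degree-zero de Rham classes see infinitesimal neighbourhoods in $B$, not just points. Indeed, the fibrewise restrictions of $[\iota\theta]$ always vanish, by \Cref{Thm:Ham=Symp} over $B=\pt$ or by your type argument. So point-detection would make every family of Calabi--Yau surfaces Hamiltonian, contradicting the paper's quartic example.

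\textbf{The case $n=2$: first-order information is not enough.} Your fallback, a first-order Gauss--Manin condition, is too weak. Flatness of $\uPic(X/B)_{\cl}\to B$ (equivalently smoothness, since the fibres are smooth) requires lifting line bundles along \emph{all} small extensions of local Artinian rings. For example, suppose the period map of a one-parameter family is tangent to a Noether--Lefschetz divisor at $b$. Then the line bundle deforms in every first-order direction, yet the corresponding component of $\uPic(X/B)_{\cl}$ is non-reduced and not flat over $B$.

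What is missing is an all-orders criterion together with a higher-order obstruction theory.
\begin{itemize}
\item The paper gets the criterion from \cite[Prop.~6.2.1]{Park2}, giving \Cref{Cor:k3family}. Exactness holds if and only if, for every local Artinian $(A,0)\to(B,b)$ and every $E$ on $X_b$, the horizontal lift $\widetilde{[\ch_1(E)]}$ lies in $\Fil^1_{\Hd}\H^2_{\DR}(X_A/A)$.
\item It then uses Buchweitz--Flenner semiregularity \cite[Prop.~5.9]{BuFl}. Line bundles are $1$-semiregular, so this Hodge condition lets them lift along all small extensions. That gives smoothness, hence flatness.
\item Conversely, flatness plus smooth fibres gives smoothness, hence lifts of $\det E$. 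One then uses $[\ch_1(E)]=[\ch_1(\det E)]$ on the smooth fibre.
\end{itemize}
That last identity also replaces your unproved factorization of $\iota\theta$ through $\det$ at the level of closed forms on the derived stack.
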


By \Cref{Thm:Ham=Symp}, we already know that the $BT$-action on $\uPerf(X/B)$ is Hamiltonian when $n \geq 3$.
Hence to prove \Cref{Thm:Perf-Ham}, it suffices to show that
\begin{itemize}
\item [(a)] the $BT$-actions on $\uPerf(E)$ for an elliptic curve $E$ and $\uPerf$ are not Hamiltonian;
\item [(b)] the $BT$-action on $\uPerf(S/B)$ for a family of Calabi-Yau surfaces $S \to B$ is Hamiltonian if and only if $\uPic(S/B)_\cl \to B$ is flat.
\end{itemize}
Both statements follow from the computations of the Hamiltonian obstructions in \Cref{Thm:MomentMapEquation}.

Roughly speaking, the projection $\uPic(S/B)_\cl \to B$ is flat if and only if the {\em Noether-Lefschetz divisors} are trivial (see \Cref{Cor:k3family}).
In particular, the $BT$-action on $\uPerf(S/B)$ is Hamiltonian for a single Calabi-Yau surface (i.e. when $B=\pt$), but there are non-Hamiltonian examples when we consider families (i.e. when $B\neq\pt$).
We provide one example here.

\begin{corollary} 
Let $|\O_{\P^3}(4)|$ be the complete linear system of quartic surfaces in the projective space $\P^3$.
Let $B\coloneq |\O_{\P^3}(4)|^{\sm} \subseteq |\O_{\P^3}(4)|$ be the open subscheme consisting of smooth quartic surfaces.
Let $S \to B$ be the universal family.
Then the canonical $BT$-action on $\uPerf(S/B)$ is not Hamiltonian.
\end{corollary}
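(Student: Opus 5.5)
The plan is to deduce the corollary from \Cref{Thm:Perf-Ham}, which applies directly. The universal family $p\colon S \to B$ over $B = |\O_{\P^3}(4)|^{\sm}$ is a smooth projective morphism of relative dimension $n=2$ between classical schemes of finite type. The first step is to give it a Calabi-Yau structure $\O_S \simeq K_{S/B}$. By adjunction each fiber has $K_{S_b}\simeq \O(4-4)|_{S_b}\simeq \O_{S_b}$, so $K_{S/B}$ is fiberwise trivial. Since $h^0(\O_{S_b})=1$, the pushforward $p_*K_{S/B}$ is a line bundle $L$ on $B$ and $K_{S/B}\simeq p^*L$. We then need $L$ trivial, or at least trivial after twisting. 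A concrete route is the residue construction: $K_{S/B}\simeq (K_{\P^3\times B/B}\otimes \O(S))|_S$, and $\O(S)\simeq \O_{\P^3}(4)\boxtimes \O_{|\O(4)|}(1)$, so $K_{S/B}\simeq p^*\O_B(1)|_B$. This is a pullback from $B$, and $\O(1)$ restricted to the complement of the discriminant hypersurface (of degree $>1$) is torsion in $\Pic(B)$. If it is not trivial, we pass to a finite étale cover, or an open cover of $B$ trivializing it. The Hamiltonian property should be checkable in a way compatible with such base changes, and non-flatness of $\uPic$ persists under étale base change. The cleanest fix is to replace $B$ by an open subset on which $\O(1)$ is trivial, since non-Hamiltonian-ness for the restricted family implies it for the original one.

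By \Cref{Thm:Perf-Ham}, the action is then non-Hamiltonian if and only if $\uPic(S/B)_\cl \to B$ fails to be flat. So the main step is to show non-flatness using Noether--Lefschetz theory. For a very general quartic, $\Pic(S_b)=\Z\cdot \O(1)$ (Noether--Lefschetz). Along the Noether--Lefschetz locus, for instance quartics containing a line $\ell$, the Picard group jumps, and the components of $\uPic(S/B)_\cl$ parametrizing classes like $\O(\ell)$ map onto a proper closed subset of $B$. Concretely, the locus of quartics containing a line is a divisor, nonempty and of codimension one in $B$. The classical Picard scheme $\uPic(S/B)_\cl$ (rigidified or not, up to the smooth $B\GG_m$-gerbe) has a component supported over this divisor. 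Because $H^1(\O_{S_b})=0$, that component is unramified over $B$ and its image is a proper closed subscheme. A morphism that is locally of finite presentation and flat is open, so a component with image a nonempty proper closed subset of the connected $B$ contradicts flatness. Equivalently, the fiber dimension is $0$ everywhere while the map is not open.

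The expected obstacle is matching this geometric picture with the precise form of condition (2), which \Cref{Cor:k3family} presumably packages as triviality of the Noether--Lefschetz divisors. To prove that $\uPic(S/B)_\cl \to B$ is not flat, I would pick a specific smooth quartic containing a line, such as the Fermat quartic. The class $\O(\ell)$ defines a point of $\uPic(S/B)_\cl$ over it. The deformation theory of line bundles says that $\O(\ell)$ extends to first order along a tangent vector $v$ of $B$ exactly when the cup product of $v$ (its Kodaira--Spencer class) with $c_1(\ell)\in H^1(\Omega^1)$ vanishes in $H^2(\O)$. Hence $\O(\ell)$ does not extend to all first-order deformations. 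The local ring of the fiber of $\uPic$ at this point is then a proper quotient of the reduced local ring of $B$, which is a regular ring, so the map is not flat there. This pins down the non-flatness without any global Noether--Lefschetz input.
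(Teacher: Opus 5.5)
Your reduction matches the paper's: by \Cref{Thm:Perf-Ham}, the corollary comes down to showing that $\uPic(S/B)_\cl\to B$ is not flat. Your discussion of the Calabi--Yau structure addresses something the paper's proof passes over, and your computation is right. We have $K_{S/B}\simeq p^*\O_B(1)$. Since $p_*\O_S\simeq\O_B$, this is trivial only if $\O_B(1)$ is. But $\O_B(1)$ generates $\Pic(B)\simeq\Z/108$, because the discriminant is an irreducible hypersurface of degree $108$. So the theorem literally applies only after restricting to an open $B'$, for instance the complement of a hyperplane not containing the Fermat quartic. The non-flatness argument only needs $B'$ to be irreducible and to contain a quartic with a line, so nothing is lost.

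The paper proves non-flatness globally. Assuming flatness, openness of the map and irreducibility of $B$ force every component of the degree-$0$ locus to meet a very general fibre, which is a single point by Noether--Lefschetz. Hence that locus is connected and $L^2\equiv 0$ on it, contradicting $L=\O(l-l')$ on the Fermat quartic, which has $L^2=-4$.

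Your first argument uses the same ingredients with the degree-$1$ class $\O(\ell)$, and is a bit more economical. Since $L\cdot H$ is locally constant, the degree-$1$ locus is open and closed and non-empty. It has no points over very general $b$, where all degrees lie in $4\Z$. Under flatness its image would be a non-empty open subset of $B$ missing the very general points, which is impossible. This, not ``unramified because $H^1(\O)=0$'', is what justifies your claim about the image. Unramifiedness says nothing about the image, and closedness of the image is unnecessary once you use irreducibility rather than connectedness.

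Your second, local argument is genuinely different: it certifies non-flatness at one point via $\widehat{\O}_{\Pic,x}=\widehat{\O}_{B,b_0}/J$ with $J\neq 0$. Its key step, ``hence $\O(\ell)$ does not extend to all first-order deformations'', is asserted without proof. It is also not free of Noether--Lefschetz input: it is exactly the infinitesimal Noether--Lefschetz statement. To supply it, use three facts:
\begin{itemize}
\item the Kodaira--Spencer image of the quartic family at $b_0$ is $h^\perp\subseteq H^1(T_{S_{b_0}})\simeq H^1(\Omega^1_{S_{b_0}})$;
\item the obstruction pairing $H^1(T)\otimes H^1(\Omega^1)\to H^2(\O)$ becomes the non-degenerate cup product on $H^{1,1}$;
\item $c_1(\ell)\notin\C h$, since $\ell\cdot h=1$, $\ell^2=-2$ and $h^2=4$.
\end{itemize}
Then $v\mapsto\kappa(v)\cup c_1(\ell)$ is non-zero on the Kodaira--Spencer image. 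This route gives a pointwise statement (the Noether--Lefschetz locus is a smooth divisor at $b_0$), at the price of period-map input. The paper's route needs only the global Noether--Lefschetz theorem and intersection numbers.
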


\begin{proof}
    By \Cref{Thm:Ham=Symp}, it suffices to show that $\uPic(S/B)_\cl \to B$ is not flat.
    We prove this by contradiction.
    Assume that $\uPic(S/B)_\cl \to B$ is flat.
    Consider the open and closed subscheme
    \[\uPic(S/B,0)\coloneq\left\{ \big(b\in B, L\in \Pic(S_b)\big) \colon c_1(L) \in \ker\big(H^2(S_b,\Z) \xrightarrow{i_{b,*}} H^4(\P^3,\Z)\big) \right\} \subseteq \uPic(S/B), \]
    of degree $0$ line bundles $L$ on the fibers $S_b$ of $S \to B$.
    Here $c_1(L)\in H^2(S_b,\Z)$ is the first Chern class of $L$ in the singular cohomology and $i_b\colon S_b \hookrightarrow \P^3$ is the inclusion.

    For a very general $g \in B$, the fiber $\uPic(S/B,0)_{\cl} \times_B \{g\}$ is connected:
    indeed, every degree $0$ line bundle is trivial on $S_g$ by the Noether-Lefschetz theorem (see e.g. \cite[\S3.3.2]{Voi}).
    Since $\uPic(S/B,0)_{\cl} \to B$ is an open morphism (since it is flat) and $B$ is irreducible, it follows that $\uPic(S/B,0)_{\cl}$ is connected.

    Since degrees and self-intersection numbers of line bundles are locally constant, we therefore have $L^2 = 0$ for every degree $0$ line bundle on the fibers $S_b$.
    However, this is false:
    for example, take the Fermat quartic surface $S_b = \{ X_0^4+X_1^4+X_2^4+X_3^4 = 0 \}$ and consider the lines $l$, $l'$ defined by $X_0=\lambda X_1$ and $X_2=\lambda X_3$ for two different values of $\lambda$ with $\lambda^4=-1$;
    then $L = \O_{S_b}(l-l')$ has degree $0$ and $L^2=-4 <0$ by the adjunction formula.
    This leads to contradiction.
\end{proof}

We will also provide a rigidified version of the extension correspondence.
Let $\uExact(X/B)$ be the derived stack of exact triangles $E_1 \to E_2 \to E_3$ of perfect complexes on the fibers of $X \to B$
(see \S\ref{ss:ExtensionCorrespondence} for the precise definition).
When $X \to B$ has a Calabi-Yau structure,
\[\xymatrix@R-.5pc{
& \uExact(X/B) \ar[ld]_{(\ev_1,\ev_3)} \ar[rd]^{\ev_2} \\
\uPerf(X/B)\times_B \uPerf(X/B) && \uPerf(X/B)
} \]   
has a canonical Lagrangian correspondence structure, by \cite[Cor.~6.5]{BD}.

\begin{proposition}\label{Prop:RidigifiedExtCorr}
Let $X \to B$ be a smooth projective Calabi-Yau morphism of dimension $n$.
If $n\geq 3$ (or $n=2$ and $B=\pt$), there exists a canonical $(2-n)$-shifted Lagrangian correspondence
\[\xymatrix@R-.5pc{
& \uExact(X/B)^{\rig} \ar[ld]_{} \ar[rd]^{} \\
\uPerf(X/B)/\!/BT\times_B \uPerf(X/B)/\!/BT && \uPerf(X/B)/\!/BT.
}\]
\end{proposition}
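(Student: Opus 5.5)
The plan is to obtain the rigidified correspondence as a composite of two Lagrangian correspondences in $\Symp_{B,2-n}$: the rigidification of the extension correspondence, followed by a change-of-groups correspondence. Throughout set $d=2-n$. The hypothesis ($n\geq 3$, or $n=2$ and $B=\pt$) means we are under \Cref{Assumption}: (A2) $d\leq -1$ when $n\geq 3$, or (A1) $B=\pt$ and $d=0$ when $n=2$. So \Cref{Thm:Ham=Symp} and \Cref{Cor:rigid-adjunction} apply for tori of any rank.

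\emph{Step 1: make the extension correspondence $BT$-equivariant.} Let $BT$ act on $\uExact(X/B)$ by constant automorphisms of the whole triangle. Then $\ev_2$ is $BT$-equivariant. The map $(\ev_1,\ev_3)$ is equivariant for the diagonal action $BT\to BT\times_B BT = B(T\times_B T)$ on $\uPerf(X/B)\times_B\uPerf(X/B)$. To promote \cite[Cor.~6.5]{BD} to a morphism in $\Symp^{BT}_{B,d}$, I would apply \Cref{Prop:BT-symp}. It says $\Symp^{BT}_{B,d}$ is the pullback of $\Symp_{B,d}$ along $\Corr^{BT}_B\to\Corr_B$. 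Hence a $BT$-equivariant correspondence of derived stacks whose underlying correspondence is Lagrangian lifts uniquely to $\Symp^{BT}_{B,d}$. Applying $\rig_{\Symp}$ from \Cref{Def:SympRig} then gives a Lagrangian correspondence
\[\big(\uPerf(X/B)\times_B\uPerf(X/B)\big)/\!/BT \dashrightarrow \uPerf(X/B)/\!/BT.\]
Its functoriality is guaranteed by \Cref{Cor:rigid-adjunction}, or more directly by the fact that $\rig_{\Symp}$ is a functor.

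\emph{Step 2: compare the two rigidifications of the product.} By \Cref{Thm:Ham=Symp} for $T^2=T\times_B T$, the product $P=\uPerf\times_B\uPerf$ carries a Hamiltonian $BT^2$-action, so $P\in\Hamil^{BT^2}_{B,d}$. Its reduction is $P/\!/BT^2\simeq \uPerf/\!/BT\times_B\uPerf/\!/BT$; this holds because reduction of a product by a product group is the product of reductions, which I would check on the Lagrangian intersection diagrams.

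Next consider the diagonal $\Delta\colon BT\to BT^2$, a map of smooth group stacks. Its restriction functor $\res$ from \Cref{Prop:Ind/Res-Functoriality} sends $P$ to $P$ with the diagonal Hamiltonian $BT$-action. Since $\forget_{\Hamil}$ is an equivalence, this agrees with the structure used in Step 1. Now apply \Cref{Prop:SympAdj} / \Cref{Prop:LagCat-Adjunction}: composing the counit and unit of $\ind\dashv\res$ and of $\red\dashv\triv$ gives a canonical morphism in $\Symp_{B,d}$,
\[P/\!/BT^2 \dashrightarrow (\res_\Delta P)/\!/BT.\]
Concretely, this is the Lagrangian composition of $P/BT^2\to\t_2\dual[d-1]/BT^2$ with the correspondence $\t_2\dual[d-1]/BT\colon \t\dual[d-1]/BT\dashleftarrow\t_2\dual[d-1]/BT^2$, intersected with zero sections. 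A more direct description I would aim for is the following correspondence, obtained from a triple Lagrangian intersection in the style of \eqref{Eq:HamRed-LagCor}:
\[\mu_P^{-1}(\ker(\t_2\dual\to\t\dual))/BT \longrightarrow \mu_P^{-1}(0)/BT^2 \ \text{ and } \ P/\!/_\Delta BT.\]

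\emph{Step 3: compose.} Define $\uExact(X/B)^{\rig}$ as the composite in $\Symp_{B,d}$ of the Step 2 correspondence with the Step 1 correspondence, that is, as the fiber product of the two apexes over $P/\!/BT$. Composites of Lagrangian correspondences are Lagrangian, so this gives the desired $d$-shifted Lagrangian correspondence.

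The main obstacle is Step 2. One must identify $\res_\Delta$ of the product Hamiltonian structure with the structure obtained from \Cref{Thm:Ham=Symp} for the diagonal action, and produce the comparison map canonically. Since $\forget_{\Hamil}$ is an equivalence for both $BT$ and $BT^2$, I would argue that $\forget_{\Hamil}\circ\res_\Delta$ equals the pullback $\Symp^{BT^2}\to\Symp^{BT}$ along $B^2T\to B^2T^2$. That reduces the identification to a statement about symplectic actions alone, which is formal by \Cref{Prop:BT-symp}.
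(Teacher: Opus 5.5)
Your proposal is correct and follows the same overall plan as the paper's \Cref{Const:RidigifiedExtCorr}. You lift \eqref{Eq:Ext-3} to $\Symp^{BT}_{B,2-n}$ by \Cref{Prop:BT-symp}, apply $\rig_{\Symp}$ for the diagonal action, and identify $\uPerf(X/B)/\!/BT\times_B\uPerf(X/B)/\!/BT$ with $P/\!/(BT\times BT)$. You then compose with a comparison correspondence $P/\!/(BT\times BT)\dashrightarrow P/\!/_\Delta BT$.

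The only real difference is how you produce that comparison.
\begin{itemize}
\item The paper uses the short exact sequence $BT\xrightarrow{\Delta}BT\times BT\to BT$ and \eqref{Eq:thirdisomorphismthm} to rewrite $P/\!/(BT\times BT)\simeq(P/\!/_\Delta BT)/\!/BT$. It then takes the standard reduction correspondence \eqref{Eq:HamRed-LagCor} for the residual $BT$-action.
\item Your ``compose units and counits'' is best made precise as follows. Take the unit $P\to\ind_\Delta\res_\Delta P$ of $\res_\Delta\dashv\ind_\Delta$; this is \Cref{Prop:LagCat-Adjunction} applied to the swapped correspondence, or equivalently the counit of $\ind_\Delta\dashv\res_\Delta$ with the result reversed. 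Then apply $\red_{BT\times BT}$ and use $\red_{BT\times BT}\circ\ind_\Delta\simeq\red_{BT}$, which follows from \Cref{Prop:Ind/Res-Functoriality}. The adjunction $\red\dashv\triv$ is not needed.
\end{itemize}
Unwinding either construction gives the same apex, $\mu_P^{-1}(0)/BT$: the zero locus of the full $BT\times BT$-moment map, modulo the diagonal $BT$. Your route works for any homomorphism $K\to G$ of smooth group stacks and needs no quotient group. The paper's route additionally exhibits the comparison as an honest Hamiltonian reduction of $P/\!/_\Delta BT$.

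Two smaller remarks.
\begin{itemize}
\item Your ``more direct description'' has the wrong apex. The stack $\mu_P^{-1}(\ker(\t_2\dual\to\t\dual))/BT$ is just $P/\!/_\Delta BT$, and it admits no map to $\mu_P^{-1}(0)/BT^2$. The apex should be $\mu_P^{-1}(0)/BT$. It maps to $\mu_P^{-1}(0)/BT^2$, and to $P/\!/_\Delta BT$ via $\mu_P^{-1}(0)\to\mu_P^{-1}(\ker(\t_2\dual\to\t\dual))$.
\item The ``main obstacle'' you flag is mild. By \Cref{Prop:BT-symp}, an object of $\Symp^{BT}_{B,d}$ is determined by its underlying $BT$-stack and its symplectic stack, and $\forget_{\Hamil}$ is an equivalence. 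So $\res_\Delta P$ is identified with the Step 1 object as soon as the underlying actions agree. The paper uses this identification implicitly when it passes from (S2) to (S3)--(S4).
\end{itemize}
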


\Cref{Prop:RidigifiedExtCorr} will follow from the functoriality of symplectic rigidification (\Cref{Def:SympRig}) together with the functoriality of Hamiltonian reductions along change of groups (\Cref{Prop:Ind/Res-Functoriality}).

\subsection{$BT$-actions}\label{ss:BT-action}

We first review the canonical $BT$-actions on moduli of perfect complexes.
We will use the monoid structures on moduli of perfect complexes to explain the $BT$-actions.

The $\infty$-category of \defterm{monoid stacks} is the full subcategory
\[\Mon \subseteq \Fun(\Delta^{\op},\dSt)\]
consisting of functors $M \colon \Delta^{\op} \to \dSt$ such that $M_0\simeq \pt$ and the map $M_n \to M_1^{\times n}$, induced by the maps $\{i-1,i\} \hookrightarrow [n]$ in $\Delta$ for $1\leq i \leq n$, is an equivalence.
These monoid stacks are the monoid objects in $\dSt$ in the sense of \cite[Def.~4.1.2.5]{LurHA}.
We collect some basic facts on the monoid stacks:
\begin{itemize}
\item The monoid stacks are the associative algebra objects in $\dSt$, with the cartesian symmetric monoidal structure, i.e. we have a canonical equivalence of $\infty$-categories
\[\Mon \simeq \Alg(\dSt)\]
by \cite[Prop.~4.1.2.10, Prop.~2.4.2.5]{LurHA}.
\item The group stacks are monoid stacks, i.e., we have
\[\Grp \subseteq \Mon\] 
as full subcategories of $\Fun(\Delta^{\op},\dSt)$, by \cite[Ex.~4.1.2.7]{LurHA}.
\end{itemize}
\begin{lemma}\label{Lem:BT-action-1}
There exists a canonical equivalence of $\infty$-categories
\[\Mon \simeq \Fun^{\rmR} \big(\dSt^{\op}, \Alg(\Grpd)\big),\]
where $\Fun^{\rmR}(-,-)\subseteq \Fun(-,-)$ denotes the full subcategory of right adjoint functors,
and $\Grpd$ is given the cartesian symmetric monoidal structure.
\end{lemma}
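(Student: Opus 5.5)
We will prove the equivalence $\Mon \simeq \Fun^{\rmR}(\dSt^{\op},\Alg(\Grpd))$ by combining two standard facts: monoid stacks are associative algebras in $\dSt$, and objects of a presentable $\infty$-category correspond to limit-preserving representable functors.

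First, by \cite[Prop.~4.1.2.10, Prop.~2.4.2.5]{LurHA} we have $\Mon \simeq \Alg(\dSt)$, where $\dSt$ carries the cartesian symmetric monoidal structure. So it suffices to identify $\Alg(\dSt)$ with the right-hand side.

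Second, since $\dSt$ is an $\infty$-topos, hence presentable, the Yoneda embedding $\dSt \to \Fun(\dSt^{\op},\Grpd)$, $X \mapsto \Map_{\dSt}(-,X)$, is fully faithful. Its essential image consists of the functors $\dSt^{\op}\to\Grpd$ that send colimits in $\dSt$ to limits, by \cite[Prop.~5.5.2.2]{LurHTT}. For presentable sources, such functors are exactly the right adjoints $\dSt^{\op}\to\Grpd$; more precisely, they are the functors admitting a left adjoint $\Grpd \to \dSt^{\op}$, namely $K \mapsto K\otimes X$ viewed in the opposite category. This gives an equivalence $\dSt \simeq \Fun^{\rmR}(\dSt^{\op},\Grpd)$. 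The Yoneda embedding preserves finite products, since $\Map(-,X\times Y)\simeq \Map(-,X)\times\Map(-,Y)$, and it sends the terminal object to the constant functor $\pt$. The full subcategory $\Fun^{\rmR}$ is closed under pointwise products, so this is an equivalence of cartesian symmetric monoidal $\infty$-categories. Applying $\Alg(-)$ then yields
\[\Alg(\dSt) \simeq \Alg\big(\Fun^{\rmR}(\dSt^{\op},\Grpd)\big).\]

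Third, we identify $\Alg(\Fun(\dSt^{\op},\Grpd))$, for the pointwise cartesian structure, with $\Fun(\dSt^{\op},\Alg(\Grpd))$. This follows because algebras in a pointwise monoidal functor category are computed pointwise \cite[Rem.~2.1.3.4 / Prop.~3.2.4.3-type statements]{LurHA}. Alternatively, one can use the simplicial model: a monoid object is a functor $\Delta^{\op}\to\Fun(\dSt^{\op},\Grpd)$ satisfying Segal conditions, and currying together with the pointwise check of the Segal conditions gives the identification.

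It remains to check that, under this identification, the right-adjoint condition on the underlying $\Grpd$-valued functor matches the condition defining $\Fun^{\rmR}(\dSt^{\op},\Alg(\Grpd))$. The forgetful functor $\Alg(\Grpd)\to\Grpd$ is conservative and preserves and creates limits and sifted colimits, and $\Alg(\Grpd)$ is presentable. Hence a functor $\dSt^{\op}\to\Alg(\Grpd)$ preserves limits, which by the adjoint functor theorem is equivalent to being a right adjoint, iff its composite to $\Grpd$ does. Accessibility is automatic here, because $\dSt^{\op}$ has small limits and limit preservation is the operative condition. Some care is needed with the fact that $\dSt^{\op}$ is not presentable, so we will use \cite[Prop.~5.5.2.2]{LurHTT} directly, in its form that representable functors are exactly the limit-preserving functors $\dSt^{\op}\to\Grpd$. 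The main obstacle is therefore only bookkeeping: making sure $\Fun^{\rmR}$ is interpreted as limit-preserving functors in both places, so that it is compatible with passing to algebras.
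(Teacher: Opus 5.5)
Your proof is correct. It has the same skeleton as the paper's: realize $\dSt$ as a product-closed full subcategory of $\Grpd$-valued functors, compute algebras pointwise via \cite[Rem.~2.1.3.4]{LurHA}, and check the defining condition on underlying functors, since $\Alg(\Grpd)\to\Grpd$ creates limits.

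The difference is the choice of functor category. The paper writes $\dSt\simeq\Fun^{\et}(\CAlg_{\C}^{\leq0},\Grpd)$, so its middle step identifies monoid stacks with \'etale sheaves of monoids on derived affines. It then quotes \cite[Lem.~B.3.12]{CHS} to trade such sheaves for right adjoint functors on $\dSt^{\op}$. You instead use the Yoneda description $\dSt\simeq\Fun^{\rmR}(\dSt^{\op},\Grpd)$ from \cite[Prop.~5.5.2.2]{LurHTT} throughout. This avoids the site and the external lemma, and it works verbatim for any presentable $\infty$-category in place of $\dSt$. The paper's route, in exchange, gives the affine-level description, which is closer to how $\uPerf$ and $\uPic$ are defined later (as right Kan extensions from $\CAlg_{\C}^{\leq0}$). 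It also hands off to \cite{CHS} exactly the step you must do by hand: showing that limit-preserving functors $\dSt^{\op}\to\Alg(\Grpd)$ are right adjoints.

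That step needs two corrections.
\begin{enumerate}
\item The left adjoint of $\Map_{\dSt}(-,X)\colon\dSt^{\op}\to\Grpd$ is $K\mapsto X^K=\lim_K X$, not $K\otimes X$. Indeed $\Map_{\dSt}(Y,X^K)\simeq\Map_{\Grpd}(K,\Map_{\dSt}(Y,X))$, whereas $K\mapsto K\otimes X$ does not even have the right variance as a functor $\Grpd\to\dSt^{\op}$.
\item The adjoint functor theorem does not apply, since $\dSt^{\op}$ is not presentable, and ``accessibility is automatic'' is not an argument. Argue directly instead. If $G\colon\dSt^{\op}\to\Alg(\Grpd)$ preserves limits, then for each $A\in\Alg(\Grpd)$ the functor $\Map_{\Alg(\Grpd)}(A,G(-))\colon\dSt^{\op}\to\Grpd$ preserves limits. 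By \cite[Prop.~5.5.2.2]{LurHTT} it is therefore of the form $\Map_{\dSt}(-,LA)$ for some $LA\in\dSt$, and $A\mapsto LA$ is the required left adjoint $\Alg(\Grpd)\to\dSt^{\op}$.
\end{enumerate}
With these fixes the argument is complete.
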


\begin{proof}
We have canonical equivalences of $\infty$-categories
{\small \[\Mon \simeq \Alg(\dSt) \simeq \Alg\big(\Fun^{\et}(\CAlg_{\C}^{\leq0},\Grpd)\big) \simeq \Fun^{\et}\big(\CAlg_{\C}^{\leq0},\Alg(\Grpd)\big) \simeq \Fun^{\rmR} \big(\dSt^{\op}, \Alg(\Grpd)\big),\]}

\noindent
by \cite[Rem.~2.1.3.4]{LurHA} and \cite[Lem.~B.3.12]{CHS}, where $\Fun^{\et}(\CAlg^{\leq0}_{\C},-)\subseteq \Fun(\CAlg^{\leq0}_{\C},-)$ denotes the full subcategory of functors satisfying {\'e}tale descent.
\end{proof}

The notion of group actions extends to monoids.
Let $M$ be a monoid stack. The $\infty$-category of \defterm{derived stacks with $M$-actions} is the full subcategory
\[\dSt^M \subseteq \Fun(\Delta^{\op},\dSt)_{/M}\]
consisting of natural transformations $X \to M$ of functors $X,M \colon\Delta^{\op} \to \dSt$ such that for any $[n]\in \Delta$, the map $X_n \to X_0 \times M_n$, induced by the map $[0] \to [n] \colon 0 \mapsto n$ in $\Delta$, is an equivalence, see \cite[Def.~4.2.2.2]{LurHA}.
We also collect basic facts on monoid actions:
\begin{itemize}
    \item[\namedlabel{itm:second}{}] For a monoid stack $M$, there exists a canonical $M$-action on the underlying derived stack $M_1$ of $M$, by \cite[Ex.~4.2.2.4]{LurHA}.
    \item[\namedlabel{itm:third}{}] For a morphism $M \to N$ of monoid stacks and a $N$-action on a derived stack $X_0$ given by $X \to   N$, there exists an induced $M$-action on $X_0$, given by $X\times_N M \to M$.
\end{itemize}

Recall that the functor of \defterm{quasi-coherent sheaves}
\[\QCoh \colon \dSt^{\op} \longrightarrow \CAlg(\Cat)\]
is the right Kan extension of the functor of modules
$\CAlg_{\C}^{\leq0} \subseteq\CAlg_{\C} \longrightarrow \CAlg(\Cat) \colon A \mapsto \Mod_A.$
\begin{itemize}
\item The $\infty$-category of \defterm{perfect complexes} on $A\in \CAlg^{\leq0}_{\C}$ is the smallest full subcategory \[\Perf(A) \subseteq \Mod_A\]
containing $A\in \Mod_A$ and is stable under fibers, cofibers and retracts.
\item The $\infty$-category of \defterm{line bundles} on $A\in \CAlg^{\leq0}_{\C}$ is the full subcategory \[\Pic(A) \subseteq \Perf(A)\]
consisting of perfect complexes whose restrictions to $H^0(A)$ is a line bundle (in the classical sense).
\end{itemize}
Since the full subcategories $\Perf(A),\Pic(A) \subseteq \Mod_A$ are stable under tensor products and extensions along $A \to B$, we have induced functors
\[\CAlg_{\C}^{\leq0}\longrightarrow \CAlg(\Cat) \colon A \mapsto \Perf(A) \and \CAlg_{\C}^{\leq0}\longrightarrow \CAlg(\Cat) \colon A \mapsto \Pic(A).\]
By taking the right Kan extension, we obtain the functors of perfect complexes and line bundles
\[\Perf \colon \dSt^{\op} \longrightarrow \CAlg(\Cat) \and \Pic \colon \dSt^{\op} \longrightarrow \CAlg(\Cat).\]
Consider the following functor induced by \Cref{Lem:BT-action-1}:
\begin{equation}\label{Eq:BT-action-1}
\Fun^{\rmR}\big(\dSt^{\op}, \CAlg(\Cat)\big) \to \Fun^{\rmR}\big(\dSt^{\op}, \CAlg(\Grpd)\big) \to \Fun^{\rmR}\big(\dSt^{\op}, \Alg(\Grpd)\big) \simeq \Mon. 
\end{equation}

\begin{definition}\
\begin{enumerate}
\item The derived moduli stack of \defterm{perfect complexes} is the monoid stack
\[\uPerf \in \Mon,\]
defined as the image of the functor $\Perf \colon \dSt^{\op} \to \CAlg(\Cat)$ under the functor \eqref{Eq:BT-action-1}.
\item The \defterm{Picard stack} is the monoid stack
\[\uPic \in \Mon,\]
defined as the image of the functor $\Pic \colon \dSt^{\op} \to \CAlg(\Cat)$ under the functor \eqref{Eq:BT-action-1}.
The Picard stack $\uPic$ is a group stack since all line bundle are invertible by \cite[Prop.~2.9.4.2]{LurSAG}.
\end{enumerate}
\end{definition}

Let $B$ be a derived stack. The functor of \defterm{mapping stacks}
\[\uMap_B \colon \dSt_B^{\op} \times \dSt_B \longrightarrow \dSt_B\]
is the functor of internal hom, i.e., for $X,Y \in \dSt_B$, the mapping stack $\uMap_B(X,Y)\in \dSt_B$ is uniquely characterized by the universal property:
\begin{equation} \label{Eq:MapUniv}
    \Map_{\dSt_B}(S, \uMap_B(X,Y)) \simeq \Map_{\dSt_B} (S \times_B X, Y).
\end{equation}
We note that the mapping stacks are compatible with group stacks and their actions:
\begin{itemize}
\item [\namedlabel{itm:3}{}] For a derived stack $X$ over $B$ and a group stack $G$ over $B$, the mapping stack $\uMap_B(X,G)$ is a group stack. More precisely, the functor $\uMap_B$ lifts to a functor
\[\uMap_B^{\grp} \colon \dSt_B^{\op} \times \Grp_B \longrightarrow \Grp_B.\]
\item [\namedlabel{itm:4}{}]  For a derived stack $X$ and an action of a group stack $G$ on a derived stack $Y$, we have an induced action of $\uMap_B(X,G)$ on $\uMap_B(X,Y)$.
More precisely, let $\dSt_B^{\act} \subseteq \Fun(\Delta^{1} \times \Delta^{\op},\dSt_B)$ be the full subcategory consisting of group stack actions on derived stacks, i.e., morphisms $M \to G$ of functors $\Delta^{\op} \to \dSt_B$ such that $G$ is a group stack and $M \in \dSt^G_B$.
Then there exists a functor
\[\uMap_B^{\mathrm{act}} \colon \dSt_B^{\op} \times \dSt_B^{\act} \longrightarrow \dSt_B^{\act}\]
which lifts $\uMap_B$ and $\uMap_B^{\grp}$.
\end{itemize}

\begin{definition}
Let $p\colon X \to B$ be a morphism of derived stacks.
\begin{enumerate}
\item The derived stack of \defterm{perfect complexes} on the fibers of $p\colon X \to B$ is the mapping stack
\[\uPerf(X/B)\coloneq \uMap_B(X,\uPerf \times B) \in \dSt_B.\]
\item The derived stack of \defterm{line bundles} on the fibers of $p\colon X \to B$ is the mapping stack
\[\uPic(X/B)\coloneq \uMap_B(X,\uPic \times B) \in \dSt_B.\]
\end{enumerate}    
\end{definition}

\begin{construction} \label{Construction:BGmAction}
Let $p\colon X \to B$ be a morphism of derived stacks.
The canonical $BT$-action on the derived stack $\uPerf(X/B)$ is defined through the following steps:
\begin{enumerate}
\item 
The monoid stack $\uPerf$ has a canonical action on its underlying derived stack $\uPerf$ by \eqref{itm:second}.
\item 
The morphism $\uPic \to \uPerf$ of monoid stacks induces an action of $\uPic$ on $\uPerf$ by \eqref{itm:third}.
\item 
Applying $\uMap_B(X, (-)\times B)$, we get an induced action of $\uPic(X/B)$ on $\uPerf(X/B)$ by \eqref{itm:4}.
\item 
The group homomorphism $\uPic(B/B) \to \uPic(X/B)$ from \eqref{itm:3} induces an action of $\uPic\times B \simeq \uPic(B/B)$ on $\uPerf(X/B)$ by \eqref{itm:third}.
\item 
The equivalence $B\GG_m\simeq \uPic$ of group stacks in \Cref{Lem:Pic=BGm} below induces an action of $BT$ on $\uPerf(X/B)$, where $T\coloneq \GG_m\times B$.
\end{enumerate}
\end{construction}

We need the following lemma to complete \Cref{Construction:BGmAction}.

\begin{lemma}\label{Lem:Pic=BGm}
There exists a canonical equivalence of group stacks,
\[\uPic \simeq B\GG_m.\]
\end{lemma}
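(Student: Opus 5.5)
We will construct the equivalence $\uPic \simeq B\GG_m$ objectwise, using the description of monoid stacks in \Cref{Lem:BT-action-1}. Both sides are group stacks, and under $\Mon \simeq \Fun^{\rmR}(\dSt^{\op},\Alg(\Grpd))$ a group stack corresponds to a sheaf of grouplike $E_1$-spaces. It therefore suffices to produce an equivalence of étale sheaves on $\CAlg_\C^{\leq 0}$ with values in $\Alg(\Grpd)$ (in fact in $\CAlg(\Grpd)$). One side is
\[A \mapsto \Pic(A)^{\simeq},\]
the groupoid core of line bundles with its tensor product monoidal structure. The other side is the sheafified classifying space of $A\mapsto \GG_m(A)=\Omega^\infty(A)^\times$, i.e.\ $B\GG_m$ as the colimit of the bar construction in $\dSt$.

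The map $B\GG_m \to \uPic$ is constructed as follows. The unit $\O \in \Pic(A)$ has automorphism group $\Aut_{\Mod_A}(A)\simeq \Omega^\infty(A)^\times = \GG_m(A)$, as grouplike $E_\infty$ (in particular $E_1$) spaces. This holds because $\Omega^\infty(A)$, as a multiplicative monoid, acts by endomorphisms on the unit of the symmetric monoidal category $\Mod_A$, and the invertible components are exactly the units. This identification is natural in $A$ and symmetric monoidal, since the unit of a symmetric monoidal $\infty$-category has an $E_\infty$ endomorphism monoid. We therefore get a natural map of group objects $B(\GG_m(A)) \to \Pic(A)^{\simeq}$ given by inclusion of the component of $\O$, and it is a monoidal map because the delooping of the $E_2$-group $\GG_m(A)$ is compatible with tensor product on the unit component. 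Since $\uPic$ satisfies étale descent, this map factors through the étale sheafification, which gives a morphism of group stacks $B\GG_m \to \uPic$.

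It remains to show this morphism is an equivalence of underlying derived stacks, since the forgetful functor $\Grp\to\dSt$ is conservative. The map is a monomorphism on each component (fully faithful on the unit component) by the computation of $\Aut(\O)$ above. The key remaining point, which we expect to be the main obstacle, is étale-local essential surjectivity: every $L\in\Pic(A)$ should be étale-locally (in fact Zariski-locally) equivalent to $A$. By definition $L\otimes_A H^0(A)$ is a classical line bundle, so after Zariski localization on $\mathrm{Spec}\,H^0(A)$ it is trivial; choose a generator. Then lift it to a map $A\to L$ using connectivity: $L$ is perfect with $L\otimes_A H^0(A)$ discrete and invertible, so by Nakayama-type arguments for connective $A$ (as in \cite[Prop.~2.9.4.2]{LurSAG}) $L$ is connective and locally free of rank one. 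Hence a lift of the generator is an equivalence $A\simeq L$ after checking on $H^0$. Combined with full faithfulness, this shows the map is an equivalence of étale sheaves of groupoids and hence of group stacks. One must also check that the monoidal structures match, which follows since both are induced from the tensor product on $\Mod_A$.
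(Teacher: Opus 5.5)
Your route (build $B\GG_m\to\uPic$ as the unit-component inclusion, sheafify, and check it is an equivalence on underlying stacks using Zariski-local triviality of line bundles) works for the \emph{underlying} stacks. That is the well-known part, which the paper simply cites. The real content of the lemma is the comparison of group structures, and there your proposal has a genuine gap.

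The group structure on $B\GG_m$ in the paper is not ``induced from the tensor product on $\Mod_A$''. It is defined in \Cref{Ex:ClassifyingGroupstack} by delooping $\GG_m$, regarded as a group object in group stacks via the classical commutativity of the group scheme $\GG_m$. Your map $B(\GG_m(A))\to\Pic(A)^{\simeq}$ is a map of $E_1$-groups only if $B(\GG_m(A))$ is built from the $E_2$-structure on $\Aut_{\Mod_A}(A)\simeq\Omega^\infty(A)^\times$ coming from $\Mod_A$ (Dunn additivity of composition and tensor product). So you must show that this $E_2$-structure agrees, naturally in $A$, with the one coming from the commutative group scheme $\GG_m$. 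Your sentences ``compatible with tensor product on the unit component'' and ``both are induced from the tensor product on $\Mod_A$'' assert exactly this without proof. It also cannot be checked objectwise: for non-discrete $A$ the space $\GG_m(A)$ has higher homotopy $\pi_iA$, so an $E_2$-structure on it is not determined by the multiplication on $\pi_0$.

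The missing idea is rigidity at the level of the representing object. $\GG_m$ is a classical affine scheme, so the mapping spaces $\Map_{\dSt}(\GG_m^{\times n},\GG_m)$ are discrete. The full subcategory spanned by its powers is therefore a $1$-category, and by Eckmann--Hilton a group-object-in-groups structure on $\GG_m$ is unique once the multiplication $\GG_m\times\GG_m\to\GG_m$ is fixed. Both structures have the usual multiplication, by Yoneda.

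This is essentially what the paper does. It identifies $\Grp(\Grp(\St))\simeq\Grp(\St)_*$ by delooping and compares the \v{C}ech nerves of $\pt\to\uPic$ and $\pt\to B\GG_m$ in $\Grp(\Grp(\Sch))$. It then uses that $\Grp(\Grp(\Sch))\to\Grp(\Sch)$ is fully faithful because $\Sch$ is a $1$-category.

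With that rigidity argument added, your more hands-on route would go through and would reprove the plain equivalence along the way. Without it, the proposal only establishes $\uPic\simeq B\GG_m$ as stacks, and it misidentifies the main obstacle: local triviality of line bundles is routine, while matching the group structures is the actual content.
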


It is well-known that $\uPic\simeq B\GG_m$ as plain derived stacks.
Here we provide a proof of \Cref{Lem:Pic=BGm} since we haven't found a proof of comparing the group structures in the literature.

\begin{proof}[Proof of \Cref{Lem:Pic=BGm}]
Let $\St$ be the $\infty$-category of classical stacks.
There exists a fully faithful functor $\St \hookrightarrow \dSt$ with a right adjoint $(-)_{\cl} \colon \dSt \to \St$.
Since the base $\C$ is a field, the inclusion $\St \hookrightarrow \dSt$ preserves products.
Hence we have a functor $\Grp(\St) \to \Grp(\dSt)$ between group objects.

We first observe that both the derived stacks $\uPic$ and $B\GG_m$ are classical.
Indeed, $\uPic$ is classical by \cite[2.9.6.2]{LurSAG} and $B\GG_m$ is classical since the inclusion $\St \hookrightarrow \dSt$ preserves colimits.
Hence it suffices to show that $\uPic \simeq B\GG_m$ as classical group stacks, i.e. objects in $\Grp(\St)$.

Recall from \cite[\S6.2.3]{LurHTT} that we have a canonical equivalence of $\infty$-categories
\begin{equation}\label{Eq:BG1}
B\colon \Grp(\St) \simeq \St_* \colon \Cech,
\end{equation}
where $ \St_*$ is the full subcategory of the undercategory $\St_{\pt/}$, consisting of effective epimorphisms $\pt \to M$ of classical stacks. 
Here the functor $B\colon \Grp(\St) \to \St_*$ sends a group stack $G$ to the canonical morphism $\pt \to BG$ for the classifying stack $BG$
and the functor $\Cech \colon \St_* \to \Grp(\St)$ sends a morphism $\pt \to M$ to the Cech nerve $\Cech(\pt \to M)$.

By considering the group objects in the $\infty$-categories in \eqref{Eq:BG1}, we obtain canonical equivalences 
\begin{equation}\label{Eq:BG2}
\Grp(\Grp(\St)) \simeq \Grp(\St_*) \simeq \Grp(\St)_*
\end{equation}
where $\Grp(\St)_*$ is the full subcategory of the undercategory $\Grp(\St)_{\pt/}$ consisting of group homomorphisms $\pt \to G$ that are effective epimorphisms.
Here the second equivalence follows from the definition of group objects.

We note that both $\uPic$ and $B\GG_m$ can be considered as objects in $\Grp(\St)_*$.
Indeed, since the group structure on $B\GG_m$ is defined via the commutative group structure on $\GG_m$ and the equivalence \eqref{Eq:BG2}, we have a canonical group homomorphism $\pt \to B\GG_m$, which is an effective epimorphism.
On the other hand, the morphism $\pt \to \uPic$ given by the trivial line bundle $\C$ on $\pt$ is the unit of the group structure of $\uPic$, and hence it can be regarded as a group homomorphism.
Since we already know $\uPic\simeq B\GG_m$ as classical stacks, without the group structures (see e.g. \cite[Ex.~14.4.8]{LM}), and there is a unique map $\pt \to \uPic\simeq B\GG_m$, the map $\pt \to \uPic$ is an effective epimorphism.

We will show that $\uPic\simeq B\GG_m$ as objects in $\Grp(\St)_*$.
By the equivalence \eqref{Eq:BG2}, it suffices to show that the Cech nerves are equivalent,
\[\Cech(\pt \to \uPic) \simeq \Cech (\pt \to B\GG_m) \textin \Grp(\Grp(\St)).\]
Note that $\Cech (\pt \to B\GG_m) \in \Grp(\Grp(\Sch)) \subseteq \Grp(\Grp(\St))$, where $\Sch \subseteq \St$ is the full subcategory of classical schemes.
Since we know that $\uPic \simeq B\GG_m$ as objects in $\St_*$, we also have $\Cech (\pt \to \uPic) \in \Grp(\Grp(\Sch))$.
Since $\Sch$ is an $1$-category, the forgetful functor
\[\Grp(\Grp(\Sch)) \to \Grp(\Sch)\]
is fully faithful.
Hence it suffices to show that
\[\Cech(\pt \to \uPic) \simeq \Cech (\pt \to B\GG_m) \textin \Grp(\Sch).\]
By the equivalence \eqref{Eq:BG1}, this follows from the equivalence $\uPic \simeq B\GG_m$ in $\St_{*}$, which we already know.
\end{proof}

\subsection{Non-Hamiltonian actions}

We compute the Hamiltonian obstructions (\Cref{Thm:MomentMapEquation}) for moduli of perfect complexes and prove our main result (\Cref{Thm:Perf-Ham}).

We first review the canonical shifted symplectic structures on the moduli of perfect complexes.
Let $p\colon X \to B$ be a smooth projective Calabi-Yau morphism of dimension $n$ between classical schemes of finite type.
\begin{itemize}
\item For any derived stack $M$ over $B$,
we have the \defterm{integration map}
\[\int_{X/B} \colon \sA^{p,\cl}(X\times_BM\to B,d) \lra \sA^{p,\cl}(M \to B,d-n),\]
induced from the following composition of morphisms of filtered complexes on $B$,
\begin{multline*}
\quad\qquad\DR(X\times_BM \to B) \xleftarrow[\boxtimes]{\simeq} \DR(X \to B) \widehat{\otimes}\DR(M \to B) \\ 
\rightarrow p_*\DR(X \to X) \widehat{\otimes} \DR(M \to B) \simeq p_*\O_X \otimes \DR(M\to B) \xrightarrow{\omega} \DR(M\to B)[-n],
\end{multline*}
where the first left arrow is an isomorphism by \Cref{Lem:Kunneth}
and the last arrow is given by the counit map $p_* \O_X \to \O_B[-n]$ of the adjunction $p_* \dashv p^*[n]$.
\item We have the {\em Chern character} \cite{TV2} 
\[\ch_p \colon \uPerf \lra  \sA^{p,\cl}[p] \textin \dSt.\]
\end{itemize}
The $(2-n)$-shifted symplectic structure on the moduli stack $\uPerf(X/B) \coloneq \uMap_B(X,\uPerf\times B)$ is
\[\int_{X/B} \ch_2(\cE_{X/B}) \in \sA^{2,\cl}(\uPerf(X/B) \to B,2-n),\]
where $\cE_{X/B}$ is the universal perfect complex on $X\times_B \uPerf(X/B)$.

Our key result for proving \Cref{Thm:Perf-Ham} is the following.
We continue letting $T : = \GG_{m,B} \in \Grp_B$.

\begin{proposition}\label{Prop:4.13}
The canonical $BT$-action on $\uPerf(X/B)$ is Hamiltonian if and only if
\[\int_{X/B}\ch_1(\cE_{X/B}) \in \sA^{1,\cl}(\uPerf(X/B)\to B, 1-n)\]
is exact.    
More generally,
for any open substack $M \subseteq \uPerf(X/B)$, the induced $BT$-action on $M$ is Hamiltonian if and only if $\int_{X/B} \ch_1(\cE_{X/B}|_{X\times_B M})$ is exact.
\end{proposition}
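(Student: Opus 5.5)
The plan is to use \Cref{Thm:MomentMapEquation} with $d=2-n$, $r=1$ and $\t\dual\simeq\O_B$. Since $B$ is a classical scheme of finite type, the exactness conclusion needs (A3), i.e.\ $d=2-n\leq 2$, which always holds. Hence the commutative square \eqref{Square:MomentMapEquation} is cartesian, and $\sHamil(BT\circlearrowleft M)$ is the fiber of $d\colon\sA^0(M,1-n)\to\sA^{1,\cl}(M,1-n)$ over $\iota\theta$. So the action is Hamiltonian if and only if $\iota\theta$ is exact, meaning that its class $[\iota\theta]\in\sA^{\DR}(M,2-n)$ vanishes. The whole proof therefore reduces to identifying the contraction
\[\iota\Big(\int_{X/B}\ch_2(\cE_{X/B})\Big)\simeq \int_{X/B}\ch_1(\cE_{X/B}) \textin \sA^{1,\cl}(M,1-n),\]
up to a unit. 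The statement for an open substack $M\subseteq\uPerf(X/B)$ follows in the same way, because the $BT$-action, the contraction and the integration all restrict along open immersions.

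To compute $\iota$, I will first show that it commutes with integration. The action $\sigma\colon M\times BT\to M$ is compatible with $X\times_B(-)$, and the integration map is built from the K\"unneth equivalence of \Cref{Lem:Kunneth} and the trace $p_*\O_X\to\O_B[-n]$. Both of these are natural in the second factor and compatible with tensoring by $\DR(BT)$. Hence $\iota\circ\int_{X/B}\simeq\int_{X/B}\circ\,\iota_{X}$, where $\iota_X$ is the contraction for the induced $BT$-action on $X\times_B M$ that is trivial on $X$. It then suffices to compute $\iota_X\ch_2(\cE)$ on $X\times_B M$.

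The key input is the pullback $\sigma^*\cE\simeq\cE\boxtimes\cL$, where $\cL$ is the universal line bundle on $BT=B\GG_{m,B}$. This holds because by \Cref{Construction:BGmAction} the action is tensoring by line bundles pulled back from $\uPic\simeq B\GG_m$ (\Cref{Lem:Pic=BGm}). The Chern character is multiplicative, as a map of (commutative) monoid stacks $\uPerf\to\prod_p\sA^{p,\cl}[p]$. So, writing $u:=\ch_1(\cL)\in\Gr^1\DR(BT)[2]$ for the generator corresponding to $\Sym^1(\t\dual)(-1)[-2]$ in \Cref{Lem:Splitting}, we get
\[\sigma^*\ch_2(\cE)\simeq\ch_2(\cE)\boxtimes 1+\ch_1(\cE)\boxtimes u+\ch_0(\cE)\boxtimes u^2/2.\]
Projecting to the $\Sym^1$-component gives $\iota\ch_2(\cE)\simeq\ch_1(\cE)$.

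The main obstacle is making the multiplicativity and the identification $\ch_1(\cL)=u$ hold at the level of closed forms, and not only on cohomology classes. Matching only classes could be enough here: by \cite[Lem.~6.1.2]{Park2}, in our range the relevant spaces of de Rham forms have controlled homotopy (they are discrete or contractible), and the Hamiltonian criterion only involves the exactness class $[\iota\theta]$. I would therefore first try to prove the identity $[\iota\theta]=[\int_{X/B}\ch_1(\cE)]$ in $\pi_0\sA^{\DR}(M,2-n)$. This reduces to the Toën--Vezzosi multiplicativity of $\ch$ together with $\ch_1$ of the tautological line bundle on $B\GG_m$ being the standard degree-two generator, which can be checked over $B=\pt$ and then base-changed.
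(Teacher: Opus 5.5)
Your proposal is correct and follows essentially the paper's route. You apply \Cref{Thm:MomentMapEquation}, commute $\iota$ past $\int_{X/B}$ (the paper phrases this via the $BT$-equivariant evaluation map $\ev\colon X\times_B\uPerf(X/B)\to\uPerf\times B$ and $\cE_{X/B}\simeq\ev^*\cE$), and obtain $\iota\ch_2(\cE)\simeq\ch_1(\cE)$ from $\sigma^*\ch_p(\cE)\simeq\sum_{a+b=p}\ch_a(\cE)\boxtimes\ch_b(\cL)$ together with the normalization $\t\dual\simeq\C$ via $\ch_1(\cL)$.

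Your fallback of matching only classes in $\pi_0\sA^{\DR}(M,2-n)$ is also valid, but for a different reason than the one you give. Both conditions depend only on $[\iota\theta]$ and $[\int_{X/B}\ch_1(\cE_{X/B})]$ in $\pi_0$, so no discreteness input is needed. \cite[Lem.~6.1.2]{Park2} would not supply it anyway, since it does not cover the positive shifts $2-n$ arising for $n\leq 1$.
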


\Cref{Prop:4.13} will follow from \Cref{Thm:MomentMapEquation} together with the following lemma.
Recall from \Cref{Def:Contraction} the contraction map
\[\iota \colon \sA^{p,\cl}(\uPerf,p) \longrightarrow \sA^{p-1,\cl}(\uPerf,p-1),\]
with respect to the canonical $B\GG_m$-action on $\uPerf$ (in \S\ref{ss:BT-action}).
Here we fixed a trivialization
\[\C \xrightarrow{\simeq} \t\dual \simeq \Gamma(B\GG_m,\LL_{B\GG_m}[1]),\]
of the dual Lie algebra $\t\dual$ of $\GG_m$,
via the underlying $1$-shifted $1$-form $ \ch_1(\cL)^{\cancel{\cl}} \in \sA^1(B\GG_m,1)$, 
where $\cL$ is the universal line bundle on $B\GG_m\simeq \uPic$.
Let $\cE$ be the universal perfect complex on $\uPerf$.

\begin{lemma}\label{Lem:Contraction-Chern}
There exists a canonical equivalence
\[\iota \ch_p(\cE) \simeq \ch_{p-1}(\cE) \textin \sA^{p-1,\cl}(\uPerf,p-1).\]
\end{lemma}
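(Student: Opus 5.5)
By \Cref{Def:Contraction}, $\iota\ch_p(\cE)$ is obtained in three steps. First pull $\ch_p(\cE)$ back along the action map $\sigma\colon \uPerf\times B\GG_m \to \uPerf$. Then decompose the result via the K\"unneth formula (\Cref{Cor:KunnethBT}) and the splitting of $\DR(B\GG_m)$ (\Cref{Lem:Splitting}). Finally keep the component of weight $\t\dual(-1)[-2]$, i.e. the coefficient of the generator of $\Sym^1(\t\dual)$.

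The plan is to compute $\sigma^*\ch_p(\cE)$ from the multiplicativity of the Chern character. Unwinding \Cref{Construction:BGmAction} (with $X=B=\pt$), the action map $\sigma$ is the composite
\[\uPerf\times B\GG_m \simeq \uPerf\times\uPic \to \uPerf\times\uPerf \xrightarrow{\otimes} \uPerf,\]
using \Cref{Lem:Pic=BGm}. Hence $\sigma^*\cE\simeq \cE\boxtimes\cL$. The Chern character of \cite{TV2} is a map of $E_\infty$/ring-valued functors: it sends tensor products to (K\"unneth) products of closed forms, i.e. the total character $\ch=\sum_p\ch_p$ into $\prod_p\sA^{p,\cl}[p]$ is multiplicative. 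This is the one input I would quote from \cite{TV2} (or deduce from its construction via Hochschild/negative cyclic homology, which is symmetric monoidal). It gives
\[\sigma^*\ch_p(\cE)\simeq \sum_{a+b=p}\ch_a(\cE)\boxtimes\ch_b(\cL).\]

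Next we identify $\ch_b(\cL)$ under \Cref{Lem:Splitting}. We have $\DR(B\GG_m)$ split with $\Gr^b$ equal to $\Sym^b(\t\dual)[-2b]$ in degree $2b$. The class $\ch_1(\cL)$ is exactly the generator used to fix the trivialization $\C\simeq\t\dual$. Since the splitting is unique and multiplicative, one gets $\ch_b(\cL)=\ch_1(\cL)^b/b!$, which lies in the weight-$b$ summand. Projecting onto the weight-one summand $\pr_1$ then kills all terms except $b=1$. That term is $\ch_{p-1}(\cE)\boxtimes\ch_1(\cL)$, which under the chosen trivialization is $\ch_{p-1}(\cE)$.

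The main obstacle is the first step: establishing multiplicativity of $\ch$ \emph{as closed forms}, i.e. as an equivalence in $\sA^{p,\cl}$ rather than just in cohomology. I would also need its compatibility with the K\"unneth map $\boxtimes$ of \Cref{Lem:Kunneth}. A secondary, more routine check is that the group structure used on $\uPic$ matches the one on $B\GG_m$, which is what \Cref{Lem:Pic=BGm} supplies. The normalization of $\t\dual$ then makes the final identification canonical.
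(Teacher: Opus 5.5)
Your proposal is correct and is essentially the paper's proof. You pull back along the action map $\sigma$, use $\sigma^*\cE\simeq\cE\boxtimes\cL$ and the multiplicativity of the Chern character (the paper cites \cite[Thm.~6.5]{HSS}) to get $\sigma^*\ch_p(\cE)\simeq\sum_{a+b=p}\ch_a(\cE)\boxtimes\ch_b(\cL)$, and then project to the weight-one summand using the trivialization $\C\simeq\t\dual$ fixed by $\ch_1(\cL)$.

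The formula $\ch_b(\cL)=\ch_1(\cL)^b/b!$ is superfluous, and your appeal to a ``unique and multiplicative splitting'' does not by itself justify it. All you need is that $\ch_b(\cL)$ lies in the weight-$b$ summand. This is automatic for degree reasons: the components of $\sA^{b,\cl}(B\GG_m,b)$ in weights $q>b$, namely $\Map(\C,\Sym^q(\t\dual)[2b-2q])$, are contractible.
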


\begin{proof}
Let $\sigma \colon \uPerf \times B\GG_m \to \uPerf$ be the action map.
Then we have canonical equivalences
\[\sigma^*\ch_p(\cE) \simeq \ch_p(\cE \boxtimes \cL) \simeq \sum_{a+b=p}\ch_a(\cE) \boxtimes \ch_b(\cL) \textin \sA^{p,\cl}( \uPerf \times B\GG_m ,p),\]
by the multiplicativity of the Chern characters (see \cite[Thm.~6.5]{HSS}).
Then the statement follows from the definition of contraction map in \Cref{Def:Contraction} and the trivialization $\C\simeq \t\dual$ that we chose.
\end{proof}

\begin{proof}[Proof of \Cref{Prop:4.13}]
By the construction of $BT$-action on the mapping stack $\uPerf(X/B) \coloneq \uMap_B(X,\uPerf \times B)$ in \eqref{itm:3}, the diagram
\[\xymatrix{
X\times_B\uPerf(X/B) \ar[r]^-{\ev} \ar[d]^{\pr_2} & \uPerf \times B \\ \uPerf(X/B)
}\]
is $BT$-equivariant.
Since the contraction $\iota$ and the integration $\int_{X/B}$ are defined for the de Rham complexes and $\iota\ch_2(\cE) \simeq \ch_1(\cE)$ by \Cref{Lem:Contraction-Chern}, we have canonical equivalences
\[ \iota \int_{X/B} \ev^*(\ch_2(\cE))  \simeq  \int_{X/B} \iota  \ev^*(\ch_2(\cE)) \simeq  \int_{X/B} \ev^*(\iota\ch_2(\cE)) \simeq  \int_{X/B} \ev^*(\ch_1(\cE)) \]
of $(1-n)$-shifted closed $1$-forms on $\uPerf(X/B) \to B$.
Since $\cE_{X/B}\simeq \ev^*\cE$, we have
\[\iota \int_{X/B} \ch_2(\cE_{X/B}) \simeq \int_{X/B}\ch_1(\cE_{X/B}).\]
Then the statement follows from \Cref{Thm:MomentMapEquation}.
\end{proof}

\begin{notation}
Denote by
\begin{itemize}
\item $\H_{\DR}^k(X/B)\coloneq \pi_0 \sA^{\DR}(X \to B,k)$,
\item $\Fil_{\Hd}^p \H_{\DR}^k(X/B)\coloneq \pi_0 \sA^{p,\cl}(X \to B, k-p)$.
\end{itemize}
Then the canonical map $\Fil_{\Hd}^p \H_{\DR}^k(X/B) \hookrightarrow \H_{\DR}^k(X/B)$ is injective, since the Hodge to de Rham spectral sequence degenerates, by \cite{Del1}.    
\end{notation}

\begin{corollary}\label{Cor:non-Ham-Perf}
The canonical $B\GG_m$-action on $\uPerf$ is not Hamiltonian.
\end{corollary}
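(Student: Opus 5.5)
The plan is to apply \Cref{Prop:4.13} in the degenerate case $X=B=\pt$, where $n=0$ and the Calabi-Yau structure is the identity. Then $\uPerf(X/B)=\uPerf$ is $2$-shifted symplectic with symplectic form $\ch_2(\cE)$, and the $BT$-action is the canonical $B\GG_m$-action of \S\ref{ss:BT-action}. Assumption (A3) of \Cref{Thm:MomentMapEquation} holds, since $B=\pt$ is classical and $d=2\leq 2$. So \Cref{Prop:4.13}, which combines \Cref{Thm:MomentMapEquation} with \Cref{Lem:Contraction-Chern} and here reduces to $\iota\ch_2(\cE)\simeq\ch_1(\cE)$, says the action is Hamiltonian if and only if $\ch_1(\cE)\in\sA^{1,\cl}(\uPerf,1)$ is exact. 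It therefore suffices to show that $\ch_1(\cE)$ is not exact. Equivalently, its image $[\ch_1(\cE)]\in\pi_0\sA^{\DR}(\uPerf,2)$ should be nonzero, because exact forms are precisely the fiber of $[-]\colon\sA^{1,\cl}(\uPerf,1)\to\sA^{\DR}(\uPerf,2)$ over $0$.

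To detect non-vanishing, I will pull back along the morphism $j\colon B\GG_m\simeq\uPic\to\uPerf$ given by the inclusion of line bundles, using \Cref{Lem:Pic=BGm}. Functoriality of Chern characters gives $j^*\ch_1(\cE)\simeq\ch_1(\cL)$, where $\cL$ is the universal line bundle. Since $[-]$ commutes with pullback, it is enough to show $[\ch_1(\cL)]\neq0$ in $\pi_0\sA^{\DR}(B\GG_m,2)$.

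For this I will use \Cref{Lem:Splitting} with $B=\pt$ and $r=1$. It gives
\[\DR(B\GG_m)\simeq\widehat{\bigoplus}_{p\geq0}\Sym^p(\t\dual)^{\fil}(-p)[-2p],\]
with $\t\dual\simeq\C$ placed in cohomological degree $2$. Consequently $\Fil^1\DR(B\GG_m)\to\Fil^0\DR(B\GG_m)$ is the inclusion of the summands with $p\geq1$, and on $H^2$ it induces an isomorphism $\C\simeq\C$. Likewise $\Fil^1\to\Gr^1$ is an isomorphism on $H^2$. So the three groups
\[\pi_0\sA^{1,\cl}(B\GG_m,1)\simeq\pi_0\sA^{1}(B\GG_m,1)\simeq\pi_0\sA^{\DR}(B\GG_m,2)\]
are all identified with $\t\dual\simeq\C$.

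Under these identifications, the underlying $1$-form $\ch_1(\cL)^{\cancel{\cl}}$ is exactly the element used in the paper to trivialize $\t\dual$, so it is a nonzero generator. It is also nonzero for the intrinsic reason that it is the standard generator of $\Gamma(B\GG_m,\LL_{B\GG_m}[1])\simeq\C$. Hence $[\ch_1(\cL)]\neq0$, so $[\ch_1(\cE)]\neq0$, and the action is not Hamiltonian.

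The main point needing care is the claim that $[-]\colon\pi_0\sA^{1,\cl}\to\pi_0\sA^{\DR}$ is injective on $B\GG_m$. This follows from the splitting being a direct sum with no higher differentials, which is the degree-concentration argument in the proof of \Cref{Lem:Splitting}. A second point is the compatibility $j^*\ch_1(\cE)\simeq\ch_1(\cL)$, which is naturality of the Chern character of \cite{TV2}.
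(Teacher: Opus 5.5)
Your proposal is correct and follows essentially the paper's route. You reduce via \Cref{Prop:4.13} to the non-exactness of $\ch_1$, and then detect this on $\uPic\simeq B\GG_m$, using \Cref{Lem:Splitting} to see that $[\ch_1(\cL)]\neq 0$ in $\H^2_{\DR}(B\GG_m)$. The only cosmetic difference is that the paper restricts the action to the open substack $\uPic\subseteq\uPerf$ (via the open-substack clause of \Cref{Prop:4.13}), whereas you apply \Cref{Prop:4.13} on all of $\uPerf$ and pull the class back along $\uPic\to\uPerf$.
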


\begin{proof}
It suffices to show that the canonical $B\GG_m$-action on the open substack $\uPic \subseteq \uPerf$
is not Hamiltonian.
Since the de Rham cohomology of $\uPic \simeq B\GG_m$ is the polynomial ring generated by $[\ch_1(\cL)] \in \H^2_{\DR}(\uPic)$ (\Cref{Lem:Splitting}), we have $[\ch_1(\cL)] \neq 0$ and hence $\ch_1(\cL)\in \sA^{1,\cl}(\uPic,1)$ is not exact.
Hence the statement follows from \Cref{Prop:4.13}.
\end{proof}

\begin{corollary}\label{Cor:non-Ham-elliticcurve}
Let $E$ be a smooth projective curve of genus $1$.
Then the canonical $B\GG_m$-action on $\uPerf(E)$ is not Hamiltonian.
\end{corollary}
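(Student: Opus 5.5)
By \Cref{Prop:4.13} (with $B=\pt$, $n=1$), the canonical $B\GG_m$-action on $\uPerf(E)$ is Hamiltonian if and only if the $0$-shifted closed $1$-form $\int_E \ch_1(\cE_E)\in \sA^{1,\cl}(\uPerf(E),0)$ is exact. Since the condition restricts along open immersions, I will show non-Hamiltonianity on a suitable open substack $M\subseteq \uPerf(E)$. By the exact triangle used in the proof of \Cref{Lem:Exactness}, exactness holds if and only if the de Rham class $[\int_E\ch_1(\cE_E)]\in \H^2_{\DR}(M)$ vanishes. So the plan is to find an open substack $M$ and a test morphism $f\colon N\to M$ with $f^*[\int_E\ch_1(\cE_E)]\neq 0$ in $\H^2_{\DR}(N)$.

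I take $M=\uPic(E)\subseteq\uPerf(E)$, the open substack of line bundles, or just the degree-$1$ component. Classically this is $\Pic^1(E)\times B\GG_m\simeq E\times B\GG_m$; it is smooth, so its de Rham cohomology is computed by Künneth (\Cref{Cor:KunnethBT}) together with \Cref{Lem:Splitting}. The natural test map is
\[f\colon E\times B\GG_m\to \uPerf(E),\qquad (x,L)\mapsto \O_E(x)\otimes L,\]
which classifies the perfect complex $\cF=\O_{E\times E}(\Delta)\boxtimes\cL$ on $E\times(E\times B\GG_m)$. By the multiplicativity of Chern characters (as in \Cref{Lem:Contraction-Chern}) and the compatibility of $\int_E$ with pullback,
\[f^*\int_E\ch_1(\cE_E)=\int_E\ch_1(\cF)=\int_E\bigl(\ch_1(\O(\Delta))+\ch_0(\O(\Delta))\boxtimes\ch_1(\cL)+\ch_1(\O(\Delta))\boxtimes \ch_0(\cL)\bigr).\]
I then compute the class in $\H^2_{\DR}(E\times B\GG_m)\simeq \H^2_{\DR}(E)\oplus \H^0_{\DR}(E)\cdot[\ch_1(\cL)]$.

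The term $\ch_0(\O(\Delta))\boxtimes\ch_1(\cL)$ integrates to $\bigl(\int_E \mathrm{rank}\bigr)\cdot\ch_1(\cL)$. However, integration $\int_E$ using $\omega$ and the counit $p_*\O_E\to\O[-1]$ only picks up the $\H^{1}(E,\O)$ component. A degree-$0$ class therefore does not contribute, and this term vanishes after integration. The surviving contribution comes from the Künneth component of $\ch_1(\O(\Delta))=[\Delta]$ in $\H^{0,1}\otimes\H^{1,0}$-type pieces, namely the $(1,0)\otimes(0,1)$ part. Integrating this over the first factor gives a nonzero class in $\H^2_{\DR}(E)$ of $\Fil^1$ type, since $E$ has genus $1$: the Künneth decomposition of the diagonal pairs $H^1(\O_E)$ with $H^0(\Omega_E)$ nondegenerately via $\omega$. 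Hence the pullback is essentially the Kähler/fundamental class of $E$, which is nonzero in $\H^2_{\DR}(E)$. By Hodge degeneration (Deligne), classes in $\Fil^1_{\Hd}\H^2_{\DR}$ inject into de Rham cohomology, so nonvanishing can be checked at the Hodge level.

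The main obstacle is the bookkeeping of this integration: identifying precisely which Künneth component of $\ch_1(\O(\Delta))$ survives $\int_E$ with the shift conventions, and checking that the result is the nondegenerate pairing $H^0(\Omega^1_E)\otimes H^1(\O_E)\to\C$ induced by $\omega$. This is where genus $1$ enters; for a point ($n=0$) or for higher dimension the analogous class behaves differently. If the diagonal computation is awkward, a simpler fallback is to compose with the Abel–Jacobi identification $E\simeq\Pic^1(E)$ and use that the universal line bundle has $c_1$ equal to the Poincaré class, whose slant product is the identity on $H^1$. Either route gives $f^*[\int_E\ch_1(\cE_E)]\neq 0$, so the form is not exact, and \Cref{Prop:4.13} shows the action is not Hamiltonian.
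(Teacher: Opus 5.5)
Your architecture matches the paper's. You pull back along a map classifying $\O_{E\times E}(\Delta)$; the extra $B\GG_m$ factor is harmless but unnecessary. You then use \Cref{Prop:4.13} and the fiber square from the proof of \Cref{Lem:Exactness} to reduce to nonvanishing of a de Rham class, and finally evaluate $\int_E[\Delta]$.

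\textbf{The gap is at the decisive step, which is wrong as written.} A $0$-shifted closed $1$-form has de Rham class in $\pi_0\sA^{\DR}(-,1)=\H^1_{\DR}(-)$, not in $\H^2_{\DR}$. Moreover, $\int_E$ (cup with $\pr_1^*\omega$, then push forward along $\pr_2$) lowers cohomological degree by one, so it maps $\H^2_{\DR}(E\times N)\to\H^1_{\DR}(N)$. Your own K\"unneth description exposes the inconsistency: integrating an $H^1\otimes H^1$ component over the first factor leaves a class in $H^1$ of the second factor. It therefore cannot produce the fundamental (K\"ahler) class of $E$. So the class you assert is nonzero is not the obstruction, and the appeal to Hodge degeneration for $\Fil^1_{\Hd}\H^2_{\DR}$ is beside the point. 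The actual nonvanishing computation, which you yourself flag as the unresolved ``main obstacle'', is never carried out. A quick sanity check: pulled back to the smooth curve $E$, a $0$-shifted closed $1$-form is an ordinary closed $1$-form, so its class must lie in $H^0(\Omega^1_E)\subseteq \H^1_{\DR}(E)$.

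\textbf{The repair is short and needs no K\"unneth bookkeeping.} First identify $\int_E$ with $(\pr_2)_*\bigl((-)\cup\pr_1^*\omega\bigr)$ on cohomology, via the comparison with singular cohomology. Then use $[\ch_1(\O(\Delta))]=[\Delta]=\delta_*1$ together with the projection formula. This gives $\int_E[\Delta]=(\pr_2)_*\delta_*\delta^*\pr_1^*\omega=\omega\neq 0$ in $\H^1_{\DR}(E)$, which is exactly the paper's argument with $f\colon E\to\uPerf(E)$.

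Your version with $E\times B\GG_m$ yields the same answer. By \Cref{Cor:KunnethBT} and \Cref{Lem:Splitting}, $\H^1_{\DR}(E\times B\GG_m)\simeq \H^1_{\DR}(E)$, because the de Rham cohomology of $B\GG_m$ sits in even degrees. The $1\boxtimes\ch_1(\cL)$ term dies, as you correctly say, since $\omega\cup 1$ has no top-degree part.

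Two smaller points:
\begin{itemize}
\item Your expansion of $\ch_1(\cF)$ lists $\ch_1(\O(\Delta))$ twice.
\item Your Poincar\'e-bundle fallback, where the slant product acts as the identity on $H^1$, is in fact the correct-degree form of this same computation. It contradicts your main route's claim that the output lies in $\H^2$.
\end{itemize}
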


\begin{proof}
    Let $\Delta \subseteq E \times E$ be the diagonal divisor and consider its associated line bundle on $E \times E$,
    \[ L : = \O_{E \times E}(\Delta). \]
    Let $\cE$ be the universal perfect complex on $E \times \uPerf(E)$.
    By the universal property of mapping stacks \eqref{Eq:MapUniv}, there exists a morphism $f \colon E \to \uPerf(E)$ such that $L \simeq (\id_E \times f)^* \cE$ on $E \times E$.
    Therefore,
    \[ \int_E [\ch_1(L)] \simeq \int_E (\id_E \times f)^*[\ch_1(\cE)] \simeq f^*\int_E[\ch_1(\cE)] \textin \sA^{\DR}(E,1), \]
    since the integration map $\int_E$ forms a functor $\DR(E \times (-)) \to \DR(-)[-1]$ by construction.
    By \Cref{Prop:4.13}, it suffices to show that $\int_E [\ch_1(\cE)] \neq 0$, which follows from the non-vanishing
    \[ \int_E [\ch_1(L)] \neq 0 \textin \H_{\DR}^1(E). \]

    Let $\omega \in \H^0(E,\Omega_E^1)$ be the Calabi-Yau form, which can be seen as an element of $\H^1_{\DR}(E)$ since $\Gr^{\geq 2}\DR(E) = 0$ (by \eqref{Eq:GrDR}).
    Let $p \colon E \to \pt$ be the projection.
    The map $\DR(E) \to \C[-1]$ used to define the integration is the composition of the cupping with $\omega$ and the top degree projection map $\DR(E) \to \C(-1)[-2]$.
    Indeed, we implicitly used $\omega$ to identify the right adjoint $p^*(-) \otimes \Omega_E^1[1]$ of $p_*$ to $p^*[1]$.
    For smooth varieties, the de Rham cohomology $\H^*_{\DR}$ and the singular cohomology $\H^*$ with $\C$-coefficients are isomorphic by \cite{Gro}.
    Since the top degree projection is equivalent to pushforward in singular cohomology, we therefore have that
    \[ \int_E \simeq (\pr_2)_*((-) \cup \pr_1^*\omega) \colon \H^*(E \times E) \to \H^*(E) \]
    where $\pr_i \colon E \times E \to E$ are the projections.
    
    The Chern character in de Rham cohomology coincides with the classical one on smooth varieties by \cite[Thm.~5.4]{TV2}.
    In particular, the first Chern class of a line bundle associated to a divisor is equal to the cycle class of the divisor.
    Hence,
    \[ [\ch_1(L)] = [\Delta]  \textin \H^2(E \times E). \]
    
    The compositions $\pr_i \circ \delta = \id_E $ and $[\Delta] = \delta_*1$ where $\delta \colon E \to E \times E$ is the diagonal map.
    Therefore, we have
    \[ \int_E[\ch_1(\cL)] =(\pr_2)_*([\Delta] \cup \pr_1^*\omega) = (\pr_2)_*(\delta_*1 \cup \pr_1^*\omega) = (\pr_2)_*\delta_*\delta^*\pr_1^*\omega = \omega \textin \H_{\DR}^1(E) \]
    where we used the projection formula in the second identity.
    This implies the desired non-vanishing $\int_E[\ch_1(\cL)] \neq 0$, and completes the proof.
\end{proof}

\begin{definition}
Assume that the base $B$ is a local Artinian scheme. Let $X_0\coloneq X\times_B \{0\}$ be the fiber of $p\colon X \to B$ over the unique closed point $0 \in B$.
\begin{enumerate}
\item The space of \defterm{horizontal de Rham classes} is
\[\H_{\DR}^k(X/B)^{\nabla}\coloneq\mathrm{image}\left(\H_{\DR}^k(X) \to \H_{\DR}^k(X/B)\right) 
.\]
\item The map of \defterm{horizontal lifts}
\[\widetilde{(-)} \colon \H_{\DR}^k(X_0) \xrightarrow{\simeq} \H^k_{\DR}(X/B)^{\nabla}\]
is the inverse of the composition
\[\H^k_{\DR}(X/B)^{\nabla} \hookrightarrow \H^k_{\DR}(X/B) \xrightarrow{0^*} \H^k_{\DR}(X_0), \]
which is an isomorphism:
we have $\Fil^0\DR(X/B) \simeq \Fil^0\DR(X \times B/B) \simeq \Fil^0\DR(X) \otimes \Gamma(B,\O_B)$ by K\"{u}nneth formula (\Cref{Lem:Kunneth}) and $\Fil^0\DR(X) \simeq \Fil^0\DR(X_0)$ by nil-invariance of $\Fil^0\DR$ \cite[pp.~62]{Park2}.
\end{enumerate}
\end{definition}

\begin{corollary}\label{Cor:k3family}
Assume that $n=2$.
Then the canonical $BT$-action on $\uPerf(X/B)$ is Hamiltonian if and only if the following condition is satisfied:
\begin{itemize}
\item[\namedlabel{itm:5}{}] For any local Artinian ring $A$, a pointed morphism of schemes $(A,0) \to (B,b)$, and a perfect complex $E\in \uPerf(X_b)$, we have
\[\widetilde{[\ch_1(E)]} \in \Fil_{\Hd}^1 \H_{\DR}^2(X_A/A) ,\]
where $[\ch_1(E)] \in \H^2_{\DR}(X_b)$ is the image of $\ch_1(E)$ under the composition $\sA^{1,\cl}(X_b,1) \to \sA^{\DR}(X_b,2) \to \pi_0\sA^{\DR}(X_b,2)$ 
and $X_A\coloneq X\times_BA$ is the pullback of $X \to B$ along $A \to B$.
\end{itemize}
\end{corollary}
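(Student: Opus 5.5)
By \Cref{Prop:4.13}, the action is Hamiltonian if and only if the $(-1)$-shifted closed $1$-form $\alpha\coloneq\int_{X/B}\ch_1(\cE_{X/B})$ on $\uPerf(X/B)\to B$ is exact. That is, the class $[\alpha]\in\pi_0\sA^{\DR}(\uPerf(X/B)\to B,0)$ must vanish; the exact-versus-closed square in the proof of \Cref{Lem:Exactness} identifies exactness with vanishing of this class. So the plan is to turn this vanishing statement into the pointwise condition displayed in the corollary.

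\textbf{Step 1 (reduction to points).} Since $B$ is classical of finite type and the shift is $0$, I would use \cite[Lem.~6.1.2]{Park2}. As in the (A1) case of \Cref{Lem:Exactness}, it should give that $\sA^{\DR}(M\to B,0)$ is discrete. I also expect its relative version: a class in $\pi_0$ is detected by pullbacks along all maps $\Spec A\to M$ with $A$ local Artinian over $B$, because relative de Rham cohomology in degree $0$ only sees formal neighbourhoods of points. I would apply this to $M=\uPerf(X/B)$.

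\textbf{Step 2 (restricting to a test point).} Fix a local Artinian $A$ with $(A,0)\to(B,b)$, and a map $\Spec A\to\uPerf(X/B)$, i.e.\ a perfect complex $E_A$ on $X_A$. Integration commutes with base change, so the pulled-back class is $\int_{X_A/A}[\ch_1(E_A)]\in\H^0_{\DR}(A/A)$. I would describe this number Hodge-theoretically. By Hodge-to-de Rham degeneration for $X_A/A$, the class $[\ch_1(E_A)]$ lies in $\Fil^1_{\Hd}\H^2_{\DR}(X_A/A)$. The integration map pairs a class with the Calabi--Yau form $\omega$ (which spans $\Fil^2_{\Hd}$) and projects to the top piece, exactly as in the proof of \Cref{Cor:non-Ham-elliticcurve}. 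Hence $\int_{X_A/A}$ kills $\Fil^1_{\Hd}$ classes paired against $\omega$ up to the relevant degree, and the image of $\ch_1$ in $\Gr$ gives the obstruction.

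\textbf{Step 3 (the obstruction in families).} The point is that $\ch_1(E_A)$ is a closed $1$-form on the total space. Its horizontal de Rham class agrees with $\widetilde{[\ch_1(E_b)]}$ by nil-invariance, where $E_b\coloneq E_A|_{X_b}$. The pulled-back obstruction then measures the failure of $\widetilde{[\ch_1(E_b)]}$ to lie in $\Fil^1_{\Hd}\H^2_{\DR}(X_A/A)$. Concretely, this failure is its pairing with $\omega$, i.e.\ its component in $\Gr^0=\H^2(\O)$. One direction is then immediate: the condition forces every pullback to vanish, hence $[\alpha]=0$. For the converse, given $E\in\uPerf(X_b)$ and an Artinian thickening, I would produce a test point by extending $E$. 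If $E$ does not extend, I would replace $E$ by an extendable complex with the same $\ch_1$, e.g.\ a line bundle of that class, which deforms exactly when the Hodge condition holds. In the remaining cases I would argue by the first-order obstruction.

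The main obstacle is Step 3: identifying the abstractly defined pullback of $[\alpha]$ with the $\Gr^0$-component of the horizontal lift. This requires carefully comparing the de Rham class of the closed form $\ch_1(E_A)$, which is relative over $A$, with the horizontal lift, and matching the $\omega$-pairing normalisation.
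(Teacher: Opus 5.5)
There is a genuine gap, and it sits in Steps 1--2, not in the identification you flag in Step 3. Your opening reduction matches the paper: by \Cref{Prop:4.13}, the action is Hamiltonian iff $[\alpha]=0$ in $\pi_0\sA^{\DR}(\uPerf(X/B)\to B,0)$, where $\alpha=\int_{X/B}\ch_1(\cE_{X/B})$. The paper then concludes at once by citing \cite[Prop.~6.2.1]{Park2}, which supplies the equivalence between this exactness and the horizontal-lift condition. Your Steps 1--3 try to reprove that equivalence, and the reproof breaks at the base change in Step 2.

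\textbf{Why the test in Step 2 is vacuous.} If you restrict $\alpha$ along an $A$-point $E_A$ and base-change to $A$, the resulting form lies in $\sA^{1,\cl}(\mathrm{Spec}\,A\to\mathrm{Spec}\,A,-1)$. This space is contractible, because $\DR(\mathrm{Spec}\,A\to\mathrm{Spec}\,A)\simeq A^{\fil}$ has $\Fil^1=0$. So the class you test in $\H^0_{\DR}(A/A)$ is always zero. This is just your own Step 2 observation carried through: $[\ch_1(E_A)]=\widetilde{[\ch_1(E_b)]}$ lies in $\Fil^1_{\Hd}\H^2_{\DR}(X_A/A)$, and the filtered integration map kills $\Fil^1$. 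Hence whenever $E_b$ extends over $A$, the condition of the corollary holds automatically for $(E_b,A)$, and the test point carries no information.

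\textbf{Consequences.} The detection principle of Step 1, in the form you use it (pullbacks evaluated in $\H^0_{\DR}(A/A)$ after base change), is false. If it held, every such $BT$-action would be Hamiltonian. A small example: take $M=\{0\}\hookrightarrow B=\mathbb{A}^1$, so $\Gamma(\Fil^0\DR(M\to B))=\C[[t]]$ with the $t$-adic filtration. The $(-1)$-shifted closed $1$-form $t$ has class $t\neq0$ while $\sA^0(M,-1)=0$, so it is not exact, yet all of your tests vanish. The converse direction also cannot be run. Precisely when the condition fails for $(E,A)$, neither $E$ nor $\det E$ extends to $X_A$, so no test point exists. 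Your ``replace $E$ by a deforming line bundle'' move therefore produces nothing in the only case that matters.

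\textbf{What is missing.} The condition concerns arbitrary, typically non-extendable, $E$ on the central fibre, with $A$ serving only to probe the formal neighbourhood of $b$ in $B$. The right test is to pull $[\alpha]$ back along the $\C$-point $E\colon\pt\to\uPerf(X/B)$ over $b$ \emph{without} base-changing. The class then lands in $\pi_0\sA^{\DR}(\pt\to B,0)$, the degree-$0$ relative Hodge-completed de Rham cohomology of $\{b\}\hookrightarrow B$. This is $\widehat{\O}_{B,b}=\varprojlim_k\O_{B,b}/\mathfrak{m}_b^k$, not $\C$ or $A$. One then has to show two things:
\begin{enumerate}
\item such points detect degree-$0$ relative de Rham classes;
\item the image of $E^*[\alpha]$ in each Artinian quotient $A$ is the $\omega$-trace of the $\H^2(X_A,\O_{X_A})$-component of $\widetilde{[\ch_1(E)]}$.
\end{enumerate}
Since that trace is an isomorphism for a Calabi--Yau surface, this component vanishes iff $\widetilde{[\ch_1(E)]}\in\Fil^1_{\Hd}\H^2_{\DR}(X_A/A)$. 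The resulting equivalence is exactly what the paper imports from \cite[Prop.~6.2.1]{Park2} rather than reproving.
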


\begin{proof}
This follows from \Cref{Prop:4.13} together with \cite[Prop.~6.2.1]{Park2}.
\end{proof}

\begin{proof}[Proof of \Cref{Thm:Perf-Ham}]
If $n=0$ (resp. $n=1$), the statement follows from \Cref{Cor:non-Ham-Perf} (resp. \Cref{Cor:non-Ham-elliticcurve}).
If $n\geq3$, the statement follows from \Cref{Thm:Ham=Symp}.
It remains to show that when $n=2$, the property \eqref{itm:5} is equivalent to the flatness of $\Pic(X/B)_\cl \to B$.

Assume \eqref{itm:5}.
Following \cite[\S{5}]{BuFl}, call a coherent sheaf $E$ on a smooth projective variety $Y$ {\em $1$-semiregular} if the trace map $\Ext^2_Y(E,E) \to \H^2(Y,\O_Y)$ is injective.
Every line bundle is $1$-semiregular since the trace map is an isomorphism.
By \cite[Prop.~5.9]{BuFl}, flat deformation of $1$-semiregular sheaves over local Artinian rings satisfying \eqref{itm:5} extends to infinitesmial thickenings of the base.
Therefore, the morphism $\Pic(X/B)_{\cl} \to B$ satisfies the infinitesimal lifting property for local Artinian rings.
Therefore, it is smooth by \cite[Lem.~37.12.2]{Sta}.

Conversely, assume that the morphism $\Pic(X/B)_{\cl} \to B$ is flat.
Then, since its fibers are smooth, it is smooth by \cite[Lem.~29.34.3]{Sta}.
Hence, it satisfies the infinitesimal lifting property for local Artinian rings by \cite[Lem.~37.12.2]{Sta}.
Therefore, in the situation of \eqref{itm:5}, we can find a line bundle $L_A$ extending $L : = \det(E)$.
Also, we have $[\ch_1(E)] = [\ch_1(L)]$ by the splitting principle, since the Chern character in de Rham cohomology is equal to the classical one on smooth varieties by \cite[Thm.~5.4]{TV2}.
Since $[\ch_1(L_A)] \in \Fil_{\Hd}^1\H^2_{\DR}(X_A)$, we conclude that $\widetilde{[\ch_1(E)]} = \widetilde{[\ch_1(L)]} \in \Fil_{\Hd}^1 \H_{\DR}^2(X_A/A)$ as desired.
This completes the proof.
\end{proof}

\subsection{Extension correspondences} \label{ss:ExtensionCorrespondence}

We provide the rigidified version of extension correspondences (\Cref{Prop:RidigifiedExtCorr}).

Consider the functor of commutative square
\[\Perf^{\Box} \colon \dSt^{\op} \xrightarrow{\Perf}\CAlg(\Cat) \xrightarrow{\Fun(\Delta^1\times \Delta^1,-)} \CAlg(\Cat).\]
Here the second functor is induced by the limit-preserving functor $\Fun(\Delta^1\times\Delta^1,-)\colon \Cat \to \Cat$, which is symmetric monoidal with the cartesian symmetric monoidal structure on $\Cat$.
More explicitly, for a derived stack $M$, the symmetric monoidal category $\Perf^{\Box}(M) \in \CAlg(\Cat)$ consists of commutative squares of perfect complexes on $M$,
\begin{equation}\label{Eq:Ext-1}
\xymatrix@-1pc{
E_1 \ar[r] \ar[d] & E_2 \ar[d] \\ E_4 \ar[r] & E_3.
}\end{equation}
Let $\uPerf^{\Box}$ be the monoid stack of commutative squares, induced by the limit-preserving functor 
\[\dSt^{\op} \xrightarrow{\Perf^{\Box}} \CAlg(\Cat) \xrightarrow{(-)^{\simeq}}\CAlg(\Grpd) \to \Alg(\Grpd),\]
under \Cref{Lem:BT-action-1}.

We will now define the functor of exact triangles,
\[\Exact \colon \dSt^{\op} \lra \Cat.\]
For a derived stack $M$, we define $\Exact(M) \subseteq \Perf^{\Box}(M)$ as the full subcategory consisting of commutative squares \eqref{Eq:Ext-1}, which are cartesian and $E_4 \simeq 0$.
Since $\Exact(M)\subseteq \Perf^{\Box}(M)$ is stable under pullbacks along morphisms in $\dSt$, we have a well-defined functor $\Exact$ induced from $\Perf^{\Box}$.
(For the precise construction, we consider $\Exact$ as the straightening of the subcategory of the unstraightening of $\Perf^{\Box}$, which is fiberwise $\Exact(M)$.)

\begin{definition}
The derived stack of \defterm{exact triangles}
\[\uExact \in \dSt\]
is the image of the limit-preserving functor $\dSt^{\op} \xrightarrow{\Exact} \Cat \xrightarrow{} \Grpd$ under $\dSt \simeq \Fun^{\rmR}(\dSt^{\op},\Grpd)$.

\end{definition}

Consider the canonical morphisms of monoid stacks
\[\uPic \to \uPerf \xrightarrow{\diag} \uPerf^{\Box} \xrightarrow{\ev_i} \uPerf,\]
where $\diag$ sends a perfect complex $E$ to the commutative square \eqref{Eq:Ext-1} with $E_i=E$, and
$\ev_i$ sends a commutative square \eqref{Eq:Ext-1} to $E_i$.
Since $\uPic\simeq B\GG_m$ as monoid stacks (\Cref{Lem:Pic=BGm}), 
\begin{itemize}
\item we have an induced $B\GG_m$-action on $\uPerf^{\Box}$, and
\item the evaluation maps $\ev_i \colon \uPerf^{\Box} \to \uPerf$ lift to $B\GG_m$-equivariant maps.
\end{itemize}

Note that $\uExact$ is closed under the $B\GG_m$-action on $\uPerf^{\Box}$.
More precisely, for a line bundle $L$ on a derived stack $M$ and a commutative square \eqref{Eq:Ext-1} of perfect complexes on $M$, which is cartesian and $E_4\simeq 0$, the induced commutative square
\[\xymatrix@-1pc{
E_1 \otimes L \ar[r] \ar[d] & E_2 \otimes L \ar[d] \\ E_4 \otimes L \ar[r] & E_3\otimes L
}\]
is also cartesian and $E_4\otimes L \simeq 0$.
Hence we have an induced $B\GG_m$-action on $\uExact$ such that $\uExact \to \uPerf^{\Box}$ is $B\GG_m$-equivariant.
In particular, we have a $B\GG_m$-equivariant correspondence
\begin{equation}\label{Eq:Ext-2}
\xymatrix@-1pc{
& \uExact \ar[ld]_{(\ev_1,\ev_3)} \ar[rd]^-{\ev_2} & \\ \uPerf \times \uPerf &&\uPerf
}\end{equation}
of derived stacks.

\begin{definition}
Let $p\colon X \to B$ be a morphism of derived stacks.
The derived stack of \defterm{exact triangles on the fibers of $p\colon X \to B$} is the mapping stack
\[\uExact(X/B)\coloneq \uMap_B(X,\uExact \times B) \in \dSt_B.\]
\end{definition}

Applying the functor $\uMap_B(X,(-)\times B)$ to the $B\GG_m$-equivariant correspondence \eqref{Eq:Ext-2},
we obtain a $BT$-equivariant correspondence
\begin{equation}\label{Eq:Ext-3}
\xymatrix@-1pc{
& \uExact(X/B) \ar[ld]_{(\ev_1,\ev_3)} \ar[rd]^-{\ev_2} & \\ \uPerf(X/B) \times \uPerf(X/B) &&\uPerf(X/B),
}\end{equation}
of derived stacks over $B$,
where we denoted $T\coloneq \GG_{m}\times B \in \Grp_B$.

Recall \cite[Cor.~6.5]{BD} that the correspondence \eqref{Eq:Ext-2} (without the $B\GG_m$-actions) has a canonical $2$-shifted Lagrangian correspondence structure.
By \cite[Thm.~2.10]{Cal} the correspondence \eqref{Eq:Ext-3} (without the $BT$-actions) has a canonical $(2-n)$-shifted Lagrangian correspondence structure when $X \to B$ has a Calabi-Yau structure of dimension $n$.

\begin{construction}\label{Const:RidigifiedExtCorr}
Let $p\colon X \to B$ be a smooth projective Calabi-Yau morphism of relative dimension $n$ between classical schemes of finite type.
Assume that $n\geq 3$ (or $n=2$ and $B=\pt$).
The \defterm{rigidified extension correspondence}
\[\xymatrix{
& \uExact(X/B)^{\rig} \ar[ld]_{(\ev_1,\ev_3)} \ar[rd]^-{\ev_2} & \\ \uPerf(X/B)/\!/BT \times \uPerf(X/B)/\!/BT &&\uPerf(X/B)/\!/BT
}\]
is the $(2-n)$-shifted Lagrangian correspondence, defined through the following steps:
\begin{enumerate}
\item [(S1)] By \Cref{Prop:BT-symp}, the Lagrangian correspondence \eqref{Eq:Ext-3} whose underlying correspondence has a $BT$-action, uniquely extends to a $BT$-equivariant Lagrangian correspondence;
we have
\[\uExact(X/B) \colon \uPerf(X/B)\times \uPerf(X/B) \dashrightarrow \uPerf(X/B) \textin \Symp^{BT}_{B,2-n}.\]
\item [(S2)] Applying the symplectic rigidification functor $\rig_{\Symp} \colon \Symp^{BT}_{B,2-n} \to \Symp_{B,2-n}$ in \Cref{Def:SympRig}, we obtain a Lagrangian correspondence,
\[\uExact(X/B)/\!/BT \colon \Big(\uPerf(X/B)\times \uPerf(X/B)\Big)/\!/BT \dashrightarrow \uPerf(X/B)/\!/BT\]
in $\Symp_{B,2-n}$,
where the $BT$-action on $\uPerf(X/B)\times \uPerf(X/B)$ is the diagonal action.
\item [(S3)] By the functoriality of Hamiltonian reductions in \Cref{Prop:Ind/Res-Functoriality} (see also \eqref{Eq:thirdisomorphismthm}), the short exact sequence $BT \xrightarrow{\Delta} BT \times BT \to BT$ of smooth group stacks gives us a canonical equivalence of $(2-n)$-shifted symplectic stacks,
\[\Big(\uPerf(X/B)\times \uPerf(X/B)\Big) /\!/(BT\times BT) \simeq \Bigg(\Big(\uPerf(X/B)\times \uPerf(X/B)\Big)/\!/BT \Bigg)/\!/BT.\]
Since the Hamiltonian reductions commute with products, we also have a canonical equivalence of $(2-n)$-shifted symplectic stacks,
\[ \qquad \big(\uPerf(X/B)/\!/BT \times \uPerf(X/B)/\!/BT \big) \simeq \Big(\uPerf(X/B)\times \uPerf(X/B)\Big) /\!/(BT\times BT).\]
\item [(S4)] Consider the Lagrangian correspondence of Hamiltonian reduction in \eqref{Eq:HamRed-LagCor} (or \eqref{Eq:HamRed-LagCor-funtorial}),
\begin{equation}\label{Eq:Ext-4}
\Bigg(\Big(\uPerf(X/B)\times \uPerf(X/B)\Big)/\!/BT \Bigg)/\!/BT \dashrightarrow \Big(\uPerf(X/B)\times \uPerf(X/B)\Big)/\!/BT .
\end{equation}

\item [(S5)] We define $\uExact(X/B)^{\rig}$ as the following composition of Lagrangian correspondences,
\begin{multline*}
\qquad \big(\uPerf(X/B)/\!/BT \times \uPerf(X/B)/\!/BT \big) \simeq \Bigg(\Big(\uPerf(X/B)\times \uPerf(X/B)\Big)/\!/BT \Bigg)/\!/BT \\
\xdashrightarrow[]{\eqref{Eq:Ext-4}} \Big(\uPerf(X/B)\times \uPerf(X/B)\Big)/\!/BT
\xdashrightarrow[]{\uExact(X/B)/\!/BT}\uPerf(X/B)/\!/BT.
\end{multline*}
\end{enumerate}
\end{construction}

\begin{proof}[Proof of \Cref{Prop:RidigifiedExtCorr}]
This follows from \Cref{Const:RidigifiedExtCorr} above.
\end{proof}

\appendix
\section{Correspondence categories}\label{Appendix:Corr}

In this section, we provide some technical results on correspondence categories that we use in this paper.
More precisely, we will provide the following two results:
\begin{enumerate}
\item (\Cref{Const:Corr-Funct}) The assignment $B\mapsto\Corr(\dSt_B)$ upgrades to a functor \[\Corr(\dSt) \lra \Cat.\]
\item (\Cref{Cor:Corr-Adj}) A morphism $B \xleftarrow{} A \xrightarrow{} C$ in $\Corr(\dSt)$ gives rise to an adjunction \[\xymatrix{
\Corr(\dSt_B)  \ar@<.4ex>[r] & \ar@<.4ex>[l] \Corr(\dSt_C). }\]
\end{enumerate}

\subsection{Review}

We first review the definition of the correspondence categories 
and their basic properties.
The original reference is \cite{Bar}.
We mostly follow the excellent presentation in \cite{HHLN}.

Let $\Lambda^2_2 \subseteq \Delta^2$ be the wide subcategory consisting of the morphisms $0 \to 2$ and $1 \to 2$.

\begin{definition}
The $\infty$-category of \defterm{adequate triples} is the subcategory
\[\Trip \subseteq \Fun(\Lambda^2_2,\Cat) \]
consisting of the following data:
\begin{itemize}
\item [(Obj)] An object is an $\infty$-category $\scC$ together with two wide subcategories $\scC_L,\scC_R \subseteq \scC$ satisfying the following property:
for a morphism $f\colon c \to d$ in $ \scC_R$ and a morphism $g\colon b \to d$ in $\scC_L$, there exists a cartesian square
\[\xymatrix@-.5pc{
a \ar[r]^-{\tf} \ar[d]_-{\tg}  & b \ar[d]^-{g} \\ c \ar[r]^-{f} & d
}\]
in $\scC$, such that $\tf$ lies in $\scC_R$ and $\tg$ lies in $\scC_L$.
\item [(Mor)] A morphism from $(\scC, \scC_L, \scC_R)$ to $(\scD,\scD_L,\scD_R)$ is a functor $F \colon \scC \to \scD$ such that $F(\scC_L) \subseteq \scD_L$, $F(\scC_R) \subseteq \scD_R$, and $F$ preserves pullbacks of morphisms in $\scC_R$ along morphisms in $\scC_L$.
\end{itemize}    
\end{definition}

Let $\Cat^{\fp} \subseteq \Cat$ be the subcategory consisting of $\infty$-categories with fiber products and $\infty$-functors preserving fiber products. 
Then we have a canonical $\infty$-functor
\begin{equation}\label{Eq:Corr-4}
\Cat^{\fp} \lra \Trip \colon \scC \mapsto (\scC, \scC,\scC),
\end{equation}
which preserves limits by \cite[Lem.~2.4]{HHLN}.    

\begin{example} 
By \cite[Ex.~2.9]{HHLN}, we have a functor of \defterm{twisted arrow categories}
\[\Tw \colon \Cat \lra \Trip.\]
For an $\infty$-category $\scC \in \Cat$, the twisted arrow category $\Tw(\scC) \in\Trip$ consists of the following data:
\begin{itemize}
\item [(D1)] The $\infty$-category $\Tw(\scC)$ is the unstraightening of the functor $\Map_{\scC} \colon \scC^{\op} \times \scC \to \Grpd$.
Hence
\begin{itemize}
\item an object in $\Tw(\scC)$ is a morphism $r\colon c \to b$ in $\scC$ and
\item a morphism $f$ from $r_1 \colon c_1 \to b_1$ to $r_2 \colon c_2 \to b_2$ is a commutative diagram
\[\xymatrix@-.5pc{
c_1 \ar[r]^-{f_c} \ar[d]_-{r_1}  & c_2 \ar[d]^-{r_2} \\ b_1  & b_2 \ar[l]_-{f_b}
}\]
in $\scC$, for some morphisms $f_c,f_d$.

\end{itemize}
\item [(D2)] The wide subcategory $\Tw(\scC)_L \subseteq \Tw(\scC)$ (resp. $\Tw(\scC)_R \subseteq \Tw(\scC)$) consists of morphisms $f\colon (c_1 \xrightarrow{r_1} b_1) \to (c_2 \xrightarrow{r_2} b_2)$ such that $f_c \colon c_1 \to c_2$ (resp. $f_b \colon b_2 \to b_1$) is an equivalence.
\end{itemize}
\end{example}

\begin{definition}\label{Def:Corr}
The functor of \defterm{correspondence categories}
\[\Corr \colon \Trip \lra \Cat\]
is the right adjoint of the functor $\Tw \colon \Cat \to \Trip$,
which exists by \cite[Thm.~2.18]{HHLN}.
\end{definition}

For an adequate triple $\scC \in \Trip$, the associated correspondence category $\Corr(\scC)$ can be described as follows:
Since we have a canonical equivalence of groupoids
\[\Map_{\Cat}(\Delta^n, \Corr(\scC)) \simeq \Map_{\Trip}(\Tw(\Delta^n),\scC),\]
an object in $\Corr(\scC)$ is an object in $\scC$, 
and a morphism from $c_1$ to $c_2$ in $\Corr(\scC)$ is a correspondence
\[\xymatrix@-1pc{& a\ar[ld]_s \ar[rd]^t & \\ c_1 && c_2}\]
in $\scC$, such that $s\colon a \to c_1$ lies in $\scC_L$ and $t \colon a \to c_2$ lies in $\scC_R$.

The following theorem of \cite[Thm.~12.2]{Bar} (more precisely, the correction in \cite[Thm.~3.1]{HHLN}) is extremely useful for studying the correspondence categories.

\begin{theorem}\label{Thm:Corr-cocart}
Let $p \colon \scC \to \scB$ be a morphism of adequate triples.
Assume the following:
\begin{enumerate}
\item [(A1)] The functor $p_L \colon \scC_L \to \scB_L$ is a cartesian fibration.
\item [(A2)] The functors $p_R \colon \scC_R \to \scB_R$ and $p|_{\scB_R} \colon \scC\times_{\scB} \scB_R \to \scB_R$ are cocartesian fibrations.
\item [(A3)] The inclusion $\scC_R \hookrightarrow \scC\times_{\scB}\scB_R$ preserves cocartesian morphisms over $\scB_R$.
\item [(A4)] Consider a commutative square
\[\xymatrix{
c_1 \ar[r]^-{c_{12}} \ar[d]_{c_{13}} \ar@{}[rd]|-{\sigma} & c_2 \ar[d]^{c_{24}} \\
c_3 \ar[r]^-{c_{34}} & c_4
}\]
in $\scC$,
such that 
$c_{12} \colon c_1 \to c_2$ lies in $\scC_R$, 
$c_{13} \colon c_1 \to c_3$ lies in $\scC_L$,
$c_{34} \colon c_3 \to c_4$ is a cocartesian morphism with respect to $p_R\colon \scC_R \to \scB_R$,
$p(c_{24}) \colon p(c_2) \to p(c_4)$ lies in $\scB_L$,
and $p(\sigma)$ is a cartesian square in $\scB$.
Then 
\begin{multline*}
\qquad\qquad\text{$\sigma$ is cartesian square in $\scC$ and $c_{24}\colon c_2 \to c_4$ lies in $\scC_L$ } \\
\iff \text{$c_{12}\colon c_1 \to c_2$ is a cocartesian morphism with respect to $p \colon \scC \to \scB$}.    
\end{multline*}
\end{enumerate}
Then the induced functor $\Corr(p) \colon \Corr(\scC) \to \Corr(\scB)$ is a cocartesian fibration.
\end{theorem}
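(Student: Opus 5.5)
This is the Barwick / Haugseng--Hebestreit--Linskens--Nuiten result, and the statement is quoted from \cite[Thm.~12.2]{Bar} and \cite[Thm.~3.1]{HHLN}. So the plan is to follow their strategy, adapted to the notation here. The goal is to show that $\Corr(p)\colon \Corr(\scC)\to\Corr(\scB)$ admits cocartesian lifts of every morphism and that these lifts compose correctly. By the adjunction $\Tw \dashv \Corr$ (\Cref{Def:Corr}), morphisms and $2$-simplices of $\Corr(\scC)$ correspond to maps of adequate triples $\Tw(\Delta^n)\to\scC$. Cocartesianness therefore becomes a lifting problem against inner horn inclusions $\Tw(\Lambda^n_0)\to\Tw(\Delta^n)$ relative to $\scB$.

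\textbf{Step 1: construct candidate cocartesian lifts.} Take a morphism $p(c_1)\xleftarrow{s} a \xrightarrow{t} b_2$ in $\Corr(\scB)$ with $s\in\scB_L$ and $t\in\scB_R$.
\begin{enumerate}
\item Using (A1), choose a $p_L$-cartesian lift $\tilde s\colon \tilde a\to c_1$ of $s$ in $\scC_L$.
\item Using (A2), choose a $p_R$-cocartesian lift $\tilde t\colon \tilde a\to c_2$ of $t$ in $\scC_R$.
\end{enumerate}
The candidate cocartesian edge is the correspondence $c_1\xleftarrow{\tilde s}\tilde a\xrightarrow{\tilde t} c_2$. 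Because $\Corr$ decomposes morphisms as a backward step followed by a forward step, it suffices to treat separately the two cases $t=\id$ and $s=\id$, and then to use that composites of cocartesian edges are cocartesian.

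\textbf{Step 2: verify the cocartesian property.} Concretely, fix an object $c_3$. We must show that the map from $\Map_{\Corr(\scC)}(c_2,c_3)$ to the fiber product of $\Map_{\Corr(\scC)}(c_1,c_3)$ and $\Map_{\Corr(\scB)}(p c_2,p c_3)$ over $\Map_{\Corr(\scB)}(p c_1,p c_3)$ is an equivalence. These mapping spaces are spaces of spans, and composition is computed by pullbacks of $R$-maps along $L$-maps.
\begin{enumerate}
\item \emph{Backward part.} The equivalence reduces to the universal property of the $p_L$-cartesian edge, together with the fact that pullbacks in $\scC$ lie over pullbacks in $\scB$.
\item \emph{Forward part.} A span $c_2\leftarrow x\to c_3$ composed with $\tilde t$ is computed by forming the pullback $x\times_{c_2}\tilde a$.
\end{enumerate}
The key point in the forward part is condition (A4). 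It identifies squares whose upper edge is $p$-cocartesian with squares that are cartesian and have an $L$-map on the right. This lets us translate a span out of $c_1$ lying over a given span in $\scB$ into a unique span out of $c_2$. Here (A3) ensures that the cocartesian edges appearing in the $R$-direction are also cocartesian for $\scC\times_\scB\scB_R\to\scB_R$, so cocartesian transport can be computed in either category.

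\textbf{Step 3: conclude.} Being an inner fibration follows formally, since $\Corr$ is a right adjoint and the relevant lifting problems for $p$ transpose to lifting problems in $\Trip$, which hold by the fibration hypotheses. Combining this with Steps 1 and 2 shows that $\Corr(p)$ is a cocartesian fibration.

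The main obstacle is Step 2 in the mixed case. There one must show that pulling back a cocartesian $R$-edge along an arbitrary $L$-map still gives a cocartesian edge of the span. This is the coherence subtlety that required the correction in \cite{HHLN}, and it is exactly what the biconditional in (A4) is designed to handle. I would first try to reduce it to a pointwise check on $2$-simplices $\Tw(\Delta^2)\to\scC$, using (A4) once in each direction.
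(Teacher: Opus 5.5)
The paper gives no proof of this statement; it imports it from \cite[Thm.~12.2]{Bar} with the correction \cite[Thm.~3.1]{HHLN}. Several parts of your framework are fine:
\begin{itemize}
\item the candidate lift;
\item the reduction to a purely backward and a purely forward edge via composition of cocartesian edges;
\item the backward case, which is just the universal property of $\widetilde{s}$ in $\scC_L$, since precomposing with a backward edge involves no nontrivial pullback.
\end{itemize}
Step~3 should be dropped. In the invariant setting, a functor is a cocartesian fibration once every morphism has a cocartesian lift in your mapping-space sense. Simplicial horn-lifting problems cannot be transposed through the $\infty$-categorical adjunction $\Tw\dashv\Corr$, and $\Lambda^n_0$ is not an inner horn. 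The genuine gap is the forward case. It carries essentially all the content, you assert it rather than prove it, and the tool you name for it, (A3), is not enough.

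Concretely, take the forward edge $\widetilde{a}=\widetilde{a}\xrightarrow{\widetilde{t}}c_2$ and fix a span $b_2\xleftarrow{u_B}w_B\xrightarrow{v_B}p(z)$. The comparison map on fibres sends $(u\colon w\to c_2,\ v\colon w\to z)$ to $(\bar{u}\colon w'\to\widetilde{a},\ v\circ\bar{t})$, where $w'=w\times_{c_2}\widetilde{a}$. You must show two things.

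(i) Precomposition with $\bar{t}\colon w'\to w$ is an equivalence on right legs over $v_B$, i.e.\ $\bar{t}$ is $p_R$-cocartesian. This is what you call the main obstacle, but it is immediate. (A4,~$\Rightarrow$) applied to the pullback square says $\bar{t}$ is $p$-cocartesian. A $p_R$-cocartesian lift of $p(\bar{t})$ is also cocartesian over $\scB_R$ by (A3), so it differs from $\bar{t}$ by an equivalence, and hence $\bar{t}$ is $p_R$-cocartesian.

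(ii) Every $u'\colon x\to\widetilde{a}$ in $\scC_L$ lying over the pulled-back map $w_B\times_{b_2}a\to a$ must arise, essentially uniquely, as such a $\bar{u}$. You do not address this, and here (A2) and (A3) fail. You need a lift $g\colon x\to w$ of $w_B\times_{b_2}a\to w_B$ through which $\widetilde{t}\circ u'$ factors. But $\widetilde{t}\circ u'$ lies over $u_B\circ(w_B\times_{b_2}a\to w_B)$ with $u_B\in\scB_L$. Cocartesianness for $p_R$, or for $\scC\times_{\scB}\scB_R\to\scB_R$, only controls maps lying over $\scB_R$, so it gives no factorisation.

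The missing observation is this: apply (A4,~$\Rightarrow$) to the degenerate square whose vertical edges are identities and whose two horizontal edges are the same $p_R$-cocartesian edge $g$. This shows every $p_R$-cocartesian edge is $p$-cocartesian, which incidentally makes (A3) redundant. Taking such a $g$, factor $\widetilde{t}\circ u'\simeq u\circ g$ with $p(u)\simeq u_B$. Then (A4,~$\Leftarrow$) shows the square is cartesian and $u\in\scC_L$, which gives the inverse. Without these two uses of (A4), the ``translation of spans'' in your Step~2 is unsupported.
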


\begin{example}\label{Ex:Unfurling}
Let $F\colon \scB \to \Cat$ be an $\infty$-functor.
Assume the following conditions:
\begin{enumerate}
\item [(A1)] For a morphism $f\colon b_1 \to b_2$ in $\scB$, the functor $f_! \colon F(b_1) \to F(b_2)$ has a right adjoint $f^*$.
\item [(A2)] The category $\scB$ has fiber products.
\item [(A3)] For a cartesian square
\[\xymatrix@-.5pc{
b_1 \ar[r]^-{\tf} \ar[d]_{\tg} \cart & b_2 \ar[d]^{g} \\
b_3 \ar[r]^-{f} & b_4
}\]
in $\scB$, the following Beck-Chevalley transformation is an equivalence,
\[\tf_! \circ \tg^* \xrightarrow{\simeq} g^* \circ f_! \textin \Fun (F(b_3) ,F(c_2)).\]
\end{enumerate}
The \defterm{unfurling} of the functor $F\colon \scB \to \Cat$ is a functor $\tF \colon \Corr(\scB) \lra \Cat$ 
\begin{itemize}
\item sending an object $b \in \Corr(\scB)$ to the category $F(b) \in \Cat$, 
\item sending a morphism $b_1 \xleftarrow{s} a \xrightarrow{t} b_2$ in $\Corr(\scB)$ to the functor $b_{2,!} \circ b_1^* \colon F(b_1) \to F(b_2)$,
\end{itemize}
defined as follows:
Let $p\colon \scC \to \scB$ be the cocartesian fibration associated to the functor $F \colon \scB \to \Cat$.
Let $\scC_{p\textnormal{-}\mathrm{cart}} \subseteq \scC$ be the wide subcategory of $p$-cartesian morphisms.
Then 
$(\scC,\scC_{p\textnormal{-}\mathrm{cart}}, \scC) \to (\scB,\scB,\scB)$ is a morphism in $\Trip$ by \cite[Prop.~2.6]{HHLN}.
We define $\tF \colon \Corr(\scB) \to \Cat$ as the straightening of
\[\widetilde{\scC}\coloneq\Corr(\scC,\scC_{p\textnormal{-}\mathrm{cart}}, \scC) \lra \Corr(\scB),\]
which is a cocartesian fibration by \Cref{Thm:Corr-cocart}.
\end{example}

\subsection{Adjunction} 

We provide canonical adjunctions between correspondence categories.

\begin{proposition}\label{Prop:Corr-Adj}
Let $F \colon \scC \to \scD$ be an $\infty$-functor in $\Cat^{\fp}$. Assume the followings:
\begin{enumerate}
\item [(A1)] There exists a right adjoint $F^{\rmR} \colon  \scD \to \scC$ of $F\colon \scC \to \scD$.
\item [(A2)] The functor $F\colon \scC \to \scD$ is conservative.
\item [(A3)] For any objects $c\in \scC$, $d\in \scD$, and a morphism $f\colon d \to Fc$ in $\scD$, the composition
\[F\left(c\times_{F^{\rmR}F c}F^{\rmR}d\right) \xrightarrow{F(\pr_2)} F F^{\rmR}d \xrightarrow{\mathrm{counit}} d \textin \scD\]
is an equivalence.
\end{enumerate}
Then there exists a canonical adjunction of $\infty$-categories,
\[\xymatrix{
\Corr(F) \colon \Corr(\scC)  \ar@<.4ex>[r] & \ar@<.4ex>[l] \Corr(\scD) \colon \Corr(F^{\rmR}). }\]
\end{proposition}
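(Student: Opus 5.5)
The plan is to construct the unit and counit explicitly and check the triangle identities up to coherent homotopy. The cleanest way to get the higher coherences for free is to reduce to mapping-space equivalences. By (A1) and the fact that $F$ and $F^{\rmR}$ both preserve fiber products, both functors induce functors of correspondence categories. This follows from \eqref{Eq:Corr-4} and \Cref{Def:Corr}: $F^{\rmR}$ is a right adjoint, hence preserves limits, so it lies in $\Cat^{\fp}$.

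It therefore suffices to produce, for $c\in\scC$ and $d\in\scD$, a natural equivalence
\[\Map_{\Corr(\scD)}(\Corr(F)c,d)\simeq \Map_{\Corr(\scC)}(c,\Corr(F^{\rmR})d).\]
The right side is the groupoid of spans $c\leftarrow a\to F^{\rmR}d$ in $\scC$. By adjunction this is the groupoid of pairs consisting of a map $a\to c$ together with a map $Fa\to d$. The left side is the groupoid of spans $Fc\leftarrow e\to d$ in $\scD$.

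The comparison map sends $(a\to c, Fa\to d)$ to $Fc\leftarrow Fa\to d$. In the other direction, given a span $Fc\xleftarrow{f} e\to d$, one would like to set $a\coloneq c\times_{F^{\rmR}Fc}F^{\rmR}e$, using the unit $c\to F^{\rmR}Fc$ and $F^{\rmR}f$. Then (A3), applied to $f\colon e\to Fc$, says exactly that $Fa\to FF^{\rmR}e\to e$ is an equivalence over $Fc$. Hence the composite "span $\mapsto$ $a$ $\mapsto$ span" is the identity.

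For the other composite, start from $a\to c$ with $e=Fa$. We need that the canonical map $a\to c\times_{F^{\rmR}Fc}F^{\rmR}Fa$, built from the unit, is an equivalence. Apply $F$ to this map. By (A3) the target maps equivalently to $Fa$, compatibly with the source, via the triangle identity $FF^{\rmR}F\to F$ composed with $F(\text{unit})$. So $F$ of the map is an equivalence, and conservativity (A2) gives the claim.

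Thus the functor $\scC_{/c}\to\scD_{/Fc}$, $a\mapsto Fa$, is an equivalence with inverse $e\mapsto c\times_{F^{\rmR}Fc}F^{\rmR}e$. This upgrades to an equivalence of the span groupoids above, and hence of the mapping spaces.

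The main obstacle is naturality and coherence: a pointwise equivalence of mapping spaces is not yet an adjunction of $\infty$-categories. I would handle this by exhibiting the equivalence as induced by a genuine unit transformation $\id\to\Corr(F^{\rmR})\Corr(F)$. Its component at $c$ is the span $c\xleftarrow{\id}c\to F^{\rmR}Fc$, given by the unit of $F\dashv F^{\rmR}$, viewed as a morphism in $\Corr(\scC)$ coming from $\scC_R$. One then invokes the criterion that a functor $G$ admits a left adjoint once, for each object, there is a morphism $c\to G(Lc)$ inducing equivalences on mapping spaces (\cite[Prop.~5.2.2.8]{LurHTT}-style). Composition in $\Corr$ with the unit span sends $Fc\leftarrow e\to d$ to exactly the pullback construction above. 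Hence the induced map on mapping spaces is the equivalence just established, which yields the adjunction $\Corr(F)\dashv\Corr(F^{\rmR})$.
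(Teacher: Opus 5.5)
Your pointwise analysis is correct, and it is exactly the input the paper also needs. But the step you flag as the main obstacle is never carried out. Listing the components $c\xleftarrow{\id}c\xrightarrow{\eta_c}F^{\rmR}Fc$ does not define a natural transformation $\id_{\Corr(\scC)}\to\Corr(F^{\rmR})\circ\Corr(F)$ in $\Fun(\Corr(\scC),\Corr(\scC))$. The unit criterion you cite takes such a transformation as input. The pointwise corepresentability version would only produce some abstract left adjoint $L'$ of $\Corr(F^{\rmR})$ with $L'(c)\simeq Fc$, and identifying $L'$ with $\Corr(F)$ as a functor again requires a genuine map $\id\to\Corr(F^{\rmR})\Corr(F)$. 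Naturality is not automatic even in the homotopy category. Take a backward morphism $c_2\xleftarrow{g}c_1\xrightarrow{\id}c_1$ of $\Corr(\scC)$. The two composites around its naturality square are the spans $c_2\leftarrow c_1\to F^{\rmR}Fc_1$ and $c_2\leftarrow c_2\times_{F^{\rmR}Fc_2}F^{\rmR}Fc_1\to F^{\rmR}Fc_1$. These agree only because the naturality square of $\eta$ along $g$ is cartesian.

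That cartesianness is precisely what your ``other composite'' step proves: by (A2) and (A3), $c_1\to c_2\times_{F^{\rmR}Fc_2}F^{\rmR}Fc_1$ is an equivalence for every $g\colon c_1\to c_2$. It is also what lets you build the transformation coherently. Regard $\eta$ as a functor $H\colon\scC\times\Delta^1\to\scC$. Then $H$ is a morphism of adequate triples $(\scC,\scC,\scC)\times(\Delta^1,(\Delta^1)^{\simeq},\Delta^1)\to(\scC,\scC,\scC)$, for two reasons. First, $\id$ and $F^{\rmR}F$ preserve fiber products. Second, $H$ sends a pullback of a morphism over $0\to 1$ along a morphism over $\id_1$ to a square obtained by pasting a pullback square with a naturality square of $\eta$. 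Since $\Corr$ preserves products and $\Corr(\Delta^1,(\Delta^1)^{\simeq},\Delta^1)\simeq\Delta^1$, $\Corr(H)\colon\Corr(\scC)\times\Delta^1\to\Corr(\scC)$ is a transformation from $\id$ to $\Corr(F^{\rmR})\Corr(F)$ with your spans as components. Your mapping-space computation then finishes the argument.

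Once patched this way, your proof is a genuinely different and more hands-on route than the paper's, which never writes down a unit. The paper takes the bicartesian fibration $p\colon\scE\to\Delta^1$ classifying $F\dashv F^{\rmR}$ and shows via \Cref{Thm:Corr-cocart} that $\Corr(p)$ is a cocartesian fibration. The only nontrivial hypothesis there is the same pointwise statement you proved: $x\simeq c\times_{F^{\rmR}Fc}F^{\rmR}d$ if and only if $Fx\simeq d$. It then identifies the two straightenings of its restriction over $\Corr(\Delta^1,(\Delta^1)^{\simeq},\Delta^1)\simeq\Delta^1$ with $\Corr(F)$ and $\Corr(F^{\rmR})$, so all higher coherence comes for free from the fibration.
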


\begin{proof}
Let $p\colon \scE \to \Delta^1$ be the cocartesian fibration associated to the functor $F\colon \scC \to \scD$ (regarded as a functor $\Delta^1 \to \Cat$).
Since $F\colon \scC \to \scD$ has a right adjoint, the cocartesian fibration $p\colon \scE \to \Delta^1$ is also a cartesian fibration.
Moreover, since $F\colon  \scC \to \scD$ preserves fiber products, so is $p\colon \scE \to \Delta^1$, by \cite[Cor.~4.3.1.11]{LurHTT}.
We claim that the induced functor
\[\Corr(p)\colon \Corr(\scE) \lra \Corr(\Delta^1).\]
is a cocartesian fibration (and hence also a cartesian fibration).
Indeed, by \Cref{Thm:Corr-cocart}, it suffices to show that:
for any objects $c\in \scC$, $d\in \scD$, and morphisms $f\colon d \to Fc$ in $\scD$, $x \to c\times_{F^{\rmR}F c}F^{\rmR}d$ in $\scC$, 
\[x \xrightarrow{\simeq} c\times_{F^{\rmR}F c}F^{\rmR}d \textin \scC\iff Fx \xrightarrow{\simeq}d \textin \scD.\]
This follows from the assumption (A2) and (A3) (in \Cref{Prop:Corr-Adj}).

Then we have a cartesian and cocartesian fibration
\begin{equation}\label{Eq:Corr-Adj-1}
\Corr(\scE,\scE\times_{\Delta^1}(\Delta^1)^{\simeq},\scE)\lra \Corr(\Delta^1,(\Delta^1)^{\simeq},\Delta^1)\simeq \Delta^1\end{equation}
since it is a pullback of the above functor $\Corr(p) \colon \Corr(\scE) \to \Corr(\Delta^1)$.
We claim that the cocartesian (resp. cartesian) straightening of the above fibration is $\Corr(F)\colon \Corr(\scC) \to \Corr(\scD)$ (resp. $\Corr(F^{\rmR})\colon \Corr(\scD) \to \Corr(\scC)$).
Indeed, consider the morphisms
\[\xymatrix{\scC \times \Delta^1 \ar[r] \ar@/_1pc/[rr]_-{F\times \id} & \scE \ar[r] & \scD \times \Delta^1}\]
of cocartesian fibrations over $\Delta^1$ (preserving cocartesian morphisms over $\Delta^1$), induced by \[( \scC \xrightarrow{\id_{\scC}}\scC) \xrightarrow{(\id_{\scC},F)} (\scC \xrightarrow{F}\scD) \xrightarrow{(F,\id_{\scD})} (\scD \xrightarrow{\id_{\scD}}\scD) \textin \Fun(\Delta^1,\Cat).\]
Applying $\Corr(-)\times_{\Corr(\Delta^1)}\Delta^1$, we obtain morphisms
\[\xymatrix{\Corr(\scC)\times \Delta^1 \ar[r] \ar@/_1.5pc/[rr]_-{\Corr(F)\times \id} & \Corr(\scE, \scE\times_{\Delta^1}(\Delta^1)^{\simeq},\scE) \ar[r] & \Corr(\scD)\times \Delta^1}\]
of cocartesian fibrations over $\Delta^1$ (preserving cocartesian morphisms over $\Delta^1$).
Since $\Corr \colon \Trip \to \Cat$ preserves limits, the cocartesian straightening of \eqref{Eq:Corr-Adj-1} is $\Corr(F)$.
For the cartesian straightening, we consider the morphisms
\[\xymatrix{\scD \times \Delta^1 \ar[r] \ar@/_1pc/[rr]_-{F^{\rmR}\times \id} & \scE \ar[r] & \scC \times \Delta^1}\]
of cartesian fibrations over $\Delta^1$.
Then the claim follows from an analogous argument.
\end{proof}

\subsection{Functoriality} 

Let $\scC$ be an $\infty$-category.
We can form an $\infty$-functor
\begin{equation}\label{Eq:Corr-1}
\scC \lra \Cat
\end{equation}
\begin{itemize}
\item sending an object $c$ in $\scC$ to the slice category $\scC_{/c}$, and
\item sending a morphism $f\colon c_1 \to c_2$ in $\scC$ to the forgetful functor $f_! \colon \scC_{/c_1}  \to \scC_{/c_2}$.
\end{itemize}
Indeed, this functor $\scC \to \Cat$ is the straightening of the cocartesian fibration $\ev_1 \colon \Fun(\Delta^1,\scC) \to \scC$.

Now assume that $\scC$ has fiber products.
By the unfurling construction (\Cref{Ex:Unfurling}),
the above functor \eqref{Eq:Corr-1} lifts to an $\infty$-functor
\begin{equation}\label{Eq:Corr-2}
\Corr(\scC) \lra \Cat,
\end{equation}
\begin{itemize}
\item sending an object $c$ in $\scC$ to the slice category $\scC_{/c}$, and
\item sending a morphism $c_1 \xleftarrow{s} a \xrightarrow{t} c_2$ in $\Corr(\scC)$ to the composition $t_! \circ s^* \colon \scC_{/c_1}  \to \scC_{/c_2}$, where $s^*$ is the right adjoint of $s_!$.
\end{itemize}
Since the slice categories $\scC_{/c}$ have fiber products and the forgetful/pullback functors preserve them, the above functor \eqref{Eq:Corr-2} factors through the subcategory $\Cat^{\fp} \subseteq \Cat$,
and we have an $\infty$-functor
\begin{equation}\label{Eq:Corr-3}
\Corr(\scC) \lra \Cat^{\fp}.
\end{equation}

\begin{construction}\label{Const:Corr-Funct}
Let $\scC$ be an $\infty$-category with fiber products.
We define the $\infty$-functor $\Corr({\scC_{/-}})$ as the composition
\[\Corr({\scC_{/-}}) \colon\Corr(\scC) \xrightarrow{\eqref{Eq:Corr-3}} \Cat^{\fp} \xrightarrow{\eqref{Eq:Corr-4}} \Trip \xrightarrow{\Corr} \Cat.\]
\end{construction}

\begin{itemize}
\item On objects, the functor $\Corr({\scC_{/-}})$ sends an object $c$ in $\scC$ to the correspondence category $\Corr(\scC_{/c})$.
\item On morphisms, the functor $\Corr({\scC_{/-}})$ sends a morphism $c_1 \xleftarrow{s} a \xrightarrow{t} c_2$ in $\Corr(\scC)$ to the composition $\Corr(t_!) \circ \Corr(s^*) \colon \Corr(\scC_{/c_1})  \to \Corr( \scC_{/c_2})$.
\end{itemize}

\begin{remark}
We expect that the functor $\Corr(\scC_{/-}) \colon \Corr(\scC) \to \Cat$ in \Cref{Const:Corr-Funct} is equivalent to the straightening of the cocartesian fibration $\Corr(\ev_1)\colon \Corr(\Fun(\Delta^1,\scC)) \to \Corr(\scC)$, but the authors do not know how to prove it.
It is not difficult to show that the two functors are equivalent on the objects and morphisms level, but we don't know how to prove the equivalence as functors.
\end{remark}

\begin{corollary}\label{Cor:Corr-Adj}
Let $\scC$ be an $\infty$-category with fiber products.
Let
\[\xymatrix@-1pc{& a\ar[ld]_s \ar[rd]^t & \\ c_1 && c_2}\]
be a morphism in $\Corr(\scC)$.
Then there exists a canonical adjunction of $\infty$-categories,
\[\xymatrix{
\Corr(t_! \circ s^*) \colon \Corr(\scC_{/c_1})  \ar@<.4ex>[r] & \ar@<.4ex>[l] \Corr(\scC_{/c_2}) \colon \Corr(s_! \circ t^*). }\]
\end{corollary}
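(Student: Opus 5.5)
The plan is to deduce the corollary from \Cref{Prop:Corr-Adj}, applied to the functor $F\coloneq t_!\circ s^*\colon \scC_{/c_1}\to\scC_{/c_2}$. Its right adjoint is $F^{\rmR}=s_!\circ t^*$, because $s_!\dashv s^*$ and $t_!\dashv t^*$ give $s_!t^*\dashv$ ... wait, the adjunctions point the wrong way for this composite, so the decomposition has to be handled more carefully.

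\textbf{Decomposition into two steps.} Rather than treat $F$ in one go, I would split it into two separate adjunctions and compose them. For the functor $t_!$ together with its right adjoint $t^*$, I would verify the hypotheses (A1)--(A3) of \Cref{Prop:Corr-Adj}:
\begin{itemize}
\item (A1) holds because $t_!\dashv t^*$.
\item $t_!$ is conservative, since equivalences in $\scC_{/a}$ are detected in $\scC$.
\item $t_!$ preserves fiber products, because fiber products in a slice over $a$ are computed in $\scC$.
\item (A3) reads: for $c\to a$ and $d\to c\to c_2$, the map $c\times_{a\times_{c_2}c}(a\times_{c_2}d)\to d$ is an equivalence. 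This is a pasting of pullback squares in which the base $a\times_{c_2}c$ is replaced by its section. Concretely, $c\times_{a\times_{c_2} c}(a\times_{c_2} d)\simeq c\times_c d\simeq d$, where I use the graph $c\to a\times_{c_2}c$.
\end{itemize}
This yields $\Corr(t_!)\dashv\Corr(t^*)$.

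For the other factor, $s^*$ with left adjoint $s_!$, I would apply the same proposition to $s_!$, giving $\Corr(s_!)\dashv\Corr(s^*)$. This orientation does not directly fit the corollary. Instead, I expect to use the symmetry $\Corr(\scD)\simeq\Corr(\scD)^{\op}$, which is induced by swapping legs. Under this identification $\Corr(G)^{\op}$ corresponds to $\Corr(G)$, so an adjunction $\Corr(s_!)\dashv\Corr(s^*)$ turns into an adjunction $\Corr(s^*)\dashv\Corr(s_!)$.

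\textbf{Composing.} Composing the two adjunctions gives
\[\Corr(t_!)\circ\Corr(s^*)\dashv \Corr(s_!)\circ\Corr(t^*).\]
Functoriality of $\Corr$ on $\Cat^{\fp}$, via \eqref{Eq:Corr-4}, then identifies these composites with $\Corr(t_!\circ s^*)$ and $\Corr(s_!\circ t^*)$.

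\textbf{Main obstacle.} The step I expect to be hardest is making the duality $\Corr(\scD)\simeq\Corr(\scD)^{\op}$ natural in $\scD$. Only with naturality does it carry $\Corr(G)$ to $\Corr(G)^{\op}$ compatibly, and hence transport the adjunction. I would attempt to get this from the involution of the twisted arrow category, $\Tw(\Delta^n)\simeq\Tw((\Delta^n)^{\op})$, which swaps $L$ and $R$, together with the fact that for the triples $(\scD,\scD,\scD)$ the two classes coincide.
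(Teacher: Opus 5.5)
Your proposal is correct and follows the paper's own proof: apply \Cref{Prop:Corr-Adj} to $t_!$ and $s_!$, flip the second adjunction to $\Corr(s^*)\dashv\Corr(s_!)$ using the natural equivalence $\Corr(-)^{\op}\simeq\Corr(-)$, and compose. The naturality you flag as your ``main obstacle'' is simply cited by the paper as \cite[Lem.~2.14]{HHLN}. Your explicit check of the hypotheses for $t_!$ (conservativity, preservation of fiber products, and (A3) via the graph $c\to a\times_{c_2}c$) fills in a verification the paper leaves implicit. You should, however, delete the abandoned opening claim that $s_!\circ t^*$ is right adjoint to $t_!\circ s^*$, since that fails in general at the level of slice categories.
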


\begin{proof}
By \Cref{Prop:Corr-Adj}, the functor $\Corr(s^*) \colon \Corr(\scC_{/c_1}) \to \Corr(\scC_{/a})$ is a right adjoint of $\Corr(s_!) \colon \Corr(\scC_{/a}) \to \Corr(\scC_{/c_1})$.
Since we have a canonical equivalence $\Corr(-)^{\op}\simeq \Corr(-)$ by \cite[Lem.~2.14]{HHLN}, $\Corr(s^*)$ is also a left adjoint of $\Corr(s_!)$.
Hence $\Corr(t_!\circ s^*) \simeq \Corr(t_!)\circ \Corr(s^*)$ is a left adjoint of $\Corr(s_!)\circ \Corr(t^*) \simeq \Corr(s_! \circ t^*)$.
\end{proof}

\subsection{Weil restrictions}

We provide one technical lemma on Weil restrictions used in \S\ref{ss:invariance}.

Let $\scC$ be an $\infty$-category with fiber products.
We say that $\scC$ has \defterm{Weil restrictions} if for any morphism $f\colon c_1 \to c_2$ in $\scC$,
the pullback functor $f^* \colon \scC_{/c_2} \to \scC_{/c_1}$ has a right adjoint $f_* \colon \scC_{/c_1} \to \scC_{/c_2}$.

\begin{lemma}\label{Lem:Corr-Cart}
Let $\scC$ be an $\infty$-category with fiber products and Weil restrictions.
Let $f\colon c_1 \to c_2$ and $g \colon d \to c_1$ be morphisms in $\scC$.
Then there exists a canonical cartesian square of $\infty$-categories
\[\xymatrix{
\Corr(\scC_{/f_*d}) \ar[r] \ar[d] \cart & \Corr(\scC_{/d}) \ar[d] \\
\Corr(\scC_{/c_2}) \ar[r] & \Corr(\scC_{/c_1}).
}\]	
\end{lemma}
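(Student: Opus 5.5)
The plan is to reduce the statement to slice categories over $\scC$ and then apply the functor $\Corr\colon \Trip \to \Cat$, which preserves limits because it is a right adjoint (\Cref{Def:Corr}).

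\textbf{Step 1: cartesian square of slices.} Consider the square
\[\xymatrix{
\scC_{/f_*d} \ar[r] \ar[d] & \scC_{/d} \ar[d]^{g_!} \\
\scC_{/c_2} \ar[r]^{f^*} & \scC_{/c_1}.
}\]
The left vertical functor is $(f_*d \to c_2)_!$, the forgetful functor. The top functor sends $x \to f_*d$ to $f^*x \to f^*f_*d \to d$, using the counit. I will show this square is cartesian in $\Cat$. An object of the pullback is a pair consisting of $x \to c_2$ and a factorization of $f^*x \to c_1$ through $g\colon d \to c_1$. By the adjunction $f^*\dashv f_*$ on slices over $c_1$ and $c_2$, such data is the same as a lift $x \to f_*d$ over $c_2$. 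Applied to mapping spaces, the same adjunction gives full faithfulness, so the comparison functor is an equivalence. Concretely, I would identify $\scC_{/f_*d} \simeq (\scC_{/c_2})_{/f_*d}$ and $(\scC_{/c_1})_{/d}\simeq \scC_{/d}$. The general principle is that for a right adjoint $R=f_*$ we have $\scD_{/Rd} \simeq \scD\times_{\scE}\scE_{/d}$, with the pullback taken along the left adjoint $f^*$. This is the main point to verify carefully at the level of $\infty$-categories; I would cite the standard slice/adjunction statement (the analogue of \cite[Prop.~5.2.5.1]{LurHTT}).

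\textbf{Step 2: it is a square in $\Cat^{\fp}$ and a limit there.} All four categories have fiber products, and all functors preserve them: $f^*$ is a right adjoint, forgetful functors from slices preserve fiber products, and the top functor is a composite of such. The inclusion $\Cat^{\fp}\subseteq\Cat$ creates pullbacks. Indeed, fiber products in the pullback category are computed componentwise, and a functor into the pullback preserves fiber products if and only if its components do. Hence the square is cartesian in $\Cat^{\fp}$.

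\textbf{Step 3: transport to $\Trip$ and apply $\Corr$.} By \cite[Lem.~2.4]{HHLN}, the functor \eqref{Eq:Corr-4} $\Cat^{\fp}\to\Trip$ preserves limits. $\Corr$ also preserves limits, being a right adjoint (\Cref{Def:Corr}). Applying $\Corr\circ\eqref{Eq:Corr-4}$ to the cartesian square of Step 2 therefore yields the desired cartesian square of correspondence categories.

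Finally, the horizontal functors in the resulting square are $\Corr$ of pullback-type functors, and the vertical functors are $\Corr$ of forgetful functors. This matches the maps used in the proof of \Cref{Prop:BT-symp}, with $f=s\colon B\to B^2T$ and $d=\sA^{2,\cl}_B[d]$.
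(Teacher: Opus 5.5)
Your proposal is correct and follows essentially the same route as the paper. The paper also forms the square of slice categories with $f^*$, $g_!$, $\overline{g}_!$ and $e_!\circ\tf^*$, obtaining its commutativity by applying the unfurled functor $\scC_{/(-)}\colon\Corr(\scC)\to\Cat$ to a square in $\Corr(\scC)$. It then checks the square is cartesian using the adjunction $f^*\dashv f_*$, and applies the limit-preserving composite $\Cat^{\fp}\to\Trip\xrightarrow{\Corr}\Cat$. Your right-fibration argument and your explicit check that the pullback is also a pullback in $\Cat^{\fp}$ just spell out what the paper leaves to ``inspection.''
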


\begin{proof}
Consider the canonical commutative diagram
\[\xymatrix@-.5pc{
f_*d \cart \ar[d]_{\overline{g}} & \ar[l]_{\tf} f^*f_*d \ar[r]^-{e} \ar[d] & d \ar[d]^{g} \\
c_2 & \ar[l]_f c_1 \ar@{=}[r] & c_1
}\]
in $\scC$, induced by the adjunction $f^* \dashv f_*$.
Since the left square is cartesian, 
the above commutative diagram can be regarded as a commutative square
\[\xymatrix@-1pc{
f_*d  \ar@{-->}[r]^-{} \ar@{-->}[d] & d \ar@{-->}[d] \\
c_2 \ar@{-->}[r] & c_1
}\]
in $\Corr(\scC)$.
Applying the functor $\scC_{/(-)}\colon\Corr(\scC) \to \Cat$ in \eqref{Eq:Corr-2},
we obtain a commutative square 
\[\xymatrix{
\scC_{/f_*d}  \ar[r]^-{e_! \circ \tf^*} \ar[d]_{\overline{g}_!} & \scC_{/d} \ar[d]^{g_!} \\
\scC_{/c_2} \ar[r]^-{f^*} & \scC_{/c_1}
}\]
of $\infty$-categories.
By inspection using the adjunction $f^* \dashv f_*$, the above commutative square is cartesian.
Applying the limit-preserving functor $\Cat^{\fp} \hookrightarrow \Trip \xrightarrow{\Corr} \Cat$, we obtain the desired cartesian square.
\end{proof}


\begin{thebibliography}{MNOP2}
\bibitem[AC]{AC} M.~Anel, D.~Calaque, \textit{Shifted symplectic reduction of derived critical loci}, Adv.~Theor.~Math.~Phys.~26 (2022), no.~6, 1543--1583.
\bibitem[AKLPR]{AKLPR} D.~Aranha, A.~Khan, A.~Latyntsev, H.~Park, C.~Ravi, \textit{Virtual localization revisited}, Adv.~Math.~479 (2025), 110434.
\bibitem[AB]{AB} L.~Amorin, O.~Ben-Bassat, \textit{Perversely categorified Lagrangian correspondences}, Adv.~Theor.~Math.~Phys.~21 (2017), no.~2, 289–381.
\bibitem[BKP]{BKP} Y.~Bae, M.~Kool, H.~Park, \textit{Counting surfaces on Calabi-Yau $4$-folds I: foundations}, to appear in Geom.~Topol.
\bibitem[Bar]{Bar} C.~Barwick, \textit{Spectral Mackey functors and equivariant algebraic K-theory (I)}, Adv.~Math.~304 (2017), 646--727.
\bibitem[Ben]{Ben} O.~Ben-Bassat, \textit{Multiple derived Lagrangian intersections}, In Stacks and categories in geometry, topology, and algebra, Vol.~643.~Contemp.~Math.~Amer.~Math.~Soc., Providence, RI, (2015), 119--126.
\bibitem[BFN]{BZFN} D.~Ben-Zvi, J.~Francis, D.~Nadler, \textit{Integral transforms and Drinfeld centers in derived algebraic geometry}, J.~Amer.~Math.~Soc.~23 (2010), no.~4, 909--966.
\bibitem[BP]{BP} F.~Binda, M.~Porta, \textit{Descent problems for derived Azumaya algebras}, arXiv:2107.03914v2.
\bibitem[BJ]{BJ} D.~Borisov, D.~Joyce, \textit{Virtual fundamental classes for moduli spaces of sheaves on Calabi-Yau four-folds}, Geom.~Topol.~21 (2017), no.~6, 3231--3311.
\bibitem[BBDJS]{BBDJS} C.~Brav, V.~Bussi, D.~Dupont, D.~Joyce, B.~Szendr\"{o}i, \textit{Symmetries and stabilization for sheaves of vanishing cycles}, With an appendix by Jörg Schürmann, J.~Singul.~{11}, (2015), 85--151.
\bibitem[BD]{BD} C.~Brav, T.~Dyckerhoff, \textit{Relative Calabi--Yau structures II : shifted Lagrangians in the moduli of objects}, Selecta Math. (N.S.) 27 (2021), no.~4, Paper No.~63, 45~pp.
\bibitem[BF]{BuFl} R.-O.~Buchweitz, H.~Flenner, \textit{A semiregularity map for modules and applications to deformations}, Compositio Math.~137 (2003), no.~2, 135–210.
\bibitem[Cal1]{Cal} D.~Calaque, \textit{Lagrangian structures on mapping stacks and semi-classical TFTs}, In Stacks and categories in geometry, topology, and algebra, Contemp.~Math., 643, American Mathematical Society, Providence, RI, (2015), 1--23.
\bibitem[Cal2]{Cal2} D.~Calaque, \textit{Shifted cotangent stacks are shifted symplectic}, Ann.~Fac.~Sci.~Toulouse Math.~(6)~28 (2019), no.~1, 67--90.
\bibitem[CHS]{CHS} D.~Calaque, R.~Haugseng, C.~Scheimbauer, \textit{The AKSZ construction in derived algebraic geometry as an extended topological field theory}, Mem.~Amer.~Math.~Soc. 308 (2025), no.~1555, v+173 pp.
\bibitem[CPTVV]{CPTVV} D.~Calaque, T.~Panted, B.~T\"{o}en, M.~Vaqui\'{e}, G. Vezzosi, \textit{Shifted Poisson structures and deformation quantization}, J.~Topol.~10 (2017), no.~2, 483--584.
\bibitem[CS]{CalSaf} D.~Calaque, P.~Safronov, \textit{Shifted cotangent bundles, symplectic groupoids and deformation to the normal cone}, arXiv:2407.08622.
\bibitem[COT1]{COT1} Y.~Cao, G.~Oberdieck, Y.~Toda, \textit{Gopakumar-Vafa type invariants of holomorphic symplectic 4-folds}, Comm.~Math.~Phys.~405 (2024), no.~2, Paper No.~26, 79~pp.
\bibitem[COT2]{COT2} Y.~Cao, G.~Oberdieck, Y.~Toda, \textit{Stable pairs and Gopakumar-Vafa type invariants on holomorphic symplectic 4-folds}, Adv.~Math.~408 (2022) 108605, 44 pp. 
\bibitem[CZ]{CZ} Y.~Cao, G.~Zhao, \textit{Quasimaps to quivers with potentials}, arXiv:2306.01302.
\bibitem[Del]{Del1} P.~Deligne, \textit{Th\'eor\`eme de Lefschetz et crit\`eres de d\'eg\'en\'erescence de suites spectrales}, Publ.~Math.~IHES 35 (1968), 259--278.
\bibitem[DG]{DG} V.~Drinfeld, D.~Gaitsgory, \textit{One some finiteness questions for algebraic stacks}, Geom.~Funct.~Anal.~{23} (2013), no.~1, 149--294.
\bibitem[GJT]{GJT} J.~Gross, D.~Joyce, Y.~Tanaka, \textit{Universal structures in $\C$-linear enumerative invariant theories}, SIGMA Symmetry Integrability Geom.~Methods Appl.~18 (2022), Paper No.~068, 61 pp.
\bibitem[Gro]{Gro} A.~Grothendieck, \textit{On the de Rham cohomology of algebraic varieties}, Inst.~Hautes Études Sci.~Publ.~Math.~No.~29 (1966), 95–103.
\bibitem[GP]{GP} O.~Gwilliam, D.~Pavlov, \textit{Enhancing the filtered derived category}, J.~Pure Appl.~Algebra 222 (2018), no.~11, 3621–3674.
\bibitem[Hau]{Hau} R.~Haugseng, \textit{Iterated spans and classical topological field theories}, Math.~Z.~289 (2018), no.~3-4, 1427–1488.
\bibitem[HHLN]{HHLN} R.~Haugseng, F.~Hebestreit, S.~Linskens, J.~Nuiten, Joost, \textit{Two-variable fibrations, factorisation systems and-categories of spans}, Forum Math.~Sigma 11 (2023), Paper No.~e111, 70 pp.
\bibitem[HSS]{HSS} M.~Hoyois, S.~Scherotzke, N.~Sibilla, \textit{Higher traces, noncommutative motives, and the categorified Chern character}, Adv.~Math.~309 (2017), 97--154.
\bibitem[KL]{KL} Y.-H.~Kiem, J.~Li, \textit{Categorification of Donaldson-Thomas invariants via perverse sheaves}, arXiv:1212.6444.
\bibitem[KP1]{KP} Y.-H.~Kiem, H.~Park, \textit{Localizing virtual cycles for Donaldson--Thomas invariants of Calabi--Yau 4-folds}, J.~Algebraic Geom.~32 (2023), no.~4, 585–639.
\bibitem[KP2]{KP2} Y.-H.~Kiem, H.~Park, \textit{Cosection localization via shifted symplectic geometry}, arXiv:2504.19542.
\bibitem[KPS]{KPS} T.~Kinjo, H.~Park, P.~Safronov, \textit{Cohomological Hall algebras for 3-Calabi-Yau categories}, arXiv:2406.12838.
\bibitem[LM]{LM} G.~Laumon, L.~Moret-Bailly, \textit{Champs algébriques}, Ergeb.~Math.~Grenzgeb.~(3), 39, Springer-Verlag, Berlin, 2000, xii+208 pp.
\bibitem[Lur1]{LurHTT} J.~Lurie, \textit{Higher topos theory}, Ann.~of Math.~Stud.~170, Princeton University Press, Princeton, NJ, 2009, xviii+925 pp.
\bibitem[Lur2]{LurHA} J.~Lurie, \textit{Higher algebra}, 2017, url: https://www.math.ias.edu/~lurie/papers/HA.pdf.
\bibitem[Lur3]{LurSAG} J.~Lurie, \textit{Spectral algebraic geometry}, 2018, url: https://www.math.ias.edu/~lurie/papers/SAG-rootfile.pdf.
\bibitem[Mou]{Mou} T.~Moulinos, \textit{The geometry of filtrations}, Bull.~Lond.~Math.~Soc.~53 (2021), no.~5, 1486–1499.
\bibitem[OT]{OT} J.~Oh, R.~P.~Thomas, \textit{Counting sheaves on Calabi-Yau 4-folds,~I}, Duke Math.~J.~172 (2023), no.~7, 1333–1409.
\bibitem[PTVV]{PTVV} T.~Pantev, B.~T\"{o}en, M.~Vaqui\'{e}, G. Vezzosi, \textit{Shifted symplectic structures}, Publ.~Math.~Inst.~Hautes Etudes Sci.~117 (2013), 271--328.
\bibitem[Par1]{Park1} H.~Park, \textit{Virtual pullbacks in Donaldson-Thomas theory of Calabi-Yau $4$-folds}, arXiv:2110.03631.
\bibitem[Par2]{Park2} H.~Park, \textit{Shifted symplectic pushforwards}, arXiv:2406.19192.
\bibitem[PY]{PY} H.~Park, J.~You, \textit{An introduction to shifted symplectic structures}, Moduli spaces, virtual invariants and shifted symplectic structures (2025), 37--64.
\bibitem[Saf]{Saf1} P.~Safronov, \textit{Quasi-Hamiltonian reduction via classical Chern--Simons theory}, Adv.~Math.~287 (2016), 733--773.
\bibitem[STV]{STV} T.~Sch\"{u}rg, B.~T\"{o}en, G.~Vezzosi, \textit{Derived algebraic geometry, determinants of perfect complexes, and applications to obstruction theories for maps and complexes}, J.~Reine Angew.~Math.~702 (2015), 1--40.
\bibitem[Sta]{Sta} The Stacks project authors, \textit{The Stacks project}, url: https://stacks.math.columbia.edu/.
\bibitem[TV1]{TV1} B.~T\"{o}en, G.~Vezzosi, \textit{Homotopical algebraic geometry II : geometric stacks and applications}, Mem.~Amer.~Math.~Soc.~193 (2008), no.~902, x+224 pp.
\bibitem[TV2]{TV2} B.~T\"{o}en, G.~Vezzosi, \textit{Caractères de Chern, traces équivariantes et géométrie algébrique dérivée}, Selecta Math. (N.S.) 21 (2015), no. 2, 449–554.
\bibitem[Voi]{Voi} C.~Voisin, \textit{Hodge theory and complex algebraic geometry. II}, Translated from the French by Leila Schneps.
Cambridge Stud. Adv. Math., 77.
Cambridge University Press, Cambridge, 2003. x+351 pp.
\bibitem[You]{You} J.~You, \textit{Reduced pairs/sheaves correspondences}, in preparation.
\end{thebibliography}
\end{document}